\DeclareMathAlphabet{\pazocal}{OMS}{zplm}{m}{n}
\newtheorem{theorem}{Theorem}[section]
\newtheorem{lemma}[theorem]{Lemma}
\newtheorem{proposition}[theorem]{Proposition}
\theoremstyle{definition}
\newtheorem{definition}[theorem]{Definition}
\newtheorem{example}[theorem]{Example}
\newtheorem{conjecture}{Conjecture}[section]
\newtheorem{main}{Theorem}
\theoremstyle{remark}
\newtheorem{remark}[theorem]{Remark}
\numberwithin{equation}{section}
\newcommand{\N}{\ensuremath{\mathbb{N}}}
\newcommand{\R}{\ensuremath{\mathfrak{R}}}
\renewcommand{\a}{\mathbf{a}}
\newcommand{\G}{\mathfrak{G}}
\newcommand{\set}[1]{\left\{#1\right\}}
\newcommand{\E}{\ensuremath{\mathcal{E}}}% denote the bifurcation set of H
\renewcommand{\u}{\pazocal{U}}% used to denote the set U_q of x having a unique q-expansion
\newcommand{\ub}{\mathbf{U}} % used to denote the set  of corresponding sequence in U_q
\newcommand{\ul}{\ensuremath{\mathcal{U}}} % means the set of univoque bases
\newcommand{\VB}{\mathbf{V}} % used to denote the set of corresponding sequences in V_q
\newcommand{\vl}{\mathcal{V}}% denote the set of bases in V
\newcommand{\vb}{\mathbf{V}} % used to denote the of corresponding sequences in V
\newcommand{\I}{\mathcal{I}}
\newcommand{\IB}{{\mathbf{I}}}
\newcommand{\W}{\pazocal{W}}% denote the set of x in W_q
\newcommand{\WB}{\mathbf{W}}% denote the set of corresponding sequences in W_q
\newcommand{\LB}{\mathfrak{L}}% denotes the set of labels of a directed graph
\renewcommand{\L}{B_*}% denote the set of language of a shift space
\newcommand{\om}{\omega}
\newcommand{\la}{\lambda}
\newcommand{\ga}{\gamma}
\newcommand{\ep}{\varepsilon}
\newcommand{\f}{\infty}
\newcommand{\al}{\alpha}
\newcommand{\si}{\sigma}
\newcommand{\de}{\delta}
\newcommand{\ra}{\rightarrow}
\newcommand{\lle}{\preccurlyeq}
\newcommand{\lge}{\succcurlyeq}
\begin{document}

\title[Entropy, Topological transitivity, and dimension properties]{Entropy, Topological transitivity, and Dimensional properties of unique $q$-expansions}

\author{Rafael Alcaraz Barrera}
\address{Departamento de Matem\'atica Aplicada, Instituto de Matem\'atica e Estat\'istica, Universidade de S\~ao Paulo, Rua do Mat\~ao 1010, Ci\-da\-de U\-ni\-ver\-si\-ta\-ria, 05508-090, S\~ao Paulo SP, Brasil}
\address{\emph{Current Address}: Instituto de F\'isica, Universidad Aut\'onoma de San Luis Potos\'i. Av. Manuel Nava 6, Zona Universitaria, C.P. 78290. San Luis Potos\'i, S.L.P. M\'exico}
\email{rafalba@ime.usp.br \quad \quad ralcaraz@ifisica.uaslp.mx}

\author{Simon Baker}
\address{Mathematics Institute, University of Warwick, Coventry, CV4 7AL, United Kingdom}
\email{simonbaker412@gmail.com}

\author{Derong Kong}
\address{School of Mathematical Science, Yangzhou University, Yangzhou, Jiangsu 225002, People's Republic of China}
\address{\emph{Current Address}: Mathematical Institute, University of Leiden, PO Box 9512, 2300 RA Leiden, The Netherlands}
\email{derongkong@126.com\quad \quad d.kong@math.leidenuniv.nl}

\thanks{ Research of R. Alcaraz Barrera was sponsored by FAPESP 2014/25679-9  and by CONACYT-FORDECYT
265667. Research of S. Baker was supported by the EPSRC grant EP/M001903/1. Research of D. Kong was supported by the NSFC No.~11401516.}

\date{\today}

\subjclass[2010]{Primary 11A63; Secondary 37B10, 37B40, 11K55, 68R15}

\begin{abstract}
Let $M$ be a positive integer and $q \in(1,M+1].$ We consider expansions of real numbers in base $q$ over the alphabet $\{0,\ldots, M\}$. In particular, we study the set $\u_{q}$ of real numbers with a unique $q$-expansion, and the set $\ub_q$ of corresponding sequences.

It was shown in \cite[Theorem 1.7]{Komornik_Kong_Li_2015_1} that the function $H$, which associates to each $q\in(1, M+1]$ the topological entropy of $\u_q$, is a Devil's staircase. In this paper we explicitly determine the plateaus of $H$,  and characterize the bifurcation set $\E$ of $q$'s where the function $H$ is not locally constant.  Moreover, we show that $\E$ is a Cantor set of full Hausdorff dimension. We also investigate the topological transitivity of a naturally occurring subshift $(\VB_q, \sigma),$ which has a close connection with open dynamical systems. Finally, we prove that the Hausdorff dimension and box dimension of $\u_q$ coincide for all $q\in(1,M+1]$.
\end{abstract}

\keywords{Expansions in non integer bases, topological entropy, topological transitivity, Box dimension, Hausdorff dimension}
\maketitle

\tableofcontents

\section{Introduction}
\label{sec:1}

\noindent Fix a positive integer $M$. For $q\in(1,M+1]$ we call a sequence $(x_i)=x_1x_2\ldots\in  \set{0,1,\ldots,M}^\f$ an \textit{expansion of $x$ in base $q$} (or simply a \textit{$q$-expansion of $x$}) if
\begin{equation}
\label{eq:11}
x=\pi_{q}((x_{i})):=\sum_{i=1}^\f\dfrac{x_i}{q^i}.
\end{equation}
It is easy to see that $x$ has a $q$-expansion if and only if $x \in [0, M/(q-1)].$

Expansions in non-integer bases were pioneered in the papers of R\'{e}nyi \cite{Renyi_1957} and Parry \cite{Parry_1960}. Since these beginnings, expansions in non-integer bases have received much attention and have connections with many areas of mathematics, most notably, ergodic theory, fractal geometry, symbolic dynamics and number theory.

For the standard integer base expansions it is well known that every number has a unique expansion except for a countable set of exceptions that have precisely two. When our base is non-integer the situation is very different, as the following result of Sidorov demonstrates. In \cite{Sidorov_2003} Sidorov proved that if $q\in(1,M+1)$ then Lebesgue almost every $x\in[0, M/(q-1)]$ has a continuum of $q$-expansions (see also, \cite{Dajani_deVries_2007, Sidorov_2007}). Furthermore, for any  $k\in\N\cup\set{\aleph_0}$ there exist $q\in(1,M+1)$ and $x\in[0, M/(q-1)]$ such that $x$ has precisely $k$ different $q$-expansions (cf.~\cite{Erdos_Joo_1992, Sidorov_2009}).

Within expansions in non-integer bases, a particularly well studied topic is the set of real numbers with a unique expansion. For $q\in(1, M+1]$ let
$$
\u_q:=\{x\in[0,M/(q-1)]: x \textrm{ has a unique }q\textrm{-expansion}\}.$$
We call $\u_q$ the \emph{univoque set}. Let $\ub_q:=\pi_{q}^{-1}(\u_{q})$ be the corresponding set of expansions. %In this paper we always use the superscript ``tilde" to indicate that this is a set of sequences.
For a sequence $(c_i)\in\set{0,1,\ldots, M}^\f$ we denote its \emph{reflection} by $\overline{(c_i)}:=(M-c_i)$. Then $\ub_q$ is the set of sequences $(x_i)\in\set{0, 1,\ldots, M}^\f$ satisfying the following lexicographic inequalities  (cf.~\cite{DeVries_Komornik_2008})
\[\left\{
\begin{array}{lll}
\si^n((x_i))\prec (\al_i(q)) &\textrm{whenever}& x_n<M,\\
\si^n((x_i)) \succ \overline{(\al_i(q))}&\textrm{whenever}& x_n>0.
\end{array}
\right.\]
Here $\si$ is the left shift and $(\al_i(q))=\al_1(q)\al_2(q)\ldots$ is the lexicographically largest $q$-expansion of $1$ with infinitely many non-zero elements. For more information on the sets $\u_q$ and $\ub_q$ we refer the reader to \cite{Daroczy_Katai_1993, Erdos_Joo_Komornik_1990,  Glendinning_Sidorov_2001, Komornik_2011} and the references therein.

For $q\in(1, M+1]$ we denote by $h(\ub_q)$  the \emph{topological entropy} of $\ub_q$:
 \[
 h(\ub_q):=\lim_{n\ra\f}\frac{\log\# B_n(\ub_q)}{n}.
 \]
 Where $ B_n(\ub_q)$ is the set of length $n$ subwords that appear in sequences of $\ub_q,$ and $\#$ denotes cardinality. Here and throughout this paper we will use the natural base $M+1$ logarithms. The topological entropy quantifies the size and the complexity of $\ub_q$. Note that the topological entropy in a symbolic space is always defined for a \emph{subshift} (see Definition \ref{def:22} below). Although $(\ub_q, \sigma)$  is not always a subshift (cf.~\cite[Theorem 1.8]{DeVries_Komornik_2008}), it was shown in \cite[Lemma 2.1]{Komornik_Kong_Li_2015_1} that the entropy function
 \begin{align*}
 H: \quad& (1, M+1]\longrightarrow[0,1]\\
 & ~\qquad q~~\quad \mapsto \quad h(\ub_q)
 \end{align*}
 is everywhere well-defined. In \cite{Komornik_Kong_Li_2015_1} the authors proved that the entropy function $H$ is a \emph{ Devil's staircase} (see e.g., Figure \ref{Fig:1}), i.e., $H$ satisfies the following:
 \begin{enumerate}
   \item $H$ is increasing, continuous, and surjective as a function from $(1,M+1]$ onto $[0,1]$;
   \item $H'(q)=0$ almost everywhere in $(1, M+1]$;

 \end{enumerate}
Furthermore,  $H(q)>0$ if and only if $q>q_c$, where $q_c=q_c(M)$ is the \emph{Komornik-Loreti constant} (see Equation \eqref{eq:23} below).

 \begin{figure}[h!] %  figure placement: here, top, bottom, or page
    \centering
 \includegraphics[width=12cm]{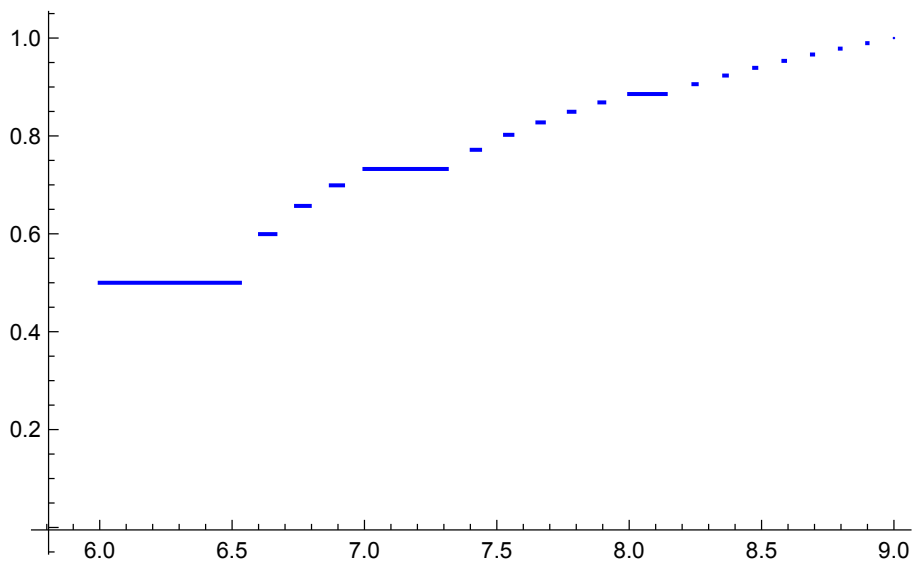}
    \caption{The asymptotic graph of the entropy function $H(q)=h(\ub_q)$ with $M=8$. Here $q_c\approx 5.80676$. For more explanation see Example \ref{ex:513}.}
    \label{Fig:1}
 \end{figure}

In this paper we are interested in characterizing the intervals where $H$ is constant. We are also interested in determining the properties of the \emph{bifurcation set}
 \begin{equation}\label{eq:12}
 \E:=\set{q\in(1,M+1]:  \forall \epsilon>0,\, \exists p\in(q-\epsilon, q+\epsilon)\textrm{ such that }H(p)\neq H(q)}.
 \end{equation}
Then $\E$ is the set of bases where $H$ is not locally constant. We call an element of $\E$ a \emph{bifurcation base}. %We always use a captical letter in the mathcal style for a set of bases.

Let us recall from \cite{Komornik_Loreti_2002} the set
$$\ul=\ul(M):=\{q\in(1,M+1]: 1 \textrm{ has a unique } q\textrm{-expansion}\}.$$ We refer to elements of $\ul$ as \emph{univoque bases}. It is well known that the set $\ul$ has a close connection with the univoque set $\u_q$ (cf.~\cite{DeVries_Komornik_2008}). Erd\H{o}s and Jo{\'{o}} proved that $\ul$ has Lebesgue measure zero \cite{Erdos_Joo_1992}, Dar\'{o}czy and K\'{a}tai \cite{Darczy_Katai_1995} later showed that $\ul$ has full Hausdorff dimension (see also, \cite[Theorem 1.6]{Komornik_Kong_Li_2015_1}). Recently, de Vries et al. \cite[Theorem 1.2]{DeVries_Komornik_Loreti_2016} proved that its closure $\overline{\ul}$ is a \emph{Cantor set}, i.e., a nonempty compact set with empty interior and no isolated points. They also obtained
 \[
 (1, M+1]\setminus\overline{\ul}=\bigcup(q_0, q_0^*).
 \]
 Where the union in the above equation is pairwise disjoint and countable.
Note by \cite{Komornik_Loreti_2002} the  {Komornik-Loreti constant} $q_c=q_c(M)$ is the smallest base of $\ul$. So the first connected component of $(1, M+1]\setminus\overline{\ul}$ is $(1, q_c)$.

It was shown in \cite[Lemma 2.11]{Komornik_Kong_Li_2015_1} (see also, \cite[Theorem 2.6]{Kong_Li_2015}) that the entropy function $H$ is constant on each connected component $(q_0, q_0^*)$ of $(1, M+1]\setminus\overline{\ul}$. It is natural to ask what are the maximal intervals $[p_L, p_R]$ for which
\[ H(q)=H(p_L)\quad\textrm{for all }q\in[p_L, p_R]?\]
These maximal intervals $[p_L, p_R]$ are called the \emph{entropy plateaus} of $H$. It is clear that the first entropy plateau is $(1, q_c],$ since $H(q)>0$ if and only if $q>q_c$ (cf.~\cite{Glendinning_Sidorov_2001}, see also, \cite{Kong_Li_Dekking_2010}). One might expect that for each connected component $(q_0, q_0^*)$ of $(q_c, M+1]\setminus\overline{\ul},$ the closed interval $[q_0, q_0^*]$ would be an entropy plateau of $H$.  However this is not true. As we will see, each entropy plateau contains infinitely many connected components of $(q_c, M+1]\setminus\overline{\ul}$ (see Remark \ref{rem:53}). In this paper we give a complete description of the entropy plateaus of $H$ (see Theorem \ref{th2}).

Recall from (\ref{eq:12}) that $\E$ is the set of bifurcation bases. Following on from our investigation into the entropy plateaus of $H,$ we give a complete characterization of the set $\E$. As a consequence of this characterization, we show that $\E$ is a Cantor set of full Hausdorff dimension (see Theorem \ref{th3}).

By \cite[Theorem 1.8]{DeVries_Komornik_2008} it follows that for $q\in(1,M+1]$, $(\ub_q, \sigma)$ may not be a subshift. Inspired by \cite{Komornik_Kong_Li_2015_1} we introduce the set
\begin{equation}\label{eq:13}
\VB_q:=\set{(x_i)\in\set{0,1,\ldots,M}^\f: \overline{(\al_i(q))}\lle\si^n((x_i))\lle(\al_i(q))\textrm{ for all }n\ge 0}.
\end{equation}
It is a consequence of Lemma \ref{lem:24} (see below) that $\VB_q=\emptyset$ for any $q\in(1, q_G)$, where $q_G$ is the generalized golden ratio defined in (\ref{eq:21}).
Moreover, for any $q\in[q_G, M+1]$ it was shown in \cite[Lemma 2.6]{Komornik_Kong_Li_2015_1} that  $(\VB_q, \si)$ is a non-empty subshift, and in \cite[Proposition 2.8]{Komornik_Kong_Li_2015_1} that
 \begin{equation}\label{eq:14}
 h(\VB_q)=h(\ub_q)=H(q) \quad\textrm{for all }q\in[q_G, M+1].
 \end{equation}
As a result of \eqref{eq:14}, to answer many of the questions we are interested in, it is sufficient to study the subshift $(\VB_q, \si)$.

From the perspective of dynamical systems, the subshift $(\VB_q, \si)$ has a close connection with open dynamical systems (cf.~\cite{Alcaraz_Barrera_2014}). For more information on open dynamical systems we refer the readers to \cite{Bundfuss_Kruger_Troubetzkoy_2011}, the survey paper \cite{Sidorov_2003_survey} and the references therein.

Topological transitivity (see Definition \ref{def:23}) is a fundamental property in dynamical systems. In this paper we give a complete characterization of those bases for which the subshift $(\VB_q, \si)$ is topologically transitive (see Theorem \ref{th1}). Moreover, we show that the generalized golden ratio $q_G$ is the smallest base $q\in(1,M+1]$ for which $(\VB_q, \si)$ is transitive. We also determine the smallest base  $q\in(1,M+1]$ for which $(\VB_q, \si)$ is not transitive, and the smallest base $q$ in $(1,M+1]$  for which $h(\VB_q)>0$ and $(\VB_q, \si)$ is transitive.

Until now most of the dimensional results for $\u_q$ have focused on determining its Hausdorff dimension. In this paper we show that the box dimension of $\u_q$ coincides with its Hausdorff dimension for all $q\in(1, M+1]$ (see Theorem \ref{th4}).

The paper is arranged as follows. In Section \ref{sec:2} we recall some relevant properties of unique expansions and state our main results. In Section \ref{sec:3} we prove Theorem \ref{th1} and give a complete characterization of the bases $q\in(1, M+1]$ for which the subshift $(\VB_q, \si)$ is topologically transitive.  In Section \ref{sec:4} we study properties of irreducible and $*$-irreducible intervals and prepare for the proof of Theorem \ref{th2}, which is given in Section \ref{sec:5}. In Section \ref{sec:6}, via an application of Theorem \ref{th2}, we give a characterization of the set $\E$ and prove Theorem \ref{th3}. In our final section we prove Theorem \ref{th4} and conclude that the Hausdorff dimension and box dimension of $\u_q$ coincide for all $q\in(1, M+1]$.

\section{Preliminaries and main results}\label{sec:2}

\noindent In this section we recall some relevant concepts from the study of unique expansions and state our main results. First we introduce some notation from symbolic dynamics. For more information on these topics we refer the reader to \cite{Lind_Marcus_1995}.

\subsection{Symbolic dynamics and unique expansions}
Fix a positive integer $M$.  Let $\set{0,1,\ldots,M}^\f$ be the set of all one sided sequences $(c_i)=c_1c_2\ldots,$ where each term $c_i$ is from the \emph{alphabet} $\set{0,1,\ldots,M}$. Then the pair $(\set{0,1,\ldots, M}^\f, \si)$ is called a \emph{one sided full shift}, where
  $\si: \set{0,1,\ldots,M}^\f \ra \set{0,1,\ldots,M}^\f$ is \emph{the one sided left-shift map} defined by $\si((c_i))=(c_{i+1})$.

By a \emph{word} $\om$ we mean a finite string of digits $\om=\om_1\ldots\om_n$ with each $\om_i\in\set{0,1,\ldots,M}$. The length of $\om$ is denoted by $|\om|$.
Given two finite words $\om=\om_1\ldots \om_n$ and $\de=\de_1\ldots\de_m$, we denote by $\om\de=\om_1\ldots \om_n\de_1\ldots \de_m$ their concatenation. For $k\in\N$ we denote by $\om^k=\overbrace{\om\cdots\om}^{k}$ the $k$ times concatenation of $\om$ with itself, and by $\om^\f=\om\om\cdots$ the infinite concatenation of $\om$ with itself. A sequence $(x_i)\in\set{0,1,\ldots, M}^\f$ is called \emph{periodic} if there exists $n\ge 1$ such that $(c_i)=(c_1\ldots c_n)^\f$. In this case, the smallest such $n$ is called the \emph{period} of $(c_i)$, and the word $c_1\ldots c_n$ is called the {\emph{period block}} of $(c_i)$.

For a sequence $(c_i)\in \set{0,1,\ldots,M}^\f,$ we denote by
\[\overline{(c_i)}=(M-c_1)(M-c_2)\cdots\]
 its \emph{reflection}. Similarly, for a word $\om=\om_1\om_2\ldots\om_n$ its reflection is written as $\overline{\om}=(M-\om_1)(M-\om_2)\cdots(M-\om_n)$. If $\om_n>0$ then we write
 $\om^-=\om_1\ldots\om_{n-1}(\om_n-1).$
  If $\om_n<M$ then we put $\om^+=\om_1\ldots\om_{n-1}(\om_n+1)$.
Throughout this paper we will use the lexicographic ordering on sequences and  words. For two sequences $(c_i)$ and $(d_i)$ we write
\[(c_i)\prec (d_i)\]
 if there exists $n\in\N$ such that $c_i=d_i$ for all $i<n$, and $c_n<d_n$. Moreover, we write $(c_i)\preccurlyeq (d_i)$ if $(c_i)\prec (d_i)$ or $(c_i)=(d_i)$. Symmetrically, we say that $(c_i)\succ(d_i)$ (or $(c_i)\succcurlyeq(d_i)$) if $(d_i)\prec(c_i)$ (or $(d_i)\preccurlyeq(c_i)$).

 For $q\in(1,M+1]$ we denote by $\al(q)=(\al_i(q))$  the \emph{quasi-greedy} $q$-expansion of $1$, i.e., $(\al_i(q))$ is the lexicographically largest \emph{infinite} $q$-expansion of $1$. Here an expansion is called \emph{infinite} if it does not end with an infinite string of zeros.
  The following lexicographic characterization of $\al(q)$ was established in \cite[Theorem 2.2]{Baiocchi_Komornik_2007}.
\begin{lemma}
\label{lem:21}
  The map $q\mapsto \al(q)$ is a strictly increasing bijection from $(1, M+1]$ onto the set of all infinite sequences $(\al_i)$ satisfying
\[
\al_{n+1}\al_{n+2}\ldots\lle \al_1\al_2\ldots\quad\textrm{whenever}\quad \al_n<M.
\]
\end{lemma}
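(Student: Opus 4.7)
The lemma asserts three things: strict monotonicity (hence injectivity) of $q \mapsto \alpha(q)$, that each $\alpha(q)$ satisfies the stated lexicographic inequalities (necessity), and that every infinite sequence satisfying those inequalities arises as $\alpha(q)$ for some $q \in (1, M+1]$ (surjectivity). I would address these three assertions in that order.

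For strict monotonicity, suppose $1 < q < q'$. From $\sum_{i\ge 1}\alpha_i(q)/q^i = 1 = \sum_{i\ge 1}\alpha_i(q')/(q')^i$ and the fact that $t \mapsto \sum_{i\ge 1}\alpha_i(q)/t^i$ is strictly decreasing in $t$ (the sequence $\alpha(q)$ is not identically zero since $q>1$), one gets $\sum_{i\ge 1}\alpha_i(q)/(q')^i < \sum_{i\ge 1}\alpha_i(q')/(q')^i$. Comparing these two expansions in the single base $q'$ digit by digit, and using the tail bound $\sum_{i\ge n+1}M/(q')^i \le M/\bigl((q'-1)(q')^n\bigr)$ to control the discrepancy once a strict digit change occurs, I would conclude $\alpha(q) \prec \alpha(q')$.

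For necessity, fix $q$ and suppose $\alpha_n := \alpha_n(q) < M$ but $\sigma^n(\alpha(q)) \succ \alpha(q)$. Using the identity $\sum_{i\ge 1}\alpha_{n+i}(q)/q^i = q^n\bigl(1 - \sum_{i\le n}\alpha_i(q)/q^i\bigr)$, I would build an infinite $q$-expansion of $1$ strictly lex larger than $\alpha(q)$: keep the prefix $\alpha_1\ldots\alpha_{n-1}$, replace $\alpha_n$ by $\alpha_n+1$, and append a tail encoding the remaining mass, obtained by "lifting" the lex-excess of $\sigma^n(\alpha(q))$. This contradicts the maximality of the quasi-greedy expansion. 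Care is needed to ensure the new expansion is infinite; this can be arranged because the lex excess at the shifted position produces digits not identically zero from some point on.

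Surjectivity is the main obstacle. Given an infinite sequence $(\alpha_i)$ satisfying the lex condition, I would define $q$ implicitly by $\sum_{i\ge 1}\alpha_i/q^i = 1$; existence and uniqueness of $q \in (1, M+1]$ follow from continuity and strict monotonicity of this series in the base, together with the boundary behaviour as $q \to 1^+$ and $q \to (M+1)^-$ (noting the special case $\alpha = M^\infty$ which corresponds to $q = M+1$). It then remains to show $(\alpha_i) = \alpha(q)$. I would suppose, for contradiction, that some infinite $q$-expansion $(\beta_i)$ of $1$ satisfies $(\beta_i) \succ (\alpha_i)$, let $n$ be the first position where they differ, and split into cases according as $\alpha_n < M$ or not. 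The lex condition applied at an appropriate index forces the tail of $(\beta_i)$ after position $n$ to represent a value exceeding $M/(q-1)\cdot q^{-n}$, which is impossible. The delicate points are handling long or infinite runs of $M$'s in $(\alpha_i)$, where the hypothesis is vacuous at some indices, and ruling out degenerate endpoints; this is exactly where the infiniteness of the expansion is used.
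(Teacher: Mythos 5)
First, a point of comparison: the paper does not prove this lemma at all --- it is quoted from \cite[Theorem 2.2]{Baiocchi_Komornik_2007} --- so there is no in-paper argument to measure against, and your sketch must stand on its own. Your three-part decomposition (monotonicity, necessity, surjectivity) is the standard one, but every part hinges on passing between the lexicographic order and the order of $\pi_q$-values, and the specific devices you invoke for this do not work. In the monotonicity step, the tail bound $\sum_{i>n}M(q')^{-i}=M(q')^{-n}/(q'-1)$ is useless: for every $q'\le M+1$ one has $M/(q'-1)\ge 1$, so after the first digit disagreement at position $n$ the discarded tail of $\alpha(q')$ can a priori outweigh the gain of $(q')^{-n}$, and no contradiction with $\pi_{q'}(\alpha(q))<\pi_{q'}(\alpha(q'))$ follows. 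What is actually needed is the greedy maximality of each digit: $\alpha_n(q')$ is the largest digit $d$ with $\sum_{i<n}\alpha_i(q')(q')^{-i}+d(q')^{-n}<1$, and since every partial sum $\sum_{i\le n}\alpha_i(q)(q')^{-i}\le\pi_{q'}(\alpha(q))<1$, an induction on $n$ gives $\alpha(q)\preccurlyeq\alpha(q')$ directly, with no tail estimate at all.

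The necessity step has a more concrete flaw. The expansion you propose to build --- the prefix $\alpha_1\ldots\alpha_{n-1}(\alpha_n+1)$ followed by a tail carrying ``the remaining mass'' --- cannot be made infinite: by the maximality of $\alpha_n$ in the quasi-greedy algorithm, $\sum_{i<n}\alpha_i q^{-i}+(\alpha_n+1)q^{-n}\ge 1$, so the remaining mass is $\le 0$ and the tail is forced to be $0^\infty$; your claim that infiniteness ``can be arranged'' because of the lex-excess is false. The correct contradiction is different: the algorithm gives $\pi_q(\sigma^n(\alpha(q)))\le 1$ when $\alpha_n<M$, while any infinite sequence lexicographically greater than $\alpha(q)$ has $\pi_q$-value strictly greater than $1$ (maximality of the digit at the first disagreement plus infiniteness of the tail), so $\sigma^n(\alpha(q))\succ\alpha(q)$ is impossible. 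The same missing ingredient reappears in surjectivity: to rule out an infinite $q$-expansion $(\beta_i)\succ(\alpha_i)$ of $1$ one needs $\pi_q(\sigma^n((\alpha_i)))\le 1$, and extracting this value inequality from the purely lexicographic hypothesis requires an iteration argument (bounding $\pi_q(\sigma^n(\alpha))-1$ by $q^{-j}\bigl(\pi_q(\sigma^{n+j}(\alpha))-1\bigr)$ at successive disagreement indices and letting the accumulated exponent tend to infinity), together with the preliminary observation that the hypothesis, imposed only when $\alpha_n<M$, in fact forces $\sigma^n(\alpha)\preccurlyeq\alpha$ for all $n$. None of this appears in the sketch, and it is precisely where the content of the lemma lies.
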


Recall that $\ul=\ul(M)$ is the set of   bases $q\in(1,M+1]$ for which $1$ has a unique $q$-expansion with respect to   the alphabet $\set{0,1,\ldots,M}$. The following lexicographic characterization  of $\ul$ was established in \cite[Theorem 2.5]{DeVries_Komornik_Loreti_2016}:
\[
\ul=\set{q\in(1,M+1): \overline{\al(q)}\prec\si^n(\al(q))\prec \al(q)\textrm{ for all }n\ge 1}\cup\set{M+1}.
\]
Moreover, the following lexicographic description of its closure $\overline{\ul}$  was given in \cite[Theorem 3.9]{DeVries_Komornik_Loreti_2016}:
\[
\overline{\ul}=\set{q\in(1,M+1]: \overline{\al(q)}\prec \si^n(\al(q))\lle \al(q)\textrm{ for all }n\ge 0}.
\]
Inspired by \cite[Definition 3.2]{DeVries_Komornik_Loreti_2016}, we also define
\[
\vl=\set{q\in(1, M+1]: \overline{\al(q)}\lle\si^n(\al(q))\lle \al(q)\textrm{ for all } n\ge 0}.
\]

Therefore, $\ul\subseteq\overline{\ul}\subseteq\vl$.
The topological properties of $\ul$,  $\overline{\ul}$ and $\vl$ were investigated in \cite{DeVries_Komornik_Loreti_2016}. They proved that the difference sets $\overline{\ul}\setminus\ul$ and $\vl\setminus\overline{\ul}$ are both countable. Furthermore, $\overline{\ul}\setminus\ul$ is dense in $\overline{\ul}$, and $\vl\setminus\overline{\ul}$ is discrete and dense in $\vl$. { Indeed, the set $\vl$ is the union of a Cantor set $\overline{\ul}$ and a countable set $\vl\setminus\overline{\ul}$, see \cite{DeVries_Komornik_Loreti_2016}.}
 By \cite[Lemma 3.5]{DeVries_Komornik_Loreti_2016} it follows that the smallest base of $\vl$ is the \emph{generalized golden ratio}
\begin{equation}\label{eq:21}
 q_G =\left\{\begin{array}{lll}
k+1 &\textrm{if} & M=2k,\\
 \frac{k+1+\sqrt{k^2+6k+5}}{2}&\textrm{if}& M=2k+1.
\end{array}\right.
\end{equation}
At the generalized golden ratio the quasi-greedy expansion satisfies $\al(q_G)=k^\f$ if $M=2k$, and $\al(q_G)=((k+1)k)^\f$ if $M=2k+1$. The generalized golden ratio $q_G$ was first introduced by the second author in \cite{Baker_2014}.

Recall from \cite[Definition 1.2.1]{Lind_Marcus_1995}  the following.

\begin{definition}\label{def:22}
Let $X\subseteq\set{0,1,\ldots,M}^\f$. Then $(X, \sigma)$ is called a \emph{subshift} if there is a set $\mathfrak{F}$ of \emph{forbidden words} such that $X=X_{\mathfrak{F}}$, where $$X_{\mathfrak{F}}: = \set{(c_i)\in\set{0,1,\ldots,M}^\f : (c_i) \textrm{ does not containing any word in } \mathfrak{F}}.$$

If $\mathfrak{F}$ can be chosen to be a finite set then $(X, \sigma)$ is called a \emph{subshift of finite type}.  If $\mathfrak{F}=\emptyset$, then $X=X_{\emptyset}=\set{0,1,\ldots,M}^\f$ and $(X, \sigma)$ is called a \emph{full shift}.
\end{definition}

We always write a subshift as a pair $(X, \sigma)$ to emphasise that the left shift $\si$ is an operator on $X$. Clearly, by Definition \ref{def:22} it follows that for every subshift $(X, \sigma)$, the set $X$ is a closed and forward $\si$-invariant subset of $\set{0,1,\ldots M}^\f$, i.e., $\si(X) =X$.

For a subshift $(X, \si)$ we denote by $\L(X)$ the set of finite sub-words of sequences in $X$ together with the empty word $\epsilon$. The set $\L(X)$ is commonly referred to as the \emph{language} of $X$ and words appearing in $\L(X)$ are called \emph{admissible}.

\begin{definition}\label{def:23}
A subshift $(X, \si)$ is said to be \emph{topologically transitive} (or simply \emph{transitive}), if for all $\om, \nu\in\L(X)$ there exists $\de\in\L(X)$ such that $\om\de\nu\in\L(X)$.
\end{definition}

Note that $(\ub_q, \sigma)$ is not always a subshift. Consequently we consider the subshift $(\VB_q, \sigma)$ defined in (\ref{eq:13}), i.e.,
 \[
\VB_q=\set{(x_i)\in\set{0,1,\ldots,M}^\f: \overline{\al(q)}\lle\si^n((x_i))\lle\al(q)\textrm{ for all }n\ge 0}.
\]

% By (\ref{eq:21})  and Lemma \ref{lem:21} we obtain the following lemma.
\begin{lemma}\label{lem:24}
$\VB_q\ne\emptyset$ if and only if $q\in[q_G, M+1]$.
\end{lemma}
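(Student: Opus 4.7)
The proof splits into sufficiency and necessity, which are asymmetric in difficulty.

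Sufficiency ($q \in [q_G, M+1] \Rightarrow \VB_q \ne \emptyset$). I would exhibit $\alpha(q_G)$ itself as an element of $\VB_q$ for every such $q$. When $M = 2k$, $\alpha(q_G) = k^\infty$ is shift-invariant and self-reflective ($\overline{k^\infty} = k^\infty$), so the requirement $\overline{\alpha(q)} \preccurlyeq k^\infty \preccurlyeq \alpha(q)$ reduces to $\alpha(q) \succcurlyeq k^\infty$, which is immediate from Lemma~\ref{lem:21} together with the order-reversing property of reflection. When $M = 2k+1$, $\alpha(q_G) = ((k+1)k)^\infty$ has only two distinct shifts, namely $\alpha(q_G)$ itself and $\overline{\alpha(q_G)} = (k(k+1))^\infty$; both lie in $[\overline{\alpha(q)}, \alpha(q)]$ by the monotonicity of $q \mapsto \alpha(q)$.

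Necessity ($\VB_q \ne \emptyset \Rightarrow q \geq q_G$). I would argue by contradiction. Suppose $q < q_G$ and $(x_i) \in \VB_q$; then by Lemma~\ref{lem:21}, $\alpha(q) \prec \alpha(q_G)$. The main tool is a \emph{squeeze}: whenever the prefix $x_1 \ldots x_{i-1}$ matches $\alpha_1(q) \ldots \alpha_{i-1}(q)$, the constraint $(x_j) \preccurlyeq \alpha(q)$ yields $x_i \leq \alpha_i(q)$, while $\sigma^{i-1}((x_j)) \succcurlyeq \overline{\alpha(q)}$ yields $x_i \geq M - \alpha_1(q)$. For $M = 2k$, let $n$ be the least index with $\alpha_n(q) < k$; an immediate induction (using the squeeze with $\alpha_i(q) = k$ for $i < n$) forces $x_1 = \dots = x_{n-1} = k$, and then the same squeeze at $i = n$ yields $x_n \leq \alpha_n(q) < k \leq x_n$, a contradiction. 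For $M = 2k+1$, the argument is more intricate because $\alpha(q_G)$ alternates. The first-digit squeeze gives $\alpha_1(q) = k+1$ (otherwise the squeeze already fails at $i = 1$), hence $x_1 \in \{k, k+1\}$. Letting $N$ be the first index with $\alpha_N(q) < \alpha_N(q_G)$, I would split on $x_1$ and propagate: in the branch $x_1 = k+1$ the prefix of $(x_i)$ is forced to match that of $\alpha(q_G)$ up to position $N-1$, while in the branch $x_1 = k$ it is forced to match that of $\overline{\alpha(q_G)}$. At position $N$ (or $N+1$, depending on parity) the squeeze, with lower bound supplied by the appropriate shift $\sigma^{j}((x_i)) \succcurlyeq \overline{\alpha(q)}$, collides with the strict inequality $\alpha_N(q) < \alpha_N(q_G)$ and yields the contradiction.

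The main obstacle lies in the odd case: at each step $x_j$ admits two admissible values, producing a branching case analysis, and the shift that supplies the binding lower bound varies along the way. The cleanest organisation is to induct on the first deviation index $N$ of $\alpha(q)$ from $\alpha(q_G)$, distinguishing $N$ even versus odd and verifying that in every branch the forced prefix of $(x_i)$ collides with the discrepancy at position close to $N$. The even case $M = 2k$ serves as a useful warm-up, since there $\alpha(q_G)$ is a constant sequence and the branching collapses.
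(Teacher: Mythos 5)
Your proof is correct. The sufficiency half is essentially the paper's argument: both exhibit $\al(q_G)$ as an element of $\VB_q$ and use the monotonicity of $q\mapsto\al(q)$ from Lemma \ref{lem:21}; the paper cites an external lemma for $\al(q_G)\in\VB_{q_G}$ where you verify it by hand, which is harmless. The necessity half is where the two routes differ. The paper argues, for $M=2k$, that $\al(q)\prec k^\f$ forces $\al(q)\prec\overline{\al(q)}$, so the constraint at $n=0$ is already unsatisfiable, and then dismisses $M=2k+1$ with ``similarly''. That one\mbox{-}line trick does \emph{not} transfer literally to the odd case: for instance $\al(q)=(k+1)k(k+1)(k-1)^\f$ is a valid quasi-greedy expansion with $q<q_G$ and yet $\al(q)\succ\overline{\al(q)}$, so some propagation along shifts is genuinely needed there. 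Your digit-by-digit squeeze supplies exactly this, and it checks out: once $x_1\in\{k,k+1\}$ is chosen, the alternation of upper bounds from $(x_i)\lle\al(q)$ (resp.\ $\si((x_i))\lle\al(q)$) with lower bounds from the appropriate shifted reflection constraint forces every subsequent digit, so the ``branching at each step'' you anticipate does not actually occur, and the collision at the first deviation index $N$ goes through in both branches. A leaner way to organise the same contradiction, and probably what the paper's ``similarly'' intends, is to note that $\al(q)\prec\al(q_G)$ gives the \emph{strict} bounds $\overline{\al(q_G)}\prec\si^n((x_i))\prec\al(q_G)$ for all $n$; for $M=2k+1$ the $n=0$ and $n=1$ instances, combined with the self-similarity of the alternating word $((k+1)k)^\f$, already contradict each other for either value of $x_1$, with no need to locate $N$. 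Either way the lemma is established; your version costs more bookkeeping but is fully rigorous and, unlike the paper's text, actually writes down the odd case.
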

\begin{proof}
First we prove sufficiency. Note by Lemma \ref{lem:21} the map $q\ra \al(q)$ is increasing. This implies that $\VB_{q_G}\subseteq\VB_q$ for all $q\in[q_G, M+1]$. By \cite[Lemma 3.5]{DeVries_Komornik_Loreti_2016} it follows that
$\al(q_G)\in\VB_{q_G}$, and therefore $\al(q_G)\in\VB_q$ for all $q\in[q_G, M+1].$

Now we prove the necessity. If $M=2k$, then by (\ref{eq:21}) and Lemma \ref{lem:21} we have
$\al(q)\prec\al(q_G)=k^\f$ for any $q\in(1,q_G)$. This gives $\al(q)\prec\overline{\al(q)}$, and therefore  $\VB_{q}=\emptyset$.
 Similarly, for $M=2k+1$   we still have $\VB_q=\emptyset$.
\end{proof}

By Lemma \ref{lem:24} it suffices to consider $\VB_q$ for $q\in[q_G, M+1]$.
Now we recall some properties of the subshift $(\VB_q, \si)$ and the set $\vl$.
Observe that $q_G$ and $M+1$ are the smallest and the largest elements of  $\vl$ respectively.  Then by \cite[Theorem 1.3]{DeVries_Komornik_Loreti_2016} it follows  that
\[
[q_G, M+1]\setminus\vl=(q_G, M+1)\setminus\vl=\bigcup(q_\ell, q_r),
\]
where the union on the right hand-side is pairwise disjoint and countable.
Here the open intervals $(q_\ell, q_r)$ are referred to as the \emph{basic intervals} of $[q_G, M+1]\setminus\vl$.

The following properties of the subshift $(\VB_q, \si)$ and the set $\vl$ were essentially established  in \cite[Theorems 1.7 and 1.8]{DeVries_Komornik_2008} (see also, \cite{DeVries_Komornik_Loreti_2016}).
\begin{lemma}\label{lem:25}
\begin{enumerate}
\item
Let $(q_\ell, q_r)$ be a basic interval of $[q_G, M+1]\setminus\vl$. Then $\VB_q=\VB_{q_\ell}$ for all $q\in[q_\ell, q_r)$. Furthermore, $(\VB_{q_\ell}, \si)$ is a subshift of finite type.

\item If $p<q$ and $(p, q]\cap\vl\ne\emptyset,$ then $\VB_p$ is a proper subset of $\VB_q$.

\item For each $q\in\vl\setminus\ul$ there exists a basic interval $(q_\ell, q_r)$ of $[q_G, M+1]\setminus\vl$ such that $q=q_\ell$.
\end{enumerate}
\end{lemma}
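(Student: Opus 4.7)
The plan is to handle the three parts in order, relying throughout on the strict monotonicity of $q\mapsto\al(q)$ (Lemma \ref{lem:21}) and on the lexicographic descriptions of $\ul$, $\overline{\ul}$ and $\vl$ recalled just before the statement.

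For part (1), the inclusion $\VB_{q_\ell}\subseteq\VB_q$ for every $q\in[q_\ell,q_r)$ is essentially automatic: monotonicity gives $\al(q_\ell)\lle\al(q)$, hence $\overline{\al(q)}\lle\overline{\al(q_\ell)}$, so any sequence sandwiched by $\overline{\al(q_\ell)}$ and $\al(q_\ell)$ is a fortiori sandwiched by $\overline{\al(q)}$ and $\al(q)$. The real content is the reverse inclusion, and here I would exploit that $q_\ell$ and $q_r$ are neighbouring points of $\vl$ while $(q_\ell,q_r)\cap\vl=\emptyset$. First I would show that $q_\ell\in\vl\setminus\ul$, since otherwise continuity/monotonicity of $\al$ would produce points of $\vl$ arbitrarily close to the right of $q_\ell$. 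A $q_\ell\in\vl\setminus\ul$ forces an equality $\si^n\al(q_\ell)=\al(q_\ell)$ for some $n\ge 1$ in the defining chain of inequalities, so $\al(q_\ell)$ is periodic, say $\al(q_\ell)=(a_1\cdots a_n)^\infty$ with $a_1\cdots a_n$ admissible. For any $q\in(q_\ell,q_r)$ one has $\al(q)\succ\al(q_\ell)$, and if $\al(q)$ differed from $\al(q_\ell)$ within the first $n$ digits one could produce a point of $\vl$ strictly between $q_\ell$ and $q$ by truncating and re-periodising; since $(q_\ell,q_r)\cap\vl=\emptyset$, the first $n$ digits of $\al(q)$ and $\al(q_\ell)$ must coincide. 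This prefix alone suffices to trap sequences in $\VB_q$ inside $\VB_{q_\ell}$: if $(x_i)\in\VB_q\setminus\VB_{q_\ell}$, some shift $\si^k(x_i)$ would exceed $\al(q_\ell)$ lexicographically, but then the first differing position would already force $(x_i)$ to exceed $\al(q)$ too, a contradiction. The subshift-of-finite-type claim follows because periodicity of $\al(q_\ell)$ makes the infinite family of lexicographic restrictions on $\VB_{q_\ell}$ reducible to finitely many forbidden words of length $n$.

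For part (2), I would produce a single witness sequence separating $\VB_p$ from $\VB_q$. The inclusion $\VB_p\subseteq\VB_q$ again comes from monotonicity of $\al$. Pick any $r\in(p,q]\cap\vl$; by the very definition of $\vl$, the sequence $\al(r)$ satisfies $\overline{\al(r)}\lle\si^n\al(r)\lle\al(r)$ for all $n\ge 0$, so $\al(r)\in\VB_r\subseteq\VB_q$. On the other hand, strict monotonicity gives $\al(r)\succ\al(p)$, which directly violates the defining inequality $\si^0\al(r)\lle\al(p)$ of $\VB_p$. Hence $\al(r)\in\VB_q\setminus\VB_p$ and the inclusion is strict.

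For part (3), suppose $q\in\vl\setminus\ul$. By comparing the two lexicographic characterisations, there exists $n\ge 1$ with either $\si^n\al(q)=\al(q)$ or $\si^n\al(q)=\overline{\al(q)}$. The expected main obstacle is showing that such a $q$ is a left endpoint of a component of $[q_G,M+1]\setminus\vl$, i.e.\ producing an $\ep>0$ with $(q,q+\ep)\cap\vl=\emptyset$. The strategy is to let $n$ be minimal with equality and to argue that any $q'$ slightly to the right of $q$ has $\al(q')$ agreeing with $\al(q)$ on its first $n$ digits but differing somewhere soon after; a careful examination of the induced inequalities on $\si^{n}\al(q')$ — combined in the first case with the equality $\si^n\al(q)=\al(q)$ and in the second with $\si^n\al(q)=\overline{\al(q)}$ — shows that at least one of the defining conditions of $\vl$ fails for $q'$. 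The existence of such a right-hand gap gives the basic interval $(q_\ell,q_r)$ with $q=q_\ell$ (invoking Lemma \ref{lem:24} and the partition of $[q_G,M+1]\setminus\vl$ recalled before the statement). The delicate point throughout is the bookkeeping that shows the minimal-period structure of $\al(q)$ is \emph{stable} to the right and \emph{unstable} to the left, which is where I expect to spend most of the technical effort.
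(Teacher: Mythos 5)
The paper does not actually prove Lemma \ref{lem:25}; it imports it from \cite[Theorems 1.7 and 1.8]{DeVries_Komornik_2008} and \cite{DeVries_Komornik_Loreti_2016}, so your proposal must be measured against those arguments. Your part (2) is correct and is exactly the standard witness argument: $\al(r)$ for any $r\in(p,q]\cap\vl$ lies in $\VB_r\subseteq\VB_q$ but violates $\si^0(\al(r))\lle\al(p)$. Your part (3) is the right general strategy (it is essentially \cite[Theorem 1.3]{DeVries_Komornik_Loreti_2016}), but everything is deferred to the ``careful examination'', which is where all of the work lies; as written it is an outline, not a proof. Your observation that $q_\ell\in\vl\setminus\ul$ forces $\al(q_\ell)$ to be periodic is correct (note the second way the $\ul$-condition can fail, $\si^n(\al(q_\ell))=\overline{\al(q_\ell)}$, also yields periodicity, of period dividing $2n$), and the reduction of the subshift-of-finite-type claim to forbidden words of length at most $n$ is fine.

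The genuine gap is in part (1). The key intermediate claim --- that for $q\in(q_\ell,q_r)$ the first $n$ digits of $\al(q)$ coincide with those of $\al(q_\ell)=(a_1\ldots a_n)^\f$ --- is false. Take $M=1$, $q_\ell=q_G$ with $\al(q_G)=(10)^\f$ (so $n=2$), whose basic interval is $(q_G,q_{NT})$ with $\al(q_{NT})=(1100)^\f$: every $q$ in this interval has $\al(q)=1100\cdots$, which already differs from $(10)^\f$ at position $2$, and no ``truncate and re-periodise'' procedure produces a point of $\vl$ in $(q_G,q)$. Consequently the trapping argument collapses: a shift of $(x_i)$ may exceed $(a_1\ldots a_n)^\f$ at its first differing position without exceeding $\al(q)$, precisely because $\al(q)$ itself does. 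What actually closes the argument is a two-sided ``reflection ping-pong'': in the example, an occurrence of $11$ in $(x_i)$ forces the continuation $00$ (from the upper bound $\si^k((x_i))\lle\al(q)=1100\cdots$), which in turn forces $11$ (from the lower bound $\si^k((x_i))\lge\overline{\al(q)}=0011\cdots$), and so on, trapping a tail of $(x_i)$ in the rigid pattern $(1100)^\f$, which finally violates $\si^k((x_i))\lle\al(q)\prec(1100)^\f$. This is exactly the mechanism of the labeled graph $\G_3$ in the proof of Proposition \ref{prop:37} (and of \cite[Theorem 1.8]{DeVries_Komornik_2008}); an argument using only the upper bound $\al(q)$ cannot work, so part (1) needs to be redone along these lines.
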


Now we consider the transitivity of $(\VB_q, \si)$. By Lemma \ref{lem:25} (1) it follows that $\VB_q$ is \emph{stable}  in any basic interval of $[q_G, M+1]\setminus\vl$, i.e.,   for each basic interval $(q_\ell, q_r)$ of $[q_G, M+1]\setminus\vl$ we have  $\VB_q=\VB_{q_\ell}$ for all $q\in[q_\ell, q_r)$.  So to determine those $q\in[q_G,M+1]$ for which $(\VB_q, \si)$ is transitive, it suffices to determine those $q\in [q_G, M+1]\cap\vl$ for which $(\VB_q, \si)$ is transitive.

Let
\[
 {\vb}= \set{(a_i)\in\set{0,1,\ldots,M}^\f: \overline{(a_i)}\lle\si^n((a_i))\lle(a_i)\textrm{ for all }n\ge 0}.
\]
Note that any sequence in $\vb$ is infinite. Using Lemma \ref{lem:21} one can verify that the map
\begin{align*}
\Phi:\quad& \vl\quad \longrightarrow\quad \vb\\
&q~\quad\mapsto\quad \al(q)
\end{align*}
 is a bijection. Thus the study of bases in $\vl$ is equivalent to the study of sequences in $\vb$.  Note that $q_G$ is the smallest base in $\vl$. By Lemma \ref{lem:21} this implies that $\al(q_G)$ is the lexicographically smallest sequence of $\vb$.

In order to characterize those $q$ for which $(\VB_q, \si)$ is transitive we introduce a special class of sequences in  $\vb$.

 \begin{definition}\label{def:26}
A  sequence $(a_i)\in\vb$ is said to be \emph{irreducible} if
  \[
  a_1\ldots a_j(\overline{a_1\ldots a_j}\,^+)^\f\prec (a_i)\quad\textrm{whenever}\quad (a_1\ldots a_j^-)^\f\in {\vb}.
  \]
  \end{definition}
  { We mention that the name `irreducible' of a sequence $(a_i)\in\vb$ is meaningful since for $q=\Phi^{-1}((a_i))$ the corresponding subshift $(\vb_q, \sigma)$ is irreducible (transitive), see Theorem \ref{th1} below.}
By (\ref{eq:21}) one can verify that $\al(q_G)$ is irreducible, and hence it is the smallest irreducible sequence.

To describe transitivity we also need to introduce the base $q_T$, called the \emph{transitive base}, which is defined implicitly via the equation
  \begin{equation}\label{eq:22}
\al(q_T)=\left\{
\begin{array}
  {lll}
  (k+1)\,k^\f&\textrm{if}& M=2k,\\
  (k+1)\,((k+1)k)^\f&\textrm{if}&M=2k+1.
\end{array}
\right.
\end{equation}
Clearly $\al(q_T)\in\vb$ and therefore $q_T\in\vl$. Indeed we can show that $q_T\in\ul$.
By (\ref{eq:21}) and Lemma \ref{lem:21} we have $q_T>q_G$.

\subsection{Main results} Our first result is for the transitivity of $(\VB_q, \si)$.

\begin{main}\label{th1}
 Let $q\in[q_G, M+1]\cap\vl$. Then $(\VB_q, \si)$ is  transitive if and only if    $\al(q)$ is  irreducible, or $q=q_T$.
 \end{main}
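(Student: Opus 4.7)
The plan is to prove the two implications of the biconditional separately, with the sufficiency direction requiring a constructive bridge word and the necessity direction proceeding by contraposition via obstruction words.

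For the sufficiency direction, first treat the case $q = q_T$ as a special case. Here $\al(q_T)$ is eventually periodic with an explicit simple form ($(k+1)k^\infty$ for $M = 2k$, and $(k+1)((k+1)k)^\infty$ for $M = 2k+1$), so $\VB_{q_T}$ can be described as a subshift of finite type. Transitivity then reduces to checking that an explicit finite transition graph is strongly connected, which can be verified directly by listing admissible two- or three-letter transitions. For the case where $\al(q)$ is irreducible, take arbitrary admissible words $\omega, \nu \in \LB(\VB_q)$ and propose a bridge of the form $\delta_N = \overline{a_1 \ldots a_N}^+ \, a_1 \ldots a_N$ for $N$ sufficiently large. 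The verification that $\omega \delta_N \nu \in \LB(\VB_q)$ amounts to checking, for every shift $n$, that $\overline{\al(q)} \lle \si^n(\omega \delta_N \nu \cdot x^*) \lle \al(q)$ for some admissible tail $x^*$. The only delicate positions are those where a shifted suffix of the bridge aligns with a prefix of $\al(q)$; at exactly these positions one applies the strict inequality $a_1 \ldots a_j (\overline{a_1 \ldots a_j}^+)^\infty \prec \al(q)$ (for each $j$ with $(a_1 \ldots a_j^-)^\infty \in \vb$) to extract the needed lexicographic slack.

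For the necessity direction, argue by contraposition: suppose $\al(q)$ is not irreducible and $q \ne q_T$, and exhibit admissible words that cannot be bridged. Let $j$ be the smallest index with $(a_1 \ldots a_j^-)^\infty \in \vb$ and $a_1 \ldots a_j (\overline{a_1 \ldots a_j}^+)^\infty \lge \al(q)$, and write $(b_i) := (a_1 \ldots a_j^-)^\infty$, $q' := \Phi^{-1}((b_i)) < q$. By Lemma 2.5(2), $\VB_{q'} \subsetneq \VB_q$. I would take the obstruction words $\omega := a_1 \ldots a_j$ and $\nu := \overline{a_1 \ldots a_j}$, both admissible since they appear as prefixes of $\al(q)$ and $\overline{\al(q)}$ respectively. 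The key claim is that any candidate bridge $\delta$ produces a word $\omega \delta \nu$ that, because of the lexicographic rigidity at the boundary ($a_1 \ldots a_j (\overline{a_1 \ldots a_j}^+)^\infty \lge \al(q)$), forces $\omega$ to be extended only into sequences remaining in $\VB_{q'}$ and $\nu$ to be preceded only by sequences in $\VB_{q'}$, ruling out bridging unless one can exit and re-enter $\VB_q \setminus \VB_{q'}$ in a specific way. The role of excluding $q = q_T$ enters here: at $q_T$ the non-irreducibility is an exact equality ($a_1 \ldots a_j (\overline{a_1 \ldots a_j}^+)^\infty = \al(q_T)$) with a self-conjugate structure that permits the required bridging, whereas for $q \ne q_T$ either the equality has different structural consequences or the inequality is strict, yielding a genuine barrier.

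The main obstacle will be the verification, in the irreducible case, that the proposed bridge $\delta_N$ indeed satisfies the admissibility inequalities at \emph{every} shifted position; this requires a careful induction on the alignment depth and invoking irreducibility at the appropriate scale $j$ for each potentially dangerous shift. A second subtle point is isolating precisely why $q_T$ is the unique non-irreducible base yielding transitivity, which seems to require separately verifying that the equality case in the non-irreducibility condition together with membership in $\vl$ characterizes $q_T$ up to the form of $\al(q)$, with a case split on the parity of $M$ reflecting the two definitions of $q_T$ in \eqref{eq:22}.
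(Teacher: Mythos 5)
Your high-level architecture (explicit graph for $q=q_T$, a bridge word for the irreducible case, an obstruction pair for the non-irreducible case) matches the paper's, but both core technical components have genuine gaps. In the sufficiency direction, the bridge $\de_N=\overline{a_1\ldots a_N}^{+}\,a_1\ldots a_N$ fails the admissibility inequalities at \emph{both} junctions, and this is not a matter of delicate verification: the word is simply not in $\L(\VB_q)$ in general. At the right junction the suffix of $\om\de_N\nu$ beginning at the block $a_1\ldots a_N$ is $a_1\ldots a_N\,\nu\cdots$, so the upper bound forces $\nu\lle\al_{N+1}(q)\ldots\al_{N+|\nu|}(q)$; since $\nu$ is an arbitrary admissible word while $\si^N(\al(q))$ may fall strictly below $\al(q)$ at a very early position, this fails (for $M=2$ and $\al(q)=(2210)^\f$, which is irreducible with $q>q_T$, take $N=7$ and $\nu=2$: the suffix $2210221\,2\succ 22102210$). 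The left junction fails symmetrically when $\om$ ends with a reflected prefix, e.g.\ $\om=2200$ followed by $\overline{a_1\ldots a_7}^{+}=0012002$ violates the lower bound at the third letter. This is precisely why the paper does not attach the bridge directly: it first normalises $\om$ to end with a \emph{primitive} prefix $\a=\al_1(q)\ldots\al_m(q)$ or its reflection (Lemma \ref{lem:38}), normalises $\nu$ to be preceded by the neutral word $k^N$ or $((k+1)k)^N$ (Lemma \ref{lem:39}), and then descends through the cascade $(\overline{\a}^{+})^{N}(\overline{\R(\a)}^{+})^{N}(\overline{\R^{2}(\a)}^{+})^{N}\cdots$ of iterated reflection recurrence words (Definition \ref{def:311} and Lemmas \ref{lem:313}--\ref{lem:316}) until it reaches a word compatible with $k^N$. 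This reflection-recurrence apparatus is the technical heart of the proof and is entirely absent from your plan; irreducibility enters not merely as ``lexicographic slack'' at dangerous shifts but to guarantee $\R^{i}(\a)\,(\overline{\R^{i}(\a)}^{+})^\f\prec\al(q)$ at every level of the cascade. (Also, $(\VB_{q_T},\si)$ is sofic but \emph{not} of finite type, though your graph verification survives this.)

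The necessity direction contains a concrete error: the pair $\om=a_1\ldots a_j$, $\nu=\overline{a_1\ldots a_j}$ is not an obstruction. For instance, for $M=2$ and the non-irreducible $\al(q)=(22\,\overline{22})^\f=(2200)^\f$ one has $j=2$, $\om=22$, $\nu=00$, and $\om\nu=2200$ is a prefix of $\al(q)$, so the empty bridge already works; more generally the paper's own case analysis shows $\om$ can be followed in $\L(\VB_q)$ by $\overline{\om}$ after finitely many copies of $\overline{\om}^{+}$. Your structural claim is also off: admissible extensions of $\om$ are not confined to $\VB_{q'}$ with $q'=\Phi^{-1}((a_1\ldots a_j^{-})^\f)$ (e.g.\ any extension begins $a_1\ldots a_j\succ a_1\ldots a_j^{-}$, so it is never in $\VB_{q'}$). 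The correct statement, proved in Proposition \ref{prop:37}, is that every sequence of $\VB_q$ beginning with $\om$ lies in the rigid sofic shift $X_{\G_3}$ built from the four blocks $\om,\om^{-},\overline{\om},\overline{\om}^{+}$, and the obstruction word must be chosen \emph{outside} $\L(X_{\G_3})$, namely $\nu=k^{2j}$ (resp.\ $((k+1)k)^{2j}$), which is admissible exactly because $q>q_T$. For $q\in[q_{NT},q_T)$ a different obstruction is required ($\om=k+1$ and $\nu=k^{m}$ with $(k+1)k^{m}$ forbidden, as in Lemma \ref{lem:33}), which your uniform contraposition does not supply. Your identification of $q_T$ as the equality case $a_1\ldots a_j(\overline{a_1\ldots a_j}^{+})^\f=\al(q_T)$ is correct in spirit, but the surrounding argument does not stand as written.
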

 \begin{remark}\label{rem:27}
   \begin{enumerate}
   \item As previously remarked, to determine those $q\in[q_G,M+1]$ for which $(\VB_q, \si)$ is transitive, it suffices to determine those $q\in [q_G, M+1]\cap\vl$ for which $(\VB_q, \si)$ is transitive. Consequently Theorem \ref{th1} provides a complete classification of those $q\in[q_G,M+1]$ for which $(\VB_q, \si)$ is transitive.
     \item Note that the generalized golden ratio $q_G$ is the smallest base $q$ for which $\al(q)$ is irreducible, and by Lemma \ref{lem:24} we have $\VB_q=\emptyset$ if $q< q_G$. So by Theorem \ref{th1} it is also the smallest base $q$ for which $(\VB_q, \si)$ is transitive.
    In Lemma  \ref{lem:33} and Proposition \ref{prop:36} (see below) we show that the base $q_T$  is the smallest base $q$ for which $(\VB_q, \si)$ is transitive and $h(\VB_q)>0$. This explains why $q_T$ is  referred to as the  {transitive base}. Moreover, we prove that $q_{NT}$ defined in (\ref{eq:31}) is the smallest base $q$ for which $(\VB_q, \si)$ is not transitive (see Lemmas \ref{lem:31} and  \ref{lem:32}).

     \item In Lemma \ref{lem:33} we show   that there exists a subinterval $I\subseteq[q_G, M+1]$ such that $(\VB_q, \si)$ is not transitive for any $q\in I$.
On the other hand,  take $q\in[q_G, M+1]$ such that $\al(q)=(M^n0)^\f$ for some $n\ge 2$, then one can  check that $\al(q)$ is irreducible. Furthermore, by \cite[Theorem 1.2]{DeVries_Komornik_Loreti_2016} we have $q\in\vl\setminus\ul$. Then by Lemma \ref{lem:25} (3) it follows that $q=q_\ell$ is the left endpoint of a basic interval of $[q_G, M+1]\setminus\vl$.  By Lemma \ref{lem:25} (1) and Theorem \ref{th1} this implies that  the following set
 \[
  \mathcal{T}=\set{q\in(1,M+1]: (\VB_q, \si)\textrm{ is a transitive subshift}}
 \]
 contains a subinterval (in fact $\mathcal{T}$ contains infinitely many subintervals).
   \end{enumerate}
 \end{remark}

For $q\in(1, M+1]$ we recall that $H(q)=h(\ub_q)$ is the topological entropy of $\ub_q$.
Now we turn our attention to the entropy plateaus of the function $H$.   Let  $(\tau_i)_{i=0}^\f=01101001\ldots$ be the classical \emph{Thue-Morse sequence} defined as follows: $\tau_0=0$, and if $\tau_i$ has already been defined for some $i\ge 0$, then $\tau_{2i}=\tau_i$ and $\tau_{2i+1}=1-\tau_i$. For more information on the classical Thue-Morse sequence $(\tau_i)_{i=0}^\f$ we refer to the survey paper \cite{Allouche_Shallit_1999}.  By a result of \cite{Komornik_Loreti_2002}, the smallest base of $\ul$ is the Komornik-Loreti constant $q_c$, which is defined using the classical {Thue-Morse sequence} $(\tau_i)_{i=0}^\f$.  To be more explicit,
\begin{equation}\label{eq:23}
\al(q_c)=(\la_i)_{i=1}^\f:=\left\{
\begin{array}{lll}
(k+\tau_i-\tau_{i-1})_{i=1}^\f&\textrm{if}& M=2k,\\
(k+\tau_i)_{i=1}^\f&\textrm{if}& M=2k+1.
\end{array}
\right.
\end{equation}
 { Notice that the Thue-Morse sequence $(\tau_i)_{i=0}^\f$ has indexes starting at $0$, whereas the sequence $(\lambda_i)_{i=1}^\f$ has indexes starting at $1$. } We point out that the sequence $(\lambda_i)_{i=1}^\f$ in \eqref{eq:23} depends on $M$. It follows from the definition of $(\tau_i)_{i=0}^\f$  that  the sequence $(\la_i)$ satisfies the following recursive equations (cf.~\cite{Komornik_Loreti_2002}):
\begin{equation}\label{eq:24}
\la_1\ldots \la_{2^{n+1}}=\la_1\ldots \la_{2^n}\overline{\la_1\ldots \la_{2^n}}^+\quad\textrm{for all}\quad n\ge 0.
\end{equation}
Then the sequence $\al(q_c)$ begins with
\begin{align*}
&(k+1)k(k-1)(k+1)\; (k-1)k(k+1)k\cdots&\textrm{if}&\quad M=2k,\\
&(k+1)(k+1)k(k+1)\; k k (k+1)(k+1)\cdots&\textrm{if}&\quad M=2k+1.
\end{align*}
By (\ref{eq:21})--(\ref{eq:23}) and Lemma \ref{lem:21}  it follows that $q_G<q_c<q_T$.

The significance of $q_c$ in the study of the univoque set $\u_q$ is well demonstrated by  \cite[Theorems 1.1 and 1.2]{Komornik_Kong_Li_2015_1}. It is shown that $H(q)=0$ for all $q\in(1, q_c]$ and
$H(q)>0$ for all $q\in(q_c, M+1]$.
This implies  that the first entropy plateau of $H$ is $(1, q_c]$. Hence, in the following we may restrict out attention to the interval $(q_c, M+1]$.

Note by (\ref{eq:14}) we have $H(q)=h(\VB_q)$ for all $q\in(q_c, M+1]$. As we will see, the characterization of those $q$ for which $(\VB_q, \si)$ is transitive will be useful when it comes to describing the entropy plateaus of $H$. However, for any $q\in(q_c, q_T),$ it will be shown in Lemma \ref{lem:33} that $(\VB_q, \si)$ is not transitive. This makes determining the entropy plateaus in the interval $(q_c, q_T)$ more intricate.

In order to solve this  problem  we introduce the following sequences in $\vb$.
 For $n\in\N$ we define the sequence $\xi(n)=(\xi_i(n))$ by
\begin{equation}\label{eq:25}
\xi(n):=\left\{
\begin{array}
  {lll}
  \la_1\ldots\la_{2^{n-1}}(\overline{\la_1\ldots\la_{2^{n-1}}}\,^+)^\f&\textrm{if}& M=2k,\\
  \la_1\ldots\la_{2^n}(\overline{\la_1\ldots\la_{2^n}}\,^+)^\f&\textrm{if}& M=2k+1.
\end{array}\right.
\end{equation}
Here   the sequence $(\la_i)=\al(q_c)$ is defined in \eqref{eq:23} which depends  on $M$.
Then by (\ref{eq:22}) and \eqref{eq:25} we have $\al(q_T)=\xi(1)$. In Lemma \ref{lem:43} we show that each sequence $\xi(n)$ is an element of $\vb$, and   $\xi(n)$ strictly decreases to $(\la_i)=\al(q_c)$ as $n\ra\f$. Here the convergence of sequences in $\set{0,1,\ldots, M}^\f$ is defined with respect to the order topology.

The following definition should be compared with Definition \ref{def:26} of an irreducible sequence.
\begin{definition}\label{def:28}
A sequence $(a_i)\in\vb$ is said to be \emph{$*$-irreducible} if  there exists $n\in\N$ such that
$\xi(n+1)\lle (a_i)\prec \xi(n),$
 and
\[
a_1\ldots a_j(\overline{a_1\ldots a_j}\,^+)^\f\prec (a_i),
\]
whenever
\[
 (a_1\ldots a_j^-)^\f\in\vb\quad\textrm{and}\quad j>
 \left\{\begin{array}{lll}
 2^n&\textrm{if}& M=2k,\\
 2^{n+1}&\textrm{if}& M=2k+1.
 \end{array}\right.
\]
\end{definition}

{  Notice that for  a $*$-irreducible  sequence $(a_i)\in\vb$ we only need to verify the inequalities $a_1\ldots a_j(\overline{a_1\ldots a_j}^+)^\f\prec (a_i)$  for all large integers $j$ satisfying  $(a_1\ldots a_j^-)^\f\in\vb$. Comparing with Definition \ref{def:26} of irreducible sequence this explains  the name `$*$-irreducible'.}
Building upon the notion of irreducible and $*$-irreducible sequences we introduce the following class of intervals.

\begin{definition}\label{def:29}
An interval $[p_L, p_R]\subseteq(q_c, M+1]$ is called an \emph{irreducible interval} (or, a \emph{$*$-irreducible interval}) if $\al(p_L)$ is irreducible (or $*$-irreducible), and there exists a word $a_1\ldots a_m$ with $a_m<M$ such that
\[\al(p_L)=(a_1\ldots a_m)^\f,\quad \al(p_R)=a_1\ldots a_m^+(\overline{a_1\ldots a_m})^\f.\]
\end{definition}
We point out that the irreducible interval (or $*$-irreducible interval) $[p_L, p_R]$ is well-defined (see Lemma \ref{lem:41} below).

Our second  result is for the   entropy plateaus of $H(q)=h(\ub_q)$.
\begin{main}
  \label{th2}
  The interval $[p_L, p_R]\subseteq(q_c, M+1]$ is an entropy  plateau of $H$ if and only if $[p_L, p_R]$ is an irreducible  or a $*$-irreducible interval.
 \end{main}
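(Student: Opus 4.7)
The plan is to prove the equivalence in two parts, handling the irreducible and $*$-irreducible cases separately. The constancy direction relies on Lemma \ref{lem:25}(1) and the continuity of $H$; the converse, together with maximality of the plateaus, relies on Lemma \ref{lem:25}(2), which ensures that strict enlargement of $\VB_q$ forces a strict increase of $h(\VB_q)=H(q)$ via (\ref{eq:14}).

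For the irreducible case, let $[p_L,p_R]$ be an irreducible interval. Since $\al(p_L)=(a_1\ldots a_m)^\infty$ is periodic, $p_L\in\vl\setminus\ul$, so by Lemma \ref{lem:25}(3) it is the left endpoint of a basic interval $(q_\ell,q_r)\subseteq[q_G,M+1]\setminus\vl$. The defining condition $a_1\ldots a_j(\overline{a_1\ldots a_j}\,^+)^\infty\prec\al(p_L)$ whenever $(a_1\ldots a_j^-)^\infty\in\vb$ (Definition \ref{def:26}) is designed to rule out every candidate periodic element of $\vb$ whose quasi-greedy expansion lies strictly between $\al(p_L)$ and $\al(p_R)$; a direct lexicographic computation using Lemma \ref{lem:21} then identifies $q_r=p_R$, that is, $\al(p_R)=a_1\ldots a_m^+\overline{(a_1\ldots a_m)}^\infty$ is the quasi-greedy expansion of the next base of $\vl$. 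Applying Lemma \ref{lem:25}(1), (\ref{eq:14}), and continuity of $H$ now yields $H\equiv H(p_L)$ on $[p_L,p_R]$.

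For the $*$-irreducible case, the interval $(p_L,p_R)$ meets $\vl$ in countably many points, so Lemma \ref{lem:25}(1) cannot be applied globally. Here the plan is to exploit the Thue-Morse recursion (\ref{eq:24}): the condition $j>2^n$ (or $j>2^{n+1}$) in Definition \ref{def:28} forces every intermediate element of $\vb$ to share the same length-$2^n$ (respectively $2^{n+1}$) Thue-Morse prefix with $\al(p_L)$. Using this block structure I would show that every word admissible in $\VB_q$ for $q\in(p_L,p_R]$ but not in $\VB_{p_L}$ must occupy a fixed position relative to the Thue-Morse blocks, so that the new admissible words grow at most subexponentially and hence $h(\VB_q)=h(\VB_{p_L})$ throughout $[p_L,p_R]$. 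Maximality at the right endpoint then follows from the form $\al(p_R)=a_1\ldots a_m^+\overline{(a_1\ldots a_m)}^\infty$ and Lemma \ref{lem:21}, which for $q$ slightly larger than $p_R$ exhibits admissible sequences in $\VB_q$ not present in $\VB_{p_R}$ and in sufficient quantity to raise the entropy; a symmetric argument handles $p_L$.

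For the necessity direction, given an entropy plateau $[p_L,p_R]\subseteq(q_c,M+1]$, the Devil's staircase property and Lemma \ref{lem:25}(2) place $p_L\in\vl$. Whether $\al(p_L)\succcurlyeq\xi(1)=\al(q_T)$ or $\xi(n+1)\preccurlyeq\al(p_L)\prec\xi(n)$ for some $n\ge 1$ then determines whether the plateau is irreducible or $*$-irreducible, and in both cases the defining inequalities of Definitions \ref{def:26} and \ref{def:28} are forced by maximality: any violation would produce a base in $\vl$ strictly between $p_L$ and $p_R$ with strictly larger $\VB$, contradicting constancy of $H$ on the plateau via Lemma \ref{lem:25}(2). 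The form of $\al(p_R)$ is then pinned down by taking the largest quasi-greedy expansion compatible with $H(p_R)=H(p_L)$. The main obstacle is the $*$-irreducible sufficiency: one must carefully control the combinatorics of the admissible-word count in the presence of countably many $\vl$-bases inside $(p_L,p_R)$ and verify that the Thue-Morse block structure indeed absorbs all potential entropy increases, a delicate step with no analogue in the irreducible case.
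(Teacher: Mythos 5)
Your sufficiency argument for irreducible intervals rests on the claim that $\al(p_R)=a_1\ldots a_m^+(\overline{a_1\ldots a_m})^\f$ is the quasi-greedy expansion of the right endpoint of the basic interval of $[q_G,M+1]\setminus\vl$ starting at $p_L$, so that $\VB_q=\VB_{p_L}$ throughout $[p_L,p_R)$ by Lemma \ref{lem:25}(1). This is false: by \cite[Theorem 2.3]{Kong_Li_2015} the connected component $(q_0,q_0^*)$ of $[q_T,M+1]\setminus\overline{\ul}$ with $q_0=p_L$ satisfies $\al(q_0^*)=a_1\ldots a_m^+\,\overline{a_1\ldots a_m}\,\overline{a_1\ldots a_m^+}\cdots\prec\al(p_R)$, and in fact $(p_L,p_R)$ contains infinitely many elements of $\overline{\ul}\subseteq\vl$ (Remark \ref{rem:53}); this is precisely the phenomenon flagged in the introduction, namely that a plateau is not a single gap of $\overline{\ul}$. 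Consequently $\VB_q$ grows strictly infinitely often inside $(p_L,p_R)$ by Lemma \ref{lem:25}(2), and the real content of sufficiency is to show that this growth does not raise the entropy. The paper does this by a counting argument: any sequence in $\VB_{p_R}\setminus\VB_{p_L}$ must eventually enter a two-vertex sofic shift of entropy $\frac{1}{m}\log 2$, and since $(\VB_{p_L},\si)$ is a transitive subshift of finite type (Theorem \ref{th1}) with $h(\VB_{p_L})\ge\frac{1}{m}\log 2$, a convolution bound on $\#B_n(\VB_{p_R})$ gives $h(\VB_{p_R})\le h(\VB_{p_L})$. Your $*$-irreducible sketch gestures at the right idea but misidentifies the mechanism: the new admissible words grow exponentially at rate $2^{1/m}$, not subexponentially; what saves the argument is that this rate is strictly below $h(\VB_{p_L})\ge\frac{1}{2^n}\log 2$ because $m>2^n$.

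The maximality and necessity parts have a second, related gap: you repeatedly infer a strict increase of $H$ from a strict increase of $\VB_q$ via Lemma \ref{lem:25}(2). That lemma only gives strict inclusion of subshifts, and strict inclusion does not imply strictly larger entropy --- inside a plateau $\VB_q$ increases strictly while $H$ is constant, so the inference fails exactly where you need its negation. To force a strict entropy increase at the endpoints one needs a \emph{transitive} subshift of finite type of full entropy that grows strictly (\cite[Corollary 4.4.9]{Lind_Marcus_1995}): for irreducible intervals this is $(\VB_{p_L},\si)$ itself, approached from the relevant side by left endpoints of other irreducible intervals (Proposition \ref{prop:411}); for $*$-irreducible intervals, where $(\VB_q,\si)$ is never transitive, the paper must first construct the unique transitive component of full entropy (Lemmas \ref{lem:58}--\ref{lem:59}, Proposition \ref{prop:510}), a step absent from your outline. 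Finally, for necessity the paper does not derive the defining inequalities from maximality directly; it shows the irreducible and $*$-irreducible intervals are pairwise disjoint and cover $(q_c,M+1]$ up to the Lebesgue-null set $\overline{\ul}$ (Lemmas \ref{lem:46} and \ref{lem:49}), so every plateau must coincide with one of them.
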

\begin{remark}
  \begin{enumerate}
    \item We show in Lemma \ref{lem:410} that for each irreducible or $*$-irreducible interval $[p_L, p_R]$ the left endpoint  is contained in $\overline{\ul}\setminus\ul$ and the right endpoint is contained in $\ul$.

    \item We also investigate  the topological properties of the irreducible and $*$-irreducible  intervals. For example, all of these irreducible and $*$-irreducible intervals are pairwise disjoint (see Lemma \ref{lem:46}). Moreover, the left end point of each irreducible interval can be approximated from below by left end points of irreducible intervals and the right end point of each irreducible interval can be approximated by left end points of irreducible intervals. We also find similar approximations for the end points of a $*$-irreducible interval. (see Proposition \ref{prop:411}).

    \item In Lemma \ref{lem:44} we show that the $*$-irreducible intervals are all contained in $(q_c, q_T)$, and  the irreducible intervals are all contained in $(q_T, M+1)$. In particular, it follows from the proof of Propositions \ref{prop:52} and \ref{prop:511} that the union of all $*$-irreducible intervals covers $(q_c, q_T)$ up to a set of measure zero, and the union of all irreducible intervals covers $(q_T, M+1)$ up to a set of measure zero.  Therefore, by (2), it follows that the Komornik-Loreti constant $q_c$ can be approximated by end points of $*$-irreducible intervals from above, the base $M+1$ can be approximated by end points of $*$-irreducible intervals from below. Moreover, the transitive base $q_T$ can be approximated by end points of irreducible intervals from above and by end points of $*$-irreducible intervals from below. 
    
     \end{enumerate}
\end{remark}

Recall from (\ref{eq:13}) that  $\E$ is the set of bifurcation bases. Then by Theorem \ref{th2} we can immediately  rewrite   $\E$ as
\[
\E=[q_c, M+1]\setminus\bigcup(p_L, p_R),
\]
where the union is taken over all entropy plateaus $[p_L, p_R]$ of $H$. We mention that the union in the above equation is countable and pairwise disjoint.
Note that $H$ is constant on each connected component of $[q_c, M+1]\setminus\overline{\ul}$. This implies that $\E\subseteq\overline{\ul}$. Since $\overline{\ul}$ is a Lebesgue null set,  so is  $\E$.

In the following theorem we characterize the set $\E$ in terms of irreducible and $*$-irreducible sequences and show that $\E$ is a Cantor set of full Hausdorff dimension.
\begin{main}
  \label{th3}
  Let $\E$ be the set of bifurcation  bases. Then
 \begin{enumerate}
 \item $
  \E=\overline{\set{q\in(q_c, M+1]: \al(q)\textrm{ is irreducible or} *\textrm{-irreducible}}}.
 $

  \item $\E$ is a Cantor set and $\dim_H\E=1$.
  \end{enumerate}
\end{main}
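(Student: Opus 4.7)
For Part (1), I would handle the two inclusions separately. First, every base $q$ with $\al(q)$ irreducible or $*$-irreducible lies in $\E$: by Definition \ref{def:29} and Theorem \ref{th2}, such a $q$ is the left endpoint $p_L$ of an entropy plateau $[p_L, p_R]$; distinct plateaus cannot share an endpoint (by maximality together with the continuity and monotonicity of $H$), so $H$ is strictly less than $H(p_L)$ in every left neighbourhood of $p_L$, whence $p_L\in\E$. Closedness of $\E$ then gives the inclusion of the closure. For the reverse inclusion, take $q\in\E$. The final remark following Theorem \ref{th2} asserts that the union of all irreducible and $*$-irreducible intervals covers $(q_c, M+1)$ up to a Lebesgue null set; hence there exist plateau intervals arbitrarily close to $q$ from each side, and the left endpoints of these plateaus are irreducible or $*$-irreducible by Theorem \ref{th2} and converge to $q$.

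For $\E$ being a Cantor set: closedness is immediate from the description $\E = [q_c, M+1]\setminus\bigcup (p_L, p_R)$; empty interior follows because $\E\subseteq\overline{\ul}$ has Lebesgue measure zero; and absence of isolated points follows by combining Part (1) with Proposition \ref{prop:411}, since every $q\in\E$ is a limit of irreducible or $*$-irreducible bases, and each such base is approached from both sides by further irreducible or $*$-irreducible bases, all of which lie in $\E$ by Part (1).

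The main obstacle is $\dim_H \E = 1$. My plan is to exhibit, for each $\ep>0$, a subset $\mathcal{C}\subseteq\E$ of Hausdorff dimension at least $1-\ep$, adapting the Dar\'{o}czy--K\'{a}tai strategy used to show $\dim_H\overline{\ul}=1$. Fix a large integer $n$ and an irreducible base $q_0$ with periodic expansion $\al(q_0)=(w_0)^\f$ for some admissible word $w_0$ of length $n$. Concatenations of the form $w_0\, a_1\, w_0\, a_2\, w_0\cdots$, with each $a_i$ drawn from a large pool $\mathcal{A}_n$ of admissible length-$n$ blocks (chosen to respect the lexicographic constraints of Lemma \ref{lem:21}), produce sequences of the form $\al(q)$ for $q\in\vl$, with the recurring buffer $w_0$ forcing the irreducibility condition of Definition \ref{def:26}. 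A standard mass-distribution argument, combined with the exponential growth $|\mathcal{A}_n|\geq (M+1)^{n(1-o(1))}$, yields a Cantor subset of $\E$ of Hausdorff dimension at least $1-O(1/n)$; letting $n\to\f$ gives $\dim_H\E\geq 1$, and the reverse inequality is trivial.
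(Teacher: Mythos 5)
Your overall architecture matches the paper's (Part (1) via the plateau description from Theorem \ref{th2}, the dimension via a large explicit family of irreducible bases), but there are two genuine gaps. In the forward inclusion of Part (1) you claim that every $q$ with $\al(q)$ irreducible or $*$-irreducible is the left endpoint of an entropy plateau. That is only true when $\al(q)$ is \emph{periodic}: Definition \ref{def:29} requires $\al(p_L)=(a_1\ldots a_m)^\f$, and most irreducible bases (e.g.\ those in $\ul$ with aperiodic expansions, which form the bulk of $\I$) are not endpoints of any plateau, so your argument says nothing about why they lie in $\E$. The paper closes this by proving instead that $\I\cap(p_L,p_R)=\emptyset$ for every irreducible interval $[p_L,p_R]$ (and similarly for $\I^*$): if $q\in\I$ satisfied $\al(p_L)\prec\al(q)\prec\al(p_R)$, then $\al_1(q)\ldots\al_m(q)=a_1\ldots a_m^+$ while $(a_1\ldots a_m)^\f\in\vb$, contradicting irreducibility of $\al(q)$. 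Combined with $\E=[q_c,M+1]\setminus\bigcup(p_L,p_R)$ this yields $\I\cup\I^*\subseteq\E$, covering the aperiodic case. Your reverse inclusion is also slightly off at plateau endpoints: if $q=p_R$ there are no plateaus approaching $q$ from the left, and one must invoke Proposition \ref{prop:411} to produce irreducible bases decreasing to $p_R$.

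The dimension argument has a quantitative flaw. In your construction both the buffer $w_0$ and the free blocks $a_i$ have length $n$, so the set of sequences $w_0a_1w_0a_2\cdots$ has at most $(M+1)^{nk}$ prefixes of length $2nk$; any reasonable projection to bases then has dimension at most $1/2$, not $1-O(1/n)$. To push the dimension to $1$ the constrained coordinates must occupy a vanishing proportion of positions. The paper achieves this with a single prefix $M^{2N-1}0$ followed by freely chosen length-$N$ blocks strictly between $0^N$ and $M^N$: the prefix alone forces irreducibility of the whole sequence (Lemma \ref{lem:62}), every coordinate after position $2N$ is essentially free, and the separation of the blocks from $0^N$ and $M^N$ makes $\pi_{M+1}\circ\al$ Lipschitz on the resulting set (Lemma \ref{lem:64}), giving $\dim_H\I_N\ge\log((M+1)^N-2)/(N\log(M+1))\to1$. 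You would also need to actually verify that your concatenations lie in $\vb$ and satisfy Definition \ref{def:26}; with a recurring buffer this is asserted rather than proved, whereas the paper's one-prefix construction makes the verification a short explicit case check.
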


Up until now we knew little about the box dimension of $\u_q$. Our final result shows that the box dimension of $\u_q$  coincides with its Hausdorff dimension for any $q\in(1,M+1]$.

\begin{main}\label{th4}
  For all $q\in(1,M+1]$ we have $\dim_B\u_q=  \dim_H\u_q$.
\end{main}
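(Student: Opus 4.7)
The plan is to squeeze $\overline{\dim}_B \u_q$ between an elementary covering estimate from above and $\dim_H \u_q$ from below, both agreeing with $H(q)\log(M+1)/\log q$. For $q \in (1, q_G)$, $\u_q$ reduces essentially to $\{0, M/(q-1)\}$ (since $\VB_q = \emptyset$ by Lemma \ref{lem:24}), so all dimensions vanish. I therefore focus on $q \in [q_G, M+1]$.

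For the upper box bound, the lexicographic characterisations of $\ub_q$ and $\VB_q$ imply $\u_q \subseteq \pi_q(\VB_q) \cup \{0, M/(q-1)\}$. For each $n \in \N$, every word in $B_n(\VB_q)$ defines a cylinder whose $\pi_q$-image is an interval of diameter at most $M/(q^n(q-1))$, so $\u_q$ is covered by at most $\#B_n(\VB_q) + 2$ intervals of length comparable to $q^{-n}$. This yields
\begin{equation*}
\overline{\dim}_B \u_q \;\le\; \limsup_{n \to \infty} \frac{\log \#B_n(\VB_q)}{n \log q} \;=\; \frac{H(q)\log(M+1)}{\log q},
\end{equation*}
where the last equality combines the base-$(M+1)$ entropy convention with \eqref{eq:14}. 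For $q \in [q_G, q_c]$ this gives $\overline{\dim}_B \u_q = 0$, which together with $\dim_H \u_q \ge 0$ settles the theorem in that range. For $q \in (q_c, M+1]$, the matching Hausdorff lower bound $\dim_H \u_q \geq H(q)\log(M+1)/\log q$ is already available in the literature (see e.g.\ \cite{Komornik_Kong_Li_2015_1}); combined with the universal inequalities $\dim_H \u_q \le \underline{\dim}_B \u_q \le \overline{\dim}_B \u_q$, this forces all three dimensions to coincide.

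The main obstacle is securing the Hausdorff lower bound uniformly on $(q_c, M+1]$. If it is not directly quotable in that generality, I would reconstruct it via dynamics. By Theorem \ref{th1}, $(\VB_q, \sigma)$ is transitive whenever $\al(q)$ is irreducible or $q = q_T$, so its measure of maximal entropy pushes forward under $\pi_q$ to a probability measure on $\u_q$ whose local dimension at a.e.\ point equals $H(q)\log(M+1)/\log q$ by Shannon--McMillan--Breiman together with Lyapunov exponent $\log q$; Billingsley's lemma then delivers the bound. For $q$'s sitting in the interior of an irreducible or $*$-irreducible plateau $[p_L,p_R]$ supplied by Theorem \ref{th2}, the constancy of $H$ on $[p_L,p_R]$ together with the inclusion $\VB_{p_L} \subseteq \VB_q$ from Lemma \ref{lem:25}(2) transfers the lower bound from a transitive endpoint to every $q \in [p_L, p_R]$, completing the argument.
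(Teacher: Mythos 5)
Your overall strategy---bounding $\overline{\dim}_B\u_q$ from above by a cylinder count at the entropy rate and matching it against the known identity $\dim_H\u_q=h(\ub_q)/\log q$ from \cite{Komornik_Kong_Li_2015_1}---is the strategy of the paper, and the lower-bound half is unproblematic (that identity is directly quotable, so your dynamical reconstruction via measures of maximal entropy is not needed; note in passing that $\pi_q(\VB_q)\not\subseteq\u_q$ because $\VB_q$ only imposes weak inequalities, so the transfer step in your fallback would anyway have to go through $\WB_q$ rather than $\VB_{p_L}$). The genuine gap is in the upper bound: the inclusion $\u_q\subseteq\pi_q(\VB_q)\cup\set{0,M/(q-1)}$ is false. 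The lexicographic characterisation of $\ub_q$ only requires $\si^n((x_i))\prec\al(q)$ when $x_n<M$ and $\si^n((x_i))\succ\overline{\al(q)}$ when $x_n>0$, so $\ub_q$ contains every sequence of the form $0^{m-1}d\,w$ and $M^{m-1}d\,w$ with $w\in\WB_q$; for $m\ge2$ these violate the two-sided inequalities defining $\VB_q$ (for instance $\si^j(0^{m-1}d\,w)$ begins with a block of zeros and is therefore $\prec\overline{\al(q)}$ for small $j$ unless $\al(q)=M^\f$). Thus $\u_q$ is the union of the two endpoints of $[0,M/(q-1)]$ with \emph{countably many} affine copies of $\W_q$, not a subset of $\pi_q(\VB_q)$ plus two points, and your proposed cover by the cylinders of $B_n(\VB_q)$ simply does not cover $\u_q$.

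This is not a cosmetic issue, because the upper box dimension is not countably stable, so the extra copies cannot be discarded without argument. The paper's Lemma \ref{lem:71} exists precisely to close this gap: at scale $\delta$ all but $O(\log(1/\delta))$ of the affine copies of $\W_q$ lie within $\delta$ of the two endpoints, whence $N_\delta(\u_q)$ is at most a constant times $\log(1/\delta)\cdot N_\delta(\W_q)$ plus $2$, and the logarithmic factor vanishes in the limit, giving $\overline{\dim}_B\u_q=\overline{\dim}_B\W_q$; the cylinder count is then run on $\WB_q$. An alternative repair, closer in spirit to what you wrote, is to cover $\u_q$ by the prefix cylinders of $\ub_q$ itself: the length-$n$ prefixes of sequences in $\ub_q$ form a subset of $B_n(\ub_q)$, the associated intervals have length $M/(q^n(q-1))$ and genuinely cover $\u_q$, and $h(\ub_q)=H(q)$ is well defined by \cite[Lemma 2.1]{Komornik_Kong_Li_2015_1}. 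Either repair is short, but as written your covering step fails and the upper bound is not established.
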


\subsection{List of notation}
\begin{itemize}
\item
\begin{itemize}
\item $\pi_q$ denotes  the projection map:  $\pi_q((x_i))=\sum_{i=1}^\f\frac{x_i}{q^i}$.

\item $\alpha(q)=(\alpha_i(q))$ denotes the quasi-greedy $q$-expansion of $1$, i.e., the lexicographically largest $q$-expansion of $1$ with infinitely many non-zero elements.

\item $\overline{(c_i)}$ denotes the reflection of a sequence $(c_i),$ i.e. $\overline{(c_i)}=(M-c_i).$

\item For a word $w=w_1\ldots w_n,$ we let $w^{-}=w_1\ldots w_{n-1} (w_n-1)$ if $w_n>0,$ and $w^{+}=w_1\ldots w_{n-1} (w_n+1)$ if $w_n<M.$

\item $\sigma$ denotes the left shift.

\item $B_n(X)$ is the set of subwords of length $n$ that appear in elements of some sequence space $X$.

\item $\epsilon$ denotes the empty word. 

\item  $B_{*}(X)=\bigcup_{n=1}^{\infty}B_n(X)\cup \{\epsilon\}$ denotes the language of $X$.

\item $\R(\a)$ denotes the reflection recurrence word of a primitive word $\a$, see Definition \ref{def:311}.

\end{itemize}

\item \begin{itemize}
\item $\u_q$ denotes the univoque set, i.e. those $x$ with a unique $q$-expansion. Then
\[
\u_q=\set{\pi_q((x_i)):     
\begin{array}{lll}
\si^n((x_i))\prec (\al_i(q)) &\textrm{whenever}& x_n<M,\\
\si^n((x_i)) \succ \overline{(\al_i(q))}&\textrm{whenever}& x_n>0.
\end{array}
 }.
\]

\item $\ub_q$ is the set of corresponding $q$-expansions  of elements of $\u_q$, i.e., $\ub_q=\pi_q^{-1}(\u_q)$.

\item  $\VB_q=\set{(x_i)\in\set{0,1,\ldots,M}^\f: \overline{(\al_i(q))}\lle\si^n((x_i))\lle(\al_i(q))\textrm{ for all }n\ge 0}$. 

\item $\W_q=\set{\pi_q((x_i)): \overline{\al(q)}\prec\si^n((x_i))\prec\al(q)\textrm{ for all } n\ge 0}$, see Section \ref{sec:8}.

\item $\WB_q$ denotes the set of corresponding $q$-expansions of elements of $\W_q$, i.e., $\WB_q=\pi_q^{-1}(\W_q)$.  

\item $\ul=\{q\in(1,M+1]: 1 \textrm{ has a unique } q\textrm{-expansion}\}$. Then
\[
\ul =\set{q\in(1,M+1]: \overline{\al(q)}\prec\si^n(\al(q))\prec \al(q)\textrm{ for all } n\ge 1}\cup\set{M+1}.
\]

\item $\overline{\ul}$ denotes the topological closure of $\ul$. Then 
\[
\overline{\ul}=\set{q\in(1,M+1]: \overline{\al(q)}\prec\si^n(\al(q))\lle \al(q)\textrm{ for all } n\ge 0}.
\]

\item $\vl=\set{q\in(1, M+1]: \overline{\al(q)}\lle\si^n(\al(q))\lle \al(q)\textrm{ for all } n\ge 0}.$

\item  $\vb$ denotes the set of corresponding expansions of $1$ for some $q\in\vl$.  Then 
\[{\vb}= \set{(a_i)\in\set{0,1,\ldots,M}^\f: \overline{(a_i)}\lle\si^n((a_i))\lle(a_i)\textrm{ for all }n\ge 0}.\]

\item $H(q)=h(\VB_q)$ denotes the topological entropy of $\VB_q$.

\item $\E$ denotes  the bifurcation set. Then 
\[\E=\set{q\in(1,M+1]:  \forall \epsilon>0,\, \exists p\in(q-\epsilon, q+\epsilon)\textrm{ such that }H(p)\neq H(q)}.\]

\item $\I$ denotes the set of bases $q\in[q_T, M+1]$ such that $\al(q)$ is irreducible, see (\ref{eq:412}).

\item $\I^*$ denotes the set of bases $q\in(q_c, q_T)$ such that $\al(q)$ is $*$-irreducible, see (\ref{eq:412}).
\end{itemize}

\item 
\begin{itemize}
\item $q_G$ denotes the generalized golden ratio defined in equation \eqref{eq:21}.

\item $q_c$ denotes the Komornik-Loreti constant   defined implicitly in \eqref{eq:23}. It is the smallest element of $\ul$.

\item  $(\lambda_i)_{i=1}^{\infty}$ is the quasi-greedy expansion of $q_c$, i.e., $\al(q_c)=(\lambda_i)$.

\item $(\tau_i)_{i=0}^{\infty}$ denotes the classical Thue-Morse sequence.

\item $q_{NT}$ denotes the smallest base $q$ in which $(\VB_q, \si)$ is not transitive, see (\ref{eq:31}). 

\item  $q_T$ denotes the transitive base defined implicitly in equation \eqref{eq:22}.

\item $q_{T_n}$ denotes a base in the interval $(q_c, q_T]$ (see Lemma \ref{lem:43}).

\item $\xi(n)=\al(q_{T_n})$ is the quasi-greedy  expansion given in (\ref{eq:25}):
 \begin{equation*}
\xi(n)=\left\{
\begin{array}
  {lll}
  \la_1\ldots\la_{2^{n-1}}(\overline{\la_1\ldots\la_{2^{n-1}}}\,^+)^\f&\textrm{if}& M=2k,\\
  \la_1\ldots\la_{2^n}(\overline{\la_1\ldots\la_{2^n}}\,^+)^\f&\textrm{if}& M=2k+1.
\end{array}\right.
\end{equation*}
\end{itemize}
\end{itemize}

\section{Topological transitivity of $(\VB_q, \si)$}\label{sec:3}

\noindent In this section we give a complete characterization of those $q$ for which $(\VB_q, \si)$ is transitive and prove Theorem \ref{th1}. Recall that $(\VB_q, \si)$ is the subshift defined in (\ref{eq:13}). Note that by Lemma \ref{lem:24} we have $\VB_q=\emptyset$  for any $q\in(1, q_G)$, where $q_G$ is the generalized golden ratio defined in (\ref{eq:21}). Moreover, by Lemma \ref{lem:25} (1) the set $\VB_q$ is stable in any basic interval of $[q_G, M+1]\setminus\vl$. This means for each connected component $(q_\ell, q_r)$ of $[q_G, M+1]\setminus\vl$ we have $\VB_q=\VB_{q_\ell}$ for any $q\in[q_\ell, q_r)$. Therefore, as previously remarked, to classify those $q$ for which $(\VB_q, \si)$ is transitive, it suffices to classify those $q\in[q_G, M+1]\cap\vl$ for which $(\VB_q, \si)$ is transitive. Our method of proof is based upon techniques from combinatorics on words (cf.~\cite{Allouche_Shallit_2003}). In particular, we prove the sufficiency of Theorem \ref{th1} with the aid of reflection recurrence words (see Definition \ref{def:311} below).

Recall from (\ref{eq:22}) that $q_T$ is the transitive base. The proof of Theorem \ref{th1} for $q\in [q_G,  q_T]$ is much easier than that for $q\in(q_T, M+1]$. For this reason we split the proof of  Theorem \ref{th1} into the following two subsections.

\subsection{Transitivity of $(\VB_q, \si)$ for $q\in[q_G, q_T]$.}
Observe that $q_G$ is the smallest base in $\vl$ and $q_c$ is the smallest base in $\overline{\ul}$. Then by \cite[Theorem 1.3]{DeVries_Komornik_Loreti_2016}  it follows   that  $\vl\cap[q_G, q_c)$ is a discrete set which contains infinitely many elements. Furthermore, we can list these elements $(q_i)$ in an increasing order in the following way:
\[
 q_G=q_1<q_2<q_3<\cdots <q_n<q_{n+1}<\cdots<q_c.
\]
Here for any $n\ge 2$ the base $q_n\in[q_G, q_c)$ admits the quasi-greedy expansion  (cf.~\cite{DeVries_Komornik_Loreti_2016})
\[
\al(q_n)=\left\{
\begin{array}{lll}
(\la_1\ldots\la_{2^{n-2}}\overline{\la_1\ldots\la_{2^{n-2}}})^\f&\textrm{if}& M=2k;\\
(\la_1\ldots\la_{2^{n-1}}\overline{\la_1\ldots\la_{2^{n-1}}})^\f&\textrm{if}& M=2k+1.
\end{array}
\right.
\]

 Denote by $q_{NT}:=q_2$  the second  smallest base in $\vl$ larger than $q_G$. Then
  \begin{equation}\label{eq:31}
\al(q_{NT})=\left\{
\begin{array}
  {lll}
  ((k+1)(k-1))^\f&\textrm{if}& M=2k,\\
  ((k+1)(k+1)kk)^\f&\textrm{if}&M=2k+1.
\end{array}
\right.
\end{equation}
By (\ref{eq:21})--(\ref{eq:23}), (\ref{eq:31}) and Lemma \ref{lem:21} it follows that
\begin{equation}\label{eq:32}
q_G<q_{NT}<q_c<q_T.
\end{equation}
\begin{lemma}
\label{lem:31}
{  The sequence }$\al(q_G)$ is irreducible. Moreover, $(\VB_q, \si)$ is transitive for all $q\in[q_G, q_{NT})$.
\end{lemma}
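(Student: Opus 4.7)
The plan is to prove the two assertions separately; both reduce to direct lexicographic verifications.

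For the irreducibility of $\al(q_G)$, I would argue that the condition of Definition \ref{def:26} holds vacuously, i.e., for every prefix $a_1\ldots a_j$ of $\al(q_G)$ the sequence $(a_1\ldots a_j^-)^\f$ fails to belong to $\vb$. When $M=2k$ we have $\al(q_G)=k^\f$, so the candidate is $(k^{j-1}(k-1))^\f$ whose reflection $(k^{j-1}(k+1))^\f$ already violates the outer inequality $\overline{(a_i)}\lle (a_i)$ in position $j$. When $M=2k+1$ with $\al(q_G)=((k+1)k)^\f$, the word $a_1\ldots a_j^-$ ends in $k$ if $j$ is odd and in $k-1$ if $j$ is even; in each case a short computation shows that the reflection of $(a_1\ldots a_j^-)^\f$ strictly dominates some non-trivial shift $\si^n((a_1\ldots a_j^-)^\f)$, again contradicting $\vb$-membership. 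Since no admissible $j$ exists, the implication in Definition \ref{def:26} is vacuously true.

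For the transitivity statement, I would first invoke Lemma \ref{lem:25}(1): since $q_G$ and $q_{NT}$ are by construction consecutive elements of $\vl$, the open interval $(q_G,q_{NT})$ is a basic interval of $[q_G,M+1]\setminus\vl$, so $\VB_q=\VB_{q_G}$ for every $q\in[q_G,q_{NT})$. It therefore suffices to establish transitivity of $(\VB_{q_G},\si)$, which I would do by computing $\VB_{q_G}$ explicitly from the defining inequalities $\overline{\al(q_G)}\lle \si^n((x_i))\lle \al(q_G)$. For $M=2k$ the two bounds coincide with $k^\f$, giving $\VB_{q_G}=\set{k^\f}$, which is trivially transitive. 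For $M=2k+1$ the bounds $(k(k+1))^\f$ and $((k+1)k)^\f$ force every $x_i\in\set{k,k+1\}$ and force consecutive digits to alternate, so $\VB_{q_G}=\set{((k+1)k)^\f,(k(k+1))^\f}$, the two-cycle subshift of finite type with forbidden words $kk$ and $(k+1)(k+1)$. Transitivity is then immediate: given admissible words $\om$ and $\nu$, and depending on the last digit of $\om$ and the first digit of $\nu$, one chooses $\de\in\set{\epsilon,k,k+1}$ so that $\om\de\nu$ remains alternating and therefore admissible.

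The only mildly finicky part is the parity-of-$M$ case split in the vacuity argument for irreducibility, but in both cases the failure of $\vb$-membership is witnessed by a single lexicographic comparison; there is no substantive obstacle.
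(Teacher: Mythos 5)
Your proposal is correct and follows essentially the same route as the paper: the same reduction to $\VB_{q_G}$ via Lemma \ref{lem:25}(1) and the same explicit identification $\VB_{q_G}=\set{k^\f}$ (for $M=2k$) or $\set{((k+1)k)^\f,(k(k+1))^\f}$ (for $M=2k+1$). The only difference is that you spell out the irreducibility of $\al(q_G)$ as a vacuity argument (no prefix $a_1\ldots a_j$ has $(a_1\ldots a_j^-)^\f\in\vb$), which is precisely the content of the paper's ``easy to check'' remark.
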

\begin{proof}
Using (\ref{eq:21}) it is easy to check that $\al(q_G)$ is irreducible. By Lemma \ref{lem:25} (1) we have $\VB_q=\VB_{q_G}$ for all $q\in[q_G, q_{NT})$. So it suffices to prove that $(\VB_{q_G}, \si)$ is  transitive. This can be verified by observing $\VB_{q_G}=\set{k^\f}$ if $M=2k$, and $\VB_{q_G}=\set{((k+1)k)^\f, (k(k+1))^\f}$ if $M=2k+1$.
\end{proof}

In the following lemma we show that for all $q\in[q_{NT}, q_{T}]$ the quasi-greedy expansion $\al(q)$ is not   irreducible.
 \begin{lemma}
   \label{lem:32}
  { The sequence} $\al(q)$ is not irreducible for any $q\in[q_{NT}, q_T]$.
 \end{lemma}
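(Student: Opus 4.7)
The plan is to exhibit, for each $q\in[q_{NT},q_T]$, a small index $j$ which witnesses the failure of irreducibility in the sense of Definition \ref{def:26}. That is, I will find $j\in\{1,2\}$ so that $(a_1\ldots a_j^-)^\f\in\vb$ and, simultaneously,
\[
a_1\ldots a_j(\overline{a_1\ldots a_j}\,^+)^\f\succcurlyeq (a_i)=\al(q),
\]
contradicting the strict inequality required for irreducibility.

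First I would use Lemma \ref{lem:21} (monotonicity of $q\mapsto\al(q)$) together with the explicit formulas \eqref{eq:31} for $\al(q_{NT})$ and \eqref{eq:22} for $\al(q_T)$ to pin down the beginning of $\al(q)$. In the even case $M=2k$, both endpoint sequences begin with the digit $k+1$, so $a_1=k+1$. In the odd case $M=2k+1$, both endpoint sequences begin with $(k+1)(k+1)$, so $a_1a_2=(k+1)(k+1)$.

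Now for the witness. When $M=2k$, take $j=1$: then $a_1^-=k$ and $(k)^\f=\al(q_G)\in\vb$ by Lemma \ref{lem:24} (or by direct verification, since $\overline{k^\f}=k^\f$). The comparison sequence is
\[
a_1(\overline{a_1}\,^+)^\f=(k+1)\bigl((k-1)+1\bigr)^\f=(k+1)k^\f=\al(q_T),
\]
so it is lexicographically $\succcurlyeq \al(q)$ for every $q\leq q_T$. When $M=2k+1$, take $j=2$: then $(a_1a_2)^-=(k+1)k$ and $((k+1)k)^\f=\al(q_G)\in\vb$. Moreover $\overline{a_1a_2}=kk$ so $\overline{a_1a_2}\,^+=k(k+1)$, giving
\[
a_1a_2(\overline{a_1a_2}\,^+)^\f=(k+1)(k+1)\bigl(k(k+1)\bigr)^\f=(k+1)\bigl((k+1)k\bigr)^\f=\al(q_T),
\]
which is again $\succcurlyeq \al(q)$. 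In both cases the required strict inequality in Definition \ref{def:26} is violated, so $\al(q)$ is not irreducible.

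There is no real obstacle in this argument: the key and somewhat pleasing observation is that in each parity the naive witness $j=1$ (respectively $j=2$) produces the \emph{exact} sequence $\al(q_T)$ on the right-hand side, which by monotonicity caps $\al(q)$ from above precisely on the interval $[q_{NT},q_T]$. The only small point to double-check is that $(a_1\ldots a_j^-)^\f$ lies in $\vb$, but this reduces to verifying that $\al(q_G)\in\vb$, which is immediate from the definition of $\vb$ and the explicit form of $\al(q_G)$ in \eqref{eq:21}.
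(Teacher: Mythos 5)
Your proof is correct and follows essentially the same route as the paper: the paper also pins down the initial digit(s) of $\al(q)$ via monotonicity of $q\mapsto\al(q)$ and the explicit formulas for $\al(q_{NT})$ and $\al(q_T)$, and then uses the witness $j=1$ (for $M=2k$) with $(\al_1(q)^-)^\f=k^\f\in\vb$ and $\al_1(q)(\overline{\al_1(q)}\,^+)^\f=(k+1)k^\f\succcurlyeq\al(q)$ to contradict Definition \ref{def:26}. The only difference is cosmetic: the paper writes out only the even case and declares the odd case similar, whereas you supply the odd-case witness $j=2$ explicitly.
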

 \begin{proof}
 Since the proof for $M=2k+1$ is similar, we only prove the lemma for $M=2k$.

Let $q\in[q_{NT}, q_T]$. Then by (\ref{eq:22}), (\ref{eq:31}) and Lemma \ref{lem:21} it follows that
 \begin{equation}\label{eq:33}
 ((k+1)(k-1))^\f\lle \al(q)\lle (k+1)\,k^\f.
 \end{equation}
 This implies that $\al_1(q)=k+1$. Observe that $(\al_1(q)^-)^\f=k^\f\in\vb$. But (\ref{eq:33}) implies
 \[ \al_1(q)(\overline{\al_1(q)}\,^+)^\f=(k+1)k^\f\lge \al(q).\]
Therefore, by Definition  \ref{def:26} $\al(q)$ is not irreducible.
 \end{proof}

Now we consider the transitivity of $(\VB_q, \si)$ for  $q\in[q_{NT}, q_T]$.
 \begin{lemma}\label{lem:33}
 { The subshift }$(\VB_q,\si)$ is not transitive for any $q\in[q_{NT}, q_T)$.
 \end{lemma}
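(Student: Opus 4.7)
I would first handle the case $M=2k$; the case $M=2k+1$ follows by an analogous (but more delicate) argument using $((k+1)k)^\f$ in place of $k^\f$. For $q\in[q_{NT},q_T)$ we have $\al(q)\prec\al(q_T)=(k+1)k^\f$, so there is a smallest index $j\ge 2$ with $\al_j(q)<k$. Since $k^\f\in\VB_q$ (its only shift lies strictly between $\overline{\al(q)}$ and $\al(q)$) and $\al(q)\in\VB_q$ starts with $k+1$, both $\om=(k+1)$ and $\nu=k^{j-1}$ belong to $\L(\VB_q)$. The plan is to show that no finite word $\de$ can make $\om\de\nu$ admissible, thereby establishing non-transitivity.

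The crux of the argument is an \emph{absorbing} property of long runs of $k$'s: if $(x_i)\in\VB_q$ contains a run $x_m\cdots x_{m+L-1}=k^L$ with $L\ge j-1$ and $m\ge 2$, then $x_{m-1}$ must itself equal $k$. This follows from a short lexicographic case analysis. The inequality $\si^{m-2}(x)\lle\al(q)$ forces $x_{m-1}\le k+1$, and the candidate $x_{m-1}=k+1$ would require $(k+1)k^L\cdots\lle\al(q)=(k+1)k^{j-2}\al_j(q)\cdots$, which fails at position $j$ of the shift because $k>\al_j(q)$. Symmetrically, $\si^{m-2}(x)\lge\overline{\al(q)}=(k-1)k^{j-2}\overline{\al_j(q)}\cdots$ with $\overline{\al_j(q)}>k$ rules out $x_{m-1}<k$.

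With this in hand, suppose $(x_i)\in\VB_q$ contains $(k+1)\de k^{j-1}$ starting at position $n$. Applying the absorbing property to the terminal block $k^{j-1}$ forces the last symbol of $\de$ to be $k$, extending the run by one; iterating through $\de$ symbol by symbol forces $\de=k^{|\de|}$. The resulting run $x_{n+1}\cdots x_{n+|\de|+j-1}$ then has length $\ge j-1$, so one final application of the absorbing property yields $x_n=k$, contradicting $x_n=k+1$. Hence $\om\de\nu\notin\L(\VB_q)$ for every $\de$, and $(\VB_q,\si)$ fails to be transitive.

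The main obstacle is the case $M=2k+1$: since $\overline{\al(q)}$ begins $kk(k+1)(k+1)\cdots\succ k^\f$, the constant sequence $k^\f$ no longer lies in $\VB_q$. I would instead use the admissible alternating sequence $((k+1)k)^\f\in\VB_q$, take $\nu$ to be a sufficiently long prefix of $((k+1)k)^\f$, and redefine $j$ as the smallest position at which $\al(q)$ deviates from $\al(q_T)=(k+1)((k+1)k)^\f$. A parallel absorbing argument then shows that long alternating blocks $((k+1)k)^L$ force their entire leftward history in any admissible sequence to remain in the same alternating pattern, precluding a bridge from the forced prefix $(k+1)(k+1)$ of $\al(q)$ to such a block and yielding non-transitivity by the same contradiction.
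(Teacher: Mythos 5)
Your proposal is correct and follows essentially the same strategy as the paper's: the paper likewise picks $\om$ to be a short prefix of $\al(q)$ and $\nu$ a long block of the middle pattern ($k^{m}$ for $M=2k$, $((k+1)k)^{m}$ for $M=2k+1$), and then forces any connecting word $\de$ backwards, symbol by symbol, using the lexicographic constraints (your ``absorbing property'' is the same mechanism as the paper's observation that $(k+1)k^{n}$ and $(k-1)k^{n}$ are forbidden for large $n$), arriving at the same contradiction. The only cosmetic difference is that you work out $M=2k$ in detail and sketch $M=2k+1$, whereas the paper does the reverse.
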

 \begin{proof}
Suppose $M=2k+1$. By (\ref{eq:22}) we have $\al(q_T)=(k+1)((k+1)k)^\f$. Take  $q\in [q_{NT}, q_T)$. Then by (\ref{eq:31}) and Lemma \ref{lem:21} it follows that
\begin{equation}\label{eq:34}
((k+1)(k+1)kk)^\f\lle \al(q)\prec (k+1)((k+1)k)^\f.
\end{equation}
So
 there exists $m\ge 2$ such that $\al(q)\prec(k+1)((k+1)k)^m  0^\f$. This implies that the words $(k+1)((k+1)k)^n, k(k(k+1))^n \notin\L(\VB_q)$ for all $n\geq m$. Observe by (\ref{eq:34}) that $\om=(k+1)(k+1)$ and $\nu=((k+1)k)^{m}$ belong to $\L(\VB_q)$. We claim that there is no $\delta\in\L(\VB_q)$ such that $\om\de\nu\in\L(\VB_q)$.

 Suppose on the contrary that $\om\de\nu\in\L(\VB_q)$ for some $\de=\de_1\ldots \de_s\in\L(\VB_q)$. Note that $\de_i\in\set{k,k+1}$ for all $1\le i\le s$. Since $(k+1)((k+1)k)^m\notin\L(\VB_q)$, we have $\de_s=k$. Then $\de_s\nu=k((k+1)k)^m$.  Note that $k(k(k+1))^m\notin\L(\VB_q)$.This gives  $\de_{s-1}=k+1$. By iteration of this reasoning we conclude that
 \[
\de=\de_1\ldots\de_s=\left\{
\begin{array}{lll}
((k+1)k)^j&\textrm{if}& s=2j,\\
k((k+1)k)^j&\textrm{if}& s=2j+1.
\end{array}
\right.
 \]
However, in both cases the word $(k+1)((k+1)k)^m$ occurs in $\om\de\nu$. This implies that $\om\de\nu\notin\L(\VB_q)$, leading to a contradiction with our hypothesis.

Now we assume $M=2k$. Then by (\ref{eq:22}) we have $\al(q_{T})= (k+1)k^\f$. Let $q\in[q_{NT}, q_{T})$. Then by (\ref{eq:31}) and Lemma \ref{lem:21} we have
 \begin{equation}\label{eq:35}
((k+1) (k-1))^\f\lle\al(q)\prec(k+1)k^\f.
\end{equation}
By \eqref{eq:35} there exists $m\ge 1$ such that $\al(q)\prec(k+1)k^m 0^\f$. This implies that $(k+1)k^n, (k-1)k^n\notin\L(\VB_q)$ for all $n\geq m$. Note by (\ref{eq:35}) that $\om=k+1 \in \L(\VB_q)$ and $\nu=k^{m} \in \L(\VB_q)$. By a similar argument to that used in the case where $M=2k+1$, we can prove that  there does not exist $\de\in\L(\VB_q)$ such that $\om\de \nu\in\L(\VB_q)$.
 \end{proof}
 Note that $q_G$ is the smallest element of $\vl$, and $q_{NT}$ is the second smallest element of $\vl$. By Lemma \ref{lem:25} (1) it follows that
  $\VB_q=\VB_{q_G}$ for all $q\in[q_G, q_{NT})$. Note by Lemma \ref{lem:31} that $(\VB_{q_G}, \si)$ is transitive.
Then by Lemma \ref{lem:33} it follows that  $q_{NT}$ is the smallest base $q$ for which $(\VB_q, \si)$ is not transitive.

Now we recall from \cite[Definition 3.1.3]{Lind_Marcus_1995} the following.

\begin{definition}\label{def:34}
A subshift $(X, \sigma)$ is called \emph{sofic} if there exists a labeled graph $\G=(G, \mathfrak{L})$ such that $X$ can be represented by $\G$. A labeled graph $\G=(G, \mathfrak{L})$ is called \emph{right-resolving} if the outgoing edges from the same vertex  carry different labels.
 \end{definition}
 In the following lemma we prove that $(\VB_{q_T}, \si)$ is a transitive sofic subshift.

\begin{lemma}
  \label{lem:35}
{ The subshift $(\VB_{q_T}, \si)$ is sofic and transitive.} Moreover, the topological entropy of $(\VB_{q_T}, \sigma)$ is
  \[
  H(q_T)=\left\{
  \begin{array}
    {lll}
    \log 2&\textrm{if}& M=2k,\\
    \frac{1}{2}\log 2&\textrm{if}& M=2k+1.
  \end{array}\right.
  \]
\end{lemma}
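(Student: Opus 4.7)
The plan is to construct an explicit finite right-resolving labeled graph $\G_M$ presenting $\VB_{q_T}$, verify that $G_M$ is strongly connected, and compute the spectral radius of its $0/1$ transition matrix. Sofic-ness then follows from the existence of the presentation, topological transitivity from strong connectivity of $G_M$, and the entropy values from the standard spectral-radius formula for irreducible right-resolving sofic shifts (with logarithms in base $M+1$ as throughout the paper).

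Case $M=2k$: here $\al(q_T)=(k+1)k^\f$ and $\overline{\al(q_T)}=(k-1)k^\f$, so the lexicographic defining inequalities force $x_i\in\set{k-1,k,k+1}$, and one checks by direct inspection that the forbidden blocks are exactly $(k+1)k^j(k+1)$ and $(k-1)k^j(k-1)$ for $j\ge 0$; equivalently, the subsequence of non-$k$ letters in any element of $\VB_{q_T}$ strictly alternates between $k+1$ and $k-1$. These sequences are presented by the right-resolving graph on two vertices $\mathsf{P},\mathsf{N}$ (encoding the type of the most recent non-$k$ letter), with loops labeled $k$ at each vertex and crossing edges labeled $k-1$ (from $\mathsf{P}$ to $\mathsf{N}$) and $k+1$ (from $\mathsf{N}$ to $\mathsf{P}$). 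The graph is strongly connected and its $2\times 2$ adjacency matrix $\bigl(\begin{smallmatrix}1&1\\1&1\end{smallmatrix}\bigr)$ has spectral radius $2$, giving $H(q_T)=\log 2$.

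Case $M=2k+1$: now $\al(q_T)=(k+1)((k+1)k)^\f$, the letters of $\VB_{q_T}$ lie in $\set{k,k+1}$, and the forbidden blocks split into two infinite symmetric families $(k+1)^2\bigl(k(k+1)\bigr)^j(k+1)$ and $k^2\bigl((k+1)k\bigr)^j k$ for $j\ge 0$. To produce a sofic presentation I would build the canonical DFA whose state records, as a suffix of the current output, the longest prefix of $\al(q_T)$ or $\overline{\al(q_T)}$ currently being matched, and minimise it by identifying states with equivalent right-infinite futures; the two infinite chains collapse to six equivalence classes $\mathsf{U}_2,\mathsf{U}_3,\mathsf{U}_4,\mathsf{L}_2,\mathsf{L}_3,\mathsf{L}_4$, invariant under the order-$2$ reflection symmetry $\mathsf{U}_i\leftrightarrow\mathsf{L}_i$. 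Strong connectivity is witnessed by the crossing edges $\mathsf{U}_3\xrightarrow{k}\mathsf{L}_2$ and $\mathsf{L}_3\xrightarrow{k+1}\mathsf{U}_2$, which realise the transition from a broken upper-match tail to the start of a lower match. Block-diagonalising the adjacency matrix via the reflection symmetry, the eigenvalue equations on the symmetric block reduce to $\la^2=2$, so the Perron eigenvalue is $\sqrt 2$ and $H(q_T)=\tfrac{1}{2}\log 2$.

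The principal obstacle is the $M=2k+1$ case: one must identify the correct minimal sofic presentation and check that the $6$-vertex graph produces exactly the sequences of $\VB_{q_T}$, in both directions, paying attention to how upper- and lower-bound constraints chain into each other near the end of a broken match. For $M=2k$ the corresponding verifications are elementary. Once the presentations are in place, sofic-ness and transitivity follow immediately, and the stated entropies are read off from the Perron eigenvalues computed above.
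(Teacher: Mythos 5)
Your case $M=2k$ is correct and is exactly the paper's argument: the same two-vertex right-resolving graph with loops labelled $k$ and crossing edges labelled $k-1$ and $k+1$, adjacency matrix of spectral radius $2$, hence $H(q_T)=\log 2$.

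The case $M=2k+1$ has a genuine gap. You identify the forbidden words correctly, but the automaton on which the whole argument rests is never written down, and the two structural claims you make about it are false. Take the follower-set automaton whose state is the longest suffix of the output that is a proper prefix of a forbidden word. Its states are: the single-letter classes $\set{k+1}$ and $\set{k}$; the classes $\set{(k+1)^2(k(k+1))^j}_{j\ge0}$ and $\set{k^2((k+1)k)^j}_{j\ge0}$, from which the next letter is forced; and the classes $\set{(k+1)^2(k(k+1))^jk}_{j\ge0}$ and $\set{k^2((k+1)k)^j(k+1)}_{j\ge0}$. The last two have identical transitions (reading $k$ sends both to the forced lower class, reading $k+1$ sends both to the forced upper class), so they merge and the minimal automaton has five states, not six. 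Far more seriously, the two single-letter states can only be entered from each other, never from the other four, so the automaton is \emph{not} strongly connected: it has a transient two-state component whose internal infinite paths carry only the alternating sequences $(k(k+1))^{\infty}$ and $((k+1)k)^{\infty}$. The crossing edges $\mathsf{U}_3\xrightarrow{k}\mathsf{L}_2$ and $\mathsf{L}_3\xrightarrow{k+1}\mathsf{U}_2$ you cite only show that the recurrent four (really three) states are mutually reachable. Hence ``transitivity follows from strong connectivity of the presenting graph'' fails as stated; to rescue it you must pass to the recurrent part and prove that it still presents every admissible word of $\VB_{q_T}$ (every admissible word can be read starting from some recurrent state, because it can be extended to the left into one). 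That is exactly the verification you defer as ``the principal obstacle,'' and it is where the content of the proof lies. Your eigenvalue claim survives: restricted to the recurrent part the Perron eigenvalue is $\sqrt2$, giving $\tfrac12\log 2$, so the numerical answers are all right.

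For comparison, the paper sidesteps the entire issue by presenting $\VB_{q_T}$, exactly as in the even case, with a two-vertex graph whose four edges carry the length-two labels $k(k+1)$, $kk$, $(k+1)k$ and $(k+1)(k+1)$. That graph is visibly irreducible and right-resolving, the inclusion $\VB_{q_T}=X_{\G_2}$ is checked directly from the lexicographic inequalities, and the entropy is $\tfrac12\log\la_{G_2}=\tfrac12\log 2$ because every label has length $2$. If you want to keep the single-letter formulation, write the automaton out explicitly, reduce to its three-state recurrent core, and prove the language equality; otherwise adopt the block-of-length-two presentation.
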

 \begin{proof}

   Suppose $M=2k$. Then by (\ref{eq:23}) we have $\al(q_T)=(k+1)k^\f$. Thus
   \begin{equation}\label{eq:36}
   \VB_{q_T}=\set{(x_i): (k-1)k^\f\lle\si^n((x_i))\lle(k+1)k^\f\textrm{ for all }n\ge 0}.
   \end{equation}

  \begin{figure}[h!]
\centering
\begin{tikzpicture}[->,>=stealth',shorten >=1pt,auto,node distance=4cm,
                    semithick]

  \tikzstyle{every state}=[minimum size=0pt,fill=black,draw=none,text=black]

  \node[state] (A)                    { };
  \node[state]         (B) [ right of=A] { };

  \path[->,every loop/.style={min distance=0mm, looseness=60}]
   (A) edge [loop left,->]  node {$k$} (A)
            edge  [bend left]   node {$k-1$} (B)

        (B) edge [loop right] node {$k$} (B)
            edge  [bend left]            node {$k+1$} (A);
\end{tikzpicture}
 \caption{The picture of the labeled graph $\G_1=(G_1,\LB_1)$.} \label{Fig:2}
\end{figure}
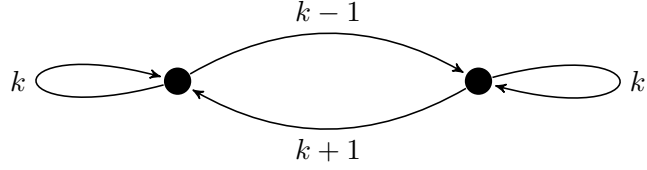
We claim that   $(\VB_{q_T}, \si)$ is a transitive sofic subshift which can be represented by the labeled graph $\G_1=(G_1, \LB_1)$ (see Figure \ref{Fig:2}).

Take a sequence $(x_i)\in\VB_{q_T}$. Then $x_j\in\set{k-1, k, k+1}$ for all $j\geq 1$. If $x_j=k-1$, then by (\ref{eq:36}) it follows that $x_{j+1}\in\set{k, k+1}$. Similarly, if $x_j=k$, then $x_{j+1}\in\set{k-1,k,k+1}$. Moreover, if $x_j=k+1$, then $x_{j+1}\in\set{k-1,k}$. This implies that $(x_i)\in X_{\G_1}$. Therefore $\VB_{q_T}\subseteq X_{\G_1}$.

On the other hand, suppose $(x_i)\notin\VB_{q_T}$. Then there exists $j\in\N$ such that
\[x_j x_{j+1}\ldots\succ (k+1)k^\f\quad\textrm{ or }\quad x_j x_{j+1}\prec (k-1)k^\f.\]
 By symmetry we may assume $x_j x_{j+1}\ldots\succ (k+1)k^\f$. Since $x_i\in\set{k-1, k, k+1}$ for all $i\ge 1$, there must exists a large integer $N\in\N$ such that $$x_j\ldots x_{j+N}=(k+1)k^{N-1}(k+1).$$ Clearly, $(k+1)k^{N-1}(k+1)\notin\L(X_{\G_1})$. This implies that $(x_i)\notin X_{\G_1}$.

Therefore $\VB_{q_T}=X_{\G_1}$ and $(\VB_{q_T}, \sigma)$ is a transitive sofic subshift.
   Note that the labeled graph $\G_1$ is right-resolving. By \cite[Theorem 4.3.3]{Lind_Marcus_1995} it follows that
   \[
   H(q_T)=h(\VB_{q_T})=\log\la_{G_1}=\log 2,
   \]
   where $\la_{G_1}$ is the spectral radius  of the adjacency matrix of $G_1$.

   Now we consider $M=2k+1$. By (\ref{eq:25}) we have $\al(q_T)=(k+1)(k+1)(k(k+1))^\f$. Then
   \[
   \VB_{q_T}=\set{(x_i): kk((k+1)k)^\f\lle \si^n((x_i))\lle(k+1)(k+1)(k(k+1))^\f\textrm{ for all }n\ge 0}.
   \]
By similar arguments used in the case where $M=2k,$ one can verify that  $(\VB_{q_T}, \si)$ is a transitive sofic subshift represented by the labeled graph $\G_2=(G_2, \LB_2)$ (see Figure \ref{Fig:3}).
   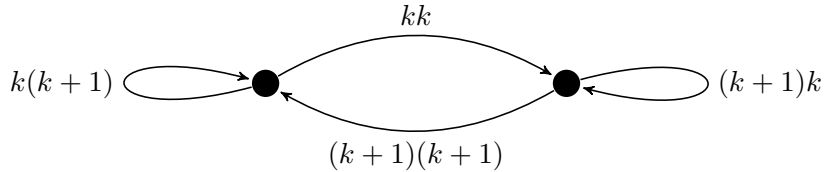
\begin{figure}[h!]
  \centering
\begin{tikzpicture}[->,>=stealth',shorten >=1pt,auto,node distance=4cm,
                    semithick]

  \tikzstyle{every state}=[minimum size=0pt,fill=black,draw=none,text=black]

  \node[state] (A)                    { };
  \node[state]         (B) [ right of=A] { };

  \path[->,every loop/.style={min distance=0mm, looseness=60}]
   (A) edge [loop left,->]  node {$k(k+1)$} (A)
            edge  [bend left]   node {$kk$} (B)

        (B) edge [loop right] node {$(k+1)k$} (B)
            edge  [bend left]            node {$(k+1)(k+1)$} (A);
\end{tikzpicture}
  \caption{The picture of the labeled graph $\G_2=(G_2,\LB_2)$.}
  \label{Fig:3}
\end{figure}
So $(\VB_{q_T}, \si)$ is  a transitive sofic subshift. Note that the labels in Figure \ref{Fig:3} are all of length $2$. This implies that
 $H(q_T)=h(\VB_{q_T})=\frac{1}{2}\log\la_{G_2}=\frac{1}{2}\log 2$.
 \end{proof}

 Combining Lemmas \ref{lem:31}--\ref{lem:35} we have proved Theorem \ref{th1} for $q\in[q_G, q_T]$.
 \begin{proposition}
   \label{prop:36}
   Let $q\in[q_G, q_T]\cap\vl$. Then
 $(\VB_{q},\si)$ is   transitive if and only if $\al(q)$ is irreducible or $q=q_T$.
 \end{proposition}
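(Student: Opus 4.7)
The plan is to prove Proposition \ref{prop:36} by patching together the preceding lemmas according to where $q$ sits in the interval $[q_G,q_T]\cap\vl$. The decisive observation is that, since $q_G$ is the smallest element of $\vl$ and $q_{NT}$ is the second smallest element of $\vl$, the set $[q_G, q_{NT})\cap\vl$ collapses to the singleton $\{q_G\}$. Recalling that $q_T\in\vl$ as noted just after (\ref{eq:22}), the set $[q_G, q_T]\cap\vl$ is thus partitioned into the three pieces $\{q_G\}$, $[q_{NT}, q_T)\cap\vl$, and $\{q_T\}$, and on each piece the biconditional reduces to one of the lemmas already proved in this subsection.

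On $\{q_G\}$, Lemma \ref{lem:31} yields simultaneously that $\al(q_G)$ is irreducible and that $(\VB_{q_G},\si)$ is transitive, so both sides of the biconditional hold. On $[q_{NT}, q_T)\cap\vl$, Lemma \ref{lem:32} gives that $\al(q)$ is not irreducible, and since $q\neq q_T$ the right-hand side of the biconditional fails; at the same time Lemma \ref{lem:33} gives that $(\VB_q,\si)$ is not transitive, so the left-hand side also fails. Finally, at $q=q_T$ the right-hand side holds trivially, and Lemma \ref{lem:35} supplies the transitivity of $(\VB_{q_T},\si)$, so the left-hand side holds as well.

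There is no serious obstacle beyond verifying that this trichotomy exhausts $[q_G, q_T]\cap\vl$. The only point that deserves to be spelled out in the write-up is the claim that $q_G$ and $q_{NT}$ are the two smallest elements of $\vl$, which has already been recorded in the enumeration of $\vl\cap[q_G,q_c)$ preceding (\ref{eq:31}). Consequently the proposition follows at once from the case analysis above.
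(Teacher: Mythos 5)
Your proposal is correct and is essentially the paper's own argument: the paper simply states that Proposition \ref{prop:36} follows by combining Lemmas \ref{lem:31}--\ref{lem:35}, and your trichotomy $\{q_G\}$, $[q_{NT},q_T)\cap\vl$, $\{q_T\}$ (justified by the enumeration of $\vl\cap[q_G,q_c)$ showing $(q_G,q_{NT})\cap\vl=\emptyset$) is exactly the case analysis implicit in that combination. You have merely spelled out the bookkeeping the paper leaves to the reader.
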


Note that $q_T>q_c>q_{NT}$, and $H(q)>0$ if and only if $q> q_c$.  Then by Lemmas \ref{lem:33} and  \ref{lem:35}   it follows that $q_T$ is the smallest base $q$ for which $H(q)>0$ and $(\VB_q, \si)$ is transitive.

\subsection{Transitivity of $(\VB_q, \si)$ for $q\in(q_T, M+1]$}

In this subsection we prove Theorem \ref{th1} for $q\in(q_{T}, M+1]\cap\vl$. First we prove the necessity.
 \begin{proposition}\label{prop:37}
 Let $q\in(q_{T}, M+1]\cap\vl$. If $(\VB_q, \si)$ is topologically transitive, then $\al(q)$ is irreducible.
 \end{proposition}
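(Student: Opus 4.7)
I will prove the contrapositive: assuming $q \in (q_T, M+1]\cap\vl$ and $\al(q) = (a_i)$ is not irreducible, I exhibit admissible words $\omega, \nu \in \L(\VB_q)$ that cannot be joined by any $\delta \in \L(\VB_q)$, so that $(\VB_q, \sigma)$ fails to be transitive.

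By Definition \ref{def:26}, non-irreducibility supplies an integer $j \geq 1$ with $(a_1\ldots a_j^-)^\infty \in \vb$ and $a_1\ldots a_j(\overline{a_1\ldots a_j}^+)^\infty \lge (a_i)$. The hypothesis $q > q_T$ forces $j \geq 2$: in either parity of $M$, one has $a_1(\overline{a_1}^+)^\infty = \al(q_T) \prec \al(q)$ by (\ref{eq:22}) and Lemma \ref{lem:21}, so $j=1$ cannot witness non-irreducibility. In particular $a_j \geq 1$. Setting $p := \Phi^{-1}((a_1\ldots a_j^-)^\infty) \in \vl$ we have $p < q$, so Lemma \ref{lem:25}(2) yields $(a_1\ldots a_j^-)^\infty \in \VB_p \subset \VB_q$; as the inequalities defining $\VB_q$ are reflection-symmetric, we also get $(\overline{a_1\ldots a_j}^+)^\infty \in \VB_q$. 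Consequently the words $a_1\ldots a_j$, $\overline{a_1\ldots a_j}^+$, and arbitrary powers of the latter all belong to $\L(\VB_q)$.

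Set $\omega := a_1\ldots a_j$ and, for each $m \in \N$, $\nu_m := (\overline{a_1\ldots a_j}^+)^m\, a_1\ldots a_j$; by the previous paragraph both words are admissible. I claim that for $m$ large enough, no $\delta \in \L(\VB_q)$ satisfies $\omega\delta\nu_m \in \L(\VB_q)$. The key observation is rigidity: whenever $(x_i) \in \VB_q$ contains $a_1\ldots a_j$ starting at position $n+1$, the bounds $\overline{\al(q)} \lle \sigma^n((x_i)) \lle \al(q) \lle a_1\ldots a_j(\overline{a_1\ldots a_j}^+)^\infty$, together with their reflected counterparts, force the next $j$ symbols to lie in a narrow two-element cylinder of the form $\overline{a_1\ldots a_{j-1}}\{M-a_j,\, M-a_j+1\}$. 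In the strict case $\al(q) \prec a_1\ldots a_j(\overline{a_1\ldots a_j}^+)^\infty$, iterating this rigidity produces an explicit $m_0$ such that $a_1\ldots a_j(\overline{a_1\ldots a_j}^+)^{m_0} \notin \L(\VB_q)$; choosing $m \geq m_0$ places this forbidden subword inside $\nu_m$, contradicting $\omega\delta\nu_m \in \L(\VB_q)$ for every $\delta$. In the equality case $\al(q) = a_1\ldots a_j(\overline{a_1\ldots a_j}^+)^\infty$, the same rigidity shows that once a sequence leaves the $a_1\ldots a_j$-phase it is forced into the $\overline{a_1\ldots a_j}^+$-phase irreversibly, so the trailing block $a_1\ldots a_j$ of $\nu_m$ can never be attained after passing through $\omega$.

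The main obstacle will be executing the equality case cleanly, where no quantitative gap is available and the argument must rest entirely on propagation of the lexicographic constraints. I plan to split it further according to whether $q \in \ul$ or $q \in \vl \setminus \ul$: in the latter case Lemma \ref{lem:25}(3) and (1) present $(\VB_q, \sigma)$ as a subshift of finite type realised by a labelled graph analogous to Figures \ref{Fig:2}--\ref{Fig:3}, whose two cycles are joined in only one direction, directly obstructing strong connectivity; in the former case $\al(q)$ must be purely periodic with period $j$, and the rigidity above pins $\VB_q$ down to an explicit two-phase shift with the same one-way connectivity. Throughout, the bound $j \geq 2$ supplied by $q > q_T$ is indispensable, since for $j = 1$ the two-symbol cylinder collapses to a single digit and the rigidity mechanism disappears.
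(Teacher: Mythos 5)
Your overall strategy---prove the contrapositive, use the bound $\al(q)\lle a_1\ldots a_j(\overline{a_1\ldots a_j}\,^+)^\f$ to force every continuation of $\om=a_1\ldots a_j$ into a rigid two-phase sofic shift, then exhibit an admissible word that shift cannot produce---is exactly the paper's. But several steps fail as written. First, the claim that $q>q_T$ forces $j\ge 2$ is false: you justify it by asserting $a_1(\overline{a_1}\,^+)^\f=\al(q_T)$, which holds only when $a_1=k+1$ (say for $M=2k$). If $a_1>k+1$ then $a_1(\overline{a_1}\,^+)^\f\succ\al(q_T)$ and $j=1$ can perfectly well witness non-irreducibility above $q_T$; for instance with $M=8$ the base with $\al(q)=63^\f$ lies above $q_T$ (where $\al(q_T)=54^\f$) and fails irreducibility precisely at $j=1$. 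Since you declare $j\ge 2$ ``indispensable'', this is not cosmetic. Second, in the strict case your forbidden word is $\om(\overline{\om}\,^+)^{m_0}$, but this is simply not a subword of $\nu_m=(\overline{\om}\,^+)^m\om$---the only occurrence of $\om$ in $\nu_m$ is terminal---so ``placing the forbidden subword inside $\nu_m$'' accomplishes nothing. What you actually need is that $(\overline{\om}\,^+)^m\om$ cannot occur \emph{at any offset} in the forced concatenation of blocks $\om,\om^-,\overline{\om},\overline{\om}\,^+$; since $|\delta|$ need not be a multiple of $j$, this requires an alignment argument you never supply. You also never verify that the concatenation $(\overline{\om}\,^+)^m\om$ is itself in $\L(\VB_q)$ (admissibility of powers of $\overline{\om}\,^+$ and of $\om$ separately does not give admissibility of the junction). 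Finally, the equality case is only a plan, and the assertion that for $q\in\vl\setminus\ul$ the graph presenting $(\VB_q,\si)$ has ``two cycles joined in only one direction'' is essentially the non-transitivity you are trying to prove.

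The paper avoids all of this with one choice of target word: it takes $\nu=k^{2j}$ (resp.\ $((k+1)k)^{2j}$ for $M=2k+1$), which is admissible because $q>q_T$ gives $\al(q)\succ(k+1)k^\f$, hence $k^\f\in\VB_q$, and which cannot occur in the forced sofic shift because each of the four labels $\om,\om^-,\overline{\om},\overline{\om}\,^+$ begins with a digit different from $k$ (here the only delicate point is $j\ge 2$ when $\al_1=k+1$, which the paper derives correctly from $\al(q)\succ(k+1)k^\f$ together with $\al(q)\lle\om(\overline{\om}\,^+)^\f$). A window of length $2j$ must contain a label boundary, so the alignment problem disappears, and the argument works uniformly for all $j\ge 1$ and for both the strict and equality cases. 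I would replace $\nu_m$ with such a word and rerun your rigidity argument.
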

 \begin{proof}

 Take $q\in(q_T, M+1]\cap \vl$.
 Suppose that  $\al(q)=(\al_i)$ is not irreducible. We will show that $(\VB_q, \si)$ is not topologically transitive. By our assumption and Definition \ref{def:26} there must exist an integer $j\ge 1$ such that
 \begin{equation}\label{eq:37}
 (\al_1\ldots\al_j^-)^\f\in\vb\quad\textrm{and} \quad \al_1\ldots\al_j(\overline{\al_1\ldots\al_j}\,^+)^\f\succcurlyeq(\al_i).
 \end{equation}

 Note that $(\al_i)=\al(q)\in\vb$. So $\om:=\al_1\ldots\al_j\in\L(\VB_q)$. Let $(x_i)$ be such that $\om x_1x_2\ldots\in\VB_q$. By (\ref{eq:37}) it follows that
 \[
 \om x_1x_2\ldots=\al_1\ldots\al_j x_1 x_2\ldots\lle(\al_i)\lle\al_1\ldots\al_j(\overline{\al_1\ldots\al_j}\,^+)^\f.
 \]
 This implies that $x_1\cdots x_j\lle \overline{\al_1\ldots\al_j}\,^+=\overline{\om}\,^+$. However, since $\om x_1x_2\ldots\in\VB_q$ then we must have $(x_i)\lge \overline{(\al_i)}$, which yields
  $x_1\ldots x_j\lge\overline{\al_1\ldots \al_j}=\overline{\om}$. Therefore,
 \[
 x_1\ldots x_j =\overline{\om}\quad\textrm{or}\quad x_1\ldots x_j=\overline{\om}\,^+.
 \]

\begin{itemize}
  \item[(i)] If $x_1\ldots x_j=\overline{\om}$, then by (\ref{eq:37}) and $\om x_1x_2\ldots\in\VB_q$ it follows that
  \[
x_1x_2\ldots =\overline{\om}x_{j+1}x_{j+2}\ldots\lge\overline{\al_1\ldots\al_j}(\al_1\ldots\al_j^-)^\f=\overline{\omega}(\omega^-)^\f,
  \]
  which implies that $x_{j+1}\ldots x_{2j}\lge \omega^-$. Since $\om x_1x_2\ldots \in\VB_q$ we have  $x_{j+1}x_{j+2}\ldots\lle(\al_i)=\omega\al_{j+1}\al_{j+2}\ldots$. Then we conclude that
  \[
  x_{j+1}\ldots x_{2j}=\om\quad\textrm{or}\quad x_{j+1}\ldots x_{2j}=\om^-.
  \]
  \begin{itemize}
  \item[(ia)] If $x_{j+1}\ldots x_{2j}=\omega$, then by the same argument as above we have $x_{2j+1}\ldots x_{3j}=\overline{\omega}$ or $x_{2j+1}\ldots x_{3j}=\overline{\omega}^+$.

  \item[(ib)] If $x_{j+1}\ldots x_{2j}=\omega^-$, then $x_{1}\ldots x_{2j}=\overline{\omega}\omega^-$. By \eqref{eq:37} it follows that
  \[
x_1x_2\ldots=\overline{\omega}\omega^-x_{2j+1}x_{2j+2}\ldots\lge\overline{\al_1\ldots\al_j}(\al_1\ldots\al_j^-)^\f=\overline{\omega}(\omega^-)^\f.
  \]
  This implies that $x_{2j+1}\ldots x_{3j}\lge \omega^-$. Note that $x_{2j+1}x_{2j+2}\ldots\lle(\alpha_i)$. We obtain that $x_{2j+1}\ldots x_{3j}=\omega^-$ or $x_{2j+1}\ldots x_{3j}=\omega$.
  \end{itemize}

  \item[(ii)] If $x_1\ldots x_j=\overline{\om}\,^+$, then by (\ref{eq:37}) it follows that
  \[
  \om x_1x_2\ldots =\om\,\overline{\om}\,^+ x_{j+1}x_{j+2}\ldots \lle \al_1\ldots\al_j(\overline{\al_1\ldots\al_j}\,^+)^\f=\om (\overline{\om}\,^+)^\f,
  \]
  which implies that $x_{j+1}\ldots x_{2j}\lle \overline{\om}\,^+$. On the other hand, since $\om x_1x_2\ldots\in\VB_q$ we have $x_{j+1}\ldots x_{2j}\lge \overline{\al_1\ldots \al_j}=\overline{\om}$. Hence,
  \[
  x_{j+1}\ldots x_{2j}=\overline{\om}\quad\textrm{or}\quad  x_{j+1}\ldots x_{2j}=\overline{\om}\,^+.
  \]
\end{itemize}

 By iterating the above arguments it follows that $(x_i)\in X_{\G_3}$, where $(X_{\G_3}, \sigma)$ is a sofic subshift represented by the labeled graph $\G_3=(G_3, \LB_3)$ (see Figure \ref{Fig:4}).
\begin{figure}[h!]
\centering
\begin{tikzpicture}[->,>=stealth',shorten >=1pt,auto,node distance=4cm,
                    semithick]

  \tikzstyle{every state}=[minimum size=0pt,fill=black,draw=none,text=black]

  \node[state] (A)                    { };
  \node[state]         (B) [ right of=A] { };

  \path[->,every loop/.style={min distance=0mm, looseness=60}]
   (A) edge [loop left,->]  node {$\overline{\omega}^+$} (A)
            edge  [bend left]   node {$\overline{\omega}$} (B)

        (B) edge [loop right] node {$\omega^-$} (B)
            edge  [bend left]            node {$\omega$} (A);
\end{tikzpicture}
  \caption{The picture of the  labeled graph $\G_3=(G_3, \LB_3)$.}\label{Fig:4}
\end{figure}
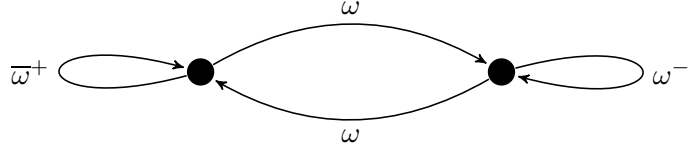

Note   that the word $\omega=\al_1\ldots \al_j$ belongs to $\L(\VB_q)$. Then to prove that $(\VB_q,\si)$ is not transitive it suffices to show that there exists a word $\nu\in\L(\VB_q)$ such that   $\omega$ can not be connected with $\nu$ in $\L(\VB_q)$.  From the above arguments it follows that if a sequence in $\VB_q$ begins with $\om$, then the sequence must be in $X_{\G_3}$ (see also Figure \ref{Fig:4}). So,  to prove that $(\VB_q,\si)$ is not transitive  it suffices to show that
 there exists a word $\nu\in\L(\VB_q)$    not appearing  in $\L(X_{\G_3})$. We split the proof of this fact into the following two cases.
%Indeed, we will find such a word $\nu$ that occurs in a sequence $(d_i)$ satisfying (\ref{eq:36}).

Case (I).  $M=2k$. Since $q>q_{T}$,  by (\ref{eq:22}) and Lemma \ref{lem:21} we have
\begin{equation}
\label{eq:38}
\al(q) \succ\al(q_{T})=(k+1)k^\f.
 \end{equation}
 So the word $k^s\in\L(\VB_q)$ for all $s\in\N$. If $\al_1=k+1$, then by (\ref{eq:37}) and (\ref{eq:38}) we have $j\ge 2$. As a consequence of this, we see by examining Figure \ref{Fig:4} that the word $\nu=k^{2j}\in\L(\VB_q)$ cannot appear in  $\L(X_{\G_3})$. If $\al_1>k+1$, then we can similarly prove that $\nu=k^{2j}\in\L(\VB_q)$ is not contained in $\L(X_{\G_3})$.

Case (II). $M=2k+1$.  Note that $q>q_{T}$. Then
\begin{equation}\label{eq:39}
(\al_i) \succ\al(q_{T})=(k+1)((k+1)k)^\f.
 \end{equation}
 This implies that the word $((k+1)k)^s\in\L(\VB_q)$ for all $s\in\N$. If $\al_1=k+1$, then by (\ref{eq:37}) and (\ref{eq:39}) we have $j\ge 3$. By examining Figure \ref{Fig:4} we see that the word $\nu=((k+1)k)^{2j}\in\L(\VB_q)$ does not occur in $\L(X_{\G_3})$. If $\al_1>k+1$, then we could again deduce that $\nu=((k+1)k)^{2j}\in\L(\VB_q)$ is not contained in $\L(X_{\G_3})$.
 \end{proof}

 Now we turn to prove the sufficiency of Theorem \ref{th1} for $q\in(q_T, M+1]$. Take $q\in(q_T, M+1]$ such that $\al(q)$ is irreducible.  We want to show that for any two admissible words $\om, \nu\in\L(\VB_q)$ there exists a word $\de$ such that $\om\de\nu\in\L(\VB_q)$.

 In the following lemma we investigate possible suffixes of a given word in $\L(\VB_q)$.
\begin{lemma}
  \label{lem:38}
  Let $q\in(q_c, M+1]\cap \vl$. Then for  any word $\om\in\L(\VB_q)$ and any integer $m>
  |\om|$ there exists $\eta\in \L(\VB_q)$ such that $\al_1(q)\ldots\al_m(q)$ or $\overline{\al_1(q)\ldots\al_m(q)}$ is a suffix of $\omega\eta\in B_*(\VB_q)$.
\end{lemma}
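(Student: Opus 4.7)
The plan is to construct $\eta$ by trying, in order of complexity, the direct concatenations $\eta=\al_1(q)\ldots\al_m(q)$ or $\eta=\overline{\al_1(q)\ldots\al_m(q)}$, and falling back on suitably shifted versions of these when the direct attempts fail. The key tools will be the lexicographic characterisation of $\L(\VB_q)$ together with the defining property $\overline{\al(q)}\lle\si^l\al(q)\lle\al(q)$ for all $l\ge 0$ coming from $q\in\vl$.

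I first set $n=|\omega|$. The hypothesis $\omega\in\L(\VB_q)$ yields the constraints $\overline{\al_1(q)\ldots\al_{n-i+1}(q)}\lle\omega_i\ldots\omega_n\lle\al_1(q)\ldots\al_{n-i+1}(q)$ for each $1\le i\le n$. A short case analysis then shows that $\omega\al_1(q)\ldots\al_m(q)$ fails to belong to $\L(\VB_q)$ only when $\omega$ has some suffix $\omega_{n-l+1}\ldots\omega_n=\al_1(q)\ldots\al_l(q)$ with $\al_{l+1}(q)\ldots\al_{l+m}(q)\prec\al_1(q)\ldots\al_m(q)$ strictly, and symmetrically for the reflected concatenation. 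When the direct concatenation succeeds we are done; when it fails at such an $\al$-suffix of length $l$, I will take $\eta=\al_{l+1}(q)\ldots\al_m(q)$, so that $\omega\eta=\omega_1\ldots\omega_{n-l}\al_1(q)\ldots\al_m(q)$ ends in $\al_1(q)\ldots\al_m(q)$ as required. Verifying $\omega\eta\in\L(\VB_q)$ then reduces to the same problem for the shorter word $\omega_1\ldots\omega_{n-l}$, and I expect to proceed by strong induction on $n$, with the trivial base case $n=0$ handled by $\eta=\al_1(q)\ldots\al_m(q)$, a prefix of $\al(q)\in\VB_q$.

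The main obstacle will be the delicate case where $\omega$ simultaneously carries a bad $\al$-suffix of length $l$ and a bad $\overline{\al}$-suffix of length $l'$. Assuming $l<l'$, these two identities force the combinatorial relation $\al_{l'-l+1}(q)\ldots\al_{l'}(q)=\overline{\al_1(q)\ldots\al_l(q)}$, and the inequality $\si^{l'-l}\al(q)\lge\overline{\al(q)}$ coming from $q\in\vl$ then yields $\overline{\si^{l'}\al(q)}\lle\si^l\al(q)$. Combining this with $\al_1(q)>M/2$ (which holds for every $q>q_c$), I plan to show that the tail $\rho=\si^l\al(q)\in\VB_q$ can be appended to $\omega$ so that $\omega\rho\in\VB_q$, after which the truncated extension $\eta=\al_{l+1}(q)\ldots\al_m(q)$ again produces the required suffix. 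The technical heart of this step will be verifying the upper lexicographic bound at every intermediate position $1\le i\le n$, which requires careful bookkeeping of any additional suffix matches between $\omega$ and prefixes of $\al(q)$ or $\overline{\al(q)}$.
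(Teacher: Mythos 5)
Your core idea is the right one and is the same as the paper's: when the direct concatenations $\om\al(q)$ and $\om\overline{\al(q)}$ fail, the obstruction is a suffix of $\om$ equal to a prefix of $\al(q)$ (or of $\overline{\al(q)}$), and the cure is to append the correspondingly \emph{shifted} tail of $\al(q)$ so that the infinite word ends in an exact copy of $\al(q)$. However, the inductive scheme you propose for verifying $\om\eta\in\L(\VB_q)$ has a genuine gap. The lemma applied to the truncated word $\om'=\om_1\ldots\om_{n-l}$ produces \emph{some} word $\om'\eta'$ ending in $\al_1\ldots\al_m$ or $\overline{\al_1\ldots\al_m}$; but $\om'\eta'$ need not contain $\om$ as a prefix (the extension $\eta'$ supplied by the inductive hypothesis need not begin with $\al_1\ldots\al_l$, and may even end in the reflected block), so it does not certify that $\om_1\ldots\om_{n-l}\al_1\ldots\al_m$ — the specific word you have committed to — lies in $\L(\VB_q)$. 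In short, "reduces to the same problem for the shorter word" is not literally true: you need the shorter word to be followable by the \emph{full} sequence $\al(q)$, which is a stronger statement than the lemma itself.

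The paper avoids this entirely by choosing, in one step, the \emph{smallest} $s$ (equivalently the longest suffix of $\om$ equal to a prefix of $\al(q)$ or $\overline{\al(q)}$) and appending $\si^{n-s}(\al(q))$ or its reflection. Minimality of $s$ forces strict inequalities $\overline{\al_1\ldots\al_{n-i}}\prec\om_{i+1}\ldots\om_n\prec\al_1\ldots\al_{n-i}$ for all $i<s$, so those positions are insensitive to whatever is appended, while every position $i\ge s$ sees a shift of $\al(q)$, which is controlled by $q\in\vl$. No induction is needed, and your "delicate case" of coexisting $\al$- and $\overline{\al}$-suffixes disappears: after aligning with the longest match, any shorter match of the opposite type is handled by $\al_1(q)>\overline{\al_1(q)}$ (for the upper bound) and $\si^j(\al(q))\lge\overline{\al(q)}$ (for the lower bound). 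Your recursion can be repaired — one can check, using $\si^j(\al(q))\lle\al(q)$, that a matching suffix of the truncated word always extends to a strictly longer matching suffix of $\om$ itself, so the recursion is secretly just locating the longest match — but as written this justification is missing and the induction does not close.
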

\begin{proof}
Take $\om=\om_1\ldots\om_n\in\L(\VB_q)$ and fix an integer $m> n$. To prove our result it suffices to find $(\eta_i)\in \VB_q$ such that $\om \eta_1 \eta_2\ldots \in \VB_q$ and $\om \eta_1\eta_2\ldots \eta_l$ satisfies the desired properties for some $l\geq 1$. Since $\om\in\L(\VB_q)$ we have
\begin{equation}
  \label{eq:310}
  \overline{\al_1(q)\ldots\al_{n-i}(q)}\lle \om_{i+1}\ldots\om_n\lle \al_1(q)\ldots\al_{n-i}(q)\quad\textrm{for all}\quad i=0,1,\ldots, n-1.
\end{equation}
If strict inequalities hold in (\ref{eq:310})  for all $i\in\set{0,1,\ldots,n-1}$, then the lemma follows by taking $(\eta_i)=(\al_i(q))$ or $(\overline{\al_i(q)})$ and $l=m$.

Otherwise,  let $s\in\set{0,1,\ldots,n-1}$ be the smallest integer for which
\[
\om_{s+1}\ldots\om_n=\al_1(q)\ldots\al_{n-s}(q)\quad\textrm{or}\quad \om_{s+1}\ldots\om_n=\overline{\al_1(q)\ldots\al_{n-s}(q)}.
\]
By symmetry we may assume $\om_{s+1}\ldots\om_n=\al_1(q)\ldots\al_{n-s}(q)$. If $s=0$, then $\om=\al_1(q)\ldots\al_{n}(q)$, and therefore the lemma follows by taking $(\eta_i)=(\al_{n+i})$ and $l=m-n$. If $s\in\set{1,\ldots,n-1}$, then by the minimality  of $s$ and (\ref{eq:310}) it follows that
  \[
      \overline{\al_1(q)\ldots\al_{n-i}(q)}\prec\om_{i+1}\ldots\om_n\prec\al_1(q)\ldots\al_{n-i}(q)\quad\textrm{for all}\quad 0\le i<s.
  \]
Therefore, the lemma follows by taking $(\eta_i)=(\al_{n-s+i}(q))$ and $l=m-n+s$.
\end{proof}

We also investigate  possible prefixes of  a given word  in $\L(\VB_q)$.
\begin{lemma}
  \label{lem:39}
  Let $q\in(q_T, M+1]$ and $m\in\N$.
Then for   any $\nu\in\L(\VB_q)$  there exists $\ga\in\L(\VB_q)$  such that $k^m$ is a prefix of $\ga\nu\in\L(\VB_q)$ if $M=2k$, or $((k+1)k)^m$ is a prefix of $\ga\nu \in\L(\VB_q)$ if $M=2k+1$.
\end{lemma}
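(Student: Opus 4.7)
The plan is to construct, for each $\nu\in\L(\VB_q)$, a one-sided sequence $s\in\VB_q$ whose prefix is $\gamma\nu$, with $\gamma:=k^m$ in the case $M=2k$ and (to first approximation) $\gamma:=((k+1)k)^m$ in the case $M=2k+1$. Since $\nu$ is a factor of some $(x_i)\in\VB_q$, say $\nu=x_j\cdots x_{j+|\nu|-1}$, the shift $(y_i):=\si^{j-1}((x_i))$ lies in $\VB_q$ and begins with $\nu$, so I set $s:=\gamma\,y_1y_2\cdots$ and verify $\overline{\al(q)}\lle\si^n(s)\lle\al(q)$ for every $n\ge 0$. The shifts with $n\ge|\gamma|$ are automatic because $\si^n(s)=\si^{n-|\gamma|}((y_i))\in\VB_q$, so only the shifts with $0\le n<|\gamma|$ require work. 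The key structural input, via \eqref{eq:22} and Lemma~\ref{lem:21}, is that $q>q_T$ forces $\al_1(q)\ge k+1$ in both parities of $M$, and additionally $\al_2(q)\ge k+1$ whenever $M=2k+1$ and $\al_1(q)=k+1$.

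For $M=2k$ the verification is immediate: every $\si^n(s)$ with $0\le n<m$ begins with $k$, which lies strictly between $\overline{\al_1(q)}\le k-1$ and $\al_1(q)\ge k+1$, so both bounds are settled at position $1$ and the lemma follows with $\gamma=k^m$. For $M=2k+1$, the even shifts $\si^{2i}(s)$ with $0\le i\le m-1$ open with $(k+1)k$ and the odd shifts $\si^{2i+1}(s)$ with $0\le i<m-1$ open with $k(k+1)$; in each of these cases the upper bound is obtained from $k<k+1\le\al_1(q)$ and the lower bound from $\overline{\al_1(q)}\le k$ together with $\overline{\al_2(q)}\le k<k+1$, resolving the comparison within the first two digits.

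The main obstacle is the final odd shift $\si^{2m-1}(s)=k\nu_1y_2\cdots$ in the regime $\al_1(q)=\al_2(q)=k+1$ and $\nu_1=k$, where both sides open with $kk$ and the direct choice of $\gamma$ can fail (for instance when $\al(q)=((k+1)(k+1)k)^\f$ and $\nu=kk$, the resulting $s$ exits $\VB_q$ already at position three of $\si^{2m-1}(s)$). To cover this subcase I would enlarge the prefix to $\gamma:=((k+1)k)^m\al_1(q)\al_2(q)\cdots\al_N(q)$ for $N$ large enough, inserting a quasi-greedy buffer between $((k+1)k)^m$ and $\nu$; the quasi-greedy property $\si^i(\al(q))\lle\al(q)$ from Lemma~\ref{lem:21} controls shifts entering the buffer, while choosing $(y_i)$ appropriately — typically as a shift of $\overline{\al(q)}\in\VB_q$ when $\nu$ occurs there — handles the transition out of it. The delicate step is arguing that such an $N$ can be found uniformly in $\nu$; this uses that for $q\in\vl$ the admissible continuations of any given $\nu$ are forced by iterating the defining lexicographic inequalities of $\VB_q$, so the transition $\al_N(q)\nu_1\cdots$ can be certified in finitely many digits.
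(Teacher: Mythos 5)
Your argument for $M=2k$, and for $M=2k+1$ when $\nu_1\ge k+1$, is complete and coincides with the paper's: all comparisons are settled within one or two digits using $\al_1(q)\ge k+1$ (and $\al_2(q)\ge k+1$ whenever $\al_1(q)=k+1$). You have also correctly isolated the only genuine difficulty, the shift $\si^{2m-1}(s)=k\nu_1\cdots$ in the regime $\al_1(q)=\al_2(q)=k+1$, $\nu_1=k$, and your counterexample $\al(q)=((k+1)(k+1)k)^\f$, $\nu=kk$ to the naive choice of $\ga$ is valid.

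The repair you propose for that subcase is, however, a genuine gap. Inserting a buffer $\al_1(q)\cdots\al_N(q)$ between $((k+1)k)^m$ and $\nu$ only relocates the junction: in your own example, any buffer ending in the digit $k$ (e.g.\ $N\equiv 0\pmod 3$) produces at the new junction the shifted tail $k\nu_1\nu_2\cdots=kkk\cdots$, which again falls below $\overline{\al(q)}=kk(k+1)\cdots$ at the third digit --- the identical failure you set out to avoid. Whether a suitable $N$ exists and how to certify admissibility of $\al_1(q)\cdots\al_N(q)\nu_1\cdots$ across that junction is exactly the content of the lemma, and you leave it unproved (you flag the step yourself). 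The paper's fix is one letter long: when $\nu_1\le k$, take $\ga=((k+1)k)^m(k+1)$. Every shift of $\ga\nu\cdots$ not landing in the tail $(y_i)\in\VB_q$ then begins with $(k+1)k$, with $k(k+1)$, or with $(k+1)\nu_1$ where $\nu_1\le k$, and each of these two-letter blocks lies strictly between $\overline{\al_1(q)\al_2(q)}$ and $\al_1(q)\al_2(q)$ (using $\overline{\al_1(q)}\le k$ and $\overline{\al_2(q)}\le k$), so every comparison closes within two digits; no buffer, no uniformity in $\nu$, and no control of continuations is needed, and $((k+1)k)^m$ is still a prefix of $\ga\nu$ as required.
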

\begin{proof}
Take $m\in\N$. Suppose that $M=2k$. Note that $q_T< q\le M+1$. Then by (\ref{eq:22}) and  Lemma \ref{lem:21} we have $\al(q)\succ\al(q_T)= (k+1)k^\f$. This implies that
\[
\overline{\al_1(q)}<k<\al_1(q).
\]
Thus, the lemma follows by taking $\ga=k^m$.

Now we assume $M=2k+1$. Since $q\in(q_T, M+1]$, we have by Lemma \ref{lem:21} that $\al(q)\succ\al(q_T)= (k+1)((k+1)k)^\f$. This implies that
\[
\overline{\al_1(q)\al_2(q)}\prec(k+1)k\prec\al_1(q)\al_2(q).
\]
Let $\nu=\nu_1\ldots\nu_n\in\L(\VB_q)$.  Then the lemma follows by taking $\ga=((k+1)k)^m(k+1)$ if $\nu_1\le k$, or  by taking $\ga=((k+1)k)^m$ if $\nu_1\ge k+1$.
\end{proof}

By Lemmas \ref{lem:38} and \ref{lem:39}, to prove the transitivity of $(\VB_q, \si)$ it suffices to connect words in $\L(\VB_q)$ that end with
 \[\al_1(q)\ldots\al_m(q)\quad\textrm{or}\quad \overline{\al_1(q)\cdots \al_m(q)},\quad m\in\N,\]
  with words in  $\L(\VB_q)$ that begin with $k^m$ if $M=2k$, or begin with $((k+1)k)^m$ if $M=2k+1$. We will show this by using some  techniques from combinatorics on words (see e.g.,~\cite{Allouche_Shallit_2003}).

\begin{definition}\label{def:310}
A word $\a=a_1\ldots a_m$ is called \emph{primitive} if
\begin{equation*}
\overline{a_1\ldots a_{m-i}}\prec a_{i+1}\ldots a_m \lle a_1\ldots a_{m-i}\quad\textrm{for all }0\le i<m.
\end{equation*}
\end{definition}
 { We point out that the name `primitive' of a word will make sense, since later in Proposition \ref{prop:317} we will use primitive words to construct a bridge between two admissible words in $(\vb_q,\sigma)$ with $\al(q)$ irreducible.}
In the following definition we introduce a subword based on a primitive word. The subwords will be useful in our subsequent proofs.
\begin{definition}\label{def:311}
For a primitive word $\a=a_1\ldots a_m,$ we call the truncated word
$$ \R(\a):=a_1\ldots a_s$$ the \emph{reflection recurrence word} of $\a$, where
 $s\in\set{0,1,\ldots,m-1}$ is the smallest integer satisfying
 \begin{equation}\label{eq:311}
 a_{s+1}\ldots a_m^-=\overline{a_1\ldots a_{m-s}}.
 \end{equation}
 \end{definition}
 We point out that when $s=0$ the reflection recurrence word $\R(\a)=\epsilon$ is the empty word. If  (\ref{eq:311}) does not hold for any $s\in\{0,1,\ldots,m-1\}$, then we set $\R(\a)=\a$, and in this case we have
 \[\overline{a_1\ldots a_{m-i}}\prec a_{i+1}\ldots a_m^-\prec a_1\ldots a_{m-i}\quad \textrm{for all}\quad 0\le i<m.\]

Before beginning our investigation into reflection recurrence words let us give an example.
\begin{example}\label{ex:312}
Let $M=1$. We consider the word
\[\a=a_1\ldots a_{12}=111001\;000111.\]
By Definition \ref{def:310} it follows that $\a$ is primitive. Note that
\[
a_{7}\ldots a_{12}^-=000110=\overline{111001}=\overline{a_1\ldots a_6},
\]
and   $s=6$ is the smallest integer for which $a_{s+1}\ldots a_{12}^-=\overline{a_1\ldots a_{12-s}}$. By Definition \ref{def:311} this implies that
\[
\R(\a)=a_1\ldots a_6=111001.
\]
In terms of Definition \ref{def:310} the reflection recurrence word $\R(\a)$ is also a primitive word.

By a similar argument to that given above we can prove that the reflection recurrence word of $\R(\a)$ is given by
\[
\R^2(\a):=\R(\R(\a))=\R(111001)=111.
\]
Again, $\R^2(\a)$ is primitive. Repeating this reasoning we have
\[
\R^3(\a):=\R(\R^2(\a))=\R(111)=11,
\]
which is also  a primitive word.  Furthermore, $\R^4(\a)=\R(\R^3(\a))=1$, again a primitive word. However, $\R^5(\a)=\R(\R^4(\a))=\epsilon$ is the empty word.
\end{example}

The previous example motivates us to investigate  the following properties of reflection recurrence words and  primitive words.
First we give bounds  for the length of a reflection recurrence word in term of the length of a primitive word $\a$.
\begin{lemma}\label{lem:313}
Let $\a$ be a primitive word with  $|\a|\ge 2$. Then
$\frac{|\a|}{2}\le |\R(\a)|\le|\a|$.
\end{lemma}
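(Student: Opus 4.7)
The plan is to settle the two inequalities separately. The upper bound $|\R(\a)|\le m$ is tautological from Definition~\ref{def:311}: either no qualifying index exists and $\R(\a)=\a$, or $\R(\a)=a_1\ldots a_s$ with $s\le m-1$. For the lower bound $|\R(\a)|\ge m/2$ I would argue by contradiction, assuming $s:=|\R(\a)|<m/2$. The degenerate case $s=0$ is ruled out quickly: the defining equation $a_1\ldots a_m^-=\overline{a_1\ldots a_m}$ would force $a_j=M/2$ for $j<m$ and $a_m=(M+1)/2$, and these integer conditions cannot both hold when $m\ge 2$. So the substantive case is $1\le s<m/2$, where I set $t:=m-s$ and observe $t>s$.

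Unpacking the defining equality of Definition~\ref{def:311} gives
\[
a_{s+j}=M-a_j\quad(j=1,\ldots,t-1),\qquad a_m=M-a_t+1.
\]
I would first feed $j=m-2s$ into the first relation—this is legal since $s\ge 1$ implies $1\le m-2s\le t-1$—to deduce $a_t=a_{m-s}=M-a_{m-2s}$, and then substitute into the expression for $a_m$ to obtain the key identity $a_m=a_{m-2s}+1$. Next, composing the first relation with itself yields the periodicity $a_{2s+k}=M-a_{s+k}=a_k$ for $k=1,\ldots,m-2s-1$ (the values $a_{s+k}$ fall in the valid range because $s+k\le t-1$). The contradiction now comes from invoking primitivity at the single index $i=2s$, permissible because $s<m/2$ forces $2s<m$: this demands $a_{2s+1}\ldots a_m\lle a_1\ldots a_{m-2s}$, yet the periodicity identifies the first $m-2s-1$ letters on both sides, while the final letter of the left-hand side is $a_m=a_{m-2s}+1$, strictly larger than $a_{m-2s}$. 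Hence $a_{2s+1}\ldots a_m\succ a_1\ldots a_{m-2s}$, contradicting Definition~\ref{def:310}.

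The edge configuration $m-2s-1=0$ (which occurs only when $m$ is odd and $s=(m-1)/2$) is covered by the same computation, since the periodicity clause is then vacuous and the contradiction reduces to the single comparison $a_m=a_1+1>a_1$ against the primitivity requirement $a_m\lle a_1$. The main obstacle is not the algebra but choosing the right primitivity index: the relations forced by the defining equation are automatically consistent with primitivity at every index $i\le s$, so a nontrivial constraint has to be pushed onto the second half of $\a$ by exploiting the prefix/suffix overlap (which exists precisely because $s<t$). Testing at $i=2s$ is the cleanest choice, and once the identity $a_m=a_{m-2s}+1$ is in hand together with the $2s$-periodicity, the contradiction is immediate.
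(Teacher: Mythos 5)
Your proof is correct and follows essentially the same route as the paper's: both rule out $s=0$ and then, for $1\le s<m/2$, iterate the defining relation $a_{s+1}\ldots a_m=\overline{a_1\ldots a_{m-s}}\,^+$ to conclude $a_{2s+1}\ldots a_m=a_1\ldots a_{m-2s}^+$, contradicting primitivity at the index $i=2s$. The only (immaterial) difference is in the degenerate case: you use a parity argument on $M$, while the paper notes $a_1>\overline{a_1}$ directly from primitivity.
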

\begin{proof}
Let $\a=a_1\ldots a_m$ be a primitive word with $m\ge 2$. Clearly, $|\R(\a)|\le m$. So it remains to prove $|\R(\a)|\ge m/2$. Suppose on the contrary that $\R(\a)=a_1\ldots a_s$ with $0\le s<m/2$. By Definition \ref{def:310} it follows that $a_1\ge \overline{a_m}>\overline{a_1}$. Since $a_1>\overline{a_{1}}$ it follows from Definition \ref{def:311}  of a reflection recurrence word that we cannot have $s=0$. It remains to show that we cannot have $0< s<m/2.$

Since $\R(\a)=a_1\ldots a_s$, we have  $a_{s+1}\ldots a_m^-=\overline{a_1\ldots a_{m-s}}$. Then we can rewrite $\a$ as
\[
\a=a_1\ldots a_s\, a_{s+1}\ldots a_m=a_1\ldots a_s\overline{a_1\ldots a_{m-s}}\,^+.
\]
This implies that
$
a_{2s+1}\ldots a_m =\overline{a_{s+1}\ldots a_{m-s}}\,^+=a_1\ldots a_{m-2s}^+,
$
leading to a contradiction with the primitivity of $\a$.
\end{proof}

Example \ref{ex:312} suggests that the reflection recurrence word $\R(\a)$ of a primitive word $\a$ is also primitive. The following lemma shows that this is the case.

\begin{lemma}\label{lem:314}
Let $\a=a_1\ldots a_m$ be a primitive word with $m\ge 2$. Then $\R(\a)$ is also primitive.
\end{lemma}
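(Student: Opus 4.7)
If no $s\in\{0,1,\ldots,m-1\}$ satisfies \eqref{eq:311} then $\R(\a)=\a$ and there is nothing to prove. Otherwise, write $\R(\a)=a_1\ldots a_s$ with $1\le s\le m-1$ (the lower bound coming from Lemma \ref{lem:313}) and $a_{s+1}\ldots a_m=\overline{a_1\ldots a_{m-s}}^{+}$. In particular $a_{m-s}>0$, so that $a_1\ldots a_{m-s}^{-}$ is well defined and is the immediate lexicographic predecessor of $a_1\ldots a_{m-s}$ among words of length $m-s$; equivalently $\overline{a_1\ldots a_{m-s}}^{+}=\overline{a_1\ldots a_{m-s}^{-}}$.

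To verify Definition \ref{def:310} for $\R(\a)$, I would fix $0\le i<s$ and derive the two required inequalities from the primitivity of $\a$. The upper bound $a_{i+1}\ldots a_s\lle a_1\ldots a_{s-i}$ is inherited by restricting the primitivity inequality $a_{i+1}\ldots a_m\lle a_1\ldots a_{m-i}$ of $\a$ to its first $s-i$ letters, and the weak lower bound $\overline{a_1\ldots a_{s-i}}\lle a_{i+1}\ldots a_s$ is obtained in the same way from $\overline{a_1\ldots a_{m-i}}\prec a_{i+1}\ldots a_m$. The real task is to rule out equality in the latter.

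Arguing by contradiction, suppose $a_{i+1}\ldots a_s=\overline{a_1\ldots a_{s-i}}$. Then the first $s-i$ letters of the two sides in $\overline{a_1\ldots a_{m-i}}\prec a_{i+1}\ldots a_m$ agree, so the strict inequality must be realised after position $s-i$; this gives $\overline{a_{s-i+1}\ldots a_{m-i}}\prec a_{s+1}\ldots a_m$. Replacing $a_{s+1}\ldots a_m$ by $\overline{a_1\ldots a_{m-s}^{-}}$ and reflecting yields $a_1\ldots a_{m-s}^{-}\prec a_{s-i+1}\ldots a_{m-i}$. On the other hand, restricting the primitivity inequality $a_{s-i+1}\ldots a_m\lle a_1\ldots a_{m-(s-i)}$ of $\a$ at shift $s-i$ to its first $m-s$ letters produces the upper estimate $a_{s-i+1}\ldots a_{m-i}\lle a_1\ldots a_{m-s}$. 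Squeezing, I obtain $a_{s-i+1}\ldots a_{m-i}=a_1\ldots a_{m-s}$, since no length-$(m-s)$ word lies strictly between the two neighbours $a_1\ldots a_{m-s}^{-}$ and $a_1\ldots a_{m-s}$.

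Plugging this identification back into the standing hypothesis and the definition of $s$ yields
\[
a_{i+1}\ldots a_m \;=\; \overline{a_1\ldots a_{s-i}}\cdot\overline{a_1\ldots a_{m-s}}^{+} \;=\; \overline{a_1\ldots a_{s-i}\,a_{s-i+1}\ldots a_{m-i}}^{+} \;=\; \overline{a_1\ldots a_{m-i}}^{+},
\]
i.e.\ $a_{i+1}\ldots a_m^{-}=\overline{a_1\ldots a_{m-i}}$. Since $0\le i<s$, this contradicts the minimality of $s$ in Definition \ref{def:311}, which completes the proof. The principal obstacle is the third paragraph: one must track precisely where the strict primitivity inequality at shift $i$ is allowed to become strict, and then combine it with the primitivity bound at shift $s-i$ so tightly that the two estimates force the exact equality $a_{s-i+1}\ldots a_{m-i}=a_1\ldots a_{m-s}$.
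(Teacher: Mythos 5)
Your proof is correct and follows essentially the same route as the paper's: truncate the primitivity inequalities of $\a$ to get the weak inequalities for $\R(\a)$, then rule out equality in the lower bound by combining the shift-$i$ and shift-$(s-i)$ primitivity inequalities of $\a$ with the defining identity \eqref{eq:311} to force $a_{s-i+1}\ldots a_{m-i}=a_1\ldots a_{m-s}$ and hence $a_{i+1}\ldots a_m^-=\overline{a_1\ldots a_{m-i}}$, contradicting the minimality of $s$. The only cosmetic difference is that you perform the final squeeze on $a_{s-i+1}\ldots a_{m-i}$ between the adjacent words $a_1\ldots a_{m-s}^-$ and $a_1\ldots a_{m-s}$, whereas the paper squeezes the reflected word against $a_{s+1}\ldots a_m^-$; these are equivalent.
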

\begin{proof}
Let $\R(\a)=a_1\ldots a_s$.  If $s=m$, then $\R(\a)=\a$ is primitive. In the following we assume $s<m$. Then by Lemma \ref{lem:313} we have $s\ge m/2\ge 1$.  By the primitivity of $\a$ it follows that
\[
\overline{a_1\ldots a_{s-i}}\lle a_{i+1}\ldots a_s\lle a_1\ldots a_{s-i}\quad\textrm{for all }0\le i<s.
\]
So to prove the primitivity of $\R(\a)$ it suffices to show that $a_{i+1}\ldots a_s\ne \overline{a_1\ldots a_{s-i}}$ for all $0\le i<s$.

Suppose on the contrary that $a_{i+1}\ldots a_s=\overline{a_1\ldots a_{s-i}}$ for some $0\le i<s$. By the primitivity of $\a$ it follows that
\begin{equation}\label{eq:312}
\overline{a_{s-i+1}\ldots a_{m-i}}\lle a_{s+1}\ldots a_m^-.
\end{equation}
Since $\R(\a)=a_1\ldots a_s$ we must have $a_{s+1}\ldots a_m^-=\overline{a_1\ldots a_{m-s}}$. Again by the primitivity of $\a$ we have $a_{s-i+1}\ldots a_{m-i}\lle a_1\ldots a_{m-s}$. Taking reflection this implies that
\begin{equation}\label{eq:313}
a_{s+1}\ldots a_m^-=\overline{a_1\ldots a_{m-s}}\lle\overline{a_{s-i+1}\ldots a_{m-i}}.
\end{equation}By (\ref{eq:312}), (\ref{eq:313}) and our assumption $a_{i+1}\ldots a_s=\overline{a_1\ldots a_{s-i}}$ we may conclude that
\[a_{i+1}\ldots a_m^-=\overline{a_1\ldots a_{m-i}}.\]
Since $i<s$ this contradicts the fact that $s$ is the smallest integer satisfying \[a_{s+1}\ldots a_m^-=\overline{a_1\ldots a_{m-s}}.\]
\end{proof}

Given a primitive word $\a$ and $n\in\N$, we denote by $\R^n(\a)=\R(\R^{n-1}(\a))$ the reflection recurrence word of $\R^{n-1}(\a)$, where we set $\R^0(\a)=\a$. Example \ref{ex:312} also implies the following lemma.
\begin{lemma}\label{lem:315}
Let $\a=a_1\ldots a_m$ be a primitive word with $m\ge 2$. Then there exists $j\in\set{0,1,\ldots, m}$ such that either $\R^{j+1}(\a)=\R^j(\a)$ with $|\R^j(\a)|\ge 2$, or $\R^j(\a)=a_1\overline{a_1}\,^+$.
\end{lemma}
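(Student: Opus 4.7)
The plan is to iterate the reflection-recurrence operator $\R$ starting from $\a$ and follow the sequence of lengths $|\R^{j}(\a)|$. By Lemma \ref{lem:314}, as long as $|\R^{j}(\a)| \ge 2$, the next iterate $\R^{j+1}(\a)$ is itself a primitive word, so the iteration is well-defined inside the class of primitive words. Moreover, since $\R^{j+1}(\a)$ is a prefix of $\R^{j}(\a)$, we have either $|\R^{j+1}(\a)| = |\R^{j}(\a)|$, in which case equal lengths force $\R^{j+1}(\a) = \R^{j}(\a)$ (a fixed point), or $|\R^{j+1}(\a)| < |\R^{j}(\a)|$.

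First I would observe that the sequence of lengths is a non-increasing sequence in $\N$, so it must eventually stabilize or drop below $2$. If it stabilizes at some value $\geq 2$, we obtain case (i) of the statement. Otherwise, the strict decreases accumulate and the length eventually reaches $1$. However, Lemma \ref{lem:313} gives $|\R(\cdot)| \geq |\cdot|/2$ whenever $|\cdot| \geq 2$; in particular, from any length $L \geq 3$ the next length is at least $\lceil L/2\rceil \geq 2$. Hence the only way to reach length $1$ is to go through length $2$. So it suffices to analyse what happens at a two-letter primitive word.

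The crux is the length-$2$ analysis. Suppose $\R^{j}(\a) = b_1 b_2$ is a two-letter primitive word; since $\R^{j}(\a)$ is a prefix of $\a$, we have $b_1 = a_1$. Primitivity (Definition \ref{def:310}) gives $\overline{b_1} < b_1$ and $\overline{b_1} < b_2 \leq b_1$, i.e.\ $b_2 \geq \overline{b_1}^+$. If $|\R^{j+1}(\a)| < 2$, then by Definition \ref{def:311} there exists $s \in \{0,1\}$ satisfying $b_{s+1}\ldots b_2^{-} = \overline{b_1\ldots b_{2-s}}$. The case $s=0$ would demand $b_1 = \overline{b_1}$, contradicting primitivity; hence $s = 1$, which forces $b_2^{-} = \overline{b_1}$, i.e.\ $b_2 = \overline{b_1}^{+}$. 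Therefore $\R^{j}(\a) = a_1\overline{a_1}^{+}$, yielding case (ii). If instead $|\R^{j+1}(\a)| = 2$, the monotonicity argument above puts us back in case (i).

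Finally, to certify the range $j \in \{0, 1, \ldots, m\}$, I would note that the length sequence starts at $m$ and each strict drop reduces the length by at least one, so either case (i) or case (ii) must occur within at most $m$ iterations. The main (and really only) technical content is the two-letter analysis in the previous paragraph; everything else reduces to the monotonicity of $|\R^{j}(\a)|$ combined with the closure property in Lemma \ref{lem:314} and the lower bound in Lemma \ref{lem:313}.
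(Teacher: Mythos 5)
Your proposal is correct and follows essentially the same route as the paper: the length $|\R^j(\a)|$ is non-increasing by Lemma \ref{lem:313}, so either it stabilizes at a value $\ge 2$ (giving a fixed point, since $\R^{j+1}(\a)$ is a prefix of $\R^j(\a)$) or it must pass through length $2$ before dropping to $1$, at which point Definition \ref{def:311} forces $\R^j(\a)=a_1\overline{a_1}\,^+$. Your write-up merely fills in details the paper leaves implicit (the two-letter case analysis over $s\in\{0,1\}$, the use of Lemma \ref{lem:314} for well-definedness, and the bound $j\le m$), all of which check out.
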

\begin{proof}
By Lemma \ref{lem:313} we have $m/2\le |\R(\a)|\le m$. This implies that the length $|\R^j(\a)|$ is decreasing with respect to $j$. If there exists $j\ge 0$ such that $|\R^{j+1}(\a)|=|\R^{j}(\a)|\ge 2$, then $\R^{j+1}(\a)=\R^j(\a)$ and we are done. Otherwise,  there exists $0\le j\le m$ such that $|\R^j(\a)|=2$ and $|\R^{j+1}(\a)|=1$. It is a consequence of Definition \ref{def:311} of a reflection recurrence word that we then must have $\R^j(\a)=a_1a_2=a_1\overline{a_1}\,^+$.
\end{proof}

For a word $\om$ and a sequence $(c_i)$ we write $\om\prec (c_i)$ if $\om  \prec c_1\ldots c_{|\om|}$. Symmetrically, we write $\om\succ (c_i)$ if $\om \succ c_1\ldots c_{|\om|}$.

The following lemma is proved using Lemma \ref{lem:314}.

\begin{lemma}\label{lem:316}
Let $q\in(q_T, M+1]\cap \vl$ be such that $\al(q)$ is irreducible.  Then $q\in\overline{\ul}$ and there exists infinitely many integers $m\ge 2$ such that $\a=\al_1(q)\ldots\al_m(q)$ is primitive.

Furthermore, for each of these $m$ there exists a large integer $N=N(m)$ such that for all $u\in\N$ and any $j\ge 0$  we have
\begin{align}
\overline{\al_1(q)\ldots\al_N(q)}&\prec\si^j(\a(\overline{\a}\,^+)^\f)\prec\al_1(q)\ldots\al_N(q),\label{eq:314}\\
  \overline{\al_1(q)\ldots\al_N(q)}&\prec\si^j((\a^-)^u(\R(\a)^-)^\f)\prec\al_1(q)\ldots\al_N(q),\label{eq:315}
\end{align}
and symmetrically,
\begin{align*}
 \overline{\al_1(q)\ldots\al_N(q)}&\prec\si^j(\overline{\a}\,( {\a}^-)^\f)\prec\al_1(q)\ldots\al_N(q),\\
  \overline{\al_1(q)\ldots\al_N(q)}&\prec\si^j((\overline{\a}\,^+)^u(\overline{\R(\a)}\,^+)^\f)\prec\al_1(q)\ldots\al_N(q).
\end{align*}
\end{lemma}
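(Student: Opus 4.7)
My plan is to establish the three claims of the lemma in order, relying on the irreducibility of $\al(q)$, the hypothesis $q > q_T$, and the primitivity properties developed in Lemmas \ref{lem:313}--\ref{lem:315}.

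For the claim $q \in \overline{\ul}$, I argue by contradiction. If instead $q \in \vl\setminus\overline{\ul}$, then by the characterizations of $\vl$ and $\overline{\ul}$ there must exist $n \ge 1$ with $\si^n \al(q) = \overline{\al(q)}$. Iterating this identity forces $\si^{2n}\al(q) = \al(q)$, so that $\al(q) = (\b\,\overline{\b})^\f$ where $\b = a_1\ldots a_n$. I would then verify that $(\b^-)^\f$ lies in $\vb$ by checking each of its $n$ shifts against the bounds $\b^-$ and $\overline{\b^-} = \overline{\b}^+$, importing the relevant lexicographic inequalities from $\al(q) \in \vb$. Next I compare $\b(\overline{\b}^+)^\f$ with $(\b\,\overline{\b})^\f$: they agree on the first $2n-1$ digits but at position $2n$ the former has $\overline{a_n}+1$ while the latter has $\overline{a_n}$, so $\b(\overline{\b}^+)^\f \succ \al(q)$. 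Applying Definition \ref{def:26} with $j=n$ yields a contradiction with irreducibility.

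For the existence of infinitely many primitive prefixes, I would approximate $q$ from below by a sequence $q_i \in \ul$ with $q_i \uparrow q$, using the density properties of $\ul$ inside $\overline{\ul}$ established in \cite{DeVries_Komornik_Loreti_2016}. Each $\al(q_i)$ is purely periodic, and one verifies (using the defining lexicographic inequalities of $\ul$) that its period block is a primitive word in the sense of Definition \ref{def:310}. Since $\al(q_i) \to \al(q)$ lexicographically and the period lengths $m_i \to \infty$, the prefix $\al_1(q)\ldots\al_{m_i}(q)$ equals this primitive period for all large $i$, yielding infinitely many primitive prefixes of $\al(q)$.

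For the inequalities, fix a primitive $\a = a_1\ldots a_m$; by Lemma \ref{lem:314} the word $\R(\a)$ is also primitive. Each of the four sequences under consideration is eventually periodic, so its shift orbit consists of finitely many classes modulo the period. For each shift $\si^j(\cdot)$ and each of $\al(q)$ and $\overline{\al(q)}$, I would locate the first position of disagreement. A key input is the bound $a_1 > M/2$, which follows from $q > q_T$ via (\ref{eq:22}) and Lemma \ref{lem:21}: combined with primitivity of $\a$ and $\R(\a)$ and irreducibility of $\al(q)$ (from which I already have strict inequality $\overline{\al(q)} \prec \si^n\al(q)$ since $q \in \overline{\ul}$), this forces each first-disagreement position to be finite and to have the correct sign, i.e., each shift's truncated prefix is strictly less than $\al_1(q)\ldots\al_N(q)$ and strictly greater than $\overline{\al_1(q)\ldots\al_N(q)}$. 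Choosing $N = N(m)$ larger than the finite maximum of all these disagreement positions across all four sequences, all shift classes, and both comparisons then yields the claimed inequalities.

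The main obstacle is the combinatorial case analysis in the third step: for each of the four sequences and each shift class one must verify that the first disagreement with $\al(q)$ (respectively $\overline{\al(q)}$) favors the correct direction, and not the wrong one. The reflection recurrence word $\R(\a)$ plays a crucial role in the sequences $(\a^-)^u(\R(\a)^-)^\f$ and $(\overline{\a}^+)^u(\overline{\R(\a)}^+)^\f$: the truncation at $\R(\a)$ rather than at $\a$ is engineered so that, by Definition \ref{def:311}, the transition from the $(\a^-)$-blocks into the periodic tail does not accidentally match a long prefix of $\al(q)$. Combined with the bound $a_1 > M/2$, which precludes the "worst-case" coincidence scenario in which a shift could agree with $\al(q)$ or $\overline{\al(q)}$ arbitrarily far, this analysis ensures $N(m)$ can be taken finite for each primitive $\a$.
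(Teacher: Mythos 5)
Your first step ($q\in\overline{\ul}$) is sound and close in spirit to the paper's, which simply invokes the structure theorem for $\vl\setminus\overline{\ul}$ from de Vries--Komornik--Loreti to get the form $(\b\,\overline{\b})^\f$ with $(\b^-)^\f\in\vb$ and then contradicts irreducibility. The genuine problems are in the other two steps. For the existence of infinitely many primitive prefixes, your approximation argument does not work as described: bases $q_i\in\ul$ do not have periodic quasi-greedy expansions (periodicity of $\al(q_i)$ would violate $\si^n(\al(q_i))\prec\al(q_i)$), so you must mean $q_i\in\overline{\ul}\setminus\ul$; but even then, left-continuity of $q\mapsto\al(q)$ only guarantees that $\al(q_i)$ and $\al(q)$ agree up to their first position of disagreement, and nothing forces that position to exceed the period $m_i$ of $\al(q_i)$. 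So the period block of $\al(q_i)$ need not be a prefix of $\al(q)$ at all; moreover the period block of an element of $\overline{\ul}\setminus\ul$ is only guaranteed to satisfy $\overline{a_1\ldots a_{m-i}}\lle a_{i+1}\ldots a_m$ (truncating a strict inequality of infinite sequences yields a non-strict one for prefixes), not the strict inequality required by Definition \ref{def:310}. The paper avoids all of this by citing \cite[Lemma 4.1]{Komornik_Loreti_2007}, which is essentially the statement you are trying to rederive.

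For the four displayed inequalities, your reduction to ``finitely many shift classes modulo the period'' is false for the family $\si^j((\a^-)^u(\R(\a)^-)^\f)$ with $u$ ranging over all of $\N$: each value of $u$ produces genuinely new shifted sequences, so the collection to be compared with $\al(q)$ and $\overline{\al(q)}$ is infinite. What is actually needed is a bound on the first-disagreement position that is uniform in $u$ and $j$, and establishing this is the substance of the lemma. The paper does it by a case analysis on where $j$ falls relative to the block structure, using the primitivity inequalities for $\a$ and $\R(\a)$, the defining identity $a_{s+1}\ldots a_m^-=\overline{a_1\ldots a_{m-s}}$ for $\R(\a)=a_1\ldots a_s$, the bound $s\ge m/2$ from Lemma \ref{lem:313}, and a choice of $N$ for which $\a(\overline{\a}\,^+)^\f\prec\al_1(q)\ldots\al_N(q)$ and $\R(\a)(\overline{\R(\a)}\,^+)^\f\prec\al_1(q)\ldots\al_N(q)$ --- this last point is where irreducibility of $\al(q)$ enters, and your sketch never makes that explicit. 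You correctly identify $a_1>\overline{a_1}$ and the role of $\R(\a)$ at the junction, but the verification you defer as ``the main obstacle'' is not routine bookkeeping; it is the proof.
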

\begin{proof}
Let $q\in(q_T, M+1]\cap \vl$ be such that $\al(q)$ is irreducible.
For simplicity we  write $(\al_i)=(\al_i(q))$. Let us emphasise that since $q\in(q_T,M+1]$ we must have $\al_{1}>\overline{\al_1}$. We claim that $q\in(q_T, M+1]\cap\overline{\ul}$. For if $q\in\vl\setminus\overline{\ul}$, then by  \cite[Theorem 1.3]{DeVries_Komornik_Loreti_2016} it would follow that $\al(q)$ has the form $\al(q)=(\al_1\ldots\al_n\overline{\al_1\ldots\al_n})^\f$ with $(\al_1\ldots\al_n^-)^\f\in\vb$. Which contradicts the fact that $\al(q)$ is irreducible.
 Therefore  $q\in(q_c, M+1]\cap\overline{\ul}$. By \cite[Lemma 4.1]{Komornik_Loreti_2007} it follows that there exist  infinitely many  integers $m$ such that $\a=\al_1\ldots\al_m$ is primitive.

Now we turn our attention to proving the inequalities.
By symmetry we only prove (\ref{eq:314}) and (\ref{eq:315}).  Fix a large integer $m\ge 2$ for which $\a=\al_1\ldots \al_m$ is primitive. Then by Definition \ref{def:310} it follows that
\begin{equation}
  \label{eq:317}
  \al_{i+1}\ldots\al_m^-\al_1\ldots\al_i\succ \overline{\al_1\ldots\al_m}\quad\textrm{and}\quad \al_{i+1}\ldots\al_m^-\prec\al_1\ldots\al_{m-i}
\end{equation}
for all $0\le i<m$. This implies that $(\a^-)^\f\in\vb$.
By Lemma \ref{lem:314}  $\R(\a)$ is also primitive. Then $(\R(\a)^-)^\f\in\vb$. Since $(\al_i)$ is  irreducible,  we have $\a(\overline{\a}\,^+)^\f\prec(\al_i)$ and $\R(\a)(\overline{\R(\a)}\,^+)^\f\prec(\al_i)$. Let $N=N(m)\ge m$ be a large integer satisfying
 \begin{equation}
   \label{eq:316}
   \a(\overline{\a}\,^+)^\f\prec\al_1\ldots\al_N\quad\textrm{and}\quad \R(\a)(\overline{\R(\a)}\,^+)^\f\prec\al_1\ldots\al_N.
 \end{equation}

First we prove (\ref{eq:314}). Since $\al_1>\overline{\al_1}$ we satisfy the left hand side of \eqref{eq:314} when $j=0$. Moreover \eqref{eq:316} proves the right hand side of (\ref{eq:314}) when $j=0$. Therefore \eqref{eq:314} holds for $j=0$. For $0<j<m$ it follows from the primitivity of $\a$ that
\[\al_{j+1}\ldots\al_m\overline{\al_1\ldots\al_j}\prec\al_1\ldots\al_m\quad\textrm{and}\quad \al_{j+1}\ldots\al_m\succ\overline{\al_1\ldots\al_{m-j}}.\] Therefore \eqref{eq:314} holds for $0<j<m$.
To prove \eqref{eq:314} holds for $j\ge m$ we start by taking reflections in \eqref{eq:317} that
\begin{equation}
  \label{eq:318}
  \overline{\al_{i+1}\ldots\al_m}\,^+\overline{\al_1\ldots\al_i}\prec \al_1\ldots\al_m\quad\textrm{and}\quad \overline{\al_{i+1}\ldots\al_m}\,^+\succ \overline{\al_1\ldots\al_{m-i}}
\end{equation}for all $0\le i<m$. \eqref{eq:318} implies that \eqref{eq:314} holds for $j\geq m$.

We now prove (\ref{eq:315}). Let $(x_i):=(\a^-)^u(\R(\a)^-)^\f$ with
  $u\in\N$. Suppose $\R(\a)=\al_1\ldots \al_s$.  Then  $(x_i)$ can be written as
 \begin{align*}
 (x_i)&=(\al_1\ldots\al_m^-)^{u-1}\al_1\ldots\al_s\al_{s+1}\ldots\al_m^-(\al_1\ldots\al_s^-)^\f\\
 &=(\al_1\ldots\al_m^-)^{u-1}\al_1\ldots\al_s\overline{\al_1\ldots\al_{m-s}}(\al_1\ldots\al_s^-)^\f.
\end{align*}
If $s=m$, then $(x_i)=(\al_1\ldots\al_m^-)^\f$ and (\ref{eq:315}) follows by (\ref{eq:317}).

Now we assume $s<m$. By (\ref{eq:317}) it follows that (\ref{eq:315}) holds for $0\le j<(u-1)m$. Note that $(x_i)$ is eventually periodic. So to prove (\ref{eq:315}) it suffices to consider $(u-1)m\le j<um+s$. We split the proof into the following four cases.
\begin{itemize}
\item $(u-1)m\le j< (u-1)m+s$. Then $i=j-(u-1)m\in\set{0,1,\ldots,s-1}$. By the primitivity of $\a$ and $\R(\a)$ it follows that
\[
\al_{i+1}\ldots\al_s\succ\overline{\al_1\ldots\al_{s-i}}\quad\textrm{and}\quad\al_{i+1}\ldots\al_m^-\prec\al_1\ldots\al_{m-i}.
\]
Therefore (\ref{eq:315}) holds for $(u-1)m\le j< (u-1)m+s$.

\item $j=(u-1)m+s$. Then $i=j-(u-1)m=s$. By Lemma \ref{lem:313} we know that $s\ge m/2$.

 If $s=m/2$, then
$x_{j+1}x_{j+2}\ldots= \overline{\al_1\ldots\al_s}(\al_1\ldots\al_s^-)^\f$. Therefore,  by (\ref{eq:316}) and using that $ \overline{\al_1}<\al_1$ we have have (\ref{eq:315}).

If $s>m/2$ then $0<2s-m<s$. Again by the primitivity of $\a$ and $\R(\a)$ we obtain
\[
\al_{s+1}\ldots\al_m^-=\overline{\al_1\ldots\al_{m-s}},\quad \al_1\ldots\al_{2s-m}\succ\overline{\al_{m-s+1}\ldots\al_s}
\]
and $\al_{s+1}\ldots\al_m^-\prec\al_1\ldots\al_{m-s}$. These inequalities combine to prove \eqref{eq:315} when $s>m/2$. Combining the above we have proved (\ref{eq:315}) for $j=(u-1)m+s$.

\item $(u-1)m+s<j<u m$. Then  $i=j-(u-1)m\in\{s,\ldots, m\}$. Note that $s\ge m/2$. Therefore $0<s-m+i<s$. By the primitivity of $\a$ and $\R(\a)$ it follows that
\[
\al_{i+1}\ldots\al_m^-\lge\overline{\al_1\ldots\al_{m-i}}, \quad\al_1\ldots\al_{s-m+i}\succ\overline{\al_{m-i+1}\ldots\al_s}
\]
and $\al_{i+1}\ldots\al_m^-\prec\al_1\ldots\al_{m-i}$. These inequalities prove (\ref{eq:315}) for $(u-1)m+s<j<u m$.

\item $um\le j<um+s$. In this case one can verify, using the primitivity of $\R(\a),$ that
\[\al_{i+1}\ldots \al_s^-\al_1\ldots\al_i\succ\overline{\al_1\ldots \al_s}\quad\textrm{and}\quad\al_{i+1}\ldots\al_s^-\prec\al_1\ldots\al_{s-i}\]
for all $0\le i<s$. Therefore $(\ref{eq:315})$ holds for $um\le j<um+s$.
\end{itemize}
\end{proof}

Now we prove the sufficiency of Theorem \ref{th1} for $q\in(q_T, M+1]$.
\begin{proposition}
  \label{prop:317}
  Let $q\in(q_{T}, M+1]\cap \vl$. If $\al(q)$ is irreducible then $(\VB_q, \si)$ is transitive.
\end{proposition}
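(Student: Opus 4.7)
The plan is to construct, for any admissible words $\om,\nu\in\L(\VB_q)$, an explicit bridging word $\de$ so that $\om\de\nu\in\L(\VB_q)$; this will establish topological transitivity of $(\VB_q,\si)$.

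First, I would normalize the endpoints. By Lemma~\ref{lem:316}, pick $m>\max\{|\om|,|\nu|\}$ so that $\a:=\al_1(q)\ldots\al_m(q)$ is primitive, and let $N=N(m)$ be the associated integer. By Lemma~\ref{lem:38}, extend $\om$ on the right to $\om\eta\in B_*(\VB_q)$ whose length-$m$ suffix is either $\a$ or $\overline{\a}$; by the reflection symmetry of the lexicographic conditions defining $\VB_q$ (which sends the alphabet digit $c$ to $M-c$ and swaps $\al(q)$ with $\overline{\al(q)}$), I may assume after reflecting both $\om$ and $\nu$ simultaneously that this suffix is $\a$. By Lemma~\ref{lem:39}, choose $\ga$ so that $\ga\nu\in\L(\VB_q)$ has prefix $k^{N}$ if $M=2k$, or $((k+1)k)^{N}$ if $M=2k+1$. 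For the bridge I then propose
\[
\de \;:=\; \eta\,(\overline{\a}\,^+)^{s}\,\ga,
\]
with $s\ge\lceil N/m\rceil+1$; the problem reduces to verifying $W:=\om\eta(\overline{\a}\,^+)^{s}\ga\nu\in\L(\VB_q)$.

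Admissibility of $W$ is then checked on every suffix by splitting according to the starting position. Shifts lying entirely inside $\om\eta$ or inside $\ga\nu$ are handled by the hypotheses $\om\eta,\ga\nu\in\L(\VB_q)$, except that when the original tail of $\om\eta$ coincides with a prefix of $\al(q)$ one must additionally exploit the strict inequality $\a(\overline{\a}\,^+)^\infty\prec\al(q)$ that comes from the irreducibility of $\al(q)$ (Definition~\ref{def:26}) to force strictness of the bound on the extension into the bridge. Shifts whose first $N$ characters lie inside the periodic block $\a(\overline{\a}\,^+)^{s}$ agree with some shift $\si^{j}(\a(\overline{\a}\,^+)^\infty)$ on their first $N$ characters and are strictly bounded by~\eqref{eq:314}; the choice $s\ge\lceil N/m\rceil+1$ guarantees enough repetitions of $\overline{\a}\,^+$ to cover every such ``interior'' shift, including those starting in the last $\a$ of $\om\eta$. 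Shifts lying entirely inside $\ga\nu$ are again handled by the hypothesis, while the primitivity of $\a$ (Lemma~\ref{lem:314}) controls how partial agreements with $\al_1(q)\al_2(q)\ldots$ can propagate.

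The main obstacle will be the remaining boundary case: shifts starting near the end of $(\overline{\a}\,^+)^{s}$ whose first $N$ characters straddle into the $k^{N}$-prefix of $\ga\nu$. Such a shift agrees with $\si^{j}(\a(\overline{\a}\,^+)^\infty)$ only on an initial segment shorter than $N$, and then deviates by substituting $k$ in place of the expected $\overline{\al_1(q)}$ from the next period of $\overline{\a}\,^+$. The key observation is that $q>q_T$ forces $\overline{\al_1(q)}<k<\al_1(q)$ (and analogously $\overline{\al_1(q)\al_2(q)}\prec (k+1)k\prec\al_1(q)\al_2(q)$ in the odd case, as used in Lemma~\ref{lem:39}), so the deviation moves the sequence upward relative to the periodic pattern but remains strictly below $\al_1(q)\ldots\al_N(q)$ and strictly above $\overline{\al_1(q)\ldots\al_N(q)}$. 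Should this estimate become borderline at some alignment, I would replace $(\overline{\a}\,^+)^{s}$ by the richer bridge $(\overline{\a}\,^+)^{u}(\overline{\R(\a)}\,^+)^{v}$, whose shifts are strictly bounded on their first $N$ characters by the symmetric form of~\eqref{eq:315} and whose trailing letters (governed by the shorter primitive $\R(\a)$ from Definition~\ref{def:311}) interact more gently with the $k^{N}$-prefix of $\ga\nu$; iterating this reduction using Lemma~\ref{lem:315} handles any residual alignment issues.
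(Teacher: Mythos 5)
Your proposal is correct and follows essentially the same route as the paper's proof: normalize via Lemmas \ref{lem:38} and \ref{lem:39} so that $\om\eta$ ends in the primitive prefix $\a$ and $\ga\nu$ begins with $k^N$ (resp.\ $((k+1)k)^N$), bridge with powers of $\overline{\a}\,^+$ controlled by \eqref{eq:314}, and resolve the boundary alignment with $\ga\nu$ by cascading through the reflection recurrence words $(\overline{\R(\a)}\,^+)^N,(\overline{\R^2(\a)}\,^+)^N,\ldots$ using the symmetric form of \eqref{eq:315} and the termination guaranteed by Lemma \ref{lem:315}. The only difference is presentational: you try the plain bridge $(\overline{\a}\,^+)^s$ first and invoke the cascade as a fallback, whereas the paper cases on $\R(\a)=\a$ versus $\R(\a)\ne\a$ from the outset.
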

\begin{proof}
Since the proof for $M=2k+1$ is similar, we only prove this proposition for $M=2k$.

Let $q\in(q_T, M+1]\cap\vl$ be such that $\al(q)=(\al_i)$ is irreducible. Fix $\om, \nu\in\L(\VB_q)$ and take $(x_i) \in \VB_q$ such that $\nu$ is a prefix of $(x_i)$. By Lemma \ref{lem:316} there exists a large integer $m>\max\set{|\om|, 2}$ such that $\a=\al_1 \ldots\al_m$ is primitive.
  Since $(\al_i)=\al(q)$ is irreducible, there exists a large integer $N\ge m$ such that
  \begin{equation}\label{eq:319}
  \al_1\ldots\al_j(\overline{\al_1\ldots\al_j}\,^+)^\f\prec\al_1\ldots\al_N\quad\textrm{whenever}\quad(\al_1\ldots\al_j^-)^\f\in\vb \textrm{ and }j\le m.
  \end{equation}

By Lemma \ref{lem:38}  there exists $\eta\in\L(\VB_q)$ such that $\a$ or $\overline{\a}$ is a suffix of $\om\eta\in\L(\VB_q)$. By symmetry we may assume that $\a$ is a suffix of $\om\eta$.  Moreover, by Lemma \ref{lem:39} there exists $\ga\in\L(\VB_q)$ such that $k^N$ is a prefix of $\ga\nu\in\L(\VB_q)$.

Note that from (\ref{eq:319}) it follows that
$$  \a(\overline{\a}^+\,)^{\f} \prec \al_1\ldots\al_N,\quad \R(\a)(\overline{\R(\a)}^+)^\f\prec\al_1\ldots\al_N.$$
Then from (\ref{eq:314}) of Lemma \ref{lem:316} we obtain
$$\overline{\al_1\ldots\al_N} \prec \sigma^j(\a(\overline{\a}^+\,)^{\f}) \prec \al_1\ldots\al_N\quad\textrm{for all}\quad j\ge 0.$$
Moreover from the symmetric version of (\ref{eq:315}) in  Lemma \ref{lem:316}  it follows that
$$\overline{\al_1\ldots\al_N} \prec \sigma^j((\overline{\a}\,^+)^u(\overline{\R(\a)}\,^+)^{\f}) \prec \al_1\ldots\al_N$$
 for every $u \in \N$ and every $j \ge 0$.  This  implies
  \begin{equation}\label{eq:350}
  \overline{\al_1\ldots\al_N}\prec\si^i(\a\,(\overline{\a}\,^+)^N(\overline{\R(\a)}\,^+)^\f)\prec\al_1\ldots\al_N\quad\textrm{for all }i\ge 0.
  \end{equation}

\vspace{1em}Firstly let us assume $\R(\a)=\a$. Then
 \begin{equation}\label{eq:351}
 \overline{\al_1\ldots\al_{m-i}}\prec\al_{i+1}\ldots\al_m^-\prec\al_1\ldots\al_{m-i}\quad\textrm{ for all }i=0,1,\ldots, m-1.
 \end{equation}
We claim that if $\R(\a)=\a$ then $\om\eta(\overline{\a}\,^+)^N\ga\nu\in\L(\VB_q)$. Note that $\ga\nu\in\L(\VB_q)$. Then, since $\nu$ is the prefix of $(x_i)$ and $(x_i) \in \VB_q$ it suffices to show that
\begin{equation}
  \label{eq:352}
  \overline{\al_1\ldots \al_N}\prec \si^i(\om\eta(\overline{\a}\,^+)^N\ga \nu)\prec\al_1\ldots\al_N\quad\textrm{for all }0\le i<|\om|+|\eta|+N m.
\end{equation}

\vspace{0.7em}Consider $0\le i<|\om|+|\eta|-m$ firstly. Note that $\a$ is  a suffix of $\om\eta$. Then, from Lemma \ref{lem:38} we know that $\om\eta \in \L(\VB_q)$. This implies that $$\overline{\al_1\ldots \al_N}\prec \si^i(\om\eta(\overline{\a}\,^+)^N\ga \nu)\prec\al_1\ldots\al_N$$
for all $0\le i<|\omega|+|\eta|-m$.

\vspace{0.7em}Now, consider $|\om|+|\eta|-m\le i< |\om|+|\eta|+Nm$. Then from  (\ref{eq:351}) we obtain
$$\overline{\al_1\ldots\al_{m-i}}\prec\overline{\al_{i+1}\ldots\al_m}\,^+\prec\al_1\ldots\al_{m-i}\quad\textrm{ for all }i=0,1,\ldots, m-1.$$
 Then
 $$\overline{\al_1\ldots \al_N}\prec \si^i(\om\eta(\overline{\a}\,^+)^N\ga \nu)\prec\al_1\ldots\al_N.$$
  This implies that (\ref{eq:352}) holds.
Then we can conclude that $\om\eta(\overline{\a}\,^+)^N\ga\nu\in\L(\VB_q)$.

\vspace{1em}Now we assume $\R(\a)\ne\a$. Note that $|\a| \ge 2$. Then by Lemma \ref{lem:313} we have $|\R(\a)|\ge 1$.

\vspace{0.7em}If $|\R(\a)|=1$, then $\a=\al_1\overline{\al_1}\,^+$. Note that $\al(q)\succ\al(q_T)=(k+1)k^\f$. Then $\R(\a)=\al_1\ge k+1$. It is easy to show that $(\al_1\overline{\al_1}\,^+)^jk^N \in \L(\VB_q)$ for every $j \in \N$. Hence, by (\ref{eq:350}) and the same arguments as above it follows that
\[\om\eta(\overline{\a}\,^+)^N(\overline{\R(\a)}\,^+)^N\ga\nu\in\L(\VB_q).\]

\vspace{0.7em} Suppose that $|\R(\a)|\ge 2$. From (\ref{eq:319}) we obtain that
 $$\R(\a)(\overline{\R(\a)}^+)^\f\prec\al_1\ldots\al_N, \quad \R^2(\a)(\overline{\R^2(\a)}^+)^\f\prec\al_1\ldots\al_N.$$
  Recall that the word $\R(\a)$ is primitive by Lemma \ref{lem:314}.  Applying  the symmetric version of Lemma \ref{lem:316} (\ref{eq:315}) to $\R(\a)$  it follows that
   $$\overline{\al_1\ldots \al_N} \prec \sigma^j(\overline{\R(\a)}\,^+)^N(\overline{\R^2(\a)}\,^+)^\f \prec \al_1\ldots \al_N \textrm{ for every } j \in \N.$$
By (\ref{eq:350}) this implies that
        \[
 \overline{\al_1\ldots\al_N}\prec\si^j(\a\,(\overline{\a}\,^+)^N(\overline{\R(\a)}\,^+)^N(\overline{\R^2(\a)}\,^+)^\f)\prec\al_1\ldots\al_N\quad\textrm{for all }j\ge 0.
        \]
        By Lemma \ref{lem:315} there exists $\ell\in\set{1,2,\ldots,m}$ such that  $\R^{\ell+1}(\a)=\R^{\ell}(\a)$ or $\R^\ell(\a)=\al_1\overline{\al_1}\,^+$. Therefore, by repeating the arguments explained for the previous cases we conclude that
        \[\om\eta(\overline{\a}\,^+)^N(\overline{\R(\a)}\,^+)^N\ldots(\overline{\R^{\ell+1}(\a)}\,^+)^N\ga\nu\in\L(\VB_q).\]
This completes the proof.
\end{proof}

\begin{proof}
  [{\bf Proof of Theorem \ref{th1}}]
  The theorem follows by Propositions \ref{prop:36}, \ref{prop:37} and \ref{prop:317}.
\end{proof}

\noindent\textbf{Final remarks on topological transitivity. }

First we include some explicit examples of irreducible and non-irreducible sequences in $\vb$ (see Table \ref{tab:1}). Here the parameter $s$ is always a member of the set $\set{k+1, \ldots , M}$, and $t$ is a member of $\set{\overline{s}+1, \ldots, s-1}$.

\begin{table}[h!]
\centering  % used for centering table
\small
\begin{tabular}{c c c c}
\hline                     %inserts double horizontal lines
$\al(q)$ & conditions & irreducible\\ [0.5ex] % inserts table
%heading
\hline \hline                 % inserts single horizontal line
%$s^{\f}$ &  & yes \\ % inserting body of the table
%\hline
$(s^j\overline{s})^\f$ &  $j \geq 2$ & yes \\
\hline
$(s^jt^l)^{\f}$ & \shortstack{ $j \ge 1, 0\le l \le j$}\ & yes \\
\hline
$(s^j\overline{s}^j)^\infty$&  $j \ge 2$  & no \\
\hline
\end{tabular}
\vspace{0.7em} \caption{Examples of irreducible sequences and non-irreducible sequences.}\label{tab:1}
\end{table}

In terms of the definitions of the sets $\ul$ and $\vl$ at the beginning of Section \ref{sec:2}, it follows  that   each sequence in   Table \ref{tab:1}  corresponds to a unique base $q\in\vl\setminus\ul$.
Therefore, by Lemma \ref{lem:25} and Theorem \ref{th1} we immediately obtain an interesting feature about the topological transitivity of the subshifts $(\VB_q, \sigma)$, i.e.,  it is neither a generic or exceptional property with respect to Lebesgue measure:
$$0<\textrm{Leb}\left(\set{q \in (1, M+1]: (\VB_q,\sigma) \textrm{ is transitive}}\right) < M+1.$$

Moreover, as we have show in Table $\ref{tab:1}$ for any $s\in\set{k+1,\ldots,M}$ and any $j\ge 2$ the  sequence $(s^j \overline{s})^\f$ is irreducible, and it converges to  $s^\f$ as $j\ra\f$. So, there exists $q\in \vl\setminus\ul$ arbitrarily close to $s+1$ for which $\alpha(q)$ is irreducible. Similarly, from Table \ref{tab:1} it follows that  for any $j\ge 2$ the sequence $(s^j \overline{s}^j)^\f$ is not   irreducible, and it converges to $s^\f$ as $j\ra\f$. So there also exists  $q\in \vl\setminus {\ul}$ arbitrarily close to $s+1$ for which $\alpha(q)$ is not irreducible. Consequently, the transitivity of
the subshifts $(\VB_q, \sigma)$ doesn't become a generic or exceptional property as we approach $s+1$, i.e., for all $\ep>0$
$$0<\textrm{Leb}\left(\set{q \in [s+1-\ep, s+1+\ep]: (\VB_q,\sigma) \textrm{ is transitive}}\right) < 2\ep.$$
This observation demonstrates the richness of the topological dynamics of the family of subshifts $(\VB_q, \sigma)$.

By Lemma \ref{lem:35}  the subshift  $(\VB_{q_T}, \sigma)$ is a transitive sofic  subshift  which is not a shift of finite type. It would be interesting to characterize the set of $q$ for which $(\VB_q, \sigma)$ is a transitive sofic subshift. Similarly, it would be interesting to characterize the set of $q$ for which $(\VB_q, \sigma)$ has the specification property. Another interesting problem to study would be to explicitly calculate the Lebesgue measure of the transitive bases and the non transitive bases.

\section{Irreducible and $*$-irreducible intervals}\label{sec:4}

\noindent Recall from Definition \ref{def:29} that an interval $[p_L, p_R]\subseteq(q_c, M+1]$ is  an  {irreducible (or $*$-irreducible) interval} if  the quasi-greedy expansion $\al(p_L)$ is irreducible (or $*$-irreducible), and there exists a word $a_1\ldots a_m$ with $a_m<M$ such that
\begin{equation}\label{eq:41}
\al(p_L)=(a_1\ldots a_m)^\f,\quad  \al(p_R)=a_1\ldots a_m^+(\overline{a_1\ldots a_m})^\f.
\end{equation}
In this case we call $[p_L, p_R]$   the \emph{irreducible (or $*$-irreducible) interval generated by $a_1\ldots a_m$}.

 \begin{lemma}\label{lem:41}
Let $[p_L, p_R]$ be an irreducible or $*$-irreducible interval generated by $a_1\ldots a_m$. Then $m$ is the smallest period of $\al(p_L)$, and
\[
\overline{a_1\ldots a_{m-i}}\lle a_{i+1}\ldots a_m\prec a_1\ldots a_{m-i}\quad \textrm{for all}\quad 1\le i<m.
\]
Furthermore, the interval $[p_L, p_R]$ is well-defined.
\end{lemma}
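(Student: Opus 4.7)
The hypothesis that $[p_L,p_R]$ is an irreducible or $*$-irreducible interval includes, via Definition \ref{def:29}, that $p_R\in(q_c,M+1]$, so that $\al(p_R)=a_1\ldots a_m^+\overline{a_1\ldots a_m}^\f$ must be a legitimate quasi-greedy expansion of $1$ in base $p_R$ and, in particular, must satisfy Lemma \ref{lem:21}. My plan is to exploit this constraint on $\al(p_R)$ to pin down the minimality of $m$ as a period of $\al(p_L)$, then to upgrade the obvious weak inequality to the claimed strict one, and finally to read off well-definedness from the uniqueness of the primitive block. The irreducibility or $*$-irreducibility of $\al(p_L)$ enters only through the inclusion $\al(p_L)\in\vb$.

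The first, purely formal step records that since $(a_1\ldots a_m)^\f=\al(p_L)\in\vb$, the defining chain $\overline{\al(p_L)}\lle\si^i(\al(p_L))\lle\al(p_L)$ for $i\ge 0$ restricts to the first $m-i$ coordinates as
\[
\overline{a_1\ldots a_{m-i}}\lle a_{i+1}\ldots a_m\lle a_1\ldots a_{m-i}\quad(1\le i<m),
\]
which is already the left inequality of the lemma and a weak form of the right one.

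The second step, which I expect to be the main obstacle, is to prove that $m$ is the smallest period of $\al(p_L)$. Suppose otherwise, and let $m'<m$ denote the smallest period; the standard fact that every period of an infinite periodic sequence is a multiple of the smallest one gives $m'\mid m$, whence $a_{m'}=a_m<M$ and $a_{m'+j}=a_j$ for every $j\ge 1$. Substituting these equalities into $\al(p_R)=a_1\ldots a_m^+\overline{a_1\ldots a_m}^\f$ one computes
\[
\si^{m'}(\al(p_R))=a_1\ldots a_{m-m'-1}(a_{m-m'}+1)\overline{a_1\ldots a_m}^\f,
\]
and since $a_{m-m'}=a_m$ this sequence strictly exceeds $\al(p_R)$ at coordinate $m-m'$. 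However $\al_{m'}(p_R)=a_{m'}<M$, so Lemma \ref{lem:21} forces $\si^{m'}(\al(p_R))\lle\al(p_R)$, a contradiction. The delicate part is to verify that the shift lines the ``$+1$'' produced by $a_m^+$ up against a strictly smaller digit situated inside the initial block $a_1\ldots a_m^+$ of $\al(p_R)$ rather than inside its reflected tail, which is exactly where the periodicity and the strict bound $a_m<M$ combine.

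With minimality at hand, strict inequality on the right is immediate: an equality $a_{i+1}\ldots a_m=a_1\ldots a_{m-i}$ for some $1\le i<m$ would promote the weak inequality $\si^i(\al(p_L))\lle\al(p_L)$ past the coincidence of the first $m-i$ coordinates into $\si^{m-i}(\al(p_L))\lge\al(p_L)$; combined with $\si^{m-i}(\al(p_L))\lle\al(p_L)$ from the $\vb$-condition, this would make $m-i<m$ a period of $\al(p_L)$ and contradict the minimality of $m$. Finally, well-definedness falls out: by minimality the block $a_1\ldots a_m$ is recovered as the unique primitive period of $\al(p_L)$, hence $\al(p_R)$ and in turn $p_R$ depend only on $p_L$, which is exactly what it means for $[p_L,p_R]$ to be intrinsic to $p_L$.
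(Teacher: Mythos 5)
Your treatment of the minimality of the period and of the two displayed inequalities is correct. The minimality argument has the same engine as the paper's: both derive a contradiction from Lemma \ref{lem:21} applied to $\al(p_R)$ when a shorter period exists. Your version, which first reduces to $m'\mid m$ and then locates the offending shift at position $m'$ (where the digit $a_{m'}=a_m<M$, so the lemma applies), is if anything more explicit than the paper's, which shifts by $nj$ and leaves the analogous digit check implicit. For the strict inequality $a_{i+1}\ldots a_m\prec a_1\ldots a_{m-i}$ you argue directly that equality would force $\si^{m-i}(\al(p_L))=\al(p_L)$ and hence a period $m-i<m$; the paper instead uses that the greedy $p_L$-expansion of $1$ is $a_1\ldots a_m^+0^\f$ to obtain the marginally stronger $a_{i+1}\ldots a_m^+\lle a_1\ldots a_{m-i}$. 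Both routes are valid (and your strict inequality implies the paper's form).

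The genuine gap is in the final clause. ``The interval $[p_L,p_R]$ is well-defined'' is not primarily a uniqueness statement; as the remark immediately after Definition \ref{def:29} indicates, its content is existence: one must show that $a_1\ldots a_m^+(\overline{a_1\ldots a_m})^\f$ really is the quasi-greedy expansion of $1$ in \emph{some} base, i.e.\ that it lies in the image of the map $q\mapsto\al(q)$ characterized by Lemma \ref{lem:21}, so that a right endpoint $p_R$ exists and Definition \ref{def:29} is not vacuous. You declared at the outset that this is part of the hypothesis, and your closing paragraph only shows that $p_R$ is determined by $p_L$; within your write-up the well-definedness conclusion is therefore circular. The missing verification is short and uses exactly the inequalities you have just proved: from $a_{i+1}\ldots a_m^+\lle a_1\ldots a_{m-i}$ and $\overline{a_1\ldots a_{m-i}}\lle a_{i+1}\ldots a_m$ one checks that $\si^n\bigl(a_1\ldots a_m^+(\overline{a_1\ldots a_m})^\f\bigr)\lle a_1\ldots a_m^+(\overline{a_1\ldots a_m})^\f$ for every $n\ge 0$ (the tail shifts are handled by reflecting the $\vb$-condition for $(a_1\ldots a_m)^\f$), and Lemma \ref{lem:21} then yields the base $p_R>p_L$. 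You should add this step.
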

\begin{proof}
Let $[p_L, p_R]$ be an irreducible or $*$-irreducible interval generated by $a_1\ldots a_m$. First we show that $m$ is the least period of $\al(p_L)$.   Suppose $j<m$ is a period of $\al(p_L)$. Then $a_1\ldots a_m=(a_1\ldots a_j)^na_1\ldots a_{r}$ for some $n\ge 1$ and $r\in\set{1,\ldots,j}$. By Lemma \ref{lem:21} it follows that the sequence
 \[
 a_1\ldots a_m^+(\overline{a_1\ldots a_m})^\f=(a_1\ldots a_j)^n a_1\ldots a_{r}^+(\overline{a_1\ldots a_m})^\f
 \]
 can not be the quasi-greedy expansion of $1$ for some base $q$. This leading to a contradiction with  (\ref{eq:41})  that $\al(p_R)=a_1\ldots a_m^+(\overline{a_1\ldots a_m})^\f.$
  So $m$ is the least period of $\al(p_L)=(a_1\ldots a_m)^\f$.

   Now we turn to prove the inequalities. Since $m$ is the least period of $\al(p_L)$, the greedy $p_L$-expansion of $1$ is $a_1\ldots a_m^+0^\f$. Which implies
 \begin{equation}\label{eq:42}
 a_{i+1}\ldots a_m\prec a_{i+1}\ldots a_m^+\lle a_1\ldots a_{m-i}\quad\textrm{for all} \quad 1\le i<m.
 \end{equation}
 On the other hand, since $(a_1\ldots a_m)^\f=\al(p_L)\in\vb$ we have
 \begin{equation}\label{eq:43}
 a_{i+1}\ldots a_m\lge \overline{a_1\ldots a_{m-i}}\quad\textrm{for all}\quad 1\le i<m.
 \end{equation}
 Then the inequalities in the lemma follows by (\ref{eq:42}) and (\ref{eq:43}).

 Finally, we show that the interval $[p_L, p_R]$ is well-defined. Then it suffices to show that $\al(p_R)=a_1\ldots a_m^+(\overline{a_1\ldots a_m})^\f$ is indeed a quasi-greedy expansion of $1$ for some base $p_R$. By (\ref{eq:42}) and (\ref{eq:43}) it follows that
 \[
 \si^n(a_1\ldots a_m^+(\overline{a_1\ldots a_m})^\f)\lle a_1\ldots a_m^+(\overline{a_1\ldots a_m})^\f\quad\textrm{for all}\quad n\ge 0.
 \]
 By Lemma \ref{lem:21} we conclude that $p_R >p_L$ is well-defined.
\end{proof}
Inspecting  Lemma \ref{lem:41} and Definition \ref{def:310} it follows that for each word $a_1\ldots a_m$ which generates an irreducible or $*$-irreducible interval the associated   word $a_1\ldots a_m^+$ is primitive.

In this section we will investigate the topological properties of irreducible and $*$-irreducible intervals. In particular, we show that irreducible and $*$-irreducible intervals are pairwise disjoint. We show that the end points of each irreducible (repectively $*$-irreducible) intervals can be approximated  by the left end points of irreducible (respectively $*$-irreducible) intervals from above and below. Moreover,  we show that every irreducible interval is a subset of $(q_T, M+1)$, and every $*$-irreducible intervals is contained in $(q_c, q_T)$. We will show in Section \ref{sec:5} that irreducible intervals cover almost every point of $(q_T, M+1)$, and $*$-irreducible intervals cover almost every point of $(q_c, q_T)$.

Recall that $(\la_i)=\al(q_c)$ is the generalized Thue-Morse sequence defined in (\ref{eq:23}).  The following property for $(\la_i)$ was established in \cite[Lemma 4.2]{Kong_Li_2015}.
\begin{lemma}
  \label{lem:42}
For all  $n\ge 0$ we have
  \[
\overline{\la_1\ldots\la_{2^n-i}}\prec\la_{i+1}\ldots\la_{2^n}\lle\la_1\ldots\la_{2^n-i}\quad\textrm{for all }0\le i<2^n.
\]
\end{lemma}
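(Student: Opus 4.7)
My plan is to argue by induction on $n$, exploiting the self-similar recursion $\la_1\ldots\la_{2^{n+1}}=\la_1\ldots\la_{2^n}\,\overline{\la_1\ldots\la_{2^n}}^{+}$ from (\ref{eq:24}). The base case $n=0$ reduces to $\overline{\la_1}\prec\la_1$, which is immediate from (\ref{eq:23}) since $\la_1=k+1$ in both parities of $M$.

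For the inductive step at level $n+1$, I would split the range $0\le i<2^{n+1}$ into two cases.

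Case (i): $2^n\le i<2^{n+1}$. Writing $i=2^n+j$ with $0\le j<2^n$, the word $\la_{i+1}\ldots\la_{2^{n+1}}$ lies entirely in the reflected second half and equals $\overline{\la_{j+1}\ldots\la_{2^n}}^{+}$ by (\ref{eq:24}). Reflecting the inductive hypothesis at index $j$ yields $\overline{\la_1\ldots\la_{2^n-j}}\lle\overline{\la_{j+1}\ldots\la_{2^n}}\prec\la_1\ldots\la_{2^n-j}$, and applying the $+$ operation at the last coordinate converts this chain into the required
\[
\overline{\la_1\ldots\la_{2^n-j}}\prec\overline{\la_{j+1}\ldots\la_{2^n}}^{+}\lle\la_1\ldots\la_{2^n-j}.
\]
The $+$ is well-defined because $\la_{2^n}>0$: indeed $\tau_{2^n}=1$ forces $\la_{2^n}\in\set{k,k+1}$ in either parity.

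Case (ii): $0\le i<2^n$. The subcase $i=0$ follows from the inductive hypothesis applied at $i=0$. For $1\le i<2^n$, both sides straddle the boundary and expand via (\ref{eq:24}) as
\[
\la_{i+1}\ldots\la_{2^{n+1}}=(\la_{i+1}\ldots\la_{2^n})\,\overline{\la_1\ldots\la_{2^n}}^{+},\qquad \la_1\ldots\la_{2^{n+1}-i}=(\la_1\ldots\la_{2^n})\,\overline{\la_1\ldots\la_{2^n-i}}.
\]
For the left strict inequality I would compare the first $2^n-i$ symbols; the inductive hypothesis yields $\overline{\la_1\ldots\la_{2^n-i}}\prec\la_{i+1}\ldots\la_{2^n}$ at once. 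For the right inequality, the inductive hypothesis only gives $\la_{i+1}\ldots\la_{2^n}\lle\la_1\ldots\la_{2^n-i}$, possibly with equality; if equality occurs, I would compare the remaining $2^n$ symbols, using the identity $\la_{2^n+r}=\overline{\la_r}$ for $1\le r<2^n$ to reduce the first $i$ trailing positions to comparing $\overline{\la_1\ldots\la_i}$ against $\la_{2^n-i+1}\ldots\la_{2^n}$, which is strictly in the correct direction by the inductive hypothesis applied at index $2^n-i$.

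The main technical obstacle I anticipate is the bookkeeping in Case (ii): keeping track of how the trailing $+$ on the reflected block interacts with truncations on each side, and confirming that every invocation of the inductive hypothesis is at an index strictly less than $2^n$ so the induction closes. Once these alignments are written out carefully, both the strict and non-strict halves of the desired double inequality follow mechanically.
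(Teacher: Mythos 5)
Your proof is correct, but note that the paper does not actually prove Lemma \ref{lem:42} itself: it imports the statement by citation from \cite[Lemma 4.2]{Kong_Li_2015}, so there is no in-paper argument to compare against. Your induction on $n$ via the recursion \eqref{eq:24} is the natural self-contained route and all the steps check out. In Case (i) the chain $\overline{\la_1\ldots\la_{2^n-j}}\lle\overline{\la_{j+1}\ldots\la_{2^n}}\prec\la_1\ldots\la_{2^n-j}$ obtained by reflecting the inductive hypothesis does survive the $+$ on the last coordinate: the left inequality becomes strict because $w\prec w^+$, and the right one stays $\lle$ because a strict inequality witnessed before the last position is unaffected, while one witnessed at the last position degrades to at most equality. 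In Case (ii) the only delicate subcase is equality of the leading $2^n-i$ symbols in the right-hand inequality, and there your comparison of the trailing blocks $\overline{\la_1\ldots\la_{2^n}}^{+}$ versus $\la_{2^n-i+1}\ldots\la_{2^{n+1}-i}$ is settled strictly within the first $i$ positions by the inductive hypothesis at index $2^n-i$, which indeed lies in $[1,2^n)$ whenever $1\le i<2^n$; so every invocation of the hypothesis is legitimate and the induction closes. The well-definedness check $\la_{2^n}>0$ (from $\tau_{2^n}=1$) is also right. This is a complete and correct proof of the cited fact.
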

Then by Lemma \ref{lem:42} and Definition \ref{def:310} it follows that for each $n\ge 0$ the word $\la_1\ldots \la_{2^n}$ is primitive.
Recall from (\ref{eq:25}) that $\xi(n)$  is defined by
\[
\xi(n)=\left\{
\begin{array}
  {lll}
  \la_1\ldots\la_{2^{n-1}}(\overline{\la_1\ldots\la_{2^{n-1}}}\,^+)^\f&\textrm{if}& M=2k,\\
  \la_1\ldots\la_{2^n}(\overline{\la_1\ldots\la_{2^n}}\,^+)^\f&\textrm{if}& M=2k+1.
\end{array}
\right.
\]
Here we emphasise that the sequence $(\la_i)$ depends on $M$.
By (\ref{eq:22}) we have $\al(q_T)=\xi(1)$.

\begin{lemma}
  \label{lem:43}
  For each positive integer $n$ there exists a unique $q_{T_n}\in(q_c, q_T]\cap\ul$ such that
  \[
  \al(q_{T_n})=\xi(n).
  \]
  Moreover, $(q_{T_n})_{n=1}^{\infty}$ is strictly decreasing and converges to $q_c$ as $n\ra\f$.
\end{lemma}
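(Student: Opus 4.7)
\medskip

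\noindent\textbf{Proof proposal.} My plan is to use the bijection of Lemma \ref{lem:21} as the engine for both existence and uniqueness, reducing everything to verifying lexicographic inequalities for the explicit sequence $\xi(n)$. I will only write out the case $M=2k+1$ (the case $M=2k$ being analogous, essentially by replacing $2^n$ with $2^{n-1}$ throughout). The key structural observation is that, by the Thue--Morse recursion \eqref{eq:24},
\[
\xi(n)=\lambda_1\ldots\lambda_{2^n}\,\overline{\lambda_1\ldots\lambda_{2^n}}^+\,\overline{\lambda_1\ldots\lambda_{2^n}}^+\cdots=\lambda_1\ldots\lambda_{2^{n+1}}\,\overline{\lambda_1\ldots\lambda_{2^n}}^+\,\overline{\lambda_1\ldots\lambda_{2^n}}^+\cdots,
\]
so $\xi(n)$ begins with $\lambda_1\ldots\lambda_{2^{n+1}}$ and is eventually periodic with period $2^n$.

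First I would verify that $\xi(n)\in\vb$, i.e.\ $\overline{\xi(n)}\preccurlyeq\sigma^j(\xi(n))\preccurlyeq\xi(n)$ for all $j\ge 0$. To do this, I would split into three regimes of the shift index $j$: (i) $0\le j<2^{n+1}$, where $\sigma^j(\xi(n))$ starts with $\lambda_{j+1}\ldots\lambda_{2^{n+1}}$ followed by a periodic tail built from $\overline{\lambda_1\ldots\lambda_{2^n}}^+$, and the required inequalities reduce to Lemma \ref{lem:42} for $\lambda_1\ldots\lambda_{2^{n+1}}$ together with a short comparison of the tails; (ii) $j\ge 2^{n+1}$ with $(j-2^{n+1})\bmod 2^n=i$ for some $0\le i<2^n$, where $\sigma^j(\xi(n))=\overline{\lambda_{i+1}\ldots\lambda_{2^n}}^+\,(\overline{\lambda_1\ldots\lambda_{2^n}}^+)^{\infty}$; here I apply the reflection of Lemma \ref{lem:42} at level $2^n$. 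By Lemma \ref{lem:21} this both establishes the existence of a unique $q_{T_n}\in(1,M+1]$ with $\al(q_{T_n})=\xi(n)$ and shows $q_{T_n}\in\vl$. Furthermore, in these case analyses the inequalities are strict for $j\ge 1$, which by the characterization of $\overline{\ul}$ and $\ul$ recalled in Section \ref{sec:2} upgrades $q_{T_n}\in\vl$ to $q_{T_n}\in\ul$.

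Next I would establish the monotonicity $\xi(n+1)\prec\xi(n)\preccurlyeq\xi(1)=\al(q_T)$. The comparison $\xi(n+1)\prec\xi(n)$ is a direct unwinding: by \eqref{eq:24} both sequences coincide on their first $2^{n+1}$ digits $\lambda_1\ldots\lambda_{2^{n+1}}$, and the next block of $\xi(n+1)$ is $\overline{\lambda_1\ldots\lambda_{2^{n+1}}}^+$, which begins with $\overline{\lambda_1\ldots\lambda_{2^n}}$, whereas $\xi(n)$ continues with $\overline{\lambda_1\ldots\lambda_{2^n}}^+$; since these differ only in the last digit (strictly smaller for $\xi(n+1)$), one obtains $\xi(n+1)\prec\xi(n)$. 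Combining the strict increasingness of the bijection $q\mapsto\al(q)$ from Lemma \ref{lem:21} with $\xi(1)=\al(q_T)$ gives $q_c<q_{T_{n+1}}<q_{T_n}\le q_T$; the lower bound $q_{T_n}>q_c$ follows because $\xi(n)$ agrees with $\al(q_c)=(\lambda_i)$ on the first $2^{n+1}$ digits but its next digit $\overline{\lambda_1}^+$ is strictly larger than $\lambda_{2^{n+1}+1}=k+\tau_{2^{n+1}+1}=k=\overline{\lambda_1}$.

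Finally, for the convergence $q_{T_n}\to q_c$, I observe that $\xi(n)$ and $(\lambda_i)$ agree on the prefix $\lambda_1\ldots\lambda_{2^{n+1}}$ and $\xi(n)$ is decreasing in $n$, so $\xi(n)\to(\lambda_i)$ in the order topology. Since the decreasing sequence $(q_{T_n})$ is bounded below by $q_c$, it has a limit $q^*\ge q_c$; by the pointwise convergence of $\al(q_{T_n})=\xi(n)$ to $(\lambda_i)=\al(q_c)$ and the uniqueness part of Lemma \ref{lem:21}, we conclude $q^*=q_c$. The main technical obstacle of the whole argument is the case analysis in Step~1 verifying that $\xi(n)\in\vb$ (and the strict versions needed for $\ul$); once Lemma \ref{lem:42} is available the remaining steps reduce to careful bookkeeping.
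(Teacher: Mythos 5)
Your proposal follows essentially the same route as the paper: verify via Lemma \ref{lem:42} that $\xi(n)$ satisfies the (strict) lexicographic characterization of $\ul$, so Lemma \ref{lem:21} gives a unique $q_{T_n}\in\ul$; prove $\xi(n+1)\prec\xi(n)$ by unwinding the recursion \eqref{eq:24}; and deduce $q_{T_n}\searrow q_c$ from $\xi(n)\to(\la_i)$ in the order topology. One computation in your justification of $q_{T_n}>q_c$ is off: for $M=2k+1$ the $(2^{n+1}+1)$-th digit of $\xi(n)$ is the \emph{first} digit of the block $\overline{\la_1\ldots\la_{2^n}}^+$, namely $\overline{\la_1}=k$ (the ``$+$'' only affects the last digit of the block), which equals $\la_{2^{n+1}+1}=k$; the first disagreement between $\xi(n)$ and $(\la_i)$ actually occurs at position $3\cdot 2^n$, where $\xi(n)$ carries $\overline{\la_{2^n}}^+$ against $\overline{\la_{2^n}}$ in $(\la_i)$. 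The conclusion $\xi(n)\succ(\la_i)$ still holds (and follows more cheaply from $q_{T_n}\in\ul$, $q_c=\min\ul$ and $\xi(n)\ne(\la_i)$), so this is a fixable slip rather than a gap in the argument.
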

\begin{proof}
Take $n\in\N$. By Lemma \ref{lem:42} it follows that
\[\overline{\xi(n)}\prec\si^i(\xi(n))\prec\xi(n)\quad \textrm{for all }i\ge 1.\]
Therefore, by the lexicographic characterization of $\ul$ there exists a unique $q_{T_n}\in(q_c, q_T]\cap \ul$ such that  $\al(q_{T_n})=\xi(n)$. In particular, $q_{T_1}=q_T$.

In the following we prove that $\set{\xi(n)}_{n=1}^\f$ is strictly decreasing. By (\ref{eq:24}) it follows that
\begin{align*}
  \la_1\ldots\la_{2^n}(\overline{\la_1\ldots\la_{2^n}}\,^+)^\f&=\la_1\ldots\la_{2^{n-1}}\overline{\la_1\ldots\la_{2^{n-1}}}\,^+(\overline{\la_1\ldots\la_{2^{n-1}}}\la_1\ldots\la_{2^{n-1}})^\f\\
  &\prec\la_1\ldots\la_{2^{n-1}}(\overline{\la_1\ldots\la_{2^{n-1}}}\,^+)^\f.
\end{align*}
Then by  (\ref{eq:25}) we have $\xi(n+1)\prec\xi(n)$ for all $n\ge 1$. Note that $\xi(n)\ra(\la_i)$ as $n\ra\f$ with respect to  the order topology. Moreover, the map which sends $\al(q)\ra q$ when restricted to $\ul$ is continuous. So
by Lemma \ref{lem:21} we conclude that $q_{T_n} \searrow q_c$ as $n\ra\f$.
\end{proof}
Note by Lemma \ref{lem:43} that $q_{T_1}=q_T$ and    $q_{T_n}\searrow q_c$ as $n\ra\f$. By (\ref{eq:32}) it follows that
\begin{equation}\label{eq:44}
q_G<q_{NT}<q_c<\cdots<q_{T_{n+1}}<q_{T_n}<\cdots  <q_{T_1}=q_T.
\end{equation}
Then the sequence $(q_{T_n})$   form a partition of $(q_c, M+1]$:
\begin{equation}\label{eq:45}
(q_c, M+1]=[q_T, M+1]\cup\bigcup_{n=1}^\f[q_{T_{n+1}}, q_{T_n}),
\end{equation}
where the unions on the right hand side are pairwise disjoint.

In the following lemma we show that the irreducible intervals are contained inside $(q_T, M+1)$, and the $*$-irreducible interval are contained in $(q_{T_{n+1}}, q_{T_n})$ for some $n\in\N$.

\begin{lemma}
  \label{lem:44}
  \begin{enumerate}
  \item Each irreducible interval is contained in $(q_T, M+1)$.
  \item  Let $[p_L, p_R]$ be a $*$-irreducible interval generated by $a_1\ldots a_m$. Then
 there exists a unique $n\in\N$ such that $[p_L, p_R]\subseteq(q_{T_{n+1}}, q_{T_n})$.
   Moreover, $m\ge 3\cdot 2^{n-1}$ if $M=2k$, and $m\ge 3\cdot 2^n$ if $M=2k+1$.
 \end{enumerate}
\end{lemma}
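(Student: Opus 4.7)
For (1), the conclusion is almost immediate from the previous results. By definition an irreducible interval lies in $(q_c, M+1]$, and Lemma~\ref{lem:32} rules out $\al(q)$ being irreducible for $q \in [q_{NT}, q_T]$. Since $q_{NT} < q_c < q_T$, no $q \in (q_c, q_T]$ admits an irreducible $\al(q)$, so $p_L > q_T$. For the upper bound $p_R < M+1$ I would use that $\al(p_L) \succ 0^\f$ forces $a_1 \ge 1$, so $\overline{a_1} \le M-1 < M$; this gives $\al(p_R) = a_1\ldots a_m^+(\overline{a_1\ldots a_m})^\f \prec M^\f = \al(M+1)$, hence $p_R < M+1$.

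For (2), I would first extract $n$ from Definition~\ref{def:28}, with uniqueness coming from the strict monotonicity of $(\xi(n))$ proved in Lemma~\ref{lem:43}. The upper bound $p_L < q_{T_n}$ is immediate from $\al(p_L) \prec \xi(n) = \al(q_{T_n})$. To obtain the strict lower bound $p_L > q_{T_{n+1}}$ I would show that $\xi(n+1)$ is not purely periodic: in the case $M = 2k$ its first digit $\la_1 = k+1$ differs from the first digit $\overline{\la_1} = k-1$ of its periodic tail $\overline{\la_1\ldots\la_{2^n}}^+$, and the analogous discrepancy holds for $M = 2k+1$. Since $\al(p_L) = (a_1\ldots a_m)^\f$ is purely periodic, it cannot equal $\xi(n+1)$, and the weak inequality in Definition~\ref{def:28} upgrades to $\al(p_L) \succ \xi(n+1)$.

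The technical heart of the lemma is the lower bound on $m$. Using the Thue--Morse recurrence \eqref{eq:24} one computes the longest common prefix of $\xi(n)$ and $\xi(n+1)$: for $M = 2k$ they agree on positions $1,\ldots,L$ with $L := 3\cdot 2^{n-1} - 1$ and then differ by exactly one at position $3\cdot 2^{n-1}$ ($\xi(n)$ having $\overline{\la_{2^{n-1}}} + 1$ and $\xi(n+1)$ having $\overline{\la_{2^{n-1}}}$); for $M = 2k+1$ the analogous length is $L = 3 \cdot 2^n - 1$. A short first-divergence analysis shows that the two first positions of disagreement (of $\al(p_L)$ with $\xi(n)$ and with $\xi(n+1)$) cannot both lie in $[1,L]$, so $\al(p_L)$ must agree with the common prefix $P$ on $[1,L]$, i.e., $a_1\ldots a_L = P$. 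Assuming $m \le L$ for contradiction, periodicity forces $a_{L+1} = P_{L+1-m}$, and iterating the constraint $\xi(n+1) \prec (a_1\ldots a_m)^\f \prec \xi(n)$ at positions $L+1, L+2, \ldots$, together with the minimal period $2^n$ of the Thue--Morse block $\la_1\ldots\la_{2^n}$ (proved inductively via \eqref{eq:24} using the overlap-free property of $(\la_i)$), produces the required contradiction. Hence $m \ge 3 \cdot 2^{n-1}$, respectively $3 \cdot 2^n$.

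Finally, the inequality $p_R < q_{T_n}$ follows by combining $m \ge 3 \cdot 2^{n-1}$ with the explicit form $\al(p_R) = a_1\ldots a_m^+(\overline{a_1\ldots a_m})^\f$: the first divergence of $\al(p_L)$ from $\xi(n)$ occurs at some position $j_0 \ge L + 1$, and a case analysis according to whether $j_0 < m$, $j_0 = m$, or $j_0 > m$ (the last being excluded by combining the quasi-greedy inequalities for $\al(p_L)$ with the lower bound on $m$) shows $\al(p_R) \prec \xi(n)$. The main obstacle throughout is the combinatorial bookkeeping required in the lower bound argument on $m$, where one must reconcile the period $m$ of $\al(p_L)$ with the Thue--Morse period $2^{n-1}$ (or $2^n$) appearing in the tail of $\xi(n)$, and exploit the overlap-free structure of $(\la_i)$.
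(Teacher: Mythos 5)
Your proposal follows essentially the same route as the paper: part (1) via Lemma \ref{lem:32} together with the observation that $\al(p_R)\ne M^\f$, and part (2) by extracting $n$ from Definition \ref{def:28}, using the non-periodicity of $\xi(n+1)$ to get strictness at the left endpoint, computing the common prefix of $\xi(n)$ and $\xi(n+1)$ (of length $3\cdot 2^{n-1}-1$, resp.\ $3\cdot 2^{n}-1$) to force the word $a_1\ldots a_{3\cdot 2^{n-1}}$ and hence the lower bound on $m$, and finally comparing $\al(p_R)$ with $\xi(n)$ by a case analysis at the first point of divergence (the paper does this by writing $m=j\cdot 2^{n-1}+r$ and invoking Lemma \ref{lem:42}). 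The one place where you remain sketchy --- deducing $m\ge 3\cdot 2^{n-1}$ from the forced prefix --- is asserted with comparable brevity in the paper itself, so the proposal matches the paper's argument in both structure and level of detail.
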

\begin{proof}
Since the proof for $M=2k+1$ is similar, we only consider the case where $M=2k$.

First we prove (1). Note by Lemma \ref{lem:32}  that $\al(q)$ is not irreducible for any $q_c<q\le q_T$. So any irreducible interval must belong to $(q_T, M+1]$. Observe that $\al(M+1)=M^\f$. It is clear from the definition that for any irreducible interval $[p_L,p_R]$ we must have $\al(p_R)\neq M^{\infty}$. Therefore $M+1$ cannot be the right endpoint of an irreducible interval. This establishes (1).

Now we prove (2). Suppose that $[p_L, p_R]$ is a $*$-irreducible interval generated by $a_1\ldots a_m$. Then $\al(p_L)=(a_1\ldots a_m)^\f$ is $*$-irreducible. By Definition \ref{def:28} there exists a unique $n\in\N$ such that $\xi(n+1)\lle\al(p_L)\prec \xi(n)$. Thus, by Lemma \ref{lem:43}
 there exists a unique $n\in\N$ such that $p_L\in[q_{T_{n+1}}, q_{T_n})$. Observe by Lemma \ref{lem:42} that $\al(q_{T_{n+1}})=\lambda_1\ldots \la_{2^n} (\overline{\la_1\ldots\la_{2^n}}^+)^{\infty}$ is not periodic. So $p_L \in (q_{T_{n+1}}, q_{T_n})$.  By Lemma \ref{lem:21} we have $\al(q_{T_{n+1}}) \prec \al(p_L)\prec\al(q_{T_n})$. This implies that
\begin{equation}\label{eq:46}
  \la_1\ldots\la_{2^n}(\overline{\la_1\ldots \la_{2^n}}\,^+)^\f \prec (a_1\ldots a_m)^\f\prec \la_1\ldots\la_{2^{n-1}}(\overline{\la_1\ldots\la_{2^{n-1}}}\,^+)^\f.
  \end{equation}

Note by (\ref{eq:24}) that $\la_{1}\ldots \la_{2^n}=\la_1\ldots \la_{2^{n-1}}\overline{\la_1\ldots\la_{2^{n-1}}}\,^+$. Then
 \[a_1\ldots a_{3\cdot 2^{n-1}}=\la_1\ldots \la_{2^{n-1}}(\overline{\la_1\ldots\la_{2^{n-1}}}\,^+)^2\quad\textrm{or}\quad \la_1\ldots \la_{2^{n-1}} \overline{\la_1\ldots\la_{2^{n-1}}}\,^+ \overline{\la_1\ldots\la_{2^{n-1}}}. \] Therefore we can conclude that $m\ge 3\cdot 2^{n-1}$. To finish the proof it remains to show that $p_R<q_{T_n}$, or equivalently, to show $\al(p_R)<\xi(n)$.

Let us write $m=j\cdot 2^{n-1}+r$ with $j\ge 3$ and $r\in\set{0,\ldots, 2^{n-1}-1}$.   By (\ref{eq:46}) we have
\begin{equation}\label{eq:47}
a_1\ldots a_m\lle \la_1\ldots \la_{2^{n-1}}(\overline{\la_1\ldots\la_{2^{n-1}}}\,^+)^{j-1}\overline{\la_1\ldots\la_r}.
\end{equation}
Note by  Lemma \ref{lem:42}  that
\begin{equation}\label{eq:48}
\overline{\la_{r+1}\ldots \la_{2^{n-1}}}^+\lle\la_1\ldots \la_{2^{n-1}-r},\quad \overline{\la_1\ldots \la_r}\prec \la_{2^{n-1}-r+1}\ldots \la_{2^{n-1}}.
\end{equation}
This implies that equality in (\ref{eq:47}) cannot hold. Therefore,
 \begin{equation*}
 a_1\ldots a_m \prec \la_1\ldots \la_{2^{n-1}}(\overline{\la_1\ldots\la_{2^{n-1}}}\,^+)^{j-1}\overline{\la_1\ldots\la_r},
 \end{equation*}
which implies
\begin{equation}\label{eq:49}
 a_1\ldots a_m^+ \lle \la_1\ldots \la_{2^{n-1}}(\overline{\la_1\ldots\la_{2^{n-1}}}\,^+)^{j-1}\overline{\la_1\ldots\la_r}.
\end{equation}

 If strict inequality holds in (\ref{eq:49}) then $\al(p_R)=a_1\ldots a_m^+(\overline{a_1\ldots a_m})^\f \prec\al(q_{T_n})$. If  $$a_1\ldots a_m^+ = \la_1\ldots \la_{2^{n-1}}(\overline{\la_1\ldots\la_{2^{n-1}}}\,^+)^{j-1}\overline{\la_1\ldots\la_r},$$ then by (\ref{eq:48}) it follows that
  \begin{align*}
  a_1\ldots a_m^+(\overline{a_1\ldots a_m})^\f& = \la_1\ldots\la_{2^{n-1}} (\overline{\la_1\ldots\la_{2^{n-1}}}\,^+)^{j-1}\overline{\la_1\ldots\la_r}\,(\overline{\la_1\ldots \la_{2^{n-1}}}\overline{a_{2^{n-1}+1}\ldots a_m})^{\f}\\
  &\prec  \la_1\ldots\la_{2^{n-1}}(\overline{\la_1\ldots\la_{2^{n-1}}}\,^+)^\f.
  \end{align*}
Again, this implies $\al(p_R)\prec\al(q_{T_n})$. Then from Lemma \ref{lem:21} we obtain $p_R<q_{T_n}$. Therefore, we conclude that $[p_L, p_R]\subseteq(q_{T_{n+1}}, q_{T_n})$.
\end{proof}
By Lemma \ref{lem:44} it follows that   each $*$-irreducible interval $[p_L, p_R]$ is contained in a unique subinterval $(q_{T_{n+1}}, q_{T_n})$. In this case, we call $[p_L, p_R]$ an \emph{$n$-irreducible interval}.

Note by Table \ref{tab:1}  that there are infinitely many irreducible intervals. In the following lemma we show that  there also exist  infinitely many $*$-irreducible intervals.

\begin{lemma}\label{lem:45}
For any positive integers $n$ and $j$ the sequence
\[
(\la_1\cdots\la_{2^n}(\overline{\la_1\ldots\la_{2^n}}\,^+)^j\, \overline{\la_1\ldots \la_{2^n}})^\f
\]
is $*$-irreducible.
\end{lemma}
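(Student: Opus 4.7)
The plan is to verify the three conditions of Definition~\ref{def:28} for
\[
\mathbf{a}:=(L(\overline{L}\,^+)^j\overline{L})^\f,
\]
where I abbreviate $L:=\la_1\ldots\la_{2^n}$ and $P:=L(\overline{L}\,^+)^j\overline{L}$, so $\mathbf{a}=P^\f$ is periodic with period $|P|=2^n(j+2)$. The structural identities I will rely on throughout are $L_{n+1}=L\,\overline{L}\,^+$ (which is the recursion~\eqref{eq:24}) and its reflection-adjusted form $\overline{L_{n+1}}\,^+=\overline{L}\,L$, together with the primitivity of $L$ furnished by Lemma~\ref{lem:42}. I shall treat the case $M=2k$ in detail; the case $M=2k+1$ is analogous after shifting the index of $\xi$ by one.

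First I would verify $\mathbf{a}\in\vb$ by checking $\overline{\mathbf{a}}\lle\si^i\mathbf{a}\lle\mathbf{a}$ for $0\le i<|P|$. Splitting $i$ into three ranges --- when the shift lands inside the initial block $L$, inside one of the $j$ copies of $\overline{L}\,^+$, or inside the terminal block $\overline{L}$ --- each comparison reduces to the primitivity inequalities for $L$ from Lemma~\ref{lem:42}, after possibly using the two identities above to realign a shifted block so it begins with $L$ or $\overline{L}$.

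Second, I would identify the index of Definition~\ref{def:28} as $N=n+1$ (and $N=n$ when $M=2k+1$, so that the threshold $\ell>2^{n+1}$ in condition (c) is the same in both parities). The upper bound $\mathbf{a}\prec\xi(n+1)$ holds because $\mathbf{a}$ and $\xi(n+1)=L(\overline{L}\,^+)^\f$ agree on positions $1,\ldots,2^n(j+2)-1$ and first disagree at position $2^n(j+2)$, where $\mathbf{a}$ carries $M-\la_{2^n}$ while $\xi(n+1)$ carries $M-\la_{2^n}+1$. The lower bound $\xi(n+2)\lle\mathbf{a}$ is obtained by rewriting $\xi(n+2)=L_{n+1}(\overline{L_{n+1}}\,^+)^\f=L\,\overline{L}\,^+(\overline{L}\,L)^\f$ via $\overline{L_{n+1}}\,^+=\overline{L}\,L$, and then a short comparison with $\mathbf{a}=L\,\overline{L}\,^+(\overline{L}\,^+)^{j-1}\overline{L}\,L\cdots$: when $j\ge 2$ the sequences first disagree at position $2^{n+1}+2^n$, and when $j=1$ they first disagree at position $5\cdot 2^n$; in both cases $\mathbf{a}$ carries the larger digit.

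The third step, which I expect to be the main obstacle, is the $*$-irreducibility inequality: for every $\ell>2^{n+1}$ with $(a_1\ldots a_\ell^-)^\f\in\vb$, show $a_1\ldots a_\ell(\overline{a_1\ldots a_\ell}\,^+)^\f\prec\mathbf{a}$. Writing $w=a_1\ldots a_\ell$, this is equivalent to $(\overline{w}\,^+)^\f\prec\si^\ell\mathbf{a}$. Since $\mathbf{a}$ has period $|P|$, the sequence $\si^\ell\mathbf{a}$ depends only on the residue $r=\ell\bmod|P|$, and my plan is to use the strong primitivity-type constraint $(w^-)^\f\in\vb$, combined with $\ell>2^{n+1}$, to show that the admissible residues are highly restricted. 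In the cleanest case $r=0$, we have $w=P^s$ and $\overline{w}\,^+=(\overline{P})^{s-1}\overline{P}\,^+$, so $(\overline{w}\,^+)^\f$ begins with $\overline{P}$ while $\si^\ell\mathbf{a}=P^\f$ begins with $P$; the comparison then reduces at once to $\overline{P}\prec P$, which holds because $\overline{P}$ starts with $\overline{\la_1}$ and $P$ starts with $\la_1=k+1>\overline{\la_1}$. The remaining residues are handled by a parallel case analysis, using $(w^-)^\f\in\vb$ to locate the first disagreement between $(\overline{w}\,^+)^\f$ and $\si^\ell\mathbf{a}$ and verifying that it always goes the right way; the technical heart of the lemma will be ruling out the few exotic residues $r$ for which this first disagreement could conceivably point in the wrong direction.
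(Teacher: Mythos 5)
Your setup is sound: the bracketing of $\mathbf{a}$ between consecutive $\xi$'s is verified correctly (the first disagreements are located exactly where you say), the choice of index in Definition~\ref{def:28} and the resulting threshold $\ell>2^{n+1}$ are right, and the reduction of the target inequality to $(\overline{w}\,^+)^\f\prec\si^\ell\mathbf{a}$ is valid. The gap is that the only instance of this inequality you actually establish is the single residue $\ell\equiv 0\pmod{|P|}$; everything else is deferred to ``a parallel case analysis,'' and you yourself flag that some residues ``could conceivably point in the wrong direction'' without resolving any of them. That deferred analysis is the entire content of the lemma, so what you have is a plan rather than a proof. Note also that stratifying by $\ell\bmod|P|$ produces $2^n(j+2)$ residues, a number growing with $j$ and $n$, so ``the few exotic residues'' is not a well-founded description of what remains.

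The two facts that make the case analysis close are absent from your outline. Stratify instead by $r=\ell\bmod 2^n$ together with the type of the length-$2^n$ block ($L$, $\overline{L}\,^+$ or $\overline{L}$) in which position $\ell$ sits; this gives a bounded number of cases independent of $j$. First, if $0<r<2^n$ and position $\ell$ lies inside a reflected block, then $a_{\ell-r+1}\ldots a_\ell=\overline{\la_1\ldots\la_r}$, hence $a_{\ell-r+1}\ldots a_\ell^-\prec\overline{a_1\ldots a_r}$ and so $(w^-)^\f\notin\vb$: the hypothesis is vacuous and there is nothing to prove. Second, in every remaining case ($\ell$ inside an $L$-block, or at a block boundary), $\si^\ell\mathbf{a}$ begins with $\la_{r+1}\ldots\la_{2^n}$, or with one of $\overline{L}\,^+$, $L$, or $\overline{L}\,L$, while the competitor begins with $\overline{a_1}\,\overline{a_2}\ldots$; the comparison is then settled within one or two blocks by the strict inequalities of Lemma~\ref{lem:42} (e.g.\ $\la_{r+1}\ldots\la_{2^n}\succ\overline{\la_1\ldots\la_{2^n-r}}$, and $\overline{L}\,L\succ\overline{L}\,L^-=\overline{L\,\overline{L}\,^+}$), with the threshold $\ell>2^{n+1}$ guaranteeing that the terminal ``$+$'' of $\overline{w}\,^+$ never enters the comparison window. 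Without the first fact you cannot restrict the admissible $\ell$, and without the second you have no uniform mechanism for deciding the remaining comparisons; supplying these two points is exactly what turns your outline into the paper's proof.
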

\begin{proof}
Since the proof for $M=2k$ is similar, we assume $M=2k+1$. Then $\xi(n)=\la_1\ldots \la_{2^n}(\overline{\la_1\ldots \la_{2^n}}\,^+)^\f$. By (\ref{eq:24}) it follows that
\[
  (\la_1\cdots\la_{2^n}(\overline{\la_1\ldots\la_{2^n}}\,^+)^j\, \overline{\la_1\ldots \la_{2^n}})^\f\prec \la_1\ldots\la_{2^n}(\overline{\la_1\ldots \la_{2^n}}^+)^\f=\xi(n)
\]
and
\begin{align*}
  (\la_1\cdots\la_{2^n}(\overline{\la_1\ldots\la_{2^n}}\,^+)^j\, \overline{\la_1\ldots \la_{2^n}})^\f&=(\la_1\ldots \la_{2^{n+1}}(\overline{\la_1\ldots \la_{2^n}}^+)^{j-1}\overline{\la_1\ldots \la_{2^n}})^\f\\
  &\succ\la_1\ldots \la_{2^{n+1}}(\overline{\la_1\ldots \la_{2^{n+1}}}^+)^\f=\xi(n+1).
\end{align*}
Denote by $(a_i):= (\la_1\cdots\la_{2^n}(\overline{\la_1\ldots\la_{2^n}}\,^+)^j\, \overline{\la_1\ldots \la_{2^n}})^\f$. In terms of Definition \ref{def:28} we will prove that for any $m=\ell 2^n+r$ with $\ell\ge 2$ and $r\in\set{0,1,\ldots, 2^n-1}$ we have
\begin{equation}\label{eq:410}
a_1\ldots a_m(\overline{a_1\ldots a_m}^+)^\f\prec (a_i)
\end{equation}
whenever $(a_1\ldots a_m^-)^\f\in\vb.$

First we assume $r\in\set{1,\ldots, 2^n-1}$. By the definition of $(a_i)$ we distinguish between the following two cases.
\begin{itemize}
\item $a_{m-r+1}\ldots a_m=\la_1\ldots \la_r$. By Lemma \ref{lem:42} it follows that
\[
a_{r+1}\ldots a_{2^n}=\la_{r+1}\ldots \la_{2^n}\succ \overline{\la_1\ldots \la_{2^n-r}}=\overline{a_1\ldots a_{2^n-r}}.
\]
So (\ref{eq:410}) holds in this case.

\item $a_{m-r+1}\ldots a_m=\overline{\la_1\ldots \la_r}$. Then
$
a_{m-r+1}\ldots a_m^-\prec \overline{\la_1\ldots\la_r}=\overline{a_1\ldots a_r},
$
which implies that $(a_1\ldots a_m^-)^\f\notin\vb$.
\end{itemize}

 Now we assume $r=0$. By the definition of $(a_i)$ we distinguish between the following  three cases.
 \begin{itemize}
 \item $a_{m-2^n+1}\ldots a_m=\la_1\ldots \la_{2^n}$. Then $a_{m+1}\ldots a_{m+2^n}=\overline{\la_1\ldots \la_{2^n}}^+\succ \overline{a_1\ldots a_{2^n}}$. This implies (\ref{eq:410}).

 \item $a_{m-2^n+1}\ldots a_m=\overline{\la_1\ldots \la_{2^n}}^+$. If $a_{m+1}\ldots a_{m+2^n}=\overline{\la_1\ldots \la_{2^n}}^+$, then by the same reasoning as in the previous case we have (\ref{eq:410}). If $a_{m+1}\ldots a_{m+2^n}=\overline{\la_1\ldots \la_{2^n}}$, then $a_{m+2^n+1}\ldots a_{m+2^{n+1}}=\la_1\ldots\la_{2^n}$. This implies that
 \[
 a_{m+1}\ldots a_{m+2^{n+1}}=\overline{\la_1\ldots \la_{2^n}}\la_1\ldots \la_{2^n}\succ\overline{a_1\ldots a_{2^{n+1}}}.
 \]
 So (\ref{eq:410}) also holds in this case.

 \item $a_{m-2^n+1}\ldots a_m=\overline{\la_1\ldots \la_{2^n}}$. Then $a_{m+1}\ldots a_{m+2^n}=\la_1\ldots \la_{2^n}\succ \overline{a_1\ldots a_{2^n}}$ which yields (\ref{eq:410}).
 \end{itemize}

 Therefore, we conclude from (\ref{eq:410}) that $(a_i)=(\la_1\cdots\la_{2^n}(\overline{\la_1\ldots\la_{2^n}}\,^+)^j\, \overline{\la_1\ldots \la_{2^n}})^\f$ is $*$-irreducible.
\end{proof}

  In the following lemma we show that the irreducible and $*$-irreducible intervals are pairwise disjoint.
\begin{lemma}
  \label{lem:46}
  \begin{enumerate}
  \item
  The irreducible   intervals are pairwise disjoint.
  \item
  The   $*$-irreducible intervals are pairwise disjoint.
  \end{enumerate}
\end{lemma}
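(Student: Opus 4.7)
The plan is to treat (1) and (2) uniformly by contradiction. Suppose $[p_L,p_R]$ and $[p_L',p_R']$ are both irreducible (or both $*$-irreducible) intervals, generated by $\a=a_1\ldots a_m$ and $\b=b_1\ldots b_n$ respectively, with $p_L\le p_L'$ (by symmetry). If $p_L=p_L'$, then $\alpha(p_L)=\alpha(p_L')$, and Lemma~\ref{lem:41} identifies both $m$ and $n$ with the smallest period of this common sequence, forcing $m=n$, $\a=\b$, and the intervals to coincide. In the $*$-irreducible case, if the two intervals lie in distinct subintervals $(q_{T_{n+1}},q_{T_n})$ provided by Lemma~\ref{lem:44}(2), they are already disjoint, so I may further assume they share a common index $n_*$. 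From here on, assume $p_L<p_L'$ and, toward a contradiction, $p_L'\le p_R$.

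By Lemma~\ref{lem:21} we have $\alpha(p_L)\prec\alpha(p_L')\preccurlyeq\alpha(p_R)$. The sequences $\alpha(p_L)=(a_1\ldots a_m)^\f$ and $\alpha(p_R)=a_1\ldots a_m^+(\overline{a_1\ldots a_m})^\f$ agree on positions $1,\ldots,m-1$ and differ only at position $m$, with values $a_m$ and $a_m+1$ respectively. Hence the first $m-1$ digits of $\alpha(p_L')$ are $a_1\ldots a_{m-1}$, and its $m$-th digit is either $a_m$ (Case A) or $a_m+1$ (Case B).

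In Case A, the first $m$ digits of $\alpha(p_L')$ coincide with $a_1\ldots a_m$, and since $p_L<p_L'$ one has $\alpha(p_L')\succ\alpha(p_L)$ strictly, so there exists a first index $j+1\ge m+1$ at which $\alpha(p_L')_{j+1}>\alpha(p_L)_{j+1}=a_{(j\bmod m)+1}$. I then invoke $\alpha(p_L')\in\vb$, which forces $\si^m(\alpha(p_L'))\lle\alpha(p_L')$. The agreement $\alpha(p_L')_i=\alpha(p_L)_i$ for $1\le i\le j$, combined with the period-$m$ structure of $(a_1\ldots a_m)^\f$, propagates to $\si^m(\alpha(p_L'))_i=\alpha(p_L')_i$ for $1\le i\le j-m$; however, at position $j-m+1$ one computes $\si^m(\alpha(p_L'))_{j-m+1}=\alpha(p_L')_{j+1}>a_{(j\bmod m)+1}=\alpha(p_L')_{j-m+1}$, violating $\si^m(\alpha(p_L'))\lle\alpha(p_L')$.

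In Case B, the first $m$ digits of $\alpha(p_L')$ equal $a_1\ldots a_m^+$, and I apply the (quasi-)irreducibility condition of Definition~\ref{def:26} (or~\ref{def:28}) to $\alpha(p_L')$ at index $j=m$. The hypothesis $((a_1\ldots a_m^+)^-)^\f=(a_1\ldots a_m)^\f=\alpha(p_L)\in\vb$ is immediate, and in the $*$-irreducible case the inclusion $[p_L,p_R]\subseteq(q_{T_{n_*+1}},q_{T_{n_*}})$ together with Lemma~\ref{lem:44}(2) yields $m\ge 3\cdot 2^{n_*-1}>2^{n_*}$ when $M=2k$ (or $m\ge 3\cdot 2^{n_*}>2^{n_*+1}$ when $M=2k+1$), so the condition applies at $j=m$. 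Using $\overline{a_1\ldots a_m^+}=\overline{a_1\ldots a_m}^-$ and $(\overline{a_1\ldots a_m}^-)^+=\overline{a_1\ldots a_m}$, the required strict inequality reduces to $\alpha(p_R)\prec\alpha(p_L')$, directly contradicting $\alpha(p_L')\preccurlyeq\alpha(p_R)$. The main obstacle lies in the shift bookkeeping of Case A, which must remain valid regardless of the relative sizes of $m$ and $n$; Case B, by contrast, reduces to a direct application of the definitions once the hypothesis is verified.
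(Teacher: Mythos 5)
Your proof is correct and follows essentially the same route as the paper's: squeeze the first $m$ digits of $\alpha(p_L')$ between $a_1\ldots a_m$ and $a_1\ldots a_m^+$, rule out the value $a_1\ldots a_m$ via the shift-invariance $\si^m(\alpha(p_L'))\lle\alpha(p_L')$ of sequences in $\vb$ (your Case A), and then derive the contradiction $\alpha(p_R)\prec\alpha(p_L')$ by applying ($*$-)irreducibility at $j=m$ (your Case B). Your explicit check that $m>2^{n_*}$, so that the $*$-irreducibility condition is actually applicable at $j=m$, fills in a detail the paper dismisses with ``by adapting the proof of (1)''.
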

\begin{proof}
First we prove (1).

Let $I(a_1\ldots a_m)=[p_L, p_R]$ and $I(b_1\ldots b_n)=[q_L, q_R]$ be two irreducible intervals generated by $a_1\ldots a_m$ and $b_1\ldots b_n$, respectively. Suppose on the contrary that $[p_L, p_R]\cap[q_L, q_R]\ne\emptyset$. We may assume $q_L\in[p_L, p_R]$. Note that $q_L\ne p_L$. Then by Lemma \ref{lem:21} we have $\al(p_L)\prec\al(q_L)\lle\al(p_R)$, i.e.,
  \begin{equation}
    \label{eq:411}
    (a_1\ldots a_m)^\f\prec(b_1\ldots b_n)^\f\lle a_1\ldots a_m^+(\overline{a_1\ldots a_m})^\f.
  \end{equation}
This implies $n\ge m$ and
$
a_1\ldots a_m\lle b_1\ldots b_m\lle a_1\ldots a_m^+.
$
By an easy check of  (\ref{eq:411}) we have  $n>m$.  Note that $\si^i((b_1\ldots b_n)^\f)\lle (b_1\ldots b_n)^\f$ for any $i\ge 0$. Then by (\ref{eq:411}) it follows that $b_1\ldots b_m\ne a_1\ldots a_m$. Therefore,
\[n>m\quad\textrm{ and }\quad b_1\ldots b_m=a_1\ldots a_m^+.\]
We have $(b_1\ldots b_m^-)^\f=(a_1\ldots a_m)^\f\in\vb$, by the irreducibility of $(b_1\ldots b_n)^\f$ we must therefore have
\[
a_1\ldots a_m^+(\overline{a_1\ldots a_m})^\f\prec (b_1\ldots b_n)^\f,
\] leading to a contradiction with (\ref{eq:411}).

Now we prove (2).  Let $I^*(c_1\ldots c_s)$ and $I^*(d_1\ldots d_t)$ be two $*$-irreducible intervals generated by $c_1\ldots c_s$ and $d_1\ldots d_t$ respectively. By Lemma \ref{lem:44} we may assume that both intervals are $n$-irreducible intervals for some $n\in\N$. Moreover, the periods $s, t>2^n$ if $M=2k$, and $s, t>2^{n+1}$ if $M=2k+1$.

By adapting the proof of (1) and using the fact that $s, t>2^n$ if $M=2k,$ and $s, t>2^{n+1}$ if $M=2k+1$, we can prove that $I^*(c_1\ldots c_s)\cap I^*(d_1\ldots d_t)=\emptyset$.
\end{proof}

{ In the following we are going to show that the left end point of each irreducible interval can be approximated by end points of irreducible intervals from  below, whereas the right end point of each irreducible interval can be approximated by end points of irreducible intervals from above. Furthermore, the similar approximation properties hold for  $*$-irreducible interval.}

Let
\begin{equation}\label{eq:412}
\begin{split}
 \I:&=\set{q\in[q_T, M+1]: \al(q)\textrm{ is irreducible}}\\
 \I^*:&=\set{q\in(q_c, q_T): \al(q)\textrm{ is }*\textrm{-irreducible}}.
 \end{split}
\end{equation}

\begin{lemma}
  \label{lem:47}
 $\I\cup \I^*\subseteq\overline{\ul}$. Moreover, $q_{T_n}\in \I^*$ for all $n\ge 2$.
\end{lemma}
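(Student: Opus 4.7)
The plan is to prove the two claims separately: the inclusion $\I \cup \I^* \subseteq \overline{\ul}$ and the membership $q_{T_n} \in \I^*$ for all $n \ge 2$. For the inclusion, take $q \in \I \cup \I^*$; then $\al(q) \in \vb$ gives $q \in \vl$, and I argue by contradiction: suppose $q \in \vl \setminus \overline{\ul}$. By the classification invoked in the proof of Lemma 3.16, there exist $p \ge 1$ and a word $c_1 \ldots c_p$ with $(c_1 \ldots c_p^-)^\f \in \vb$ (hence $c_p > 0$) such that $\al(q) = (c_1 \ldots c_p \overline{c_1 \ldots c_p})^\f$. A direct lexicographic comparison gives the key fact
\[
c_1 \ldots c_p(\overline{c_1 \ldots c_p}^+)^\f \succ \al(q),
\]
since the two sequences agree through position $2p - 1$ and at position $2p$ the left side has $\overline{c_p}+1$ while the right side has $\overline{c_p}$. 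If $q \in \I$, Definition \ref{def:26} applied with $j = p$ yields the reverse strict inequality, a contradiction.

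If instead $q \in \I^*$ with parameter $n$ from Definition \ref{def:28}, the choice $j = p$ may not satisfy $j > 2^n$ (resp.\ $j > 2^{n+1}$ when $M = 2k+1$), so I take $j = 3p$ instead. To see $3p > 2^n$, I reapply the argument of Lemma \ref{lem:44}(2) to the periodic sequence $(c_1 \ldots c_p \overline{c_1 \ldots c_p})^\f$ itself: that derivation uses only $\al(p_L) \succeq \xi(n+1)$ together with periodicity of period $m$ to conclude $m \ge 3 \cdot 2^{n-1}$ in the even case, so with $m = 2p$ it yields $2p \ge 3 \cdot 2^{n-1}$ (or $2p \ge 3 \cdot 2^n$ when $M = 2k+1$), whence $3p > 2^n$ (resp.\ $3p > 2^{n+1}$). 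The same style of comparison as before shows
\[
a_1 \ldots a_{3p}(\overline{a_1 \ldots a_{3p}}^+)^\f \succ \al(q),
\]
the two sides agreeing through position $6p - 1$ and differing at position $6p$. To invoke Definition \ref{def:28} I must verify that
\[
(a_1 \ldots a_{3p}^-)^\f = (c_1 \ldots c_p \overline{c_1 \ldots c_p}\, c_1 \ldots c_{p-1}(c_p - 1))^\f \in \vb,
\]
which reduces to a shift-by-shift lexicographic check combining $\al(q) \in \vb$ and $(c_1 \ldots c_p^-)^\f \in \vb$. Definition \ref{def:28} applied with $j = 3p$ then contradicts the $\succ$ inequality.

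For the second claim, fix $n \ge 2$. By Lemma \ref{lem:43}, $q_{T_n} \in (q_c, q_T)$ with $\al(q_{T_n}) = \xi(n)$, so it suffices to show $\xi(n)$ is $*$-irreducible. I verify Definition \ref{def:28} with parameter $n - 1 \ge 1$: Lemma \ref{lem:43} gives $\xi(n) \lle \xi(n) \prec \xi(n-1)$, and for the inequality $\xi(n)_1 \ldots \xi(n)_j(\overline{\xi(n)_1 \ldots \xi(n)_j}^+)^\f \prec \xi(n)$ whenever $(\xi(n)_1 \ldots \xi(n)_j^-)^\f \in \vb$ and $j > 2^{n-1}$ (resp.\ $j > 2^n$), I would use the explicit form $\xi(n) = \la_1 \ldots \la_{2^{n-1}}(\overline{\la_1 \ldots \la_{2^{n-1}}}^+)^\f$ together with Lemma \ref{lem:42} and the recursion \eqref{eq:24}, splitting cases by where $j$ falls within the periodic tail. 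The main obstacle is the shift-by-shift verification that $(a_1 \ldots a_{3p}^-)^\f \in \vb$ in the $\I^*$ part of the first claim---one must argue that the perturbations at positions $3p, 6p, \ldots$ do not violate any $\vb$ inequality---together with the case analysis for the $\xi(n)$ verification in the second claim, where the interplay between the ``$+1$'' at the end of $\overline{\la_1 \ldots \la_{2^{n-1}}}^+$ and the terminal decrement must be tracked carefully.
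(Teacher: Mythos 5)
Your argument for $\I\subseteq\overline{\ul}$ is fine and is essentially the paper's (which reuses the first part of the proof of Lemma \ref{lem:316}). The gap is in the $\I^*$ case. To invoke Definition \ref{def:28} at $j=3p$ you must first establish that $(a_1\ldots a_{3p}^-)^\f=(c_1\ldots c_p\,\overline{c_1\ldots c_p}\,c_1\ldots c_p^-)^\f$ lies in $\vb$, and you claim this ``reduces to a shift-by-shift lexicographic check combining $\al(q)\in\vb$ and $(c_1\ldots c_p^-)^\f\in\vb$''. That implication is false. Take $M=2$ and $c_1\ldots c_p=2111$: then $(c_1\ldots c_p^-)^\f=(2110)^\f\in\vb$ and $(c_1\ldots c_p\overline{c_1\ldots c_p})^\f=(21110111)^\f\in\vb$ (the only shift beginning with $0$ is $\si^4$, which equals the reflection $(01112111)^\f$ exactly), and this sequence lies in $[\xi(2),\xi(1))$; but for $(y_i):=(2111\,0111\,2110)^\f$ one computes $\si^4((y_i))=0111\,2110\,2\ldots\prec 0111\,2111\,0\ldots=\overline{(y_i)}$, so $(y_i)\notin\vb$. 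For such a configuration Definition \ref{def:28} imposes no condition at $j=3p$ and your argument produces no contradiction. (This particular sequence is of course not $*$-irreducible---it already violates the definition at $j=p=4$---but that is precisely the point: the contradiction must be extracted at $j=p$, not at $j=3p$.) The paper closes the argument with a dichotomy you should adopt: if $p>2^n$, then $*$-irreducibility applied at $j=p$ contradicts the elementary inequality $c_1\ldots c_p(\overline{c_1\ldots c_p}\,^+)^\f\succ\al(q)$ that you correctly identified; if $p\le 2^n$, then $c_1\ldots c_p=\la_1\ldots\la_p$ (since $\xi(n)$ and $\xi(n+1)$ share the prefix $\la_1\ldots\la_{2^n}$ by \eqref{eq:24}, so does anything between them), and Lemma \ref{lem:42} forces $(\la_1\ldots\la_p\,\overline{\la_1\ldots\la_p})^\f\prec\xi(n+1)$, contradicting $\xi(n+1)\lle\al(q)$. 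This second case also makes your appeal to Lemma \ref{lem:44}(2) to get $3p>2^n$ unnecessary.

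For the second claim, your plan (locate $j$ within the periodic tail of $\xi(n)$ and apply Lemma \ref{lem:42} together with the recursion \eqref{eq:24}) is exactly the computation the paper carries out by writing $j=u\cdot 2^{n-1}+r$ and treating $1\le r<2^{n-1}$ and $r=2^{n-1}$ separately; but as written it is only a plan, and the case analysis still needs to be executed.
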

\begin{proof}
Since the proof for  $M=2k+1$ is similar, we only consider the case where $M=2k$.

By the first part of the proof of Lemma \ref{lem:316} it follows that $\I\subseteq\overline{\ul}$.
Now we prove that $\I^*\subseteq\overline{\ul}$.
Fix $q\in \I^*$. Then $\al(q)=(\al_i)\in\vb$ and therefore $q\in\vl$. So it suffices to show that $q\notin\vl\setminus\overline{\ul}$.

 Suppose on the contrary that $q\in\vl\setminus\overline{\ul}$. Then by \cite[Theorem 1.3]{DeVries_Komornik_Loreti_2016} there exists $j\ge 1$ such that
 \begin{equation}\label{eq:413}
 \al(q)=(\al_1\ldots \al_j\,\overline{\al_1\ldots \al_j})^\f\quad\textrm{with}\quad (\al_1\ldots \al_j^-)^\f\in\vb.
 \end{equation}
By Definition \ref{def:28} of $*$-irreducible there exists $n\in\N$ such that
 \begin{equation}
   \label{eq:414}
   \la_1\ldots\la_{2^n}(\overline{\la_1\ldots\la_{2^n}}\,^+)^\f \lle(\al_1\ldots\al_j\,\overline{\al_1\ldots\al_j})^\f\prec \la_1\ldots\la_{2^{n-1}}(\overline{\la_1\ldots\la_{2^{n-1}}}\,^+)^\f.
 \end{equation}
Moreover, for the same $n$ as above, the definition of $*$-irreducible means that $j\le  2^n$. Then by (\ref{eq:414}) and (\ref{eq:24}) it follows that $\al_1\ldots\al_j=\la_1\ldots\la_j$. It is a consequence of Lemma \ref{lem:42} that
\[
(\al_1\ldots \al_j\,\overline{\al_1\ldots\al_j})^\f=(\la_1\ldots\la_j\,\overline{\la_1\ldots\la_j})^\f\prec   \la_1\ldots\la_{2^n}(\overline{\la_1\ldots\la_{2^n}}\,^+)^\f,
\]
leading to a contradiction with (\ref{eq:414}).
So $q\in\overline{\ul}$ and we conclude that $\I^*\subseteq\overline{\ul}$.

Finally, we   prove $q_{T_{n+1}}\in \I^*$ for all $n\in\N$. Note  that
\[(a_i):=\al(q_{T_{n+1}})=\la_1\ldots\la_{2^n}\,(\overline{\la_1\ldots\la_{2^n}}\,^+)^\f.\]
 Suppose $(a_1\ldots a_j^-)\in\vb$ with $j>2^n$. Then we write $j=u\cdot 2^n+r$ with $u\in\N$ and $r\in\set{1,2,\ldots, 2^n}$.
If $1\le r<2^n$, then by Lemma \ref{lem:42} it follows that
  \begin{align*}
    a_1\ldots a_j(\overline{a_1\ldots a_j}\,^+)^\f&=\la_1\ldots \la_{2^n}(\overline{\la_1\ldots \la_{2^n}}\,^+)^{u-1}\overline{\la_1\ldots \la_r} \big(\overline{\la_1\ldots\la_{2^n}}\overline{a_{2^n+1}\ldots a_j}^+\big)^\f\\
    &\prec\la_1\cdots\la_{2^n} (\overline{\la_1\ldots\la_{2^n}}\,^+)^\f=\;(a_i).
  \end{align*} If $r=2^n$, then it is easy to check that $a_1\ldots a_j(\overline{a_1\ldots a_j}\,^+)^\f\prec(a_i)$.

  Hence, $\al(q_{T_{n+1}})$ is $*$-irreducible. This implies $q_{T_{n+1}}\in \I^*$.
\end{proof}

By Lemma \ref{lem:47} we have $\I\cup \I^*\subseteq\overline{\ul}$. In the following we show that
 any  $q\in(\overline{\ul}\setminus  \I)\cap[q_T, M+1]$   belongs to a unique irreducible interval, and any $q\in(\overline{\ul}\setminus \I^*)\cap(q_c, q_T)$ belongs to a unique $*$-irreducible interval.

First we need the following technical lemma.
\begin{lemma}
  \label{lem:48}
  Let $(a_1\ldots a_m)^\f, (b_1\ldots b_n)^\f\in\vb$. If
  \begin{equation}\label{eq:415}
  (a_1\ldots a_m)^\f\prec (b_1\ldots b_n)^\f \prec a_1\ldots a_m^+(\overline{a_1\ldots a_m})^\f,
  \end{equation}
  then $b_1\ldots b_n^+(\overline{b_1\ldots b_n})^\f\prec a_1\ldots a_m^+(\overline{a_1\ldots a_m})^\f.$
\end{lemma}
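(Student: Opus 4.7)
The plan is to compare $\beta := b_1\ldots b_n^+(\overline{b_1\ldots b_n})^\f$ and $\gamma := a_1\ldots a_m^+(\overline{a_1\ldots a_m})^\f$ position by position, using the two strict inequalities in the hypothesis to pin down a long prefix of $(b_1\ldots b_n)^\f$ and using the $\vb$-condition to control the tail comparison. First I will observe that $(a_1\ldots a_m)^\f$ and $\gamma$ share the first $m-1$ coordinates $a_1\ldots a_{m-1}$, so the squeeze $(a_1\ldots a_m)^\f\prec (b_1\ldots b_n)^\f\prec\gamma$ forces $b_j=a_j$ for $j<m$, while at position $m$ we have $b_m\in\{a_m,a_m+1\}$. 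I split into two cases accordingly.

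\emph{Case A} ($b_m=a_m$, so $b_1\ldots b_m=a_1\ldots a_m$). I first show $n>m$: if $n\leq m$ then the periodicity $b_{j+n}=b_j$ together with $b_j=a_j$ for $j\leq m$ propagates the $n$-periodic pattern through $a_1\ldots a_m$, forcing $(a_1\ldots a_m)^\f=(b_1\ldots b_n)^\f$ and contradicting the strict lower bound. With $n>m$, the increment $b_n\mapsto b_n+1$ in $\beta$ occurs strictly beyond position $m$, so $\beta_j=\gamma_j$ for $j<m$ and $\beta_m=a_m<a_m+1=\gamma_m$, giving $\beta\prec\gamma$.

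\emph{Case B} ($b_m=a_m+1$, so $b_1\ldots b_m=a_1\ldots a_m^+$). An analogous periodicity argument, combined with the hypothesis $(b_1\ldots b_n)^\f\prec\gamma$ and $(b_1\ldots b_n)^\f\in\vb$, excludes $n\leq m$; with $n>m$, I introduce $\ell\geq 1$, the smallest index with $b_{m+\ell}\ne\gamma_{m+\ell}=\overline{a_{((\ell-1)\bmod m)+1}}$, and observe that $(b_1\ldots b_n)^\f\prec\gamma$ forces $b_{m+\ell}<\gamma_{m+\ell}$. Three sub-cases then arise. If $m+\ell<n$, the increment in $\beta$ does not reach position $m+\ell$, so $\beta_{m+\ell}=b_{m+\ell}<\gamma_{m+\ell}$ and we are done. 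If $m+\ell=n$, then $\beta_n=b_n+1\leq\gamma_n$; when strict, we are done; when equal, I continue the comparison into the reflected tail, where the propagating equalities at positions $n+1,\ldots,n+m-1$, coupled with the $\vb$-condition on $(a_1\ldots a_m)^\f$, will cyclically identify $a_m$ with the $\gamma$-index relevant at position $n+m$, so that $\beta_{n+m}=\overline{b_m}=\overline{a_m}-1<\overline{a_m}=\gamma_{n+m}$, yielding $\beta\prec\gamma$. Finally, if $m+\ell>n$, then $(b_1\ldots b_n)^\f$ agrees with $\gamma$ in the first $n$ positions, and I will show this leads to a contradiction: at position $n+1$, the hypothesis $(b_1\ldots b_n)^\f\prec\gamma$ forces $a_1=b_1\leq\overline{a_{((n-m)\bmod m)+1}}$ while the $\vb$-condition on $(a_1\ldots a_m)^\f$ forces the reverse inequality; equality then propagates by induction and yields $(b_1\ldots b_n)^\f=\gamma$, contradicting strictness.

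The main obstacle is the sub-case $m+\ell=n$ with equality at position $n$: one must track how the equalities propagating through the reflected tail, together with the shift-invariance imposed by the $\vb$-condition on $(a_1\ldots a_m)^\f$, conspire to identify $\gamma_{n+m}$ with $\overline{a_m}$ so that the single strict inequality $\overline{b_m}=\overline{a_m}-1<\overline{a_m}$ becomes decisive. The key combinatorial feature is the ``reflection trick'': the isolated excess $b_m=a_m+1$ translates to the isolated deficit $\overline{b_m}=\overline{a_m}-1$, and the cyclic consistency forced by $\vb$ ensures this deficit lands exactly where the comparison is finally decided.
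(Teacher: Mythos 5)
Your overall strategy is the same as the paper's (pin down a long prefix of $(b_1\ldots b_n)^\infty$, then compare $\beta$ and $\gamma$ digit by digit), but two steps at which the conclusion could genuinely fail are asserted rather than proved. The lesser one is in Case A: you exclude $n\le m$ by claiming that the $n$-periodicity ``propagates through $a_1\ldots a_m$'' and forces $(a_1\ldots a_m)^\infty=(b_1\ldots b_n)^\infty$. That purely combinatorial implication is false — a word of length $m$ that is a prefix of an $n$-periodic sequence does not make the $m$-periodic and $n$-periodic extensions coincide unless $n\mid m$ — and the exclusion matters, since if $n<m$ could occur in Case A then $\beta_n=a_n+1>a_n=\gamma_n$ and the lemma would fail. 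The correct exclusion uses the $\vb$-membership of $(b_1\ldots b_n)^\infty$: with $r'=m\bmod n$ one has $\sigma^{r'}((b_1\ldots b_n)^\infty)\preccurlyeq (b_1\ldots b_n)^\infty$, while the hypothesis $(a_1\ldots a_m)^\infty\prec(b_1\ldots b_n)^\infty$ forces $(a_1\ldots a_m)^\infty\prec\sigma^{r'}((b_1\ldots b_n)^\infty)$; playing these off against each other (with a short descent when equality holds) is what rules the case out. Your sketch never invokes $(b_1\ldots b_n)^\infty\in\vb$ here.

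The serious gap is the sub-case $m+\ell=n$ with $\beta_n=\gamma_n$, which you correctly flag as the crux but do not settle. Your decisive comparison rests on the identity $\gamma_{n+m}=\overline{a_m}$, i.e.\ on $n\equiv 0\pmod m$. Writing $n=um+r$ with $r\in\{1,\ldots,m\}$, one actually has $\gamma_{n+m}=\overline{a_r}$, and nothing in your sketch shows $r=m$; moreover when $r<m$ the first disagreement between $\beta$ and $\gamma$ may occur strictly before position $n+m$, for a reason unrelated to your ``reflection trick'' (namely a strict drop in $\sigma^r((a_1\ldots a_m)^\infty)\preccurlyeq(a_1\ldots a_m)^\infty$). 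The paper's proof is devoted precisely to this point: it shows that if $b_1\ldots b_{n-1}=\gamma_1\ldots\gamma_{n-1}$ and $r<m$, then $b_n$ must equal $\overline{a_r}$ exactly — any smaller value gives $b_{n-r+1}\ldots b_n\prec\overline{b_1\ldots b_r}$, violating $(b_1\ldots b_n)^\infty\in\vb$ — whence $(b_1\ldots b_n)^\infty\succ\gamma$, contradicting the hypothesis; so $r=m$, $b_1\ldots b_n=a_1\ldots a_m^+(\overline{a_1\ldots a_m})^{u-1}\,\overline{a_1\ldots a_m^+}$, and only then does your terminal computation go through. Note that the condition doing the work is the $\vb$-membership of $(b_1\ldots b_n)^\infty$, not of $(a_1\ldots a_m)^\infty$ as your sketch states. (One can alternatively close this sub-case without proving $m\mid n$, by comparing $\beta_{n+j}=\overline{a_j}$ with $\gamma_{n+j}=\overline{a_{((r+j-1)\bmod m)+1}}$ for $j=1,\ldots,m$ using $\sigma^{r}((a_1\ldots a_m)^\infty)\preccurlyeq(a_1\ldots a_m)^\infty$, but that requires treating an early strict drop separately from the terminal comparison $\overline{a_m}-1<\overline{a_r}$, and that case analysis is exactly what is missing from your write-up.)
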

\begin{proof}
Let $(c_i):=a_1\ldots a_m^+(\overline{a_1\ldots a_m})^\f$. Then by (\ref{eq:415})  and the same arguments as in the proof of  Lemma  \ref{lem:44} we have
\[n>m,\quad b_1\ldots b_m=a_1\ldots a_m^+,\quad\textrm{and}\quad  b_1\ldots b_{n-1} \preccurlyeq c_1\ldots c_{n-1}.\]
 If $b_1\ldots b_{n-1}\prec c_1\ldots c_{n-1}$, then we are done.

\vspace{0.7em}Now we assume $b_1\ldots b_{n-1}=c_1\ldots c_{n-1}$, and write $n=um+r$ with $u\ge 1$ and $r\in\set{1,2,\ldots,m}$. We claim that $r=m$ and
\[
b_1\ldots b_{n}=a_1\ldots a_m^+(\overline{a_1\ldots a_m})^{u-1}\overline{a_1\ldots a_m^+}.
\]
We will prove the claim by observing the following two cases.

\begin{enumerate}[(I)]
 \item Suppose that $1\le r<m$. Since
 \[b_1\ldots b_{n-1}=c_1\ldots c_{n-1}=a_1\ldots a_m^+(\overline{a_1\ldots a_m})^{u-1}\overline{a_1\ldots a_{r-1}},\]
 by \eqref{eq:415} and $(b_1\ldots b_{n})^\f\in\vb$ it follows that
\begin{equation}\label{eq:416}
    b_1\ldots b_{n}=a_1\ldots a_m^+(\overline{a_1\ldots a_m})^{u-1}\overline{a_1\ldots a_r}.
\end{equation}
Observe that (\ref{eq:416}) holds since if $b_n \neq \overline{a_r}$ then $b_n < \overline{a_r}$ which implies that
\[
b_{n-r+1}\ldots b_n\prec \overline{a_1\ldots a_r}=\overline{b_1\ldots b_r}
\]
This leads to a contradiction with $(b_1\ldots b_n)^\f\in\vl$.
 Note that $(a_1\ldots a_m)^\f\in\vb$. Then $a_1\ldots a_m^+\succ\overline{a_{r+1}\ldots a_m a_1\ldots a_r}$.
Hence,
  \[
  (b_1\ldots b_{n})^\f=(a_1\ldots a_m^+(\overline{a_1\ldots a_m})^{u-1}\overline{a_1\ldots a_r})^\f\succ  a_1\ldots a_m^+(\overline{a_1\ldots a_m})^\f.
  \]
Which contradicts (\ref{eq:415}).

\item Suppose that $r=m$. Observe that $(b_1\ldots b_{n})^\f\in\vb$. Then by using $b_1\ldots b_{n-1}=c_1\ldots c_{n-1}$ in (\ref{eq:415}) it follows that
  \[
  b_1\ldots b_{n}=a_1\ldots a_m^+(\overline{a_1\ldots a_m})^{u-1}\overline{a_1\ldots a_m^+}\quad\textrm{or}\quad  a_1\ldots a_m^+(\overline{a_1\ldots a_m})^{u}.
  \]
So to prove the claim it suffices to show that $b_1\ldots b_{n}\ne a_1\ldots a_m^+(\overline{a_1\ldots a_m})^{u}$. Suppose on the contrary that $b_1\ldots b_{n}=a_1\ldots a_m^+(\overline{a_1\ldots a_m})^{u}$. Then
  \[
  (b_1\ldots b_{n})^\f\succ a_1\ldots a_m^+(\overline{a_1\ldots a_m})^\f.
  \]
  Which contradicts (\ref{eq:415}).
\end{enumerate}
By the above we see that our claim is correct. The fact that $b_1\ldots b_n^+(\overline{b_1\ldots b_n})^\f\prec a_1\ldots a_m^+(\overline{a_1\ldots a_m})^\f$ is then an immediate consequence of this claim.
\end{proof}

\begin{lemma}
  \label{lem:49}
  \begin{enumerate}
  \item
 Let $q\in(\overline{\ul}\setminus  \I)\cap[q_T, M+1]$. Then there exists a unique irreducible interval $I$ such that $q\in I$.

 \item  Let $q\in(\overline{\ul}\setminus \I^*)\cap(q_c, q_T)$. Then there exists a unique $*$-irreducible interval $I$ such that $q\in I$.
 \end{enumerate}
\end{lemma}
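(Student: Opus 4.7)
The plan is to locate the irreducible interval containing $q$ by reading off the correct period block from the quasi-greedy expansion $\alpha(q)$. For part (1), write $(a_i):=\alpha(q)$; since $\alpha(q)$ is not irreducible, Definition~\ref{def:26} guarantees that
\[
J := \{j \geq 1 : (a_1\ldots a_j^-)^\infty \in \vb \text{ and } a_1\ldots a_j(\overline{a_1\ldots a_j}^+)^\infty \succcurlyeq \alpha(q)\}
\]
is nonempty, and I would set $m := \min J$. The candidate interval is the one generated by the word $a_1\ldots a_m^-$, i.e.\ $[p_L, p_R]$ with
\[
\alpha(p_L) := (a_1\ldots a_m^-)^\infty, \qquad \alpha(p_R) := a_1\ldots a_m(\overline{a_1\ldots a_m}^+)^\infty.
\]
The containment $q \in (p_L, p_R]$ is direct: $\alpha(p_L)\prec\alpha(q)$ comes from comparing at position $m$ (where $\alpha(p_L)$ has $a_m-1$ while $\alpha(q)$ has $a_m$), while $\alpha(q)\lle\alpha(p_R)$ is the defining condition $m\in J$, so Lemma~\ref{lem:21} gives $p_L<q\le p_R$.

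The essential step is verifying that $(a_1\ldots a_m^-)^\infty$ is irreducible, so that $[p_L, p_R]$ is legitimately an irreducible interval in the sense of Definition~\ref{def:29} (whose well-definedness is then guaranteed by Lemma~\ref{lem:41}). I would argue by contradiction: if not, there is some $j'<m$ with $(a_1\ldots a_{j'}^-)^\infty\in\vb$ and $a_1\ldots a_{j'}(\overline{a_1\ldots a_{j'}}^+)^\infty \succcurlyeq (a_1\ldots a_m^-)^\infty$ (noting that the first $j'$ digits of $(a_1\ldots a_m^-)^\infty$ are exactly $a_1\ldots a_{j'}$ because $j'<m$). After verifying the accompanying strict inequality $(a_1\ldots a_{j'}^-)^\infty\prec(a_1\ldots a_m^-)^\infty$ and excluding the coincidence case in the upper bound, I apply Lemma~\ref{lem:48} with the roles of "$a_1\ldots a_m$" and "$b_1\ldots b_n$" played by $a_1\ldots a_{j'}^-$ and $a_1\ldots a_m^-$ respectively, to conclude
\[
a_1\ldots a_m(\overline{a_1\ldots a_m}^+)^\infty \prec a_1\ldots a_{j'}(\overline{a_1\ldots a_{j'}}^+)^\infty.
\]
Combined with $\alpha(q)\lle a_1\ldots a_m(\overline{a_1\ldots a_m}^+)^\infty$ this places $j'\in J$, contradicting the minimality of $m$. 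Uniqueness then follows instantly from Lemma~\ref{lem:46}(1): two irreducible intervals both containing $q$ must coincide since they are pairwise disjoint.

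Part (2) follows the same template with a single adjustment dictated by Definition~\ref{def:28}. Given $q\in(\overline{\ul}\setminus\I^*)\cap(q_c,q_T)$, equation~\eqref{eq:45} together with Lemma~\ref{lem:43} pin down the unique $n\in\N$ with $q\in(q_{T_{n+1}},q_{T_n})$, and Definition~\ref{def:28} provides the lower threshold $j_n$ (equal to $2^n$ if $M=2k$ and $2^{n+1}$ if $M=2k+1$). Replacing $J$ by
\[
J^* := \{j > j_n : (a_1\ldots a_j^-)^\infty \in \vb,\ a_1\ldots a_j(\overline{a_1\ldots a_j}^+)^\infty \succcurlyeq \alpha(q)\},
\]
which is nonempty precisely because $\alpha(q)$ fails the $*$-irreducibility criterion, and setting $m^*:=\min J^*$, the same Lemma~\ref{lem:48}-based argument yields a $*$-irreducible interval generated by $a_1\ldots a_{m^*}^-$ that contains $q$; Lemma~\ref{lem:44}(2) ensures it lies inside $(q_{T_{n+1}},q_{T_n})$, and Lemma~\ref{lem:46}(2) supplies uniqueness.

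The main obstacle I anticipate is justifying the strict right-hand inequality required by Lemma~\ref{lem:48}: I must rule out the degenerate equality $(a_1\ldots a_m^-)^\infty = a_1\ldots a_{j'}(\overline{a_1\ldots a_{j'}}^+)^\infty$. Such a coincidence would force the strictly $m$-periodic sequence on the left to coincide from position $j'+1$ onwards with the purely $j'$-periodic tail on the right, yielding an arithmetic relation between $m$ and $j'$; a short period-length argument using Lemma~\ref{lem:41} should show this either contradicts the minimality of $m$ in $J$ (resp.\ $J^*$) or violates the membership $(a_1\ldots a_m^-)^\infty\in\vb$, thereby closing the case.
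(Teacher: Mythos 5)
Your overall strategy coincides with the paper's: take the smallest $m$ with $(a_1\ldots a_m^-)^\f\in\vb$ and $a_1\ldots a_m(\overline{a_1\ldots a_m}\,^+)^\f\succcurlyeq\al(q)$, show that $(a_1\ldots a_m^-)^\f$ is irreducible so that $q$ lies in the interval it generates, and obtain uniqueness from Lemma \ref{lem:46}. The containment argument and the use of Lemma \ref{lem:48} against the minimality of $m$ reproduce exactly the paper's treatment of witnesses of length less than $m$, and your extra care about the degenerate equality before invoking Lemma \ref{lem:48} is if anything more scrupulous than the paper.

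There is, however, a genuine gap in the irreducibility verification. Definition \ref{def:26} requires $c_1\ldots c_j(\overline{c_1\ldots c_j}\,^+)^\f\prec(c_i)$ for \emph{every} $j$ with $(c_1\ldots c_j^-)^\f\in\vb$, where $(c_i)=(a_1\ldots a_m^-)^\f$. Your contradiction argument only considers witnesses $j'<m$ (``if not, there is some $j'<m$ with \ldots''), but a failure of irreducibility could equally well occur at some $j\ge m$, where the prefix $c_1\ldots c_j$ is no longer $a_1\ldots a_j$ and the Lemma \ref{lem:48}/minimality argument does not apply. This case is not vacuous: the paper devotes its Case (I) to it, writing $j=um+r$ and using $a_1\succ\overline{a_1}$ when $r=0$, and the hypothesis $(c_1\ldots c_j^-)^\f\in\vb$ (which forces $a_{r+1}\ldots a_m^-a_1\ldots a_r^-\lge\overline{a_1\ldots a_m^-}$) when $0<r<m$. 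The omission is most visible when $m=1$: your argument then finds no $j'<m$ at all and would conclude irreducibility of $(a_1^-)^\f$ unconditionally, whereas the paper must separately check that $a_1\ge k+1$ to dispose of $j\ge 1$. The adaptation to part (2) with the threshold $j>2^n$ (resp.\ $2^{n+1}$) is otherwise in line with the paper, but the same $j\ge m^*$ case must be supplied there too for the proof to be complete.
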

\begin{proof}
Since the proof for $M=2k+1$ is similar, we only prove the lemma for $M=2k$.

First we prove (1). Take $q\in(\overline{\ul}\setminus  \I)\cap[q_T, M+1]$. Then $\al(q)=(\al_i)\in\vb$ is not irreducible. So there exists a smallest  integer $m\ge 1$ such that
  \begin{equation}
    \label{eq:417}
    (\al_1\ldots\al_m^-)^\f\in\vb\quad\textrm{and}\quad (\al_i)\lle\al_1\ldots\al_m(\overline{\al_1\ldots\al_m}\,^+)^\f.
  \end{equation}
    We start our proof by showing that $(c_i):=(\al_1\ldots\al_m^-)^\f$ is irreducible.

    First we assume $m=1$. By using $(\al_1^-)^\f\in\vb$ in (\ref{eq:417}) we must have $\al_1^-\ge \overline{\al_1}^+$, i.e., $\al_1-1\ge 2k+1-\al_1$. So, $\al_1\ge k+1$, and then $(\al_1^-)^\f$ is irreducible.

    Now we assume $m\ge 2$.
      Suppose $(c_1\ldots c_j^-)^\f\in\vb$. We will prove in the following two cases that
      \begin{equation}
        \label{eq:418}
        c_1\ldots c_j(\overline{c_1\ldots c_j}\,^+)^\f\prec (c_i).
      \end{equation}

Case (I). $j\ge m$.  Then we write $j=um+r$ with $u\ge 1$ and $r\in\set{0,1,\ldots,m-1}$.
If $r=0$, then (\ref{eq:418}) follows since $\al_1>\overline{\al_1}$ and $m\ge 2$ imply that
\[
(\al_1\ldots\al_m^-)^u( \overline{(\al_1\ldots\al_m^-)^u}\,^+)^\f\prec(\al_1\ldots\al_m^-)^\f.
\]

If $r\in\set{1,2,\ldots, m-1}$. Then since $(c_1\ldots c_j^-)^\f=((\al_1\ldots \al_m^-)\al_1\ldots\al_r^-)^\f\in\vb$ we have that
\[
\al_{r+1}\ldots \al_m^-\al_1\ldots\al_r\succ \al_{r+1}\ldots \al_m^-\al_1\ldots \al_r^-\lge \overline{\al_1\ldots\al_m^-}.
\]
This implies that
\begin{align*}
c_1\ldots c_j(\overline{c_1 \ldots c_j}\,^+)^\f&=(\al_1\ldots\al_m^-)^u\al_1\ldots \al_r(\overline{\al_1\ldots \al_m^-} \overline{c_{m+1}\ldots c_j}^+)^\f\\
&\prec (\al_1\ldots\al_m^-)^u\al_1\ldots \al_r \al_{r+1}\ldots \al_m^-\al_1\ldots \al_r \cdots\\
&=(\al_1\ldots \al_m^-)^\f=\; (c_i).
\end{align*}

Therefore, (\ref{eq:418}) holds when $j\ge m$.

\vspace{0.7em}Case (II). $0<j<m$.
Suppose on the contrary that (\ref{eq:418}) fails, i.e., $$\al_1\ldots\al_j(\overline{\al_1\ldots\al_j}\,^+)^\f\lge(\al_1\ldots\al_m^-)^\f.$$ Then
$
(\al_1\ldots\al_j^-)^\f\prec (\al_1\ldots\al_m^-)^\f\lle \al_1\ldots\al_j (\overline{\al_1\ldots\al_j}\,^+)^\f.
$
By Lemma \ref{lem:48} this implies that
\[
\al_1\ldots\al_m(\overline{\al_1\ldots\al_m}\,^+)^\f\prec\al_1\ldots\al_j(\overline{\al_1\ldots\al_j}\,^+)^\f.
\]
By (\ref{eq:417}) we have $(\al_i)\prec \al_1\ldots\al_j(\overline{\al_1\ldots\al_j}\,^+)^\f$. This leads to a contradiction with the minimality of $m$ defined in (\ref{eq:417}).

We have shown that $(\al_1\ldots\al_m^-)^\f$ is irreducible. Therefore by (\ref{eq:417}) it follows that $q$ belongs to the irreducible interval $I(\al_1\ldots\al_m^-)$. By Lemma \ref{lem:46} we may conclude that the interval $I(\al_1\ldots \al_m^-)$ is unique.

\vspace{1em}Now we turn to prove (2). By similar arguments to those used above, we can show that for each $n\ge 1$  every $q\in(\overline{\ul}\setminus \I^*)\cap(q_{T_{n+1}}, q_{T_n})$ belongs to a unique $*$-irreducible interval. Note by Lemma \ref{lem:47} that $q_{T_n}\in \I^*$ for all $n\ge 2$. Therefore, by Lemmas \ref{lem:44} and \ref{lem:46} it follows that any $q\in(\overline{\ul}\setminus \I^*)\cap(q_c, q_T)$ belongs to a unique $*$-irreducible interval.
\end{proof}

In our next result we show that for an irreducible or $*$-irreducible interval its left endpoint belongs to $\overline{\ul}\setminus\ul$ and its right endpoint belongs to $\ul$.
\begin{lemma}
  \label{lem:410}
  Let $[p_L, p_R]$ be an irreducible or $*$-irreducible interval. Then
    $p_L\in\overline{\ul}\setminus\ul$ and $p_R\in\ul$.
\end{lemma}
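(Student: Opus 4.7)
The plan is to prove both assertions directly from the lexicographic characterizations
\[
\ul = \set{q\in(1,M+1): \overline{\al(q)}\prec\si^n(\al(q))\prec \al(q)\text{ for all }n\ge 1}\cup\set{M+1}
\]
and
\[
\overline{\ul}=\set{q\in(1,M+1]: \overline{\al(q)}\prec\si^n(\al(q))\lle \al(q)\text{ for all }n\ge 0},
\]
together with the structural inequalities on $a_1\ldots a_m$ from Lemma \ref{lem:41} and the already-noted primitivity of $a_1\ldots a_m^+$. Throughout, write $\al(p_L)=(a_1\ldots a_m)^\f$ and $\al(p_R)=a_1\ldots a_m^+(\overline{a_1\ldots a_m})^\f$.

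For the claim $p_L\in\overline{\ul}\setminus\ul$: membership in $\overline{\ul}$ is immediate from Lemma \ref{lem:47}, which gives $\I\cup\I^*\subseteq\overline{\ul}$. To exclude membership in $\ul$, note that $\al(p_L)$ is periodic of period $m$, so $\si^m(\al(p_L))=\al(p_L)$, violating the required strict upper inequality; moreover $p_L\ne M+1$ since $a_m<M$ forces $\al(p_L)\ne M^\f$. Hence $p_L\in\overline{\ul}\setminus\ul$.

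For the claim $p_R\in\ul$, I would verify $\overline{\al(p_R)}\prec\si^n(\al(p_R))\prec\al(p_R)$ for every $n\ge 1$ by a case split on $n$. For $1\le n<m$ the shift is $a_{n+1}\ldots a_{m-1}(a_m+1)(\overline{a_1\ldots a_m})^\f$; the primitivity of $a_1\ldots a_m^+$ gives $a_{n+1}\ldots a_{m-1}(a_m+1)\lle a_1\ldots a_{m-n}$. When this is strict the upper bound against $\al(p_R)$ is settled in the first $m-n$ coordinates; when it is an equality one shifts the comparison by $m-n$ coordinates and invokes Lemma \ref{lem:41}, namely $\overline{a_1\ldots a_n}\lle a_{m-n+1}\ldots a_m$, together with the digit bump at position $m$ (i.e.\ $a_m+1>a_m$) to produce the strict inequality. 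The symmetric lower bound $\si^n(\al(p_R))\succ\overline{\al(p_R)}$ follows in the same range using the companion inequality $a_{n+1}\ldots a_m\succcurlyeq\overline{a_1\ldots a_{m-n}}$ of Lemma \ref{lem:41} and, once again, the bump $a_m\mapsto a_m+1$. For $n\ge m$, eventual periodicity collapses $\si^n(\al(p_R))$ to one of the finitely many sequences $\overline{\si^r(\al(p_L))}$, $0\le r<m$; since $(a_1\ldots a_m)^\f\in\vb$ these all satisfy $\overline{\al(p_L)}\lle\overline{\si^r(\al(p_L))}\lle\al(p_L)$, and the desired strict inequalities reduce to the single pointwise comparisons $\overline{\al(p_R)}\prec\overline{\al(p_L)}$ and $\al(p_L)\prec\al(p_R)$, both trivial from $a_m<a_m+1$ and, when needed at the very first coordinate, from $\overline{a_1}<a_1$ (which is forced by $p_L>q_c$ as the first digit of $\al(p_L)$ exceeds $k$).

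The only genuinely delicate point is the equality Subcase $a_{n+1}\ldots a_{m-1}(a_m+1)=a_1\ldots a_{m-n}$ in the range $1\le n<m$: there one must propagate the comparison past the first $m-n$ coordinates, and must show that the dichotomy coming from Lemma \ref{lem:41} (either strict $a_{m-n+1}\ldots a_m\succ\overline{a_1\ldots a_n}$, or equality) does not accidentally make $\si^n(\al(p_R))$ agree with $\al(p_R)$. In the equality-on-equality situation the one-place discrepancy between $a_m$ and $a_m+1$ (equivalently between $\overline{a_n}$ and $\overline{a_n}+1$) is exactly what produces the strict separation, so the bookkeeping, while multi-layered, is benign. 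Once this is laid out carefully, both halves of the lemma follow.
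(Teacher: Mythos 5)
Your proof is correct and follows essentially the same route as the paper: membership of $p_L$ in $\overline{\ul}\setminus\ul$ via Lemma \ref{lem:47} plus periodicity of $\al(p_L)$, and membership of $p_R$ in $\ul$ by checking the strict lexicographic inequalities using the bounds $\overline{a_1\ldots a_{m-i}}\lle a_{i+1}\ldots a_m\prec a_{i+1}\ldots a_m^+\lle a_1\ldots a_{m-i}$ from Lemma \ref{lem:41}. The only difference is that you spell out the case analysis for $\si^n(\al(p_R))$ that the paper leaves as a one-line implication; your bookkeeping in the equality subcases is sound.
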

 \begin{proof}
Let $[p_L, p_R]$ be an irreducible or $*$-irreducible interval generated by $a_1\ldots a_m$. Then $p_L\in \I\cup \I^*$.    By  Lemma  \ref{lem:47} it follows that $p_L\in\overline{\ul}$. Note that $\al(p_L)=(a_1\ldots a_m)^\f$. Then by the formulations of $\ul$ and $\overline{\ul}$ given in Section \ref{sec:2}  it follows that $p_L\in\overline{\ul}\setminus\ul$.

By Lemma \ref{lem:41} it follows that
    \[
   \overline{a_1\ldots a_{m-i}}\lle a_{i+1}\ldots a_m\prec a_{i+1}\ldots a_m^+\lle a_1\ldots a_{m-i}\quad\textrm{for all }1\le i<m.
   \]
   This implies that
   \[
   \overline{a_1\ldots a_m^+}(a_1\ldots a_m)^\f\prec\si^n(a_1\ldots a_m^+(\overline{a_1\ldots a_m})^\f )\prec a_1\ldots a_m^+(\overline{a_1\ldots a_m})^\f
      \]
   for all $n\ge 1$. So $p_R\in\ul$.
 \end{proof}

{ Now we show that the left end point  of each irreducible interval can be approximated by end points of irreducible intervals from below, whereas the right end point of each irreducible interval can be approximated by end points of irreducible intervals from above. Similarly, for $*$-irreducible intervals we have the same approximation properties.}

\begin{proposition}
  \label{prop:411}
Let $[p_L, p_R]\subseteq(q_c, M+1]$ be an irreducible (respectively $*$-irreducible) interval. Then there exists a sequence of irreducible (respectively $*$-irreducible) intervals $\set{[p_L(n), p_R(n)]}_{n=1}^\f$ such that $p_L(n)$ strictly  increases to  $p_L$ as $n\ra\f$. Moreover, there exists another sequence of irreducible (respectively $*$-irreducible) intervals $\set{[p_L'(n), p_R'(n)]}_{n=1}^\f$ such that $p_L'(n)$ strictly decreases to   $p_R$ as $n\ra\f$.
 \end{proposition}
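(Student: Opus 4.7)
Let $[p_L, p_R]$ be an irreducible interval generated by $a_1 \ldots a_m$, so that $\al(p_L) = (a_1 \ldots a_m)^\infty$ and $\al(p_R) = a_1 \ldots a_m^+ (\overline{a_1 \ldots a_m})^\infty$. The plan is to exhibit two explicit sequences of periodic sequences in $\vb$ that are irreducible, one converging to $\al(p_L)$ from below and one to $\al(p_R)$ from above; the corresponding irreducible intervals, well defined by Lemma \ref{lem:41}, will then have the desired left endpoints by Lemma \ref{lem:21}. The $*$-irreducible case is handled by the same constructions, together with the length bound from Lemma \ref{lem:44}(2).

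To approximate $p_L$ from below, in the generic case $a_m \ge 1$, I would take
\[
\om_n := (a_1 \ldots a_m)^n\, a_1 \ldots a_m^-, \qquad n \ge 1,
\]
and write $\om_n = b_1 \ldots b_{(n+1)m}$. A case analysis on the shift position within one period of $\om_n^\infty$, using $(a_1 \ldots a_m)^\infty \in \vb$ together with the strict inequalities of Lemma \ref{lem:41}, shows $\om_n^\infty \in \vb$. To verify irreducibility, for every $j$ such that $(b_1 \ldots b_j^-)^\infty \in \vb$ one must check that $b_1 \ldots b_j (\overline{b_1 \ldots b_j}^+)^\infty \prec \om_n^\infty$. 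Writing $j = um + r$ with $0 \le r < m$, the self-similar block structure of $\om_n$ reduces this either to the irreducibility of $(a_1 \ldots a_m)^\infty$ (when $j \le m$, using also that $\om_n^\infty$ and $(a_1 \ldots a_m)^\infty$ agree on their first $(n+1)m - 1$ positions, so that the position of strict inequality furnished by irreducibility of $(a_1 \ldots a_m)^\infty$ occurs within the common prefix for $n$ sufficiently large) or to the lexicographic inequalities of Lemma \ref{lem:41} (when $j > m$, by comparison against shifts of $(a_1 \ldots a_m)^\infty$). Comparison at position $(n+1)m$ gives $\om_n^\infty \prec \al(p_L)$ strictly, and agreement on the preceding $(n+1)m - 1$ positions gives $\om_n^\infty \to \al(p_L)$ in the order topology, so by Lemma \ref{lem:21} the associated left endpoints satisfy $p_L(n) \nearrow p_L$. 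The edge case $a_m = 0$ (which forces $a_1 = M$) is handled by a minor variant of $\om_n$ whose last digit is decreased appropriately. Symmetrically, to approach $p_R$ from above, set
\[
\nu_n := a_1 \ldots a_m^+\, (\overline{a_1 \ldots a_m})^n\, \overline{a_1 \ldots a_m}^+
\]
in the analogous generic case; then $\nu_n^\infty$ agrees with $\al(p_R)$ on the first $(n+2)m - 1$ positions and differs at position $(n+2)m$ (having $\overline{a_m} + 1$ versus $\overline{a_m}$), so $\nu_n^\infty \succ \al(p_R)$ with convergence. The same type of verification shows $\nu_n^\infty \in \vb$ and is irreducible, yielding intervals with left endpoints $p_L'(n) \searrow p_R$.

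For the $*$-irreducible case, the same constructions apply with only minor adjustments. One must additionally check that $\om_n^\infty$ and $\nu_n^\infty$ lie in some interval $[\xi(n'+1), \xi(n'))$ (so that Definition \ref{def:28} applies) and verify the $*$-irreducibility inequalities for all $j$ above the corresponding threshold. By Lemma \ref{lem:44}(2), if $[p_L, p_R]$ is an $n_0$-irreducible interval then its period satisfies $m \ge 3 \cdot 2^{n_0 - 1}$ (or $m \ge 3 \cdot 2^{n_0}$ in the odd $M$ case), which guarantees that for $n$ sufficiently large the constructed sequences remain inside some $*$-irreducible zone $(\xi(n'+1), \xi(n'))$ with $n' \ge n_0$. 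The main technical obstacle is the irreducibility verification itself: the irreducibility hypothesis on $(a_1 \ldots a_m)^\infty$ directly controls only prefixes of length at most $m$, so one must extend the argument to all prefixes of length up to $(n+1)m$ or $(n+2)m$ for $\om_n$ and $\nu_n$, which is achieved via the self-similar block decomposition combined with a careful application of Lemma \ref{lem:41}, paralleling the strategy used in the proof of Lemma \ref{lem:45}.
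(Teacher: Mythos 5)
Your approach is genuinely different from the paper's, which is non-constructive: it uses Lemma \ref{lem:410} to place $p_L\in\overline{\ul}\setminus\ul$ and $p_R\in\ul$, approximates these by points of the countable set $\overline{\ul}\setminus\ul$ (dense in the Cantor set $\overline{\ul}$), and then converts each approximating point into an irreducible (resp.\ $*$-irreducible) interval via Lemma \ref{lem:49}, using the disjointness from Lemma \ref{lem:46} to force the left endpoints to converge monotonically. Your explicit construction, however, has a genuine gap: the sequence $\om_n^\f=\bigl((a_1\ldots a_m)^n a_1\ldots a_m^-\bigr)^\f$ need not belong to $\vb$, so it need not be the quasi-greedy expansion of a base in $\vl$, let alone an irreducible one. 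The obstruction is that Lemma \ref{lem:41} only gives the non-strict inequality $\overline{a_1\ldots a_{m-i}}\lle a_{i+1}\ldots a_m$, and equality genuinely occurs for irreducible generators. Concretely, take $M=8$ and the irreducible interval $I(553)$ from Example \ref{ex:513}: here $a_3=3=\overline{a_1}$, so $\om_n=(553)^n552$ and $\si^{3n+2}(\om_n^\f)$ begins with the digit $2$, which is strictly below the first digit $3$ of $\overline{\om_n^\f}$; hence $\om_n^\f\notin\vb$ and no irreducible interval is produced. The same failure occurs for every generator of the form $s^j\overline{s}$ from Table \ref{tab:1} and for the $*$-irreducible generators $\la_1\ldots\la_{2^n}(\overline{\la_1\ldots\la_{2^n}}\,^+)^j\,\overline{\la_1\ldots\la_{2^n}}$ of Lemma \ref{lem:45}, all of which end in a block equal to the reflection of a prefix. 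Your $\nu_n$ suffers the dual defect: whenever $a_{i+1}\ldots a_m=\overline{a_1\ldots a_{m-i}}$, the shift of $\nu_n^\f$ entering the final block $\overline{a_1\ldots a_m}\,^+$ begins with $\overline{a_{i+1}\ldots a_m}\,^+=a_1\ldots a_{m-i}^+\succ a_1\ldots a_{m-i}$, violating the upper bound in the definition of $\vb$.

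Repairing this would require choosing the perturbed tail adaptively (for instance, truncating at the position dictated by the reflection recurrence word rather than always decrementing the last digit of a full period), at which point you would essentially be rebuilding the machinery of Lemmas \ref{lem:48} and \ref{lem:49}. I would recommend the paper's route instead: from $p_L\in\overline{\ul}\setminus\ul$ and $p_R\in\ul$ one gets sequences $p_i\nearrow p_L$ and $r_i\searrow p_R$ with $p_i, r_i\in\overline{\ul}\setminus\ul$, and Lemma \ref{lem:49} assigns to each such point (when its expansion is not already irreducible, resp.\ $*$-irreducible) a unique irreducible, resp.\ $*$-irreducible, interval containing it; pairwise disjointness then yields the required monotone approximation without any explicit construction or case analysis on the combinatorics of $a_1\ldots a_m$.
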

 \begin{proof}
 Since the proof for $*$-irreducible intervals is similar, we only give the proof for irreducible intervals.

 Let $[p_L, p_R]$ be an irreducible interval.   By Lemma \ref{lem:410} we have $p_L\in\overline{\ul}\setminus\ul$ and $p_R\in\ul$. Then, since $\ul$ is a perfect set and $\overline{\ul} \setminus \ul$ is a countable dense subset of $\overline{\ul}$ (see \cite[Theorem 1.1]{Komornik_Loreti_2007}) there exist sequences $(p_i), (r_i)\in\overline{\ul}\setminus\ul$ such that
   \begin{equation}
     \label{eq:419}
     p_i\nearrow p_L\quad\textrm{and}\quad r_i\searrow p_R\quad\textrm{as}\quad i\ra\f.
   \end{equation}
  Now we construct a sequence of irreducible intervals $\set{[p_L(n), p_R(n)]}_{n=1}^{\infty}$ in terms of $(p_i)$ such that $p_L(n)$ strictly increases to $p_L$ as $n\ra\f$.

 First we claim that there exists a unique  irreducible interval $I_1=[p_L(1),p_R(1)]$ containing $p_1$. Note that $p_1\in\overline{\ul}\setminus\ul$.  If $\al(p_1)$ is irreducible, then by noting that $\al(p_1)$ is periodic we set $I_1=[p_L(1), p_R(1)]$ with its left endpoint $p_L(1)=p_1$. If $\al(p_1)$ is not irreducible, then let $I_1=[p_L(1), p_R(1)]$ be the unique irreducible interval containing $p_1$ given to us by Lemma \ref{lem:49}. So $I_1=[p_L(1),p_R(1)]$ is well-defined.

   By Lemma \ref{lem:46} we know that $I_1\cap [p_L, p_R]=\emptyset$. By (\ref{eq:419}) this implies that $p_R(1)<p_L$. So by (\ref{eq:419}) there exists a least integer $N_1$ such that $p_{N_1}\in I_1$ but $p_{N_1+1}\notin I_1$. Now let $I_2=[p_L(2), p_R(2)]$ be the unique irreducible  interval containing $p_{N_1+1}\in\overline{\ul}\setminus\ul$. By a similar argument to that given above we can verify that $I_2$ is well-defined. By Lemma \ref{lem:46} it follows that $I_1\cap I_2=\emptyset$ and $I_2\cap [p_L, p_R]=\emptyset$. By (\ref{eq:419}) this implies that $p_L(1)<p_L(2)<p_L$. Again, by (\ref{eq:419}) there exists a least integer $N_2>N_1$ such that $p_{N_2}\in I_2$ but $p_{N_2+1}\notin I_2$.

   By iteration of the above arguments we construct a sequence of irreducible intervals $\set{[p_L(n), p_R(n)]}$ such that $p_L(n)$ strictly increases to $p_L$ as $n\ra\f$.

   In a similar way as in the construction of the sequence $\set{[p_L(n), p_R(n)]}_{n=1}^{\infty}$ one can also construct the sequence of irreducible intervals $\set{[p_L'(n), p_R'(n)]}_{n=1}^{\f}$ in terms of $(r_i)$ such that $p_L'(n)$ strictly decreases to $p_R$ as $n\ra\f$.
 \end{proof}

\section{Entropy plateaus of $H$}\label{sec:5}

\noindent Note by \cite[Theorems 1.1 and 1.2]{Komornik_Kong_Li_2015_1} that $H(q)=0$ if $q\le q_c$, and $H(q)>0$ if $q>q_c$. This implies  that the first plateau of $H$ is $(1, q_c]$. Furthermore,  the  entropy function $H$ is constant on each connected component $(q_0, q_0^*)$ of   $(q_c, M+1]\setminus\overline{\ul}$. This implies that the endpoints of each plateau of $H$   belong to $\overline{\ul}$. Therefore, to determine the entropy plateaus of $H,$ it suffices to consider $q\in(q_c, M+1]\cap\overline{\ul}$. By Lemma \ref{lem:33} it follows that $(\VB_q, \si)$ is not transitive for any $q\in(q_c, q_T)$. This makes determining the plateaus of $H$ in $(q_c, q_T)$ more intricate. For this reason we investigate the entropy plateaus of $H$ in $(q_c, q_T)$ and $[q_T, M+1]$ separately.

 \subsection{Entropy plateaus of $H$ in $[q_T, M+1]$}
 In this part we give the proof of Theorem \ref{th2} for $q\in[q_T, M+1]$.

Note by Lemma \ref{lem:44} that every irreducible interval is a subset of $(q_T, M+1]$. Firstly, we show that the entropy function $H$ is constant on each irreducible interval.

\begin{lemma}
  \label{lem:51}
  Let $[p_L, p_R]\subseteq(q_T, M+1]$ be an irreducible interval. Then
  \begin{enumerate}
  \item $(\VB_{p_L}, \si)$ is a transitive subshift of finite type.
  \item $H(q)=H(p_L)$ for all $q\in[p_L, p_R]$.
  \end{enumerate}
\end{lemma}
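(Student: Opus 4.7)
Part (1) and part (2) will be proved separately, with Lemma \ref{lem:41} as the main structural input.

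For part (1), Lemma \ref{lem:41} gives $\al(p_L)=(a_1\ldots a_m)^\f$ with $a_m<M$, and the periodicity of $\al(p_L)$ allows us to replace the infinitely many lexicographic conditions in \eqref{eq:13} by the finite set of length-$m$ forbidden words
\[
\mathfrak{F}:=\set{w\in\set{0,1,\ldots,M}^m: w\succ a_1\ldots a_m\textrm{ or }w\prec\overline{a_1\ldots a_m}}.
\]
The inclusion $\VB_{p_L}\subseteq X_{\mathfrak{F}}$ is immediate. For the converse, suppose $(x_i)$ avoids $\mathfrak{F}$ but $\si^n((x_i))\succ(a_1\ldots a_m)^\f$ for some $n$, and let $k=jm+r$ with $1\le r\le m$ be the first index of disagreement; then the length-$m$ block $x_{n+jm+1}\ldots x_{n+(j+1)m}$ strictly exceeds $a_1\ldots a_m$, contradicting the absence of $\mathfrak{F}$-subwords. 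A symmetric argument handles the lower bound, so $(\VB_{p_L},\si)$ is a subshift of finite type. Finally, Lemma \ref{lem:44}(1) gives $p_L>q_T$, and since $\al(p_L)$ is irreducible by Definition \ref{def:29}, Theorem \ref{th1} yields the transitivity.

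For part (2), I first reduce to proving $H(p_R)=H(p_L)$: for each $q\in[p_L,p_R]$ the inequalities $\al(p_L)\lle\al(q)\lle\al(p_R)$ combined with the description \eqref{eq:13} give $\VB_{p_L}\subseteq\VB_q\subseteq\VB_{p_R}$, hence $H(p_L)\le H(q)\le H(p_R)$ by \eqref{eq:14}.

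To show $H(p_R)\le H(p_L)$ I will analyze the ``excursions'' of sequences in $\VB_{p_R}\setminus\VB_{p_L}$. If $(x_i)$ lies in this difference, some $\si^n((x_i))$ either exceeds $(a_1\ldots a_m)^\f$ or falls below its reflection; by symmetry, treat the former case. The bound $\si^n((x_i))\lle\al(p_R)=a_1\ldots a_m^+(\overline{a_1\ldots a_m})^\f$ forces $x_{n+1}\ldots x_{n+m}=a_1\ldots a_m^+$, which in turn forces $x_{n+m+1}\ldots x_{n+2m}\lle\overline{a_1\ldots a_m}$. Iterating, every such excursion has the rigid form $a_1\ldots a_m^+(\overline{a_1\ldots a_m})^j$ for some $j\ge 1$ (or its reflection), after which the sequence must return to ordinary $\VB_{p_L}$-behaviour. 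I then upgrade this picture to a labelled-graph presentation of $\VB_{p_R}$ obtained from a presentation of $\VB_{p_L}$ by attaching finitely many deterministic (non-branching) finite paths encoding the excursions; since these attachments carry no new branching, they do not alter the spectral radius of the adjacency matrix, and therefore $h(\VB_{p_R})=h(\VB_{p_L})$ by \cite[Theorem 4.3.3]{Lind_Marcus_1995}. The main obstacle will be the bookkeeping: identifying the correct attachment points of the excursion paths in the graph of $\VB_{p_L}$ and verifying that distinct excursions cannot be concatenated to create new cycles, so that the extension is genuinely linear; this is where the irreducibility of $\al(p_L)$ and the primitivity of $a_1\ldots a_m^+$ (cf.\ Lemma \ref{lem:41}) will be used.
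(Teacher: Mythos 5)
Part (1) of your proposal is correct and follows the paper's route: periodicity of $\al(p_L)$ gives the finite-type property (your explicit forbidden set $\mathfrak{F}$ of length-$m$ words works), and irreducibility plus Theorem \ref{th1} gives transitivity.

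Part (2) contains a genuine gap. Your structural claim about the ``excursions'' is false: after a block $x_{n+1}\ldots x_{n+m}=a_1\ldots a_m^+$, the two defining inequalities of $\VB_{p_R}$ force the next length-$m$ block to lie in the lexicographic interval $[\,\overline{a_1\ldots a_m^+},\,\overline{a_1\ldots a_m}\,]$, which contains \emph{two} words, namely $\overline{a_1\ldots a_m}$ and $\overline{a_1\ldots a_m^+}$; choosing the latter puts you in the reflected situation, where again two continuations $a_1\ldots a_m$ and $a_1\ldots a_m^+$ are allowed, and so on forever. So the tail after the first occurrence of $a_1\ldots a_m^+$ (or its reflection) is not a rigid word $a_1\ldots a_m^+(\overline{a_1\ldots a_m})^j$ followed by a ``return to $\VB_{p_L}$-behaviour''; it is locked for all time into the sofic shift $X_{\G_4}$ of Figure \ref{Fig:5}, which has two branches at each vertex and hence positive entropy $\frac{1}{m}\log 2$. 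Consequently your graph surgery does not consist of ``attaching finitely many deterministic (non-branching) finite paths'': you are attaching an entire positive-entropy subshift, and there is no a priori reason this leaves the spectral radius unchanged.

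What actually closes the argument — and what is entirely absent from your proposal — is the quantitative comparison $h(\VB_{p_L})\ge\frac{1}{m}\log 2$, i.e.\ $\la\ge 2^{1/m}$ for the Perron eigenvalue $\la$ of $\VB_{p_L}$. The paper obtains this from $h(\VB_{p_L})>h(\VB_{q_T})$ (Lemma \ref{lem:35} together with Lemma \ref{lem:25}(2) and \cite[Corollary 4.4.9]{Lind_Marcus_1995}) for $m\ge 2$, with a separate direct argument for $m=1$. With this inequality in hand, the correct conclusion comes from the counting estimate
\[
\# B_n(\VB_{p_R})\le\sum_{j=0}^{n}\# B_j(\VB_{p_L})\,\# B_{n-j}(X_{\G_4})\le C\sum_{j=0}^{n}\la^{j}\,2^{\frac{n-j}{m}}\le C(n+1)\la^{n},
\]
using the Perron--Frobenius bounds on both factors, which yields $h(\VB_{p_R})\le\log\la=h(\VB_{p_L})$. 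Your reduction $\VB_{p_L}\subseteq\VB_q\subseteq\VB_{p_R}$ and the decomposition of a sequence in $\VB_{p_R}\setminus\VB_{p_L}$ into a $\VB_{p_L}$-prefix and an $X_{\G_4}$-tail are the right first steps, but without identifying $X_{\G_4}$ as a positive-entropy shift and proving $\la\ge 2^{1/m}$, the argument does not go through.
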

 \begin{proof}
Let $[p_L, p_R]$ be an irreducible interval generated by $a_1\ldots a_m$. First we prove $(1)$. Note that $\al(p_L)=(a_1\ldots a_m)^\f$ and $\al(p_L)$ is irreducible. Then by Theorem \ref{th1} it follows that $(\VB_{p_L}, \si)$ is a transitive subshift of finite type.

Now we prove $(2)$, i.e., $H$ is constant in $[p_L, p_R]$. Note that $\VB_p\subseteq\VB_q$ for all $p<q$. This implies that $H(p_L)= h(\VB_{p_L})\leq h(\VB_q)=H(q)$ for all $q\in[p_L, p_R]$. Thus it suffices to show $h(\VB_{p_R})\le h(\VB_{p_L})$.

We know that $(\VB_{p_L},\si)$ is a transitive subshift of finite type. Denote by $A$ the adjacency matrix of the directed graph which represents $(\VB_{p_L}, \si)$, and let $\la=\la(A)$ be the spectral radius  of $A$. Recall that $\# B_n(\VB_{p_L})$ denotes the number of different words of length $n$ appearing in sequences of $\VB_{p_L}$. Then by the Perron-Frobenius theorem (cf.~\cite[Proposition 4.2.1]{Lind_Marcus_1995})  there exist two constants $C_1, C_2>0$ independent of $n$ such that
   \begin{equation}
     \label{eq:51}
     C_1 \la^{ {n} }\le \# B_n(\VB_{p_L})\le C_2 \la^{ {n} }\quad\textrm{for all}\quad n\ge 1.
   \end{equation}

Note that $p_L\in\overline{\ul}$ and $p_L>q_T$. Then by Lemma \ref{lem:25} (2) it follows that $(\VB_{q_T}, \sigma)$ is a proper subshift of $(\VB_{p_L}, \sigma)$.  By \cite[Corollary 4.4.9]{Lind_Marcus_1995} and Lemma \ref{lem:35} it follows that
   \begin{equation*}
    \log \la =h(\VB_{p_L}) > h(\VB_{q_T})=\left\{
   \begin{array}{lll}
   \log 2&\textrm{if}& M=2k,\\
   \frac{\log 2}{2}&\textrm{if}& M=2k+1.
   \end{array}
   \right.
   \end{equation*}
This implies that $\la\geq  2^{\frac{1}{m}}$ for any $m\ge 2$.  We claim that 
 \begin{equation}\label{eq:52}
     \la\ge 2^{\frac{1}{m}}\quad \textrm{for all }m\ge  1.
   \end{equation}
By the above arguments  it suffices to prove the inequality in (\ref{eq:52}) for $m=1$. Observe that when $m=1$ we have $\al(p_L)=(\al_1)^\f\succ\al(q_T)$. This implies $\al_1>\overline{\al_1}$. So, for $m=1$ we have $\set{\overline{\al_1}, \al_1}^\f\subseteq\VB_{p_L}$, and therefore,
\[
\log \la=h(\VB_{p_L})\ge h(\set{\overline{\al_1}, \al_1}^\f)=\log 2.
\]
This proves  (\ref{eq:52})   for $m=1$.

Take $(c_i)\in\VB_{p_R}\setminus\VB_{p_L}$. Observe that $\al(p_L)=(a_1\ldots a_m)^\f$ and $\al(p_R)=a_1\ldots a_m^+(\overline{a_1\ldots a_m})^\f$.  Then by the definition of $\VB_q$ there exists $j\ge 0$ such that $c_{j+1}\ldots c_{j+m}=a_1\ldots a_m^+$ or its reflection $\overline{a_1\ldots a_m^+}$. Note that the sequence $c_{j+1}c_{j+2}\ldots$ belongs to
   \[
   \VB_{p_R}=\set{(x_i): \overline{a_1\ldots a_m^+}(a_1\ldots a_m)^\f\lle\si^n((x_i))\lle a_1\ldots a_m^+(\overline{a_1\ldots a_m})^\f\textrm{ for all }n\ge 0}.
   \]
   By  the same arguments as those used in the proof of Proposition \ref{prop:37} it can be shown that that $c_{j+1}c_{j+2}\ldots\in X_{{\G_4}}$, where $(X_{\G_4}, \sigma)$ is the sofic subshift represented by   the labeled graph $\G_4=(G_4, \LB_4)$ (see Figure \ref{Fig:5}).
\begin{figure}[h!]
  \centering
  % Requires \usepackage{graphicx}
 \begin{tikzpicture}[->,>=stealth',shorten >=1pt,auto,node distance=4cm,
                    semithick]

  \tikzstyle{every state}=[minimum size=0pt,fill=black,draw=none,text=black]

  \node[state] (A)                    { };
  \node[state]         (B) [ right of=A] { };

  \path[->,every loop/.style={min distance=0mm, looseness=60}]
   (A) edge [loop left,->]  node {$\overline{a_1\ldots a_m}$} (A)
            edge  [bend left]   node {$\overline{a_1\ldots a_m^+}$} (B)

        (B) edge [loop right] node {$a_1\ldots a_m$} (B)
            edge  [bend left]            node {$a_1\ldots a_m^+$} (A);
\end{tikzpicture}
  \caption{The picture of the labeled graph $\mathfrak G_4=(G_4, \LB_4)$.}\label{Fig:5}
\end{figure}
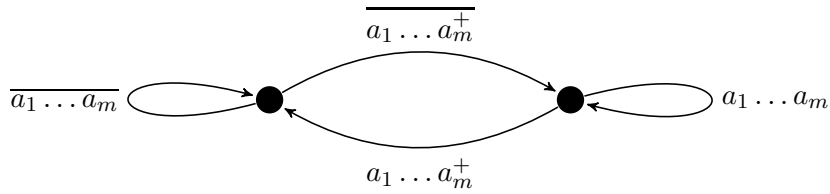

Note that the labeled graph $\G_4=(G_4, \LB_4)$ is right-resolving, and the length of each label is $m$. Then by direct calculation we obtain  $h(X_{\G_4})=\frac{1}{m}\log 2$. Since  $(X_{\G_4}, \sigma)$  is a transitive sofic subshift, by the Perron-Frobenius theorem  there exist two constants $C_3, C_4>0$ such that
  \begin{equation}
    \label{eq:53}
    C_3 \,2^{\frac{n}{m}}\le \# B_n(X_{\G_4})\le C_4\, 2^{\frac{n}{m}}\quad\textrm{for all}\quad n\ge 1.
  \end{equation}
  Therefore, by (\ref{eq:51})--(\ref{eq:53}) it follows that
  \[\begin{split}
    \# B_n(\VB_{p_R})\le\sum_{j=0}^n\# B_j(\VB_{p_L})\# B_{n-j}(X_{\G_4})&\le C_2 C_4\sum_{j=0}^n \la^j 2^{\frac{n-j}{m}}\\
    &\le   C_2 C_4\sum_{j=0}^n\la^{ {j} }\la^{n-j}\le C_2 C_4 (n+1)\la^{{n} }.
  \end{split}\]
  This implies that
  \[h(\VB_{p_R})=\lim_{n\ra\f}\frac{\log \# B_n(\VB_{p_R})}{n}\le  {\log\la}=h(\VB_{p_L}).\]
 \end{proof}
Now we prove Theorem \ref{th2} for  $q\in [q_T, M+1]$.
\begin{proposition}
  \label{prop:52}
{  The interval} $[p_L, p_R]\subseteq[q_T, M+1]$ is an entropy plateau of $H$ if and only if $[p_L, p_R]$ is an irreducible interval.
\end{proposition}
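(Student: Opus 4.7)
The plan is to prove both implications by combining Lemma \ref{lem:51} with Proposition \ref{prop:411} and Lemma \ref{lem:49}; the sufficiency direction is handled first because it is reused in the necessity.

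\emph{Sufficiency.} Suppose $[p_L,p_R]$ is an irreducible interval. Lemma \ref{lem:51}(2) already delivers that $H$ is constant on $[p_L,p_R]$, so it remains to check maximality. To see that $H(q)>H(p_R)$ for every $q>p_R$, I will use Proposition \ref{prop:411} to produce a sequence of irreducible intervals $[p_L'(n),p_R'(n)]$ with $p_L'(n)\searrow p_R$; fixing $q>p_R$ and choosing $n$ with $p_L'(n)\le q$, Lemma \ref{lem:51}(1) makes $(\VB_{p_L'(n)},\si)$ a transitive subshift of finite type, and Lemma \ref{lem:25}(2) applied at $p_L'(n)\in\overline{\ul}\subseteq\vl$ (Lemma \ref{lem:410}) yields the strict inclusion $\VB_{p_R}\subsetneq\VB_{p_L'(n)}$. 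Corollary 4.4.9 of \cite{Lind_Marcus_1995} then forces $H(p_R)<H(p_L'(n))\le H(q)$. A symmetric argument using the sequence $[p_L(n),p_R(n)]$ with $p_L(n)\nearrow p_L$ supplies $H(q)<H(p_L)$ for every $q<p_L$.

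\emph{Necessity.} Let $[p_L,p_R]\subseteq[q_T,M+1]$ be an entropy plateau with $p_L<p_R$. Because $H$ is constant on every component of $(q_c,M+1]\setminus\overline{\ul}$, maximality of the plateau forces $p_L\in\overline{\ul}$. Next I claim $p_L\in\I$: if not, Lemma \ref{lem:49}(1) places $p_L$ inside a unique irreducible interval $[\hat p_L,\hat p_R]$, which by Sufficiency is itself a plateau; since two plateaus sharing a point must coincide, $[p_L,p_R]=[\hat p_L,\hat p_R]$ and $p_L=\hat p_L\in\I$, contradicting $p_L\notin\I$.

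The real obstacle will be to upgrade ``$\al(p_L)$ is irreducible'' to ``$\al(p_L)$ is periodic''. Suppose for contradiction that $\al(p_L)$ is irreducible but non-periodic. Because \cite[Theorem 1.3]{DeVries_Komornik_Loreti_2016} attaches a reflection-periodic expansion to every element of $\overline{\ul}\setminus\ul$, I must have $p_L\in\ul$; the same result also forbids $p_L$ from being right-isolated in $\overline{\ul}$ (else $p_L$ would be the left endpoint of a component of $(q_T,M+1]\setminus\overline{\ul}$, forcing $p_L\in\overline{\ul}\setminus\ul$). Hence points of $\overline{\ul}$ accumulate at $p_L$ from the right, and using density of $\overline{\ul}\setminus\ul$ in $\overline{\ul}$ I can select $q\in(p_L,p_R)\cap(\overline{\ul}\setminus\ul)$. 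Such $q$ has $\al(q)=(a_1\ldots a_n\,\overline{a_1\ldots a_n})^\f$ with $(a_1\ldots a_n^-)^\f\in\vb$, and this is \emph{not} irreducible, because Definition \ref{def:26} with $j=n$ demands $a_1\ldots a_n(\overline{a_1\ldots a_n}^+)^\f\prec\al(q)$ whereas in fact the opposite strict inequality holds. Lemma \ref{lem:49}(1) then places $q$ in an irreducible interval, which by Sufficiency coincides with $[p_L,p_R]$, identifying $p_L$ with the left endpoint of that interval and forcing $\al(p_L)$ to be periodic---the desired contradiction. Writing $\al(p_L)=(a_1\ldots a_m)^\f$ with minimal period $m$ (where $a_m<M$ automatically, since otherwise $\al(p_L)=M^\f$ would give $p_L=M+1$, impossible as $p_R>p_L$), Definition \ref{def:29} produces the irreducible interval $[p_L,\tilde p_R]$; comparing it with $[p_L,p_R]$ via Sufficiency yields $p_R=\tilde p_R$, concluding the proof.
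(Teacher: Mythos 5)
Your sufficiency argument is essentially the paper's: constancy on $[p_L,p_R]$ comes from Lemma \ref{lem:51}, and strict growth of $H$ outside the interval is obtained by approximating with left endpoints of irreducible intervals (Proposition \ref{prop:411}), whose associated subshifts are transitive SFTs, together with Lemma \ref{lem:25}(2) and \cite[Corollary 4.4.9]{Lind_Marcus_1995}. That half is correct. The necessity is where you depart from the paper, and it contains a genuine gap. The paper never analyses $p_L$ directly: it shows that for every connected component $(q_0,q_0^*)$ of $[q_T,M+1]\setminus\overline{\ul}$ the right endpoint $q_0^*$ is \emph{not} irreducible (using the explicit form of $\al(q_0^*)$ from \cite[Theorem 2.3]{Kong_Li_2015}), places $(q_0,q_0^*)$ inside an irreducible interval via Lemma \ref{lem:49}, and concludes that the irreducible intervals cover $[q_T,M+1]$ up to the Lebesgue-null set $\overline{\ul}$, so any nondegenerate plateau must coincide with one of them. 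Your route instead tries to prove directly that $\al(p_L)$ is periodic, and the key step fails: you assert that every $q\in\overline{\ul}\setminus\ul$ has $\al(q)=(a_1\ldots a_n\,\overline{a_1\ldots a_n})^\f$ with $(a_1\ldots a_n^-)^\f\in\vb$, hence is never irreducible. That characterization is what \cite[Theorem 1.3]{DeVries_Komornik_Loreti_2016} gives for $\vl\setminus\overline{\ul}$, \emph{not} for $\overline{\ul}\setminus\ul$; for the latter one only knows that $\al(q)$ is periodic. Periodic irreducible sequences corresponding to bases in $\overline{\ul}\setminus\ul$ abound --- e.g.\ $(s^j\overline{s})^\f$ from Table \ref{tab:1}, which lies in $\I\subseteq\overline{\ul}$ by Lemma \ref{lem:47} --- so your chosen $q\in(p_L,p_R)\cap(\overline{\ul}\setminus\ul)$ may perfectly well be irreducible, in which case Lemma \ref{lem:49}(1) does not apply and no contradiction is reached. (Your first use of the same claim, to deduce $p_L\in\ul$ from non-periodicity, is harmless, since only the true statement ``$\al$ is periodic on $\overline{\ul}\setminus\ul$'' is needed there.)

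The gap is reparable within your framework: if the selected $q$ has $\al(q)$ irreducible, then $\al(q)$ is periodic with minimal period block ending in a digit $<M$ (because $q<M+1$), so by Definition \ref{def:29} and Lemma \ref{lem:41} the base $q$ is itself the left endpoint of an irreducible interval; by your sufficiency this is a plateau sharing the point $q$ with the plateau $[p_L,p_R]$, forcing $q=p_L$ and contradicting $q>p_L$. Combined with your existing treatment of the non-irreducible case this closes the argument. Two smaller points: you should rule out $p_L=q_T$ before invoking Lemma \ref{lem:49}(1) at $p_L$ (no irreducible interval contains $q_T$ by Lemma \ref{lem:44}(1), and the interval produced by the proof of Lemma \ref{lem:49} at $q_T$ would be $[q_G,q_T]$, which is not an irreducible interval in the sense of Definition \ref{def:29}); this case is excluded because irreducible intervals accumulate at $q_T$ from the right, so $H$ is not constant on any right neighbourhood of $q_T$. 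What the paper's measure-theoretic route buys is precisely that it sidesteps all of this pointwise case analysis at $p_L$; what your route would buy, once repaired, is a more structural explanation of why $p_L$ must be a periodic irreducible base.
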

\begin{proof}
  First we prove the sufficiency. Let $[p_L, p_R]\subseteq[q_T, M+1]$ be an irreducible interval. Then by Lemma \ref{lem:51} it follows that $H(q)=H(p_L)$ for any $q\in[p_L, p_R]$. So to prove that $[p_L, p_R]$ is an entropy plateau it suffices to show that $H(q)\ne H(p_L)$ for any $q\notin[p_L, p_R]$. We split the proof into the following two cases.

  Case (I). $q<p_L$. By Lemma \ref{lem:410} we  have $p_L\in\overline{\ul}\subseteq\vl$. Then by Lemma \ref{lem:25} (2) it follows that $(\VB_q, \sigma)$ is a proper subshift of $(\VB_{p_L}, \sigma)$.   Note by Theorem \ref{th1} that $(\VB_{p_L}, \si)$ is a transitive subshift of finite type. So by \cite[Corollary 4.4.9]{Lind_Marcus_1995} we have
  \[
  H(q) =h(\VB_q)<h(\VB_{p_L})=H(p_L).
  \]

  Case (II). $q>p_R$. By Proposition \ref{prop:411} there exists $r\in(p_R, q)$ such that $r$ is the left end point of an irreducible interval. Then $\al(r)$ is periodic and irreducible. By Theorem \ref{th1} it follows that $(\VB_r, \si)$ is a transitive subshift of finite type. Note that $r\in\vl$. Then by Lemma \ref{lem:25} (2)  $(\VB_{p_L}, \si)$ is proper subshift  of $(\VB_r, \si)$. Again by \cite[Corollary 4.4.9]{Lind_Marcus_1995} we have
  \[
  H(q)\ge H(r)=h(\VB_r)>h(\VB_{p_L})=H(p_L).
  \]

  By Cases (I) and (II) we  conclude that $[p_L, p_R]$ is an entropy plateau of $H$.

 Now we turn to prove the necessity. By the sufficiency part of our proposition we know that each irreducible interval is an entropy plateau. So it suffices to prove that the union of all irreducible intervals covers $[q_T, M+1]$ up to a set of measure zero.

  Note that $q_T\in\overline{\ul}$ and $M+1\in\overline{\ul}$. Then $[q_T, M+1]\setminus\overline{\ul}=(q_T, M+1)\setminus\overline{\ul}$. Let $(q_0, q_0^*)$ be a basic interval of $[q_T, M+1]\setminus\overline{\ul}$. Note by \cite[Theorem 2.3]{Kong_Li_2015} there exists a word $a_1\ldots a_m$ such that $ \al(q_0)=(a_1\ldots a_m)^\f\in\vb$ and
  \[
 \al(q_0^*)= a_1\ldots a_m^+\,\overline{a_1\ldots a_m}\,\overline{a_1\ldots a_m^+}\cdots\prec a_1\ldots a_m^+(\overline{a_1\ldots a_m})^\f.
  \]
  This implies that $\al(q_0^*)$ is not irreducible, i.e., $q_0^*\notin \I$. Observe that $q_0^*\in\overline{\ul}$. Then by Lemma \ref{lem:49} there exists a unique irreducible interval $I$ such that $q_0^*\in I$. By \cite[Lemma 2.11]{Komornik_Kong_Li_2015_1} we know that $H(q)=H(q_0^*)$ for all $q\in[q_0, q_0^*]$. Moreover, by the sufficiency part of our proposition it follows that $I$ is the largest interval such that $H(q)=H(q_0^*)$ for all $q\in I$. So, $(q_0, q_0^*)\subseteq I$. Therefore,
  \[
  [q_T, M+1]\setminus\overline{\ul}=\bigcup(q_0, q_0^*)\subseteq \bigcup I,
  \]
  where the union on the right hand side is taken over all irreducible intervals. Note by \cite[Theorem 1.6]{Komornik_Kong_Li_2015_1} that   $\overline{\ul}$ is a Lebesgue null set. This implies that the union of all irreducible intervals covers $[q_T, M+1]$ up to a set of measure zero.
\end{proof}
\begin{remark}
  \label{rem:53}
\begin{itemize}
\item  By the proof of the necessity of Proposition \ref{prop:52} it follows that for each entropy plateau $[p_L, p_R]\subset[q_T, M+1]$  there exists $q_0^*\in(p_L, p_R)\cap\overline{\ul}$. By \cite[Theorems 1.1 and 1.2]{Kong_Li_Lu_Vries_2016} it follows that
  \[
  \dim_H\overline{\ul}\cap(p_L, p_R)>0.
  \]
  This implies that $(p_L, p_R)$ contains infinitely many connected components of $[q_T, M+1]\setminus\overline{\ul}$.

  \item Note that for each plateau interval $[p_L, p_R]\subset[q_T, M+1]$ the subshift $(\VB_{p_L}, \si)$ is  transitive. Moreover, by Proposition \ref{prop:411} it follows that for any $q\in\I$ there exists a sequence of irreducible intervals $([p_L(n), p_R(n)])$ such that $p_L(n)\in\vl\setminus\ul$ converges to $q$ as $n\ra\f$. On the other hand, the proof of the necessity of Proposition \ref{prop:52} also implies that there exists a sequence $(r_n)$ such that $r_n\in(\vl\setminus\ul)\setminus\I$ converges to $q$ as $n\ra\f$. Therefore, by Lemma \ref{lem:25} we conclude that the transitivity of $(\VB_q,\si)$ does not become a generic or exceptional property as we approach any $q\in\I$, i.e., for any $q\in \I$ and any $\ep>0$ we have
  \[
  0<Leb\left(\set{p\in[q-\ep, q+\ep]: (\VB_p, \si)\textrm{ is transitive}}\right)<2\ep.
  \]
  This strengthens the observation at the end of Section \ref{sec:3}.
  \end{itemize}
\end{remark}

\subsection{Entropy plateaus of $H$ in $(q_c, q_T)$}

Now we prove Theorem \ref{th2} for $q\in(q_c, q_T)$. By Lemma \ref{lem:44} we know that the $*$-irreducible intervals are all contained in $(q_c, q_T)$. Moreover, for each $*$-irreducible interval $[p_L, p_R],$ there exists a unique $n\in\N$ such that $[p_L, p_R]\subseteq(q_{T_{n+1}}, q_{T_n})$. In this case the interval $[p_L, p_R]$ is called an \emph{$n$-irreducible interval}.

In Lemma \ref{lem:35} we calculated the topological entropy of $(\VB_{q_{T_1}}, \sigma)$. In the following lemma we calculate the topological entropy of $(\VB_{q_{T_n}}, \sigma)$.
\begin{lemma}
  \label{lem:54}
  For each $n\in\N$ the subshift $(\VB_{q_{T_n}}, \si)$    contains  a unique transitive subshift of full topological entropy. Moreover,
  \[
  H(q_{T_n})=\left\{
  \begin{array}
    {lll}
    \frac{1}{2^{n-1}}\log 2&\textrm{if}& M=2k,\\
    \frac{1}{2^n}\log 2&\textrm{if}& M=2k+1.
  \end{array}\right.
  \]
\end{lemma}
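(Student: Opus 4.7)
The plan is to exhibit, for each $n\ge 1$, a transitive sofic subshift $X_n\subseteq\VB_{q_{T_n}}$ that generalises the graphs $\G_1,\G_2$ of Lemma~\ref{lem:35}, to show that $h(X_n)=h(\VB_{q_{T_n}})$, and to deduce that $X_n$ is the unique transitive subshift of $\VB_{q_{T_n}}$ of full topological entropy. The base case $n=1$ is Lemma~\ref{lem:35}, so I concentrate on $n\ge 2$.

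Set $W:=\la_1\cdots\la_{2^{n-1}}$ when $M=2k$ and $W:=\la_1\cdots\la_{2^n}$ when $M=2k+1$. By Lemma~\ref{lem:42} the word $W$ is primitive, and by (\ref{eq:24}) we have $W\overline{W}^{+}=\la_1\cdots\la_{2|W|}$. Let $\G_n=(G_n,\LB_n)$ be the labeled graph on two vertices $A,B$ carrying the length-$|W|$ labels
\[A\to A:\;\overline{W}^{+},\qquad B\to B:\;W^{-},\qquad A\to B:\;\overline{W},\qquad B\to A:\;W.\]
For $n=1$ this is exactly $\G_1$ (if $M=2k$) or $\G_2$ (if $M=2k+1$) of Lemma~\ref{lem:35}. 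Using (\ref{eq:24}) together with the primitivity of $W$ one checks directly that every infinite sequence read off $\G_n$ satisfies $\overline{\xi(n)}\lle\si^j((x_i))\lle\xi(n)$ for all $j\ge 0$, whence $X_n:=X_{\G_n}\subseteq\VB_{q_{T_n}}$. Since $\G_n$ is right-resolving, strongly connected, and its $2\times 2$ adjacency matrix has spectral radius $2$, \cite[Theorem~4.3.3]{Lind_Marcus_1995} combined with the uniform label length $|W|$ gives $h(X_n)=\tfrac{1}{|W|}\log 2$, matching the stated value; transitivity of $X_n$ is immediate from strong connectivity.

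The crux is the upper bound $h(\VB_{q_{T_n}})\le h(X_n)$. I adapt the block-comparison technique behind Proposition~\ref{prop:37} and Lemma~\ref{lem:51}: for any $\omega\in\L(\VB_{q_{T_n}})$, the primitivity of $W$ and the lexicographic constraint $\si^j((x_i))\lle\xi(n)=W(\overline{W}^{+})^{\f}$ (together with its reflected counterpart) force every occurrence of $W$ or $\overline{W}$ inside $\omega$ to start a length-$|W|$ block coinciding with one of the four labels of $\G_n$, while the residual positions form, after shifting, a word in $\L(\VB_{q_{T_{n+1}}})$. Since $\VB_{q_{T_{n+1}}}\subsetneq\VB_{q_{T_n}}$ by Lemma~\ref{lem:25}(2), an induction on $n$ with base case $n=1$ from Lemma~\ref{lem:35} yields an estimate of the form $\#B_m(\VB_{q_{T_n}})\le C(n)\cdot m\cdot 2^{m/|W|}$ and hence the desired entropy inequality. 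Uniqueness then follows at once: if $Y\subseteq\VB_{q_{T_n}}$ is transitive with $h(Y)=h(\VB_{q_{T_n}})$, the same decomposition shows $\L(Y)\subseteq\L(X_n)$ -- otherwise transitivity would confine $Y$ to the strictly smaller-entropy residue -- so $Y\subseteq X_n$, and equality of entropies combined with the transitivity of $X_n$ forces $Y=X_n$.

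The principal obstacle is the entropy upper bound above. The graph $\G_n$ does not capture all of $\VB_{q_{T_n}}$: for example $k^{\f}\in\VB_{q_{T_n}}\setminus X_n$ when $M=2k$, and more generally sequences with long runs of $k$ (respectively of $k(k+1)$ when $M=2k+1$) escape $X_n$ entirely. Showing that these ``extra'' sequences form a subsystem of strictly smaller entropy is the delicate point, and this is where the recursive identity $W\overline{W}^{+}=\la_1\cdots\la_{2|W|}$ from (\ref{eq:24}) is essential: it is precisely this identity that allows the residue left by the $\G_n$-blocks to be identified with $\VB_{q_{T_{n+1}}}$, thereby closing the induction on $n$.
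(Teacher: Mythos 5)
Your construction of the two-vertex labeled graph, the computation $h(X_n)=\frac{1}{|W|}\log 2$ via right-resolvingness, and the dichotomy (every sequence of $\VB_{q_{T_n}}$ containing $W$ or $\overline{W}$ eventually falls into $X_n$, by the argument of Proposition \ref{prop:37}) all coincide with the paper's proof. The gap is in your treatment of the residue, which you yourself flag as the crux. You propose to control the sequences avoiding $W$ and $\overline{W}$ by noting they lie in $\L(\VB_{q_{T_{n+1}}})$ and then running an induction on $n$ with base case $n=1$. But $q_{T_{n+1}}<q_{T_n}$, so $\VB_{q_{T_{n+1}}}$ is the object treated at the \emph{next} stage of your induction, not a previous one: to bound $\#B_m(\VB_{q_{T_n}})$ you would need the not-yet-established entropy of $\VB_{q_{T_{n+1}}}$, and the induction never closes. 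The only information the earlier stages supply is $h(\VB_{q_{T_{n+1}}})\le h(\VB_{q_{T_1}})=\log 2$, which feeds back only the trivial estimate $\#B_m(\VB_{q_{T_n}})\le C\,2^m$. Nor can you extract a strict entropy drop from the proper inclusion $\VB_{q_{T_{n+1}}}\subsetneq\VB_{q_{T_n}}$: for $n\ge 2$ these subshifts are not transitive (Lemma \ref{lem:33}), so Corollary 4.4.9 of Lind--Marcus does not apply.

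The repair is short and is exactly what the paper does. If $(x_i)\in\VB_{q_{T_n}}$ avoids both $W$ and $\overline{W}$, then every length-$|W|$ factor of $(x_i)$ is strictly between $\overline{W}$ and $W$, hence sandwiched between $\overline{W}^{+}$ and $W^{-}$, and therefore the residue $Y$ satisfies
\[
(\overline{W}^{+})^{\f}\;\lle\;\si^{j}((x_i))\;\lle\;(W^{-})^{\f}\quad\textrm{for all }j\ge 0.
\]
Since $(W^{-})^{\f}\prec\al(q_c)$ (the word $\al(q_c)$ begins with $W$), this gives $Y\subseteq\VB_{q_c}$, and $h(\VB_{q_c})=0$ is already known from the fact that $H(q)=0$ for $q\le q_c$. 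With $h(Y)=0$ in hand, the block-counting estimate $\#B_m(\VB_{q_{T_n}})\le\sum_{j=0}^{m}\#B_j(Y)\,\#B_{m-j}(X_n)$ yields $h(\VB_{q_{T_n}})=h(X_n)$ directly, with no induction, and your uniqueness argument then goes through unchanged.
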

\begin{proof}
  Since the proof for $M=2k+1$ is similar, we only give the proof for $M=2k$.

   By (\ref{eq:25}) and Lemma \ref{lem:43} it follows that
  \begin{equation}\label{eq:54}
  \al(q_{T_n})=\xi(n)=\la_1\ldots\la_{2^{n-1}}(\overline{\la_1\ldots\la_{2^{n-1}}}\,^+)^\f.
  \end{equation}
Let $(X_{\G_5}, \sigma)$ be the sofic subshift  represented by the right-resolving labeled graph ${\G_5}=(G_5, \LB_5)$ (see Figure \ref{Fig:6}). Then by Lemma \ref{lem:42} it follows that for all $(x_i)\in X_{\G_5}$ we have
 \[
\overline{\la_1\ldots\la_{2^{n-1}}}( \la_1\ldots\la_{2^{n-1}} ^-)^\f \lle \si^j((x_i))\lle \la_1\ldots\la_{2^{n-1}}(\overline{\la_1\ldots\la_{2^{n-1}}}\,^+)^\f
 \]for all $j\ge 0$. By (\ref{eq:54}) this implies that $X_{\G_5}\subseteq\VB_{q_{T_n}}$. Observe that the labeled graph ${\G_5}$ is right-resolving, and each label in $\LB_5$ has length $2^{n-1}$. Then by \cite[Theorem 4.3.3]{Lind_Marcus_1995} it follows that
   \begin{equation}\label{eq:55}
h(X_{\G_5})=\frac{\log 2}{2^{n-1}}.
\end{equation}
\begin{figure}[h!]
  \centering
\begin{tikzpicture}[->,>=stealth',shorten >=1pt,auto,node distance=4cm,
                    semithick]

  \tikzstyle{every state}=[minimum size=0pt,fill=black,draw=none,text=black]

  \node[state] (A)                    { };
  \node[state]         (B) [ right of=A] { };

  \path[->,every loop/.style={min distance=0mm, looseness=60}]
   (A) edge [loop left,->]  node {$\lambda_1\ldots \lambda_{2^{n-1}}^-$} (A)
            edge  [bend left]   node {$\lambda_1\ldots \lambda_{2^{n-1}}$} (B)

        (B) edge [loop right] node {$\overline{\lambda_1\ldots\lambda_{2^{n-1}}}^+$} (B)
            edge  [bend left]            node {$\overline{\lambda_1\ldots\lambda_{2^{n-1}}}$} (A);
\end{tikzpicture}
  \caption{The labeled edge graph ${\G_5}=(G_5, \LB_5)$.}\label{Fig:6}
\end{figure}

Note that if $(x_i)\in\VB_{q_{T_n}}$ contains the word $\la_1\ldots\la_{2^{n-1}}$ or its reflection $\overline{\la_1\ldots\la_{2^{n-1}}}$, then by the same argument used in the proof of Proposition \ref{prop:37} it follows that $(x_i)$ ends with an element of $X_{\G_5}$. Now we consider a new subshift $(Y, \si)$ contained in $(\VB_{q_{T_n}}, \si).$ This subshift consists of those elements of $\VB_{q_{T_n}}$ satisfying the additional property that the words $\la_1\ldots\la_{2^{n-1}}$ and $\overline{\la_1\ldots\la_{2^{n-1}}}$ are forbidden. Observe by (\ref{eq:23}) that $\al(q_c)\succ (\la_1\ldots \la_{2^{n-1}}^-)^\f$. This implies that
 \begin{align*}
   Y&\subseteq\set{(x_i): (\overline{\la_1\ldots\la_{2^{n-1}}}\,^+)^\f\lle \si^j((x_i))\lle (\la_1\ldots\la_{2^{n-1}}^-)^\f\textrm{ for all }j\ge 0}\\
   &\subseteq\set{(x_i): \overline{\al(q_c)}\lle\si^j((x_i))\lle\al(q_c) \textrm{ for all }j\ge 0}\\
   &=\VB_{q_c}.
 \end{align*}
  Note  that $h(\VB_{q_c})=0$. This implies that $h(Y)=0$.

 Therefore we may conclude that $(X_{\G_5}, \sigma)$ is the unique transitive subshift of $(\VB_{q_{T_n}}, \sigma)$ of full topological entropy. By (\ref{eq:55}) we also have
  \[
  H(q_{T_n})=h(\VB_{q_{T_n}}) =h(X_{\G_5})=\frac{\log 2}{2^{n-1}}.
  \]
  This completes the proof.
\end{proof}

Now we show that $H$ is constant on each $*$-irreducible interval.
\begin{lemma}
\label{lem:55}
  Let $[p_L, p_R]\subseteq(q_c, q_T)$ be a $*$-irreducible interval. Then $H(q)=H(p_L)$ for all $q\in[p_L, p_R]$.
\end{lemma}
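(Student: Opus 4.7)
The plan is to adapt the counting argument of Lemma \ref{lem:51} to the $*$-irreducible setting. Since $\VB_p\subseteq\VB_q$ whenever $p\le q$, the monotonicity of $H$ forces $H(p_L)\le H(q)\le H(p_R)$ for every $q\in[p_L,p_R]$, so it suffices to establish the reverse inequality $H(p_R)\le H(p_L)$.

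First I would extract the necessary structural information. By Lemma \ref{lem:44}(2), the interval $[p_L,p_R]$ is an $n$-irreducible interval for a unique $n\in\N$, so that $[p_L,p_R]\subseteq(q_{T_{n+1}},q_{T_n})$ and the generating word $a_1\ldots a_m$ has length $m\ge 3\cdot 2^{n-1}$ if $M=2k$, and $m\ge 3\cdot 2^n$ if $M=2k+1$. Following essentially the combinatorial analysis used in the proof of Proposition \ref{prop:37}, any sequence $(c_i)\in\VB_{p_R}\setminus\VB_{p_L}$ must contain either $a_1\ldots a_m^+$ or its reflection $\overline{a_1\ldots a_m^+}$ at some position $j+1$, and from that point on the tail $c_{j+1}c_{j+2}\ldots$ is forced into the sofic subshift $X_\G$ described by the right-resolving labeled graph analogous to $\mathfrak{G}_4$ of Figure \ref{Fig:5}, with labels $a_1\ldots a_m,\,a_1\ldots a_m^+,\,\overline{a_1\ldots a_m},\,\overline{a_1\ldots a_m^+}$, each of length $m$. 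By the Perron-Frobenius theorem this gives $h(X_\G)=\frac{\log 2}{m}$.

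The counting bound
$$\#B_n(\VB_{p_R})\le\sum_{j=0}^n \#B_j(\VB_{p_L})\,\#B_{n-j}(X_\G)$$
then yields, via a standard generating-function (radius-of-convergence) argument, that $h(\VB_{p_R})\le\max\bigl(h(\VB_{p_L}),\,h(X_\G)\bigr)$. The crucial remaining step is therefore $h(X_\G)\le h(\VB_{p_L})$. For this I would use that $p_L>q_{T_{n+1}}$ and, by Lemma \ref{lem:43}, $q_{T_{n+1}}\in\ul\subseteq\vl$, so Lemma \ref{lem:25}(2) gives $\VB_{q_{T_{n+1}}}\subsetneq\VB_{p_L}$ and hence $h(\VB_{p_L})\ge H(q_{T_{n+1}})$. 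For $M=2k$, Lemma \ref{lem:54} supplies $H(q_{T_{n+1}})=\frac{\log 2}{2^n}$, and combined with $m\ge 3\cdot 2^{n-1}$ we obtain
$$h(X_\G)=\frac{\log 2}{m}\le\frac{\log 2}{3\cdot 2^{n-1}}<\frac{\log 2}{2^n}=H(q_{T_{n+1}})\le h(\VB_{p_L}).$$
A symmetric calculation, using $H(q_{T_{n+1}})=\frac{\log 2}{2^{n+1}}$ and $m\ge 3\cdot 2^n$, handles the case $M=2k+1$.

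The main obstacle, relative to Lemma \ref{lem:51}, is that $(\VB_{p_L},\sigma)$ is no longer a transitive subshift of finite type (cf.\ Lemma \ref{lem:33}), so one cannot rely on sharp Perron-Frobenius two-sided bounds for $\#B_n(\VB_{p_L})$. The quantitative length bound $m\ge 3\cdot 2^{n-1}$ (resp.\ $m\ge 3\cdot 2^n$) coming from Lemma \ref{lem:44} is precisely what compensates: it drops the entropy of the excursion subshift $X_\G$ strictly below $H(q_{T_{n+1}})$, the entropy of the transitive ``core'' sitting inside $\VB_{p_L}$, so that the coarse subadditive counting inequality is enough to transfer the bound from $\VB_{p_L}$ to $\VB_{p_R}$.
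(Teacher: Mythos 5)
Your proposal is correct and follows essentially the same route as the paper: the same reduction to $H(p_R)\le H(p_L)$, the same decomposition of sequences in $\VB_{p_R}\setminus\VB_{p_L}$ via the excursion sofic subshift of entropy $\frac{\log 2}{m}$, and the same crucial comparison $\frac{\log 2}{m}<\frac{\log 2}{2^{n}}\le h(\VB_{q_{T_{n+1}}})\le h(\VB_{p_L})$ coming from Lemmas \ref{lem:44} and \ref{lem:54}. The only (harmless) difference is that where you use a radius-of-convergence argument to process the convolution bound, the paper instead applies the estimate $\#B_j(\VB_{p_L})\le C_2\, j^{s}\lambda^{j}$ for a general, not necessarily irreducible, subshift of finite type (\cite[Theorem 4.4.4]{Lind_Marcus_1995}) and sums directly.
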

\begin{proof}
Since the proof for $M=2k+1$ is similar, we only prove the lemma for $M=2k$.

Let $[p_L, p_R]$ be a $*$-irreducible interval generated by $a_1\ldots a_m$. By Lemma \ref{lem:44} there exists $n\in\N$ such that  $[p_L, p_R]\subseteq(q_{T_{n+1}}, q_{T_n})$. Moreover,
 $m\ge 3\cdot 2^{n-1}>2^n$.
Note that the entropy function $H$ is non-decreasing. So it suffices to prove $H(p_R)\le H(p_L)$.

  Observe that $\al(p_L)$ is periodic. Therefore $(\VB_{p_L}, \si)$ is a subshift of finite type. Let $G$ be the directed  graph representing $(\VB_{p_L}, \si)$, and denote by $\la$ its spectral radius.  Suppose that $G$ has $s$ distinct strongly connected components. Then by \cite[Theorem 4.4.4]{Lind_Marcus_1995} there exist constants $C_1$ and $C_2$ such that
  \begin{equation}
    \label{eq:56}
    C_1\la^n\le\# B_n(\VB_{p_L})\le C_2 n^s\la^n\quad\textrm{for all }n\ge 1.
  \end{equation}
  Note that $\VB_{q_{T_{n+1}}}\subseteq\VB_{p_L}$. Then by Lemma \ref{lem:54} it follows that
 \[
  \frac{\log 2}{2^n}=h(\VB_{q_{T_{n+1}}})\le h(\VB_{p_L})=\log \la.
\]
  This together with $m>2^n$ implies
  \begin{equation}
    \label{eq:57}
    \la\ge 2^{\frac{1}{2^n}}>2^{\frac{1}{m}}.
  \end{equation}

  Take $(c_i)\in\VB_{p_R}\setminus\VB_{p_L}$. Note that $\al(p_L)=(a_1\ldots a_m)^\f$ and  $\al(p_R)=a_1\ldots a_m^+(\overline{a_1\ldots a_m})^\f$. Then there exists $j\ge 0$ such that $c_{j+1}\ldots c_{j+m}=a_1\ldots a_m^+$ or $c_{j+1}\ldots c_{j+m}=\overline{a_1\ldots a_m^+}$.  By analogous reasoning to that used in the proof of Proposition \ref{prop:37} we can deduce that $c_{j+1}c_{j+2}\ldots \in X_{\G_4}$, where $(X_{\G_4}, \sigma)$ is the sofic subshift represented by the labeled graph ${\G_4}=(G_4, \LB_4)$ in Figure \ref{Fig:5}. Note that the topological entropy of $(X_{\G_4}, \sigma)$ is $h(X_{\G_4})=\frac{1}{m}\log 2$. Clearly, $(X_{\G_4}, \si)$ is a transitive sofic subshift.   Then by (\ref{eq:57}) and the Perron-Frobenius theorem there exists two constants $C_3$ and $C_4$ such that
  \begin{equation}
    \label{eq:58}
   C_3 2^{\frac{n}{m}}\le \# B_n(X_{\G_4})\le C_4 2^{\frac{n}{m}}<C_4 \la^n\quad\textrm{for all }n\ge 1.
  \end{equation}

  Therefore, by (\ref{eq:56}) and (\ref{eq:58}) it follows that
   \[\begin{split}
    \# B_n(\VB_{p_R})&\le\sum_{j=0}^n\# B_j(\VB_{p_L})\# B_{n-j}(X_{\G_4}) \le C_2 C_4\sum_{j=0}^n j^s \la^{ {j} } \la^{n-j}
    \le C_2 C_4 (n+1)n^{s}\la^{{n} }.
  \end{split}\]
  This implies that
  \begin{align*}
    h(\VB_{p_R}) =\lim_{n\ra\f}\frac{\log \# B_n(\VB_{p_R})}{n}
    &\le\lim_{n\ra\f}\frac{\log (C_2C_4)+\log(n+1)+s\log n+n\log \la}{n} \\
    &=\log\la=h(\VB_{p_L}).
  \end{align*}
  This completes the proof.
\end{proof}

Note by Lemma \ref{lem:33} the subshift $(\VB_q, \si)$ is not transitive for any $q\in(q_c, q_T)$. In the following we show that for each $q$ inside a   $*$-irreducible interval, the subshift $(\VB_q, \sigma)$ contains a unique transitive subshift of finite type of full topological entropy.

Recall from Definition \ref{def:310} that a word $\a=a_1\ldots a_m$ is primitive if for all $0\le i<m$ we have
\[
\overline{a_1\ldots a_{m-i}}\prec a_{i+1}\ldots a_m\lle a_1\ldots a_{m-i}.
\]
Moreover, by Definition \ref{def:311} the reflection recurrence word of $\a$ is $\R(\a)=a_1\ldots a_s$, where $s\in\set{0,1,\ldots,m-1}$ is the smallest integer satisfying
$
 a_{s+1}\ldots a_m^-=\overline{a_1\ldots a_{m-s}},
 $
assuming such an $s$ exists. When $s=0$ then $\R(\a)$ is the empty word. If such an $s$ doesn't exist then $\R(\a)=\a.$

\begin{lemma}
  \label{lem:56}
  Let  $[p_L, p_R]$ be an $n$-irreducible interval.
  \begin{enumerate}
\item If $M=2k$, then the word $
  \al_1(p_L)\ldots \al_j(p_L)
  $ with $2^n< j<2^{n+1}$ is primitive if and only if $j=3\cdot 2^{n-1}$ and
 \[\al_1(p_L)\ldots a_{3\cdot 2^{n-1}}(p_L)=\la_1\ldots \la_{2^{n-1}}(\overline{\la_1\ldots \la_{2^{n-1}}}\,^+)^2. \]Moreover, $\R(\la_1\ldots \la_{2^{n-1}}(\overline{\la_1\ldots \la_{2^{n-1}}}\,^+)^2)=
 \la_1\ldots \la_{2^n}.
$

\item If $M=2k+1$, then the word $
  \al_1(p_L)\ldots \al_j(p_L)
  $ with $2^{n+1}< j<2^{n+2}$ is primitive if and only if $j=3\cdot 2^{n}$ and
 \[\al_1(p_L)\ldots a_{3\cdot 2^{n}}(p_L)=\la_1\ldots \la_{2^{n}}(\overline{\la_1\ldots \la_{2^{n}}}\,^+)^2. \]
 Moreover, $\R(\la_1\ldots \la_{2^{n}}(\overline{\la_1\ldots \la_{2^{n}}}\,^+)^2)=
 \la_1\ldots \la_{2^{n+1}}.
$
\end{enumerate}
\end{lemma}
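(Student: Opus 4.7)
The plan is to exploit the sandwich inequality (\ref{eq:46}) from Lemma \ref{lem:44} together with the identity (\ref{eq:24}) and the primitivity of $\la_1\ldots\la_{2^n}$ coming from Lemma \ref{lem:42}. I will treat only the case $M=2k$; the case $M=2k+1$ is entirely analogous with $2^{n-1}$ replaced by $2^n$ throughout. Write $w:=\la_1\ldots\la_{2^{n-1}}$; by (\ref{eq:24}) we have $\la_1\ldots\la_{2^n}=w\overline{w}^+$, and a direct computation gives $\overline{\overline{w}^+}=w^-$ and $\overline{\la_1\ldots\la_{2^n}}\,^+=\overline{w}\,w$.

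\textbf{Step 1 (Forcing the beginning).} Since $p_L\in(q_{T_{n+1}},q_{T_n})$, the bounds (\ref{eq:46}) both start with $\la_1\ldots\la_{2^n}=w\overline{w}^+$, so $\al_1(p_L)\ldots\al_{2^n}(p_L)=w\overline{w}^+$. At positions $2^n+1,\ldots,2^n+2^{n-1}$ the lower bound reads $\overline{w}$ (the first half of $\overline{w}\,w$) and the upper bound reads $\overline{w}^+$, so the block $\al_{2^n+1}(p_L)\ldots\al_{2^n+2^{n-1}}(p_L)$ equals either $\overline{w}$ (Case~(a)) or $\overline{w}^+$ (Case~(b)).

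\textbf{Step 2 (Short prefixes are not primitive).} For $j=2^n+r$ with $1\le r<2^{n-1}$, in both cases the prefix equals $w\overline{w}^+\overline{\la_1\ldots\la_r}$, because $\overline{w}$ and $\overline{w}^+$ agree on their first $r<2^{n-1}$ letters. The shift $i=2^n$ yields $a_{i+1}\ldots a_j=\overline{\la_1\ldots\la_r}=\overline{a_1\ldots a_{j-i}}$, violating Definition \ref{def:310}. The same shift also kills the length-$3\cdot 2^{n-1}$ prefix in Case~(a), where $\mathbf{a}=w\overline{w}^+\overline{w}$ satisfies $a_{2^n+1}\ldots a_m=\overline{w}=\overline{a_1\ldots a_{2^{n-1}}}$.

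\textbf{Step 3 (Primitivity at $j=3\cdot 2^{n-1}$ in Case~(b)).} Here $\mathbf{a}=w(\overline{w}^+)^2$. I will verify both inequalities of Definition \ref{def:310} for every shift $0\le i<3\cdot 2^{n-1}$. The boundary shifts $i\in\{0,2^{n-1},2^n\}$ reduce directly to the two facts $\overline{w}^+\prec w$ and $\overline{w}\prec\overline{w}^+$, both consequences of $\la_1=k+1>k-1$. The remaining ranges $0<i<2^{n-1}$, $2^{n-1}<i<2^n$ and $2^n<i<3\cdot 2^{n-1}$ reduce, after extracting the $w\overline{w}^+$ prefix of $\mathbf{a}$, to the primitivity of $\la_1\ldots\la_{2^n}$ supplied by Lemma \ref{lem:42}; when that lemma only gives equality on the upper side, the additional suffix forces strict inequality through another application of Lemma \ref{lem:42} applied to a shorter prefix of $(\la_i)$.

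\textbf{Step 4 (Longer prefixes are not primitive).} For $j=3\cdot 2^{n-1}+r'$ with $1\le r'<2^{n-1}$, Case~(a) is already handled by Step 2 (the shift $i=2^n$ gives the same equality as there). In Case~(b), since $\al(p_L)\prec\xi(n)=w(\overline{w}^+)^\f$ and the two sequences agree on their first $3\cdot 2^{n-1}$ letters, we must have $\al_{3\cdot 2^{n-1}+1}(p_L)\preccurlyeq(\overline{w}^+)_1=\overline{\la_1}$. Combined with $\si^{3\cdot 2^{n-1}}\al(p_L)\succcurlyeq\overline{\al(p_L)}$ this forces $\al_{3\cdot 2^{n-1}+1}(p_L)\ldots\al_j(p_L)=\overline{\la_1\ldots\la_{r'}}$, and then the shift $i=3\cdot 2^{n-1}$ yields an equality $a_{i+1}\ldots a_j=\overline{a_1\ldots a_{j-i}}$, violating primitivity.

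\textbf{Step 5 (Reflection recurrence).} For $\mathbf{a}=w(\overline{w}^+)^2$ of length $m=3\cdot 2^{n-1}$, take $s=2^n$: the suffix $a_{s+1}\ldots a_m=\overline{w}^+$ satisfies $(\overline{w}^+)^-=\overline{w}=\overline{a_1\ldots a_{m-s}}$, so $s=2^n$ qualifies in Definition \ref{def:311}. For any $0<s<2^n$, the primitivity from Step~3 gives $a_{s+1}\ldots a_m\succ\overline{a_1\ldots a_{m-s}}$ strictly, and equality $a_{s+1}\ldots a_m^-=\overline{a_1\ldots a_{m-s}}$ would force $a_{s+1}\ldots a_m=\overline{a_1\ldots a_{m-s}}^+$; inspecting the block structure $w\mid\overline{w}^+\mid\overline{w}^+$ together with $\overline{\la_1\ldots\la_{2^n}}\,^+=\overline{w}\,w$ rules this out. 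Hence $\R(\mathbf{a})=a_1\ldots a_{2^n}=\la_1\ldots\la_{2^n}$.

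The main obstacle is the careful bookkeeping in Steps 3 and 4: one must track whether Lemma \ref{lem:42} yields a strict or non-strict inequality at each boundary shift, and in Step 4 use the strict inequality $\al(p_L)\prec\xi(n)$ together with $\al(p_L)\in\vb$ in just the right way to pin down the extra letters beyond $3\cdot 2^{n-1}$.
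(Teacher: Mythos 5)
Your proposal follows essentially the same route as the paper: use the sandwich inequality from Lemma \ref{lem:44} together with \eqref{eq:24} to pin down $\al_1(p_L)\ldots\al_{2^{n+1}-1}(p_L)$ (two possible continuations after position $2^n$), kill every $j\ne 3\cdot 2^{n-1}$ by exhibiting a reflected suffix $\overline{\la_1\ldots\la_r}$ matching the prefix $\la_1\ldots\la_r$, and read off $\R(w(\overline{w}^+)^2)=\la_1\ldots\la_{2^n}$ from the block structure via Lemma \ref{lem:42}. The argument is correct; your Steps 3 and 5 merely spell out verifications that the paper asserts more tersely.
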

\begin{proof}
Since the proof of (2) for $M=2k+1$ is similar, we only give the proof of (1) for $M=2k$.

Let $(\al_i)=\al(p_L)$.  Note that $q_{T_{n+1}}<p_L<q_{T_n}$. By (\ref{eq:25}), (\ref{eq:24}) and Lemma \ref{lem:21} it follows that
\begin{equation}\label{eq:59}
\la_1\ldots\la_{2^{n-1}} \overline{\la_1\ldots\la_{2^{n-1}}}\,^+(\overline{\la_1\ldots\la_{2^{n-1}}}\, \la_1\ldots\la_{2^{n-1}})^\f
\prec (\al_i)\prec \la_1\ldots\la_{2^{n-1}}(\overline{\la_1\ldots\la_{2^{n-1}}}\,^+)^\f.
\end{equation}
This implies that
\[
\al_1\ldots\al_{3\cdot 2^{n-1}}=\la_1\ldots\la_{2^{n-1}} \overline{\la_1\ldots\la_{2^{n-1}}}\,^+ \,\overline{\la_1\ldots\la_{2^{n-1}}}\quad \textrm{or}\quad \la_1\ldots\la_{2^{n-1}}( \overline{\la_1\ldots\la_{2^{n-1}}}\,^+)^2.
\]
Again by (\ref{eq:59}), (\ref{eq:24}) and using that $(\al_i)\in\vb$ it follows that
\begin{align*}
   \al_1\ldots\al_{2^{n+1}-1}&=\la_1\ldots\la_{2^{n-1}} \overline{\la_1\ldots\la_{2^{n-1}}}\,^+ \,\overline{\la_1\ldots\la_{2^{n-1}}}\la_1\ldots\la_{2^{n-1}-1}\\
   &=\la_1\ldots\la_{2^n}\overline{\la_1\ldots \la_{2^n-1}},
\end{align*}
or
\begin{align*}
  \al_1\ldots\al_{2^{n+1}-1}&=\la_1\ldots\la_{2^{n-1}}( \overline{\la_1\ldots\la_{2^{n-1}}}\,^+)^2 \overline{\la_1\ldots\la_{2^{n-1}-1}}\\
  &=\la_1\ldots\la_{2^n}\, \overline{\la_1\ldots\la_{2^{n-1}}}\,^+ \overline{\la_1\ldots\la_{2^{n-1}-1}}.
\end{align*}

This implies that for all $2^n<j<2^{n+1}$ with $j\ne 2^n+2^{n-1}$ the word $\al_1\ldots \al_j$ has a prefix $\la_1\ldots \la_{2^n}$ and a suffix of the form $\overline{\la_1\ldots \la_\ell}$ for some $1\le \ell<2^n$. By
Definition \ref{def:310} it follows that  $\al_1\ldots\al_j$ cannot be primitive.
Furthermore, for  $j=2^n+2^{n-1}$ the word $\al_1\ldots\al_{2^n+2^{n-1}}$ is primitive if and only if it equals $\la_1\ldots\la_{2^{n-1}}( \overline{\la_1\ldots\la_{2^{n-1}}}\,^+)^2$.

Finally, observe that $\la_1\ldots\la_{2^n}=\la_1\ldots\la_{2^{n-1}}\overline{\la_1\ldots\la_{2^{n-1}}}\,^+$. Then by Lemma \ref{lem:42} it follow that the reflection recurrence word is given by
\[
\R(\la_1\ldots \la_{2^{n-1}}(\overline{\la_1\ldots \la_{2^{n-1}}}\,^+)^2)=\la_1\ldots\la_{2^{n-1}}\overline{\la_1\ldots\la_{2^{n-1}}}^+=
 \la_1\ldots \la_{2^n}.
\]
\end{proof}

Now we state an adapted version of Lemma \ref{lem:316} for $*$-irreducible sequences. The proof is essentialy the same so it is ommited.

\begin{lemma}\label{lem:57}
Let $q\in(q_c,q_T]\cap \vl$ be such that $\al(q)$ is $*$-irreducible and $q \in (q_{T_{n+1}}, q_{T_{n}})$.  Then there exist infinitely many integers $m > 2^{n+1}$ such that $\a=\al_1(q)\ldots\al_m(q)$ is primitive.

Moreover, for each of these $m$ there exists $N=N(m) \in \N$ such that for all $u\in\N$ and any $j\ge 0$  we have
\begin{align}
\overline{\al_1(q)\ldots\al_N(q)}&\prec\si^j(\a(\overline{\a}\,^+)^\f)\prec\al_1(q)\ldots\al_N(q),\label{eq:510}\\
  \overline{\al_1(q)\ldots\al_N(q)}&\prec\si^j((\a^-)^u(\R(\a)^-)^\f)\prec\al_1(q)\ldots\al_N(q),\label{eq:511}
\end{align}
and
\begin{align*}
 \overline{\al_1(q)\ldots\al_N(q)}&\prec\si^j(\overline{\a}\,( {\a}^-)^\f)\prec\al_1(q)\ldots\al_N(q),\\
  \overline{\al_1(q)\ldots\al_N(q)}&\prec\si^j((\overline{\a}\,^+)^u(\overline{\R(\a)}\,^+)^\f)\prec\al_1(q)\ldots\al_N(q).
\end{align*}
\end{lemma}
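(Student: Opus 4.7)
The plan is to mirror the proof of Lemma \ref{lem:316} almost verbatim, replacing the irreducibility hypothesis on $\al(q)$ by the $*$-irreducibility hypothesis, and checking that the thresholds in Definition \ref{def:28} are met by $|\a|$ and $|\R(\a)|$. First I would use Lemma \ref{lem:47} to conclude $q\in\I^*\subseteq\overline{\ul}$; this allows one to invoke \cite[Lemma 4.1]{Komornik_Loreti_2007} to produce infinitely many $m$ for which $\a=\al_1(q)\ldots\al_m(q)$ is primitive. Restricting to $m>2^{n+1}$ (and, when $M=2k+1$, to the cofinal subfamily with $m>2^{n+2}$) costs nothing since the set of such $m$ is infinite. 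For each of these $m$, primitivity of $\a$ gives $(\a^-)^\infty\in\vb$, and by Lemma \ref{lem:314} the reflection recurrence word $\R(\a)$ is itself primitive, hence $(\R(\a)^-)^\infty\in\vb$.

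Next I would feed these two words into the $*$-irreducibility condition of Definition \ref{def:28}. The condition applied to $\a$ requires $|\a|$ to exceed $2^n$ (resp.\ $2^{n+1}$), which is satisfied since $m>2^{n+1}$; hence $\a(\overline{\a}\,^+)^\infty\prec\al(q)$. The condition applied to $\R(\a)$ requires $|\R(\a)|$ to exceed $2^n$ (resp.\ $2^{n+1}$); by Lemma \ref{lem:313} we have $|\R(\a)|\ge m/2$, and the restriction $m>2^{n+1}$ (resp.\ $m>2^{n+2}$) forces this. Consequently $\R(\a)(\overline{\R(\a)}\,^+)^\infty\prec\al(q)$. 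One now picks $N=N(m)\ge m$ so that both $\a(\overline{\a}\,^+)^\infty$ and $\R(\a)(\overline{\R(\a)}\,^+)^\infty$ are strictly dominated by $\al_1(q)\ldots\al_N(q)$ on their first $N$ coordinates, and so that the symmetric lower bounds hold.

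Once the two key prefix inequalities
\[
\a(\overline{\a}\,^+)^\infty\prec\al_1(q)\ldots\al_N(q),\qquad \R(\a)(\overline{\R(\a)}\,^+)^\infty\prec\al_1(q)\ldots\al_N(q)
\]
are in hand, the four displayed inequalities in the lemma follow by the identical word-combinatorial case analysis carried out in the proof of Lemma \ref{lem:316}, which only uses the primitivity of $\a$, the primitivity of $\R(\a)$, Lemma \ref{lem:42}-style symmetries, and the defining identity $\a=\R(\a)\,\overline{\R(\a)}\,^+$ (which is in fact the definition of $\R(\a)$). No feature of the earlier proof invokes irreducibility in a stronger way than through the two bullet-point bounds above, so once those are secured the verification for indices $j$ in each of the four ranges $(u-1)m,\, (u-1)m+s,\, ((u-1)m+s,um),\, [um,um+s)$ goes through word for word.

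The only genuinely new point to worry about is the boundary case where $|\R(\a)|$ happens to equal the $*$-irreducibility threshold and therefore fails the strict inequality $|\R(\a)|>2^n$ of Definition \ref{def:28}. By Lemma \ref{lem:56} such a borderline $\R(\a)$ is forced to coincide with $\la_1\ldots\la_{2^n}$ (resp.\ $\la_1\ldots\la_{2^{n+1}}$), in which case $\R(\a)(\overline{\R(\a)}\,^+)^\infty=\xi(n+1)$, and the required bound $\xi(n+1)\prec\al(q)$ is immediate from the hypothesis $q\in(q_{T_{n+1}},q_{T_n})$ combined with Lemma \ref{lem:43}. Thus the potential obstruction is defused by invoking the location of $q$ in the partition (\ref{eq:45}) rather than the $*$-irreducibility condition itself; this is the sole new ingredient beyond the proof of Lemma \ref{lem:316}.
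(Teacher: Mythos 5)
Your proposal is correct and follows exactly the route the paper intends: the paper in fact omits the proof of Lemma \ref{lem:57}, stating only that it is essentially the same as that of Lemma \ref{lem:316}, and you correctly isolate the one genuinely new point --- that the length thresholds in Definition \ref{def:28} must be checked for $|\a|$ and $|\R(\a)|$ before the prefix bounds $\a(\overline{\a}\,^+)^\f\prec\al(q)$ and $\R(\a)(\overline{\R(\a)}\,^+)^\f\prec\al(q)$ can be extracted, with the borderline length of $\R(\a)$ absorbed by the hypothesis $q>q_{T_{n+1}}$ (or avoided outright by passing to the cofinal subfamily $m>2^{n+2}$, which is legitimate since the lemma only asserts the existence of infinitely many such $m$). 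One minor caution: the identity $\a=\R(\a)\,\overline{\R(\a)}\,^+$ you call ``the definition of $\R(\a)$'' holds only when $|\R(\a)|=|\a|/2$; in general $\a=a_1\ldots a_s\,\overline{a_1\ldots a_{m-s}}\,^+$ with $m-s\le s$, but this does not affect your argument since the case analysis you import from Lemma \ref{lem:316} uses the correct relation $a_{s+1}\ldots a_m^-=\overline{a_1\ldots a_{m-s}}$.
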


Using Lemma \ref{lem:57} we prove the following result which plays a key role in proving Lemma \ref{lem:59} and Proposition \ref{prop:510}, i.e. the existence and uniqueness of transitive components of full topological entropy.

\begin{lemma}
  \label{lem:58}
  Let $[p_L, p_R]$ be an $n$-irreducible interval. Then
  for any word $\om\in\L(\VB_{p_L})$ there exists a word $\ep\in\L(\VB_{p_L})$ such that
 \[\begin{array}{lll}
  \om\ep(\la_1\ldots\la_{2^n}^-)^\f\in\VB_{p_L}
  & \textrm{if}&  M=2k,\\
  \om\ep(\la_1\ldots\la_{2^{n+1}}^-)^\f\in\VB_{p_L}&\textrm{if}  &M=2k+1.
  \end{array}\]
\end{lemma}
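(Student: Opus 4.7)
Assume throughout that $M=2k$; the case $M=2k+1$ is entirely analogous, with $\la_1\ldots\la_{2^{n+1}}$ playing the role of $\la_1\ldots\la_{2^n}$. The plan is to adapt the bridge construction from the proof of Proposition \ref{prop:317} (transitivity of $\VB_q$ for irreducible $q$) to the $*$-irreducible setting, using the reflection-recurrence cascade but terminating the cascade at the specific primitive word whose reflection recurrence word is $\la_1\ldots\la_{2^n}$.

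First I would verify that the target tail is admissible, namely $(\la_1\ldots\la_{2^n}^-)^\f\in\VB_{p_L}$. By Lemma \ref{lem:42} the word $\la_1\ldots\la_{2^n}$ is primitive, and since $p_L>q_{T_{n+1}}$ we have $\al(p_L)\succ (\la_1\ldots\la_{2^n})^\f\succ(\la_1\ldots\la_{2^n}^-)^\f$; a direct lexicographic check using primitivity and $\la_1>\overline{\la_1}$ shows every shift of $(\la_1\ldots\la_{2^n}^-)^\f$ is strictly between $\overline{\al(p_L)}$ and $\al(p_L)$.

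Next, following the strategy of Proposition \ref{prop:317}, I would combine Lemma \ref{lem:38} and Lemma \ref{lem:57} to choose $m$ sufficiently large so that $\a:=\al_1(p_L)\ldots\al_m(p_L)$ is primitive and the inequalities \eqref{eq:510}--\eqref{eq:511} hold with this $\a$, and so that $m>|\om|$. Lemma \ref{lem:38} then furnishes $\eta\in\L(\VB_{p_L})$ so that $\om\eta$ ends in $\a$ or $\overline{\a}$; by symmetry I treat the first case. Iterating the reflection recurrence yields the cascade $\a,\R(\a),\R^2(\a),\ldots$, which by Lemma \ref{lem:315} terminates after finitely many steps.

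The key combinatorial step is to show that the cascade passes through the primitive word $\la_1\ldots\la_{2^{n-1}}(\overline{\la_1\ldots\la_{2^{n-1}}}^+)^2$. Since $\a$ is a primitive prefix of $\al(p_L)$ and since $\al(q_{T_{n+1}})\preccurlyeq\al(p_L)\prec\al(q_{T_n})$ forces $\al(p_L)$ to begin with $\la_1\ldots\la_{2^n}=\la_1\ldots\la_{2^{n-1}}\overline{\la_1\ldots\la_{2^{n-1}}}^+$, Lemma \ref{lem:56}(1) combined with the dyadic recursion \eqref{eq:24} for $(\la_i)$ implies that each successive $\R^i(\a)$ respects this dyadic block structure. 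Hence there is a smallest index $j_1$ with $\R^{j_1}(\a)=\la_1\ldots\la_{2^{n-1}}(\overline{\la_1\ldots\la_{2^{n-1}}}^+)^2$, and then $\R^{j_1+1}(\a)=\la_1\ldots\la_{2^n}$ by Lemma \ref{lem:56}(1).

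Finally, mimicking the bridge in Proposition \ref{prop:317} but stopping the reflected cascade at stage $j_1$ and then switching to the non-reflected form, I would define
\[
\ep:=\eta\,(\overline{\a}^+)^N(\overline{\R(\a)}^+)^N\cdots(\overline{\R^{j_1-1}(\a)}^+)^N\,\overline{\R^{j_1}(\a)}\,(\R^{j_1}(\a)^-)^N
\]
for a sufficiently large $N$. The admissibility of $\om\ep(\la_1\ldots\la_{2^n}^-)^\f=\om\ep(\R^{j_1+1}(\a)^-)^\f$ in $\VB_{p_L}$ is then verified block by block: the reflected blocks are controlled by the symmetric forms of \eqref{eq:510}--\eqref{eq:511} applied to each $\R^i(\a)$, while the concluding segment $(\R^{j_1}(\a)^-)^N(\R^{j_1+1}(\a)^-)^\f$ is controlled by \eqref{eq:511} applied to $\R^{j_1}(\a)$ (whose reflection recurrence word is exactly $\la_1\ldots\la_{2^n}$).

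The main obstacle will be the combinatorial claim in the third paragraph, namely that the reflection-recurrence cascade starting from an arbitrary primitive prefix of $\al(p_L)$ genuinely visits the specific word $\la_1\ldots\la_{2^{n-1}}(\overline{\la_1\ldots\la_{2^{n-1}}}^+)^2$ before terminating; this demands a delicate dyadic bookkeeping argument based on Lemma \ref{lem:56} and \eqref{eq:24}. A secondary obstacle is verifying the lexicographic inequalities at the junction between the last reflected block $(\overline{\R^{j_1-1}(\a)}^+)^N\overline{\R^{j_1}(\a)}$ and the non-reflected segment $(\R^{j_1}(\a)^-)^N$, which requires a case analysis in the spirit of the proof of Lemma \ref{lem:316}.
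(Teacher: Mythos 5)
Your outline follows the paper's proof quite closely (Lemma \ref{lem:38} to make $\om\eta$ end in a primitive prefix $\a$, the reflection recurrence cascade of Lemmas \ref{lem:313}--\ref{lem:315}, the inequalities of Lemma \ref{lem:57}, and Lemma \ref{lem:56} to identify where the cascade meets the dyadic words), but the key combinatorial claim in your third paragraph is false as stated, and your construction of $\ep$ depends on it. You assert that the cascade $\a,\R(\a),\R^2(\a),\ldots$ must visit the word $\la_1\ldots\la_{2^{n-1}}(\overline{\la_1\ldots\la_{2^{n-1}}}\,^+)^2$ of length $3\cdot 2^{n-1}$. The bound $|\a|/2\le|\R(\a)|\le|\a|$ of Lemma \ref{lem:313} only guarantees that some $\R^i(\a)$ has length in $[2^n,2^{n+1})$; it does not force the cascade to take a value in the open range $(2^n,2^{n+1})$. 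Concretely, $\al(p_L)$ may begin with $\la_1\ldots\la_{2^{n+1}}=\la_1\ldots\la_{2^n}\overline{\la_1\ldots\la_{2^n}}\,^+$, in which case the length-$3\cdot2^{n-1}$ prefix of $\al(p_L)$ equals $\la_1\ldots\la_{2^{n-1}}\overline{\la_1\ldots\la_{2^{n-1}}}\,^+\,\overline{\la_1\ldots\la_{2^{n-1}}}$ and is \emph{not} primitive by Lemma \ref{lem:56}, so it can never occur in the cascade; instead the cascade can pass through $\la_1\ldots\la_{2^{n+1}}$ and drop directly to $\R(\la_1\ldots\la_{2^{n+1}})=\la_1\ldots\la_{2^n}$ (take $s=2^n$ in \eqref{eq:311}). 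Then your index $j_1$ does not exist and the word $\ep$ you write down is undefined. The paper's proof claims only that $2^n\le|\R^i(\a)|<2^{n+1}$ for some $i$ and then uses Lemma \ref{lem:56} to get $\R^j(\a)=\la_1\ldots\la_{2^n}$ for some $j\in\set{i,i+1}$; your argument needs the same case split. You also omit the degenerate case $\R(\a)=\a$, which the paper treats separately.

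A second, smaller divergence: you terminate the bridge by switching from reflected to unreflected blocks, inserting $\overline{\R^{j_1}(\a)}(\R^{j_1}(\a)^-)^N$ before the final tail, which creates the junction problem you flag as a ``secondary obstacle''. The paper avoids this entirely: it runs the reflected cascade all the way down to the periodic tail $(\overline{\la_1\ldots\la_{2^n}}\,^+)^\f$ and then observes, via \eqref{eq:24}, that
\[
(\overline{\la_1\ldots\la_{2^n}}\,^+)^\f=(\overline{\la_1\ldots\la_{2^{n-1}}}\,\la_1\ldots\la_{2^{n-1}})^\f=\overline{\la_1\ldots\la_{2^{n-1}}}\,(\la_1\ldots\la_{2^n}^-)^\f,
\]
so the required tail $(\la_1\ldots\la_{2^n}^-)^\f$ appears automatically once $\overline{\la_1\ldots\la_{2^{n-1}}}$ is absorbed into $\ep$. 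Adopting this identity would remove your junction analysis altogether; the remaining task is then only the case split described above.
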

\begin{proof}
Since the proof for $M=2k+1$ is similar, we will assume $M=2k$.

Take $\om=\om_1\ldots \om_n\in\L(\VB_{p_L})$. Firstly, recall that since $p_L \in(q_{T_{n+1}}, q_{T_n})$ we have
\[
 \al(q_{T_{n+1}})=\la_1\ldots\la_{2^n}(\overline{\la_1\ldots\la_{2^n}}^+)^\f\prec \al(p_L)\prec\la_1\ldots\la_{2^n}(\overline{\la_1\ldots\la_{2^{n-1}}}^+)^\f.
\]
This implies that
 $$(\la_1\ldots\la_{2^n}^-)^\f,\quad (\overline{\la_1\ldots\la_{2^n}}^+)^\f \in \VB_{p_L},
 $$
and then
$
\la_1\ldots\la_{2^n}^-, \quad\overline{\la_1\ldots\la_{2^n}}^+ \in \L(\VB_{p_L}).$

 Let $(\al_i)=\al(p_L)$. Then $\al_1\ldots \al_{2^n}=\la_1\ldots \la_{2^n}$.  Note that $p_L\in\overline{\ul}$.
Then by \cite[Lemma 4.1]{Komornik_Loreti_2007} there exists $m \in \N$ with $m>\max\set{|\om|, 2^{n}}$ such that $\a:=\al_1\ldots \al_m$ is primitive.    By Lemma \ref{lem:38} there exists a word $\de\in\L(\VB_{p_L})$ such that $\om\de$ has a suffix $\al_1\ldots \al_m$ or $\overline{\al_1\ldots \al_m}$. By symmetry we may assume ${\al_1\ldots \al_m}=\a$ is a suffix of $\om\de$.

Since $(\al_i)$ is $n$-irreducible we have $\al_1\ldots\al_j(\overline{\al_1\cdots\al_j}\,^+)^\f\prec(\al_i)$ whenever $2^n\le j\le m$ and $(\al_1\ldots \al_j^-)^\f\in\vb$. Let $N$ be a large integer such that
\[
\al_1\ldots\al_j(\overline{\al_1\cdots\al_j}\,^+)^\f\prec \al_1\ldots \al_N\quad\textrm{whenever}\quad (\al_1\ldots \al_j^-)^\f\in\vb\textrm{ with }2^n\le j\le m.
\]
From Lemma \ref{lem:57} (\ref{eq:510}) and the symmetric version of Lemma \ref{lem:57} (\ref{eq:511}) we know that $\a(\overline{\a}^+)^\f, \overline{\a}\,^+(\overline{\R(\a)}\,^+)^\f \in  \VB_{p_L}$. Applying a similar argument to the one in Proposition \ref{prop:317} we obtain that
\begin{equation}\label{eq:512}
\overline{\al_1\ldots\al_N}\prec\si^i(\om\de(\overline{\a}^+)^N(\overline{\R(\a)}\,^+)^\f)\prec\al_1\ldots\al_N\quad\textrm{for all }i\ge 1.
\end{equation}
Suppose $\R(\a)=\a$. Then by Definition \ref{def:311} it follows that
\[
\overline{\al_1\ldots\al_{m-i}}\prec\overline{\al_{i+1}\ldots\al_m}^+\prec \al_1\ldots\al_{m-i}\quad\textrm{for all}\quad 0\le i<m.
\]
Since $(\la_1\ldots\la_{2^n}^-)^\f \in \VB_{p_L}$ we obtain that $$\om\de(\overline{\a}^+)^N (\la_1\ldots\la_{2^n}^-)^\f \in \VB_{p_L}$$ which implies our result.

\vspace{0.7em}Now we assume $\R(\a)\ne\a$. Note by Lemma \ref{lem:313} that the length of $\R(\a)$ satisfies  $|\a|/2\le |\R(\a)|\le |\a|$. So there exists $i\in\N$ such that $2^n\le |\R^i(\a)|<2^{n+1}$. Note by Lemma \ref{lem:314} that $\R^i(\a)$ is primitive.   By Lemma \ref{lem:56} it follows that there exists  $j\in\set{i, i+1}$ such that
\[
\R^j(\a)=\al_1\ldots\al_{2^n}=\la_1\ldots\la_{2^n}.
\]
Therefore, by repeatedly applying Lemma \ref{lem:57} and following a similar argument as in the previous case we can conclude that
\[
\om\de(\overline{\a}^+)^N(\overline{\R(\a)}\,^+)^N (\overline{\R^2(\a)}\,^+)^N\cdots (\overline{\R^{j-1}(\a)}\,^+)^N (\overline{\la_1\ldots\la_{2^n}}\,^+)^\f\in\VB_{p_L}.
\]
Note by (\ref{eq:24}) that
\[(\overline{\la_1\ldots\la_{2^n}}\,^+)^\f=(\overline{\la_1\ldots\la_{2^{n-1}}}\la_1\ldots\la_{2^{n-1}})^\f=\overline{\la_1\ldots\la_{2^{n-1}}}(\la_1\ldots\la_{2^{n}}^-)^\f.\] This establishes the lemma.
\end{proof}

Now we show that for each $*$-irreducible interval $[p_L, p_R]$ the subshift $(\VB_{p_L}, \si)$ contains a unique transitive subshift of finite type of full topological entropy.
\begin{lemma}
\label{lem:59}
  Let $[p_L, p_R]$ be a $*$-irreducible interval. Then the subshift $(\VB_{p_L}, \si)$ contains a  unique transitive subshift of finite type $({}  X_{p_L}, \sigma)$  such that $h({}  X_{p_L})=h(\VB_{p_L})$. Furthermore, $\al(p_L)\in {}  X_{p_L}$.
\end{lemma}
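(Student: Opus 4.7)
The plan is to exploit that $(\VB_{p_L},\si)$ is a subshift of finite type and single out the strongly connected component $C_0$ of its graph that carries the orbit of $\al(p_L)$ as $X_{p_L}$. Since $\al(p_L)=(a_1\ldots a_m)^\f$ is purely periodic, the lexicographic inequalities defining $\VB_{p_L}$ forbid only finitely many words, so $(\VB_{p_L},\si)$ is a subshift of finite type. Representing it as the edge shift of a finite directed graph $G$, I would decompose $G$ into its strongly connected components $C_0,C_1,\ldots,C_r$ (together with possibly transient edges), let $C_0$ be the one containing the $m$-cycle realising $\al(p_L)$, and define $X_{p_L}$ to be the subshift of finite type supported on $C_0$. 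By construction $X_{p_L}$ is transitive and contains $\al(p_L)$, and by Proposition 4.2.1 of \cite{Lind_Marcus_1995} we have $h(\VB_{p_L})=\max_i\log\la(C_i)$, where $\la(C_i)$ is the spectral radius of the adjacency matrix of $C_i$.

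To obtain both $h(X_{p_L})=h(\VB_{p_L})$ and the uniqueness assertion, it then suffices to show $\la(C_j)<\la(C_0)$ for every $j\ne 0$. The crucial input is Lemma \ref{lem:58}: every $\om\in\L(\VB_{p_L})$ admits an extension $\om\ep$ ending in the periodic cycle $(\la_1\ldots\la_{2^n}^-)^\f$ when $M=2k$ (and the analogous cycle when $M=2k+1$). Applying the symmetric counterpart of Lemma \ref{lem:58} together with the hierarchy of reflection recurrence words $\a\mapsto\R(\a)\mapsto\cdots\mapsto\la_1\ldots\la_{2^n}$ supplied by Lemmas \ref{lem:56} and \ref{lem:57}, one can also travel back from this cycle to the orbit of $\al(p_L)$, so the cycle lies inside $C_0$. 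Consequently every vertex of $G$ has a path into $C_0$, making $C_0$ the unique sink strongly connected component of $G$. Any sequence supported entirely in $C_j$ for $j\ne 0$ must therefore avoid both $a_1\ldots a_m$ and $\overline{a_1\ldots a_m}$ (otherwise its trajectory would transition into $C_0$ and never return), so the subshift of finite type $X_{C_j}$ embeds into the subshift $Y$ of $\VB_{p_L}$ in which these two blocks are forbidden.

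The main obstacle is then to upgrade this structural information into the strict inequality $h(Y)<h(X_{p_L})$. Mirroring the argument that $Y\subseteq\VB_{q_c}$ in the proof of Lemma \ref{lem:54}, the idea is to show $Y\subseteq\VB_{q'}$ for a suitable base $q'<p_L$ with $H(q')<H(p_L)$; the natural candidate is $q'\le q_{T_{n+1}}$, built from the primitive word $\la_1\ldots\la_{2^n}$ that sits at the bottom of the reflection recurrence hierarchy by Lemma \ref{lem:56}. The $*$-irreducibility of $\al(p_L)$ given by Definition \ref{def:28} is precisely what forces any trajectory avoiding $a_1\ldots a_m$ and $\overline{a_1\ldots a_m}$ to be squeezed into this smaller $\VB_{q'}$. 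Combined with the lower bound $\la(C_0)\ge 2^{1/2^n}$ coming from $X_{\G_5} \subseteq \VB_{q_{T_{n+1}}}\subseteq\VB_{p_L}$ (cf.~Lemma \ref{lem:54}), this yields $\la(C_j)<\la(C_0)$ for every $j\ne 0$ and thereby proves both the full-entropy claim and the uniqueness of $X_{p_L}$.
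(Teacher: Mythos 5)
Your overall architecture is the same as the paper's: exploit that $(\VB_{p_L},\si)$ is a subshift of finite type, single out the distinguished transitive component containing the orbit of $\al(p_L)$ together with the $X_{\G_6}$-dynamics of $\VB_{q_{T_{n+1}}}$, use Lemma \ref{lem:58} to show every word funnels into it, and then argue that whatever remains has strictly smaller entropy. The first half of your plan (the SCC decomposition, the identification of $C_0$ as the unique sink component, and the fact that sequences supported in $C_j$ for $j\ne 0$ avoid $a_1\ldots a_m$ and $\overline{a_1\ldots a_m}$) is sound and is essentially a graph-theoretic rephrasing of the paper's construction of $X_{p_L}$ as the maximal transitive SFT containing $X_{q_{T_{n+1}}}$.

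The genuine gap is in the entropy-separation step. You forbid only the \emph{long} words $a_1\ldots a_m$ and $\overline{a_1\ldots a_m}$ in the complementary components and hope to squeeze the resulting subshift $Y$ into $\VB_{q'}$ for some $q'\le q_{T_{n+1}}$. Two problems arise. First, the containment itself is not justified: from ``$\si^j((x_i))\lle(a_1\ldots a_m)^\f$ and $x_{j+1}\ldots x_{j+m}\ne a_1\ldots a_m$'' one only gets $\si^j((x_i))\lle a_1\ldots a_m^-M^\f$, and this bound need not be $\lle\xi(n+1)$; establishing $Y\subseteq\VB_{q_{T_{n+1}}}$ would require a separate combinatorial argument comparing $a_1\ldots a_m$ with the initial segment of $\xi(n+1)$. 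Second, and more seriously, even granting $Y\subseteq\VB_{q_{T_{n+1}}}$ you only obtain $h(Y)\le h(\VB_{q_{T_{n+1}}})=\log 2/2^n$, while your stated lower bound for the distinguished component is $h(X_{C_0})\ge\log 2/2^n$ — the two bounds meet, so no strict inequality $\la(C_j)<\la(C_0)$ follows. The paper avoids this entirely by proving the stronger Claim that \emph{every} sequence of $\VB_{p_L}$ beginning with the short word $\la_1\ldots\la_{2^n}$ or its reflection already lies in $X_{p_L}$; consequently the complementary subshift forbids these length-$2^n$ words, is squeezed into $\VB_{q_c}$, and has entropy \emph{zero}, making strictness automatic. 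Proving that Claim is the real work: it needs the two-way connection of displays (\ref{eq:514}) and (\ref{eq:515}) — not only that every word reaches the cycle $(\la_1\ldots\la_{2^n}^-)^\f$ (Lemma \ref{lem:58}), but that from this cycle one can reach every admissible continuation of $\la_1\ldots\la_{2^n}$ — and your proposal only gestures at the first direction. To repair your argument you would need to upgrade ``sequences in $C_j$ avoid $a_1\ldots a_m$'' to ``sequences in $C_j$ avoid $\la_1\ldots\la_{2^n}$,'' which is precisely the paper's Claim.
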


\begin{proof}
Since the proof for $M=2k+1$ is similar,  we assume $M=2k$.

Let $[p_L, p_R]$ be a $*$-irreducible interval generated by $a_1\ldots a_m$. Then by Lemma \ref{lem:44}  there exists $n\in\N$ such that $[p_L, p_R]\subseteq(q_{T_{n+1}}, q_{T_n})$.
 By the proof of Lemma \ref{lem:54}, it follows that $\VB_{q_{T_{n+1}}}$ contains a unique transitive sofic subshift $({}  X_{q_{T_{n+1}}}, \sigma)$, which is represented by the labeled  graph $\G_6=(G_6, \LB_6)$, as in Figure \ref{Fig:7}, such that
\begin{figure}[h!]
  \centering
  % Requires \usepackage{graphicx}
\begin{tikzpicture}[->,>=stealth',shorten >=1pt,auto,node distance=4cm,
                    semithick]

  \tikzstyle{every state}=[minimum size=0pt,fill=black,draw=none,text=black]

  \node[state] (A)                    { };
  \node[state]         (B) [ right of=A] { };

  \path[->,every loop/.style={min distance=0mm, looseness=60}]
   (A) edge [loop left,->]  node {$\lambda_1\ldots\lambda_{2^n}^-$} (A)
            edge  [bend left]   node {$\lambda_1\ldots \lambda_{2^n}$} (B)

        (B) edge [loop right] node {$\overline{\lambda_1\ldots\lambda_{2^n}}^+$} (B)
            edge  [bend left]            node {$\overline{\lambda_1\ldots\lambda_{2^n}}$} (A);
\end{tikzpicture}
  \caption{The labeled graph $\G_6=(G_6, \LB_6)$.}\label{Fig:7}
\end{figure}
  $h({}  X_{q_{T_{n+1}}})=h(\VB_{q_{T_{n+1} } })$.

Note that $\al(p_L)=(a_1\ldots a_m)^{\infty}$ is periodic. This implies that
  $(\VB_{p_L}, \si)$ is a subshift of finite type. Observe that $\VB_{q_{T_{n+1}}}\subseteq\VB_q$.
  Let $({}  X_{p_L}, \sigma) $ be the maximal transitive subshift of finite type of $(\VB_{p_L}, \sigma)$ with respect to the inclusion containing ${}  X_{q_{T_{n+1}}}$. Note that ${}  X_{p_L}$ is well defined since $(\VB_{p_L}, \sigma)$ is a subshift of finite type (see \cite[Section 4.4]{Lind_Marcus_1995}).   Now we will show that ${}  X_{p_L}$ is indeed the unique transitive subshift of finite type in $\VB_{p_L}$ satisfying $h({}  X_{p_L})=h(\VB_{p_L})$. First we make the following claim.

 \emph{Claim: If $(x_i)\in\VB_{p_L}$ have a prefix $\la_1\ldots\la_{2^n}$ or $\overline{\la_1\ldots\la_{2^n}}$, then $(x_i)\in {} X_{p_L}$.}

By symmetry we may assume $x_1\ldots x_{2^n}=\la_1\ldots\la_{2^n}$. To prove the claim, by \cite[Corollary 1.3.5]{Lind_Marcus_1995} it suffices to show that for all $N> 2^{n+1}$ we have $x_1\ldots x_N\in\L({{}  X}_{p_L})$.

By Lemma \ref{lem:58} it follows that there exists a word $\varepsilon \in \L(\VB_{p_L})$ such that
\begin{equation}\label{eq:513}
x_1\ldots x_N \varepsilon (\la_1\ldots\la_{2^{n}}^-)^\f \in \VB_{p_L}.
\end{equation}
Observe that $(\la_1\ldots\la_{2^{n}}^-)^\f\in{{}  X}_{q_{T_{n+1}}}\subseteq{}  X_{p_L}$. Moreover,  $({{}  X}_{p_L}, \sigma)$ is a transitive subshift of finte type. Take a large integer $m>2^{n+1}$.  Then for any word  $\nu \in \L({\widetilde X}_{p_L})$ there exists a word $\eta \in \L({{}  X}_{p_L})$ such that
 $$(\la_1\ldots\la_{2^{n}}^-)^m\eta \nu \in  \L({{}  X}_{p_L}).$$ Note that $(\VB_{p_L}, \si)$ is a subshift of finite type. By (\ref{eq:513}) and \cite[Theorem 2.1.8]{Lind_Marcus_1995} it follows  that
\begin{equation}\label{eq:514}
x_1\ldots x_N \varepsilon (\la_1\ldots\la_{2^{n}}^-)^m\eta \nu \in  \L(\VB_{p_L}).
\end{equation}

 On the other hand, for any word $\om\in\L({{}  X}_{p_L}),$ we have by Lemma \ref{lem:58} that there exists a word $\de$ such that $\om\de(\la_1\ldots\la_{2^{n}}^-)^\f\in\VB_{p_L}$. Observe that  $x_1\ldots x_{2^n}=\la_1\ldots\la_{2^n}$. Then  by Lemma \ref{lem:42} it follows that
  $$\overline{x_1\ldots x_N} \prec \sigma^j((\la_1\ldots\la_{2^{n}}^-)^m x_1\ldots x_N) \prec x_1\ldots x_N \textrm{ for every } 0 \le j \le m\cdot 2^n.$$
  Again by noting that $(\VB_{p_L}, \si)$ is a subshift of finite type. Then it follows that
\begin{equation}\label{eq:515}
 \om\de(\la_1\ldots\la_{2^{n}}^-)^m x_1\ldots x_N\in\L(\VB_{p_L}).
 \end{equation}

By (\ref{eq:514}) and (\ref{eq:515})  it follows that any word in $B_*({}  X_{p_L})$ can be connected in $B_*(\VB_{p_L})$ to and from the word $x_1\ldots x_N$. Note that $({}  X_{p_L}, \si)$ is a maximal transitive subshift  of finite type in $(\VB_{p_L}, \si)$.
 Thus, $x_1\ldots x_N\in\L({{}  X}_{p_L})$. Since $N>2^n$ was chosen arbitrarily, we conclude that $(x_i)\in {{}  X}_{p_L}$.

\vspace{1em} Now we consider another subshift $(Y, \sigma)$ such that $Y\subseteq \VB_{p_L}\setminus {{}  X}_{p_L}$. Observe from the claim  that if a sequence $(x_i) \in \VB_{p_L}$ has $\la_1\ldots \la_{2^n}$ or $\overline{\la_1\ldots \la_{2^n}}$ as a prefix then $(x_i) \in {{}  X}_{p_L}$.   Note that $Y$ is forward $\sigma$-invariant, i.e., $\si(Y)\subseteq Y$. Then it follows that the words $\la_1\ldots \la_{2^n}$ and its reflection $\overline{\la_1\ldots \la_{2^n}}$ are forbidden in $Y$. Therefore,
\begin{align*}
  Y&\subseteq \set{(x_i): (\overline{\la_1\ldots\la_{2^n}}\,^+)^\f\lle\si^j((x_i))\lle(\la_1\ldots\la_{2^n}^-)^\f\textrm{ for all }j\ge 0}\\
  &\subseteq\set{(x_i): \overline{\al(q_c)}\lle\si^j((x_i))\lle\al(q_c)\textrm{ for all }j\ge 0}\\
  &=\VB_{q_c}.
\end{align*}
This implies that $h(Y)=0$.

So $({{}  X}_{p_L}, \sigma)$ is the unique transitive subshift of finite type such that $h({{}  X}_{p_L})=h(\VB_{p_L})$. Observe that $\al_1(p_L)\ldots \al_{2^n}(p_L)=\la_1\ldots\la_{2^n}$. In terms of  the above arguments we conclude that $\al(p_L)\in{}  X_{p_L}$.
\end{proof}

Building on Lemma \ref{lem:59} we now prove that for each $q$ inside a $*$-irreducible interval the subshift $(\VB_q, \sigma)$ contains a unique transitive subshift of finite type of full topological entropy.
\begin{proposition}
  \label{prop:510}
  Let $[p_L, p_R]$ be a $*$-irreducible interval. Then for each $q\in[p_L, p_R]$  there exists a unique transitive subshift of finite type $({{}  X}_q, \sigma)$ with ${{}  X}_q\subseteq\VB_q$ such that $h({{}  X}_q)=h(\VB_q)$.
\end{proposition}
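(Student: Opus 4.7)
The plan is to take $X_q := X_{p_L}$, where $X_{p_L}$ is the transitive subshift of finite type produced by Lemma \ref{lem:59}, and verify that it satisfies the required property uniquely. Existence is immediate: since $p_L \le q$ implies $\VB_{p_L} \subseteq \VB_q$, we have $X_{p_L} \subseteq \VB_q$; Lemma \ref{lem:59} guarantees that $X_{p_L}$ is a transitive subshift of finite type with $h(X_{p_L}) = h(\VB_{p_L})$; and Lemma \ref{lem:55} gives $h(\VB_{p_L}) = h(\VB_q)$.

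For uniqueness, suppose $(Y,\sigma)$ is any transitive subshift of finite type with $Y \subseteq \VB_q$ and $h(Y) = h(\VB_q)$, and write $\log\lambda := h(Y)$. My strategy is to first prove the inclusion $Y \subseteq \VB_{p_L}$, after which the uniqueness clause of Lemma \ref{lem:59} immediately forces $Y = X_{p_L}$.

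To establish $Y \subseteq \VB_{p_L}$, I will argue by contradiction. Let $a_1\ldots a_m$ be the word generating $[p_L, p_R]$ and set $F := a_1\ldots a_m^+$. As in the proofs of Lemmas \ref{lem:51} and \ref{lem:55} (invoking the structural analysis used in Proposition \ref{prop:37}), every $(c_i) \in \VB_q \setminus \VB_{p_L}$ contains $F$ or $\overline{F}$ at some position $j+1$, and the tail $\sigma^j((c_i))$ is then forced to lie in the sofic subshift $X_{\G_4}$ displayed in Figure \ref{Fig:5}, whose topological entropy equals $(\log 2)/m$. If $Y \not\subseteq \VB_{p_L}$, then at least one of $F$ or $\overline{F}$ belongs to $\L(Y)$; assume without loss of generality that $F \in \L(Y)$. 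Since $h(Y) > 0$, the subshift $Y$ is perfect --- any isolated point would force $Y$ to reduce to a single finite periodic orbit, contradicting positive entropy --- and as $Y$ is a topologically transitive compact subshift, a standard Baire-category argument produces a point $\omega \in Y$ with dense orbit. Because $F \in \L(Y)$, the word $F$ must appear as a subword of $\omega$, say starting at position $j_0+1$, so that $\sigma^{j_0}(\omega)$ begins with $F$ and therefore lies in $X_{\G_4}$. Forward $\sigma$-invariance of $X_{\G_4}$ then gives $\sigma^k(\omega) \in X_{\G_4}$ for every $k \ge j_0$; since removing finitely many points from a dense subset of a perfect space preserves density, the co-finite set $\{\sigma^k\omega : k \ge j_0\}$ is still dense in $Y$, and closedness of $X_{\G_4}$ forces $Y \subseteq X_{\G_4}$. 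But then $\log\lambda = h(Y) \le h(X_{\G_4}) = (\log 2)/m$, contradicting the strict inequality $\lambda > 2^{1/m}$ already established in equation (\ref{eq:57}) during the proof of Lemma \ref{lem:55}.

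With $Y \subseteq \VB_{p_L}$ now in hand, $Y$ is a transitive subshift of finite type sitting inside $\VB_{p_L}$ and attaining the full entropy $h(\VB_{p_L})$, so the uniqueness assertion of Lemma \ref{lem:59} yields $Y = X_{p_L}$. I expect the main obstacle to be the orbit-density step: one has to carefully justify the existence of a transitive point in $Y$ and then exploit forward $\sigma$-invariance of $X_{\G_4}$ together with perfectness of $Y$, so that a single occurrence of $F$ along a transitive orbit propagates forward and, by density, drags all of $Y$ into the low-entropy component $X_{\G_4}$.
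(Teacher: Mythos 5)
Your proposal is correct and follows essentially the same route as the paper: existence comes from Lemma \ref{lem:59} together with $h(\VB_q)=h(\VB_{p_L})$, and uniqueness comes from the fact that any sequence in $\VB_q\setminus\VB_{p_L}$ is eventually trapped in the sofic shift $X_{\G_4}$ of entropy $\frac{1}{m}\log 2<h(\VB_q)$. The only difference is that you make explicit, via the transitive-point/perfectness argument, the step the paper leaves implicit --- namely that a transitive full-entropy component cannot meet $\VB_q\setminus\VB_{p_L}$ and must therefore sit inside $\VB_{p_L}$, where the uniqueness clause of Lemma \ref{lem:59} applies.
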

\begin{proof}
Since the proof for $M=2k+1$ is similar, we only give the proof for $M=2k$.

Let $[p_L, p_R]$ be a $*$-irreducible interval generated by $a_1\ldots a_m$. Then by Lemma \ref{lem:44} there exists $n\in\N$ such that $[p_L, p_R]\subseteq(q_{T_{n+1}}, q_{T_n})$. Furthermore, $m>2^n$.
By Lemma \ref{lem:59} there exists a unique transitive subshift of finite type $({{}  X}_{p_L}, \sigma)$ with ${{}  X}_{p_L}\subseteq\VB_{p_L}$ of full topological entropy.

Now take $q\in(p_L, p_R]$. Note that $\VB_{q_{T_{n+1}}}\subseteq\VB_{q}$. Then by Lemmas \ref{lem:54} and \ref{lem:55} it follows that
\begin{equation}
\label{eq:516}
h(\VB_q)=h(\VB_{p_L})=h({{}  X}_{p_L})\geq h(\VB_{q_{T_{n+1}}})=\frac{\log 2}{2^n}.
\end{equation}

Take $(x_i)\in\VB_{p_R}\setminus\VB_{p_L}$. Then there exist $j\in\N\cup\set{0}$ such that
\[
x_{j+1}\ldots x_{j+m}=a_1\ldots a_m^+\quad\textrm{or}\quad x_{j+1}\ldots x_{j+m}=\overline{a_1\ldots a_m^+}.
\]
Note that $\al(p_R)=a_1\ldots a_m^+(\overline{a_1\ldots a_m})^\f$. Then by a similar argument as used in the proof of Proposition \ref{prop:37} we can show that
$(x_i)$ must end with an element of a sofic subshift $(X,\sigma)$ with topological entropy
$
h(X)=\frac{\log 2}{m}.
$
Using the fact $m>2^n$ and (\ref{eq:516}) we have $h(X)<h(\VB_q)$.

Therefore we may conclude that $({{}  X}_{p_L}, \sigma)$ is the unique transitive subshift of finite type of $(\VB_q, \sigma)$ such that $h({{}  X}_{p_L})=h(\VB_q)$.
\end{proof}

Now we prove Theorem \ref{th2} for the interval $(q_c, q_T)$.
\begin{proposition}
  \label{prop:511}
 { The interval} $[p_L, p_R]\subseteq(q_c, q_T)$ is an entropy plateau of $H(q)$ if and only if $[p_L, p_R]$ is a $*$-irreducible interval.
\end{proposition}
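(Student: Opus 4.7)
My plan is to adapt the argument of Proposition~\ref{prop:52} to the setting where $(\VB_{p_L},\sigma)$ is no longer transitive in $(q_c,q_T)$. The substitute for transitivity is Lemma~\ref{lem:59}, which supplies a \emph{unique} transitive subshift of finite type $X_{p_L}\subseteq\VB_{p_L}$ of full topological entropy, with $\alpha(p_L)\in X_{p_L}$; combined with the approximation result in Proposition~\ref{prop:411}, this uniqueness will drive the strict inequalities that make $[p_L,p_R]$ maximal.

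For the sufficiency direction, given a $*$-irreducible interval $[p_L,p_R]\subseteq(q_c,q_T)$, Lemma~\ref{lem:55} already gives $H\equiv H(p_L)$ on $[p_L,p_R]$, so it remains to establish strict inequality elsewhere. For $q\in(q_c,p_L)$, I would use Proposition~\ref{prop:411} to pick left endpoints $p_L(n)\nearrow p_L$ of $*$-irreducible intervals and fix $n$ with $p_L(n)\in(q,p_L)$; since $p_L(n),p_L\in\overline{\ul}\subseteq\vl$, Lemma~\ref{lem:25}(2) yields $\VB_{p_L(n)}\subsetneq\VB_{p_L}$, so $X_{p_L(n)}\subseteq\VB_{p_L}$ is a transitive SFT inside $\VB_{p_L}$. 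It cannot equal $X_{p_L}$: otherwise $\alpha(p_L)\in X_{p_L}=X_{p_L(n)}\subseteq\VB_{p_L(n)}$ would contradict $\alpha(p_L)\succ\alpha(p_L(n))$. By uniqueness in Lemma~\ref{lem:59}, this forces $H(p_L(n))=h(X_{p_L(n)})<h(X_{p_L})=H(p_L)$, and monotonicity of $H$ gives $H(q)\le H(p_L(n))<H(p_L)$. The case $q\in(p_R,M+1]$ is symmetric, approximating $p_R$ from above by $p_L'(n)\searrow p_R$ and choosing $n$ with $p_L'(n)\in(p_R,q)$ (always possible since all $*$-irreducible intervals lie in $(q_c,q_T)$ by Lemma~\ref{lem:44}); here $\alpha(p_L'(n))\in X_{p_L'(n)}\setminus\VB_{p_L}$ prevents $X_{p_L}=X_{p_L'(n)}$ inside $\VB_{p_L'(n)}$, so $H(p_L)=h(X_{p_L})<h(X_{p_L'(n)})=H(p_L'(n))\le H(q)$.

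For the necessity direction, I would show that the union $\bigcup I^*$ of all $*$-irreducible intervals covers $(q_c,q_T)$ up to a Lebesgue null set; combined with the sufficiency (each $*$-irreducible interval is itself a plateau), any non-degenerate plateau $[p_L,p_R]$ then overlaps some $I^*_k$ and, by maximality of both closed intervals of constancy, coincides with it. To obtain the covering I would analyze each basic interval $(q_0,q_0^*)$ of $(q_c,q_T)\setminus\overline{\ul}$ in turn. The description in \cite[Theorem~2.3]{Kong_Li_2015} gives $\alpha(q_0)=(a_1\ldots a_m)^\f$ and $\alpha(q_0^*)=a_1\ldots a_m^+\,\overline{a_1\ldots a_m}\,\overline{a_1\ldots a_m^+}\cdots$. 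Writing $b_1\ldots b_m:=a_1\ldots a_m^+$, the observations $(b_1\ldots b_m^-)^\f=\alpha(q_0)\in\vb$ and $b_1\ldots b_m(\overline{b_1\ldots b_m}^+)^\f=a_1\ldots a_m^+(\overline{a_1\ldots a_m})^\f\succ\alpha(q_0^*)$ together show that $\alpha(q_0^*)$ violates the $*$-irreducibility condition of Definition~\ref{def:28} at $j=m$, hence $q_0^*\in\overline{\ul}\setminus\I^*$; Lemma~\ref{lem:49}(2) then places $q_0^*$ in a unique $*$-irreducible interval $I$, and because $H$ is constant with value $H(q_0^*)$ on both $[q_0,q_0^*]$ and $I$, maximality of $I$ forces $(q_0,q_0^*)\subseteq I$. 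Since $\overline{\ul}$ is Lebesgue null by \cite[Theorem~1.6]{Komornik_Kong_Li_2015_1}, the covering follows.

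The main obstacle is the combinatorial threshold $m>2^n$ (respectively $m>2^{n+1}$) needed to make the $*$-irreducibility condition tested at $j=m$ non-vacuous — without it the argument above collapses. Fortunately this is exactly what the first-digit computation at the start of the proof of Lemma~\ref{lem:44}(2) supplies: applied now to $\alpha(q_0)$ in place of $\alpha(p_L)$ (using only periodicity and $\xi(n+1)\prec\alpha(q_0)\prec\xi(n)$, where $n$ is chosen so that $q_0\in(q_{T_{n+1}},q_{T_n})$), it gives $m\ge 3\cdot 2^{n-1}$ when $M=2k$ and $m\ge 3\cdot 2^n$ when $M=2k+1$, both comfortably exceeding the required threshold.
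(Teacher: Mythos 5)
Your proposal is correct and follows essentially the same route as the paper: Lemma \ref{lem:55} for constancy on $[p_L,p_R]$, then Proposition \ref{prop:411} together with the unique full-entropy transitive SFT from Lemma \ref{lem:59} to force the strict entropy drop outside $[p_L,p_R]$, and the covering-up-to-measure-zero argument (via \cite[Theorem 2.3]{Kong_Li_2015} and Lemma \ref{lem:49}) for necessity. The only cosmetic difference is that you derive strictness directly from the uniqueness clause of Lemma \ref{lem:59}, whereas the paper nests $X_p\subseteq X_{p_L}$ (both containing $X_{q_{T_{n+1}}}$) and applies \cite[Corollary 4.4.9]{Lind_Marcus_1995}; your explicit check that $m\ge 3\cdot 2^{n-1}$ (resp.\ $3\cdot 2^n$) clears the threshold in Definition \ref{def:28} is a point the paper leaves implicit.
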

\begin{proof}

First we prove the sufficiency. Take $n\in\N$ and let $[p_L, p_R]\subseteq(q_{T_{n+1}}, q_{T_n})$ be a $n$-irreducible interval. Then by Lemma \ref{lem:55} it follows that $H(q)=H(p_L)$ for all $q\in[p_L, p_R]$. So, to prove that $[p_L, p_R]$ is an entropy plateau it suffices to show that $H(q)\ne H(p_L)$ for all $q\notin[p_L, p_R]$.

 Without loss of generality we assume $q\in(q_{T_{n+1}}, p_L)$. By Proposition \ref{prop:411} there exists a $*$-irreducible interval
 $[p,r]\subseteq (q, p_L).$ Then
 \[q_{T_{n+1}}<q<p<r<p_L.\]
 By Lemma \ref{lem:59} it follows that $\VB_p$ contains  a unique transitive subshift of finite type $({{}  X}_p, \sigma)$ such that $h({{}  X}_p)=h(\VB_p)$,  and $\VB_{p_L}$  contains a unique transitive subshift of finite type $({{}  X}_{p_L}, \sigma)$ such that $h({{}  X}_{p_L})=h(\VB_{p_L})$. Note that by the proof of Lemma \ref{lem:59} both ${{}  X}_p$ and ${{}  X}_{p_L}$ contain the maximal transitive subshift of finite type  $({{}  X}_{q_{T_{n+1}}}, \sigma)$ of $(\VB_{q_{T_{n+1}}}, \sigma)$. This implies that
   \[{{}  X}_p\subseteq {{}  X}_{p_L}.\]
    Observe by Lemma \ref{lem:59} that $\al(p_L)\in {{}  X}_{p_L}\setminus {{}  X}_p$. Then  by \cite[Corollary 4.4.9]{Lind_Marcus_1995} it follows that
 \[
 H(q)\le H(p)=h({{}  X}_p)<h({{}  X}_{p_L})=H(p_L).
 \]

 Similarly, we can also prove that $H(p)>H(p_L)$ if $p>p_R$. Therefore, $[p_L, p_R]$ is an entropy plateau.

Now we turn to prove the necessity. By the same argument used in the proof of Proposition \ref{prop:52} we could prove by using Lemma \ref{lem:49} that the union of all $n$-irreducible intervals covers $(q_{T_{n+1}}, q_{T_n})$ up to a set of measure zero. Hence, we conclude that the union of all $*$-irreducible intervals covers $(q_c, q_T)$ up to a set of measure zero. This establishes the necessity part of our theorem.
\end{proof}
\begin{remark}\label{rem:512}
\begin{itemize}
\item By the proofs of Propositions \ref{prop:52} and \ref{prop:511} it follows that the union of irreducible and $*$-irreducible intervals covers almost every point of $(q_c, M+1]$.

\item In terms of Remark \ref{rem:53} and \cite[Theorems 1.1 and 1.2]{Kong_Li_Lu_Vries_2016} we could also deduce from the proof of Proposition \ref{prop:511} that each entropy interval $[p_L, p_R]\subset(q_c, q_T)$ contains infinitely many connected components of $(q_c, q_T)\setminus\overline{\ul}$.
\end{itemize}
\end{remark}
\begin{proof}
  [{\bf Proof of Theorem \ref{th2}}]
  Theorem \ref{th2} follows from Propositions \ref{prop:52} and \ref{prop:511}.
\end{proof}

\begin{example}\label{ex:513}
Let $M=8$. Then by (\ref{eq:23}) and (\ref{eq:22}) we have
\[
\al(q_c)=(\la_i)=5435\, 3454\; 3453\,5435\ldots\quad\textrm{and}\quad \al(q_T)=5\,4^\f.
\]
This implies that $q_c\approx 5.80676$ and $q_T=3+2\sqrt{2}\approx 5.82843$.

Recall  that $I^*(c_1\ldots c_n)$  is the $*$-irreducible interval generated by $c_1\ldots c_n$. By Lemma \ref{lem:45} and Theorem \ref{th2} it follows that the following $*$-irreducible intervals are entropy plateaus of $H$.
\[
I^*(543),\quad I^*(543534),\quad I^*(543534543453),\quad \cdots.
\]
For example, for any $q\in I^*(543)$ the topological entropy of $\ub_q$ is given by
\[H(q)= \log\left(\frac{1+\sqrt{5}}{2}\right),\] which can be calculated via the subshift of finite type $(X, \sigma)$ where
\[
X=\set{(d_i)\in \set{3,4,5}^\f:  345\preccurlyeq d_{n+1}d_{n+2}d_{n+3}\preccurlyeq 543~\textrm{for any }n\ge 0}.
\]

 Moreover, recall that $I(d_1\cdots d_m)$ is the irreducible interval generated by a $d_1\ldots d_m$. By Table \ref{tab:1} and Theorem \ref{th2} it follows that the following irreducible intervals are entropy plateaus of $H$.
\begin{align*}
&I(5),\quad I(54),\quad I(55 3), \quad I(554),\quad\cdots;\\
& I(6), \quad I(63),\quad I(64), \quad I(65),\quad \cdots;\\
& I(7),\quad I(72),\quad I(73),\quad I(74),\quad I(75),\quad I(76),\quad \cdots;\\
& I(8),\quad I(81),\quad I(82),\quad I(83),\quad I(84),\quad I(85),\quad I(86),\quad I(87),\quad \cdots.
\end{align*}
When $q\in I(a)$ with $a\in\set{5,6,7,8}$ we have
\[
H(q)=\log (2a-7).
\]
When $q\in I(ab)$ with $a\in\set{5,6,7,8}$ and $b\in\set{9-a,\ldots, a-1}$, by \cite[Theorem 7.2]{Kong_Li_2015} it follows that
\[
H(q)=\log\left(a-4+\sqrt{(a-4)^2+2b-7} \right).
\]

We plot the asymptotic graph of $H$ as in Figure \ref{Fig:1}.
\end{example}

\noindent \textbf{Final remarks on the dynamical properties of $(\VB_q, \sigma)$ for $q\in(q_c,q_T)$. }

We end our study of $*$-irreducible intervals and $*$-irreducible sequences with some remarks. Firstly, note that by the proof of Lemma \ref{lem:59} combined with Lemma \ref{lem:25} we obtain that if $\alpha(q)$ is a $*$-irreducible sequence with $q_{T_{n+1}} < q < q_{T_n}$ the transitive components of $(\VB_q, \sigma)$ are $({{}  X}_{p_L}, \sigma)$ and finitely many transitive  subshifts of zero topological entropy  contained in a subshift parametrized by $(\lambda_1 \ldots \lambda_{2^n}^-)^\f$. This observation combined with Proposition \ref{prop:411} suggests that a similar phenomena might occur for every $q \in \mathcal{I}^*_r$. Moreover, from Theorem \ref{th2} and Remark \ref{rem:512} we obtain that for almost every $q \in [q_c, M+1]$, $(\VB_q, \sigma)$ contains a unique transitive component of full topological entropy and this component is a subshift of finite type. We may ask similar questions to those posed at the end of Section \ref{sec:3}. Can we characterize the set of $q$ such that $(\VB_q, \sigma)$ contains a unique transitive component of full topological entropy and this component is a subshift of finite type?

\section{The bifurcation set $\E$}\label{sec:6}

\noindent In this section we will investigate the bifurcation  set $\E$ and  prove Theorem \ref{th3}.

 Recall from (\ref{eq:412}) that $\I$ is the set of $q\in[q_T, M+1]$ for which $\al(q)$ is irreducible, and $\I^*$ is the set of $q\in(q_c, q_T)$ for which $\al(q)$ is $*$-irreducible. In the following proposition we express $\E$ in terms of $\I$ and $\I^*$
\begin{proposition}
  \label{prop:61}
$
 \E=\overline{\I\cup \I^*}.
$
\end{proposition}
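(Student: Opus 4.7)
The paragraph preceding this proposition already gives
\[
\E=[q_c,M+1]\setminus\bigcup(p_L,p_R),
\]
where the union runs over the entropy plateaus of $H$, which by Theorem \ref{th2} are precisely the irreducible and $*$-irreducible intervals. In particular $\E$ is closed, and the plan is to establish the two inclusions of $\E=\overline{\I\cup\I^*}$ separately.

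For the inclusion $\overline{\I\cup\I^*}\subseteq\E$ it suffices, since $\E$ is closed, to show $\I\cup\I^*\subseteq\E$. Suppose for contradiction that some $q\in\I\cup\I^*$ lies in the interior $(p_L,p_R)$ of a plateau generated by a word $a_1\ldots a_m$. By the strict monotonicity of $\al$ (Lemma \ref{lem:21}),
\[
(a_1\ldots a_m)^\f\prec\al(q)\prec a_1\ldots a_m^+(\overline{a_1\ldots a_m})^\f .
\]
No integer word of length $m$ lies strictly between $a_1\ldots a_m$ and $a_1\ldots a_m^+$, so $\al_1(q)\ldots\al_m(q)$ equals one of these two words. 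If it equals $a_1\ldots a_m$, a straightforward block-by-block induction using admissibility $\si^{jm}(\al(q))\lle\al(q)$ forces $\al(q)=(a_1\ldots a_m)^\f$, contradicting the strict left inequality. If it equals $a_1\ldots a_m^+$, then $(\al_1(q)\ldots\al_m(q)^-)^\f=(a_1\ldots a_m)^\f\in\vb$, and applying Definition \ref{def:26} (when $q\in\I$) or Definition \ref{def:28} (when $q\in\I^*$) with $j=m$ yields $a_1\ldots a_m^+(\overline{a_1\ldots a_m})^\f\prec\al(q)$, contradicting the right inequality. In the $*$-irreducible case one needs the bound $m>2^n$ (or $m>2^{n+1}$ if $M=2k+1$); since $q$ and $p_L$ both lie in the same component $(q_{T_{n+1}},q_{T_n})$, this bound is supplied by Lemma \ref{lem:44}(2).

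For the reverse inclusion, take $q\in\E$. If $q=p_L$ for some plateau then $q\in\I\cup\I^*$ by Definition \ref{def:29}. If $q=p_R$ for some plateau then Proposition \ref{prop:411} produces a sequence $(p_L'(n))\subseteq\I\cup\I^*$ with $p_L'(n)\searrow q$. Otherwise $q$ is not a plateau endpoint. Since $q\in\E$ and $H$ is continuous and non-decreasing, at least one of the following must hold (else $H$ is locally constant at $q$): there exist $p_n\nearrow q$ with $H(p_n)<H(q)$, or $p_n\searrow q$ with $H(p_n)>H(q)$. Treat the first case (the other is symmetric). For each $n$, the interval $(p_n,q)$ has positive Lebesgue measure, while the union of plateau interiors covers $(q_c,M+1]$ up to a Lebesgue null set by the proofs of Propositions \ref{prop:52} and \ref{prop:511}; hence $(p_n,q)$ meets some plateau interior $(p_L^{(n)},p_R^{(n)})$. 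Since $q$ lies in no plateau closure, necessarily $p_R^{(n)}<q$, so $p_R^{(n)}\in(p_n,q)$ and $p_R^{(n)}\to q^-$.

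To finish, one must upgrade $p_R^{(n)}\to q$ to $p_L^{(n)}\to q$, and this is the main obstacle since the $p_R^{(n)}$ themselves need not lie in $\I\cup\I^*$. The argument is a disjointness trick: if $p_L^{(n_k)}\le q-\delta$ along a subsequence for some fixed $\delta>0$, then for all $k$ large enough $p_R^{(n_k)}>q-\delta/2$, so the fixed open interval $(q-\delta,q-\delta/2)$ is contained in every $(p_L^{(n_k)},p_R^{(n_k)})$, contradicting the pairwise disjointness of plateaus guaranteed by Lemma \ref{lem:46}. Hence $p_L^{(n)}\to q$, and since $p_L^{(n)}\in\I\cup\I^*$ we conclude $q\in\overline{\I\cup\I^*}$, completing the proof.
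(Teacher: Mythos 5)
Your proof is correct and follows essentially the same route as the paper: it starts from $\E=[q_c,M+1]\setminus\bigcup(p_L,p_R)$, shows $(\I\cup\I^*)\cap(p_L,p_R)=\emptyset$ by the same lexicographic forcing of the prefix $a_1\ldots a_m^+$, and obtains the reverse inclusion from Proposition \ref{prop:411} at plateau endpoints together with the full-measure covering and pairwise disjointness of plateaus elsewhere. The only differences are presentational: you treat $\I$ and $\I^*$ together rather than splitting $[q_T,M+1]$ from $(q_c,q_T)$, and you spell out two steps the paper leaves implicit (the block-by-block induction ruling out the prefix $a_1\ldots a_m$, and the disjointness trick upgrading $p_R^{(n)}\to q$ to $p_L^{(n)}\to q$).
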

\begin{proof}
Note  that $\E\subseteq[q_c, M+1]$. So it suffices to prove that $\E\cap[q_T, M+1]=\overline{\I}$ and $\E\cap[q_c, q_T]=\overline{\I^*}$.
Since the proof that $\E\cap[q_c, q_T]=\overline{\I^*}$ is similar, we only prove $\E\cap[q_T, M+1]=\overline{\I}$.
We begin our proof by remarking that by Proposition \ref{prop:52} we have
 \begin{equation}\label{eq:61}
 \E\cap[q_T, M+1]=[q_T, M+1]\setminus\bigcup(p_L, p_R),
 \end{equation}
 where the union is taken over all irreducible intervals.

We now prove $\E\cap[q_T, M+1]\subseteq\overline{\I}$. Take $q\in\E\cap[q_T, M+1]$. If $q$ is the endpoint of an irreducible interval, then by Proposition \ref{prop:411} it follows that there exists a sequence $(q_n)\subseteq \I$ such that $q_n\ra q$ as $n\ra\f$. This implies that $q\in\overline{\I}$.

If $q$ is not the endpoint of an irreducible interval, then by (\ref{eq:61}) we have $q\in[q_T, M+1]\setminus\bigcup[p_L,p_R]$. Note that $[q_T, M+1]\setminus\bigcup(p_L, p_R)\subseteq\overline{\ul}$ is a Lebesgue null set. Moreover, by Lemma \ref{lem:46} these irreducible intervals are pairwise disjoint. Therefore there exists a sequence of irreducible intervals $\set{[p_L(n), p_R(n)]}$ such that $p_L(n)\ra q$ as $n\ra\f$. Note that $p_L(n)\in \I$ for each $n\in\N$. This implies that
  $q\in\overline{\I}$. Therefore by (\ref{eq:61}) we may conclude that $\E\cap[q_T, M+1]\subseteq\overline{\I}$.

 Now we prove $\overline{\I}\subseteq\E\cap[q_T, M+1]$. By (\ref{eq:61}) it suffices to prove that $\I\cap(p_L, p_R)=\emptyset$ for any irreducible interval $[p_L, p_R]\subseteq[q_T, M+1]$. Let $[p_L, p_R]$ be an irreducible interval generated by $a_1\ldots a_m$.

 Suppose on the contrary that $\I\cap(p_L, p_R)\ne\emptyset$, and take $q\in \I\cap(p_L, p_R)$. Then by Lemma \ref{lem:21} we have
 \begin{equation}
   \label{eq:62}
   (a_1\ldots a_m)^\f=\al(p_L)\prec\al(q) \prec\al(p_R)=a_1\ldots a_m^+(\overline{a_1\ldots a_m})^\f.
 \end{equation}
 Note that $\si^n(\al(q))\lle\al(q)$ for all $n\ge 0$. By (\ref{eq:62}) this implies that $\al_1(q)\ldots \al_m(q)=a_1\ldots a_m^+$. However, $(\al_1(q)\ldots \al_m(q)^-)^\f=(a_1\ldots a_m)^\f\in\vb$. By (\ref{eq:62}) and Definition \ref{def:26} it follows that
 $\al(q)$ is not irreducible, leading to a contradiction with $q\in \I$.
\end{proof}

Now we turn to investigate the Hausdorff dimension of $\E$.  By Proposition \ref{prop:61} it follows  that $\I\subseteq\E$. So to prove Theorem \ref{th3} it suffices to show that $\dim_H \I=1$.
Inspired by the proof of \cite[Theorem 1.6]{Komornik_Kong_Li_2015_1} we construct the following sequence $\set{ \I_N}_{N=2}^\f,$ consisting of subsets of $\I$.

 Fix an integer $N\ge 2$. Let $ \IB_N$ be the set of sequences $(a_i)\in\set{0,1,\ldots,M}^\f$ satisfying $a_1\ldots a_{2N}=M^{2N-1}0$, and the lexicographical inequalities
\[
0^N\prec a_{sN+1}\ldots a_{sN+N}\prec M^N\quad\textrm{for}\quad s=2,3,\ldots.
\]
Then any sequence $(a_i)\in  \IB_N$ satisfies
$
\overline{(a_i)}\prec\si^n((a_i))\prec (a_i)$ for all $n\ge 1.
$
This implies that
\begin{equation*}
 \I_N:=\set{q: \al(q)\in \IB_N}\subseteq\ul\quad\textrm{for all }N\ge 2.
\end{equation*}

In the following lemma we show that $ \I_N$ is a subset of $\I$ for all $N\ge 2$.
\begin{lemma}\label{lem:62}
  $ \I_N\subseteq \I$ for all $N\ge 2$.
\end{lemma}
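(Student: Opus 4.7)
Fix $N \geq 2$ and $q \in \I_N$, and write $(a_i) := \al(q)$. We must show both that $q \in [q_T, M+1]$ and that $(a_i)$ is irreducible.

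The containment $q > q_T$ follows from a direct lexicographic comparison of $(a_i)$ with the explicit form of $\al(q_T)$ from \eqref{eq:22}: since $(a_i)$ begins with $M^{2N-1}$ with $N \geq 2$, one checks in each parity of $M$ that $(a_i) \succ \al(q_T)$, and then Lemma \ref{lem:21} gives $q > q_T$.

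For irreducibility, fix $j \geq 1$ with $(a_1 \ldots a_j^-)^\infty \in \vb$, which forces $a_j > 0$, and set
\[
\zeta := a_1 \ldots a_j (\overline{a_1 \ldots a_j}^+)^\infty.
\]
We must show $\zeta \prec (a_i)$. The key observation is that $a_1 = \cdots = a_{2N-1} = M$ forces $\overline{a_i} = 0$ for every $i \in \{1, \ldots, 2N-1\}$, so the tail of $\zeta$ begins with a long run of zeros. Our strategy is to find an early position at which $(a_i)$ is positive while $\zeta$ is still $0$. We split on $j$: if $j \leq 2N-2$, then position $j+1$ of $(a_i)$ equals $M$ while the corresponding digit of $\zeta$ is $\overline{a_1} = 0$, and we are done. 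If $j \geq 2N$, we consider the segment $[j+1, j+2N-1]$ of $(a_i)$; this has length $2N-1$ and starts at position $\geq 2N+1$, and a short arithmetic argument shows that any such segment contains a complete length-$N$ block $[sN+1, sN+N]$ with $s \geq 2$, which by the definition of $\IB_N$ contains at least one nonzero digit. Letting $p_0 \in [j+1, j+2N-1]$ be the earliest such nonzero position, $p_0 - j \leq 2N-1 < j$ yields that $\zeta$ equals $\overline{a_{p_0-j}} = 0$ at position $p_0$, while $a_{p_0} > 0$.

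The case $j = 2N-1$ is the most delicate. Here both sides already agree on the prefix $M^{2N-1} 0$, so the comparison restarts at position $2N+1$. For $N \geq 3$, the first nonzero position of the block $a_{2N+1}\ldots a_{3N}$ lies in $[2N+1, 3N] \subseteq [2N+1, 4N-3]$, which is still within the zero stretch of the tail of $\zeta$, and we conclude as in the previous case. For $N = 2$ the only non-immediate subcase is $a_5 = 0,\; a_6 = 1$, which produces a tie at position $6$; this is broken by passing into the second period $(001)^\infty$ of the tail of $\zeta$ and invoking the nonzero digit guaranteed in the next block $a_7 a_8$. The boundary case $(j,N)=(2N-1,2)$ is the main obstacle of the argument, being the only situation in which the comparison between $\zeta$ and $(a_i)$ must cross into a second period of the tail of $\zeta$.
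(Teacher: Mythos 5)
Your argument is correct and follows essentially the same route as the paper's: both proofs verify irreducibility by splitting on the size of $j$ relative to $2N$ and locating an early position where $(\al_i)$ has a nonzero digit while the tail $(\overline{\al_1\ldots\al_j}\,^+)^\f$ is still inside its initial run of zeros, using the block condition defining $\IB_N$. The only differences are that you explicitly check $q>q_T$ (which the paper leaves implicit) and that you treat the boundary case $j=2N-1$, in particular $N=2$, in more detail than the paper, which simply asserts $M^j(0^{j-1}1)^\f\prec(\al_i)$ for all $j<2N$.
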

\begin{proof}
  Take $q\in  \I_N$. Then $\al(q)=(\al_i)$ satisfies
  \begin{equation}
    \label{eq:63}
    \al_1\ldots\al_{2N}=M^{2N-1}0\quad\textrm{and}\quad 0^N\prec\al_{sN+1}\ldots\al_{sN+N}\prec M^N
  \end{equation}
  for any $s=2,3,\ldots$. Clearly, $(\al_i)\in\vb$. Suppose $(\al_1\ldots \al_j^-)^\f\in\vb$ for some $j\ge 1$. We claim that
  \begin{equation*}
    \al_1\ldots\al_j(\overline{\al_1\ldots\al_j}\,^+)^\f\prec(\al_i).
  \end{equation*}

    Note that for such a $j$ we must have $j\ne 2N$.  We split the proof into the following two cases.

    Case (I). $1\le j<2N$. Then by (\ref{eq:63}) it follows that
    \[
    \al_1\ldots\al_j(\overline{\al_1\ldots \al_j}\,^+)^\f=M^j(0^{j-1}1)^\f\prec (\al_i).
    \]

    Case (II). $j>2N$. By (\ref{eq:63}) we have $\al_{n+1}\ldots\al_{n+2N-1}\succ 0^{2N-1}$ for all $n\ge 1$. This implies
    \[
   \al_1\ldots\al_j(\overline{\al_1\ldots \al_j}\,^+)^\f=\al_1\ldots\al_j(0^{2N-1}M\overline{\al_{2N+1}\ldots \al_j}\,^+)^\f\prec(\al_i).
    \]

    Therefore $(\al_i)$ is irreducible and $ \I_N\subseteq \I$.
\end{proof}

The following property of the Hausdorff dimension is well-known (cf.~\cite[Proposition 2.3]{Falconer_1990}).
\begin{lemma}
  \label{lem:63}
  Let $f: (X, d_1)\ra (Y, d_2)$ be a map between two metric spaces. If there exists a constants $C>0$ such that
  \[
  d_2(f(x), f(y))\le C\cdot d_1(x, y)\quad\textrm{for all }x, y\in X,
  \]
  then $\dim_H X\ge \dim_H f(X)$.
\end{lemma}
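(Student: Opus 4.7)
The plan is to prove this standard fact directly from the definition of Hausdorff measure, showing that a Lipschitz map does not increase the $s$-dimensional Hausdorff measure by more than a factor of $C^s$. I would start by recalling that for $s\ge 0$ and $\delta>0$, the $\delta$-approximating Hausdorff measure of a set $E$ in a metric space $(Y,d)$ is
\[
\pazocal{H}^s_\delta(E)=\inf\Bigl\{\sum_{i}|U_i|^s : \{U_i\}_i\text{ is a countable cover of }E\text{ with }|U_i|\le\delta\Bigr\},
\]
where $|U|$ denotes the diameter with respect to $d$, and then $\pazocal{H}^s(E)=\lim_{\delta\to 0^+}\pazocal{H}^s_\delta(E)$. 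The Hausdorff dimension is $\dim_H E=\inf\{s\ge 0:\pazocal{H}^s(E)=0\}$.

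The first step is to observe that if $\{U_i\}_i$ is a $\delta$-cover of $X$ (in the metric $d_1$), then $\{f(U_i\cap X)\}_i$ is a cover of $f(X)$, and the Lipschitz hypothesis gives $|f(U_i\cap X)|\le C\,|U_i|\le C\delta$ in the metric $d_2$. Consequently, for every $s\ge 0$,
\[
\pazocal{H}^s_{C\delta}(f(X))\le \sum_i|f(U_i\cap X)|^s\le C^s\sum_i|U_i|^s.
\]
Taking the infimum over all such covers $\{U_i\}$ yields $\pazocal{H}^s_{C\delta}(f(X))\le C^s\,\pazocal{H}^s_\delta(X)$, and then letting $\delta\to 0^+$ gives the fundamental inequality
\[
\pazocal{H}^s(f(X))\le C^s\,\pazocal{H}^s(X).
\]

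The conclusion then follows immediately. If $s>\dim_H X$ then $\pazocal{H}^s(X)=0$, so the inequality above forces $\pazocal{H}^s(f(X))=0$, which means $\dim_H f(X)\le s$. Taking the infimum over all such $s$ yields $\dim_H f(X)\le \dim_H X$, as required. There is really no obstacle in this argument; it is a direct unwinding of the definitions, and the only care needed is in handling the case $C=0$ (which is trivial as then $f$ is constant) and in noting that replacing $U_i$ by $U_i\cap X$ does not increase the diameter, so the cover passes from $X$ to $f(X)$ without loss.
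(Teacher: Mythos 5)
Your proof is correct and is precisely the standard Lipschitz-map argument (the inequality $\mathcal{H}^s(f(X))\le C^s\,\mathcal{H}^s(X)$ followed by comparison of critical exponents); the paper itself gives no proof of this lemma, only a citation to Falconer's book, where the argument is the same as yours. The only superfluous remark is your aside about $C=0$, which cannot occur since the hypothesis assumes $C>0$.
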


Using Lemma \ref{lem:63} we can obtain a lower bound for the Hausdorff dimension of $ \I_N$.
\begin{lemma}
  \label{lem:64}
  Let $N\ge 2$. Then
  \[
  \dim_H  \I_N\ge \frac{\log((M+1)^N-2)}{N\log(M+1)}.
  \]
\end{lemma}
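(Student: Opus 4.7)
The plan is to identify $\I_N$ with a product of finite alphabets via the quasi-greedy coding, transport the (straightforward) Hausdorff dimension of the symbolic model to $\I_N$ through Lemma \ref{lem:63}, and account for the constants using the block structure of $\IB_N$. Set $A := \{w \in \{0,\ldots,M\}^N : 0^N \prec w \prec M^N\}$, so $|A| = (M+1)^N - 2$. By definition of $\IB_N$ the concatenation map $(u_i)_{i \ge 1} \mapsto M^{2N-1}0\,u_1u_2\cdots$ is a bijection $A^\infty \to \IB_N$, and composing with $\Phi^{-1}$ (well-defined by Lemma \ref{lem:62}) yields a bijection $\pi : A^\infty \to \I_N$. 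I would equip $A^\infty$ with the product metric $\rho(u,v) := (M+1)^{-Nm}$, where $m$ is the first coordinate of disagreement; then $(A^\infty,\rho)$ is isometrically a full shift on $|A|$ letters with $|A|^m$ cylinders of diameter $(M+1)^{-Nm}$ at level $m$, and the standard covering argument gives $\dim_H(A^\infty, \rho) = \log((M+1)^N - 2)/(N \log(M+1))$, which is exactly the desired lower bound.

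The heart of the proof is to show that the inverse coding $\pi^{-1} : (\I_N, |\cdot|) \to (A^\infty, \rho)$ is Lipschitz: there exists $c = c(M,N) > 0$ such that, whenever $\al(p)$ and $\al(q)$ first disagree at position $n$, $|p-q| \ge c(M+1)^{-n}$. Writing $a = \al(p)$, $b = \al(q)$ with $p < q$, and using $a_i = b_i$ for $i < n$ in $\sum a_i p^{-i} = 1 = \sum b_i q^{-i}$, the mean value theorem yields
\[
  (q-p)\sum_{i \ge 1} a_i \cdot i \cdot \xi_i^{-(i+1)} = \sum_{i \ge n}(b_i - a_i)q^{-i},
\]
for some $\xi_i \in [p,q]$. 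The coefficient on the left is bounded above uniformly on $\I_N$, since the prefix $M^{2N-1}0$ confines $\I_N$ to a compact subinterval of $(1, M+1)$ bounded away from $1$. It therefore suffices to bound the right-hand side below by $c'(M,N)(M+1)^{-n}$. I would decompose the right-hand side into the leading term $(b_n - a_n)q^{-n} \ge q^{-n}$, the partial block $(n, s_0 N + N]$, and the subsequent full blocks (writing $n = s_0 N + r$, $1 \le r \le N$, $s_0 \ge 2$). On each full block $s > s_0$, both $a^{(s)}, b^{(s)} \in A$, and a direct optimization shows that the minimum of $\sum_{j=1}^N (b^{(s)}_j - a^{(s)}_j)q^{-j}$ equals $-K_q$ with $K_q := M(1 - q^{-(N-1)})/(q-1) + (M-2)q^{-N}$, attained at the extremal $A$-words $a^{(s)} = M^{N-1}(M-1)$ and $b^{(s)} = 0^{N-1}1$. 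Summing the geometric tail over $s > s_0$, the estimate reduces to the positivity of $1 - K_q/(1 - q^{-N})$; at $q = M+1$ this bracket equals $2(M+1)^{-N}/(1 - (M+1)^{-N}) > 0$, and by continuity it remains uniformly positive over $\I_N$, giving the required constant.

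Granted Lipschitzness, Lemma \ref{lem:63} applied to $\pi^{-1}$ gives $\dim_H \I_N \ge \dim_H(A^\infty, \rho) = \log((M+1)^N - 2)/(N \log(M+1))$. The main obstacle is the lower bound on $\sum_{i \ge n}(b_i - a_i)q^{-i}$: without the block constraint, the geometric tail $-M \sum_{i>n} q^{-i}$ would exactly cancel the leading $q^{-n}$ at $q = M + 1$, destroying the Lipschitz property. The exclusion of the two extremal words $0^N, M^N$ (which is precisely the source of the $-2$ in $|A| = (M+1)^N - 2$) produces a uniform positive deficit in $K_q$, and quantifying this deficit block by block and summing the resulting geometric series is the main computation.
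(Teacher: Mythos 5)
Your overall architecture coincides with the paper's: compute the dimension of a model space (your $(A^\infty,\rho)$ is the same computation as the paper's self-similar set $\pi_{M+1}(\IB_N)$ with the open set condition) and transfer it to $\I_N$ via Lemma \ref{lem:63}, which in both cases amounts to showing that a first disagreement of $\al(p)$ and $\al(q)$ at position $n$ forces $|p-q|\ge c(M+1)^{-n}$. The gap is in your proof of that separation. First, the claimed reduction to the positivity of $1-K_q/(1-q^{-N})$ is exact only at $q=M+1$: there the crude bound $b_i-a_i\ge -M$ on the partial block $(n,(s_0+1)N]$ cancels the leading term $q^{-n}$ down to exactly $q^{-(s_0+1)N}$ because $M/(q-1)=1$. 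For $q<M+1$ the partial block leaves an additional deficit of order $\frac{M+1-q}{q-1}\,q^{-n}$, and $K_q$ and the geometric factor also move against you; none of this is captured by the single bracket. Second, and more seriously, ``by continuity it remains uniformly positive over $\I_N$'' is not an argument. One computes $1-K_q/(1-q^{-N})=g(q)/\bigl((q-1)(q^N-1)\bigr)$ with $g(q)=q^N(q-M-1)+q+M-1$, so the positivity region is an interval $(M+1-\delta_N,\,M+1]$ with $\delta_N\approx 2M(M+1)^{-N}$, shrinking to $0$ as $N\to\infty$; meanwhile the prefix $M^{2N-1}0$ confines $\I_N$ to $M+1-q< Mq^{-(2N-1)}$, also shrinking to $0$. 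Continuity decides nothing here: you must compare the two rates. The comparison does succeed --- $(M+1)^{-(2N-1)}\ll(M+1)^{-N}$ precisely because the prefix length $2N$ is twice the block length, and the partial-block deficit is then dominated by a margin of roughly a factor $2$ --- but this quantitative step is the entire content of the Lipschitz estimate and is absent from your write-up.

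The paper sidesteps all of this cancellation bookkeeping. Writing $1=\sum\al_i(q_2)q_2^{-i}=\sum\al_i(q_1)q_1^{-i}$, it uses only the quasi-greedy tail bound $\sum_{i>j}\al_i(q_1)q_1^{-i}\le q_1^{-j}$ together with the observation that $\al_{j+1}(q_2)\ldots\al_{j+2N}(q_2)\ne 0^{2N}$ (every window of length $2N$ contains a full aligned block, which is $\succ 0^N$), hence $\sum_{i>j}\al_i(q_2)q_2^{-i}\ge q_2^{-(j+2N)}$. Subtracting gives $q_2^{-(j+2N)}\le\sum_{i\le j}\al_i(q_2)(q_1^{-i}-q_2^{-i})\le M(q_2-q_1)/(q_c-1)^2$ with no near-cancellation at all; the resulting Lipschitz constant is worse than yours, but any finite constant suffices. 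If you wish to keep your route, replace the continuity claim by the explicit inequalities $M+1-q\le Mq^{-(2N-1)}$ and $g(q)>0\iff M+1-q<(q+M-1)q^{-N}$ and track the partial-block error; otherwise the paper's softer estimate is the cleaner way to finish.
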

\begin{proof}
  Note that $\pi_{M+1}( \IB_N)$ is a scaled copy of the set
  \[
  E=\set{\pi_{M+1}((a_i)): 0^N\prec a_{sN+1}\ldots a_{sN+N}\prec M^N\textrm{ for all }s\ge 0}
  \]
  under an affine transformation. Therefore $\dim_H\pi_{M+1}( \IB_N)=\dim_H E$. Note that $E$ is a self-similar set generated by the iterated function system
  \[
  \set{f_{a_1\ldots a_N}(x)=\frac{x}{(M+1)^N}+\sum_{i=1}^N\frac{a_i}{(M+1)^i}: 0^N\prec a_1\ldots a_N\prec M^N}.
  \]
  This iterated function system satisfies the open set condition \cite{Falconer_1990}. Therefore
  \[
  \dim_H\pi_{M+1}( \IB_N)=\dim_H E=\frac{\log ((M+1)^N-2)}{N\log (M+1)}.
  \]
  To prove our result it suffices to prove that $\dim_H  \I_N\ge\dim_H\pi_{M+1}( \IB_N)$.

    Take $q_1, q_2\in  \I_N$ with $q_1<q_2$. Then by Lemma \ref{lem:21} we have $\al(q_1)<\al(q_2)$. Let $j\ge 1$ be the   integer satisfying
    \begin{equation}\label{eq:64}
    \al_1(q_1)\ldots \al_{j-1}(q_1)=\al_1(q_2)\ldots\al_{j-1}(q_2)\quad\textrm{and}\quad \al_j(q_1)<\al_j(q_2).
    \end{equation}
    This implies that
    \begin{equation}\label{eq:65}
      \begin{split}
         \pi_{M+1}(\al(q_2))-\pi_{M+1}(\al(q_1))=\sum_{i=j}^\f\frac{\al_i(q_2)-\al_i(q_1)}{(M+1)^i}\le\sum_{i=j}^\f\frac{M}{(M+1)^i}=(M+1)^{1-j}.
      \end{split}
    \end{equation}

Note that each $q_2\in  \I_N$ satisfies
\[
0^{2N}\prec\al_{i+1}(q_2)\ldots\al_{i+2N}(q_2)\prec M^{2N}\quad\textrm{for all }i\ge 1.
\]
Then by (\ref{eq:64}) we have
\[
\sum_{i=1}^j\frac{\al_i(q_2)}{q_1^i}\ge\sum_{i=1}^\f\frac{\al_i(q_1)}{q_1^i}=1=\sum_{i=1}^\f\frac{\al_i(q_2)}{q_2^i}\ge \sum_{i=1}^j\frac{\al_i(q_2)}{q_2^i}+\frac{1}{q_2^{j+2N}}.
\]
Observe that $q_c<q_1<q_2\le M+1$. Therefore,
\begin{equation}
  \label{eq:66}
  \begin{split}
    \frac{1}{(M+1)^{j+2N}}&\le\frac{1}{q_2^{j+2N}}\le\sum_{i=1}^j\Big(\frac{\al_i(q_2)}{q_1^i}-\frac{\al_i(q_2)}{q_2^i}\Big)\\
    &\le\sum_{i=1}^\f\Big(\frac{M}{q_1^i}-\frac{M}{q_2^i}\Big)=\frac{M(q_2-q_1)}{(q_1-1)(q_2-1)}\\
    &\le\frac{M}{(q_c-1)^2}(q_2-q_1).
  \end{split}
\end{equation}

Therefore, by (\ref{eq:65}) and (\ref{eq:66}) it follows that
\[
\pi_{M+1}(\al(q_2))-\pi_{M+1}(\al(q_1))\le (M+1)^{1-j}\le\frac{(M+1)^{2N+2}}{(q_c-1)^2}(q_2-q_1).
\]
Note that $\pi_{M+1}(\al(q_2))-\pi_{M+1}(\al(q_1))>0$. Hence, by using
\[
f:=\pi_{M+1}\circ\al:  \I_N\ra\pi_{M+1}( \IB_N)
\]
in  Lemma \ref{lem:63} we conclude that $\dim_H  \I_N\ge \dim_H\pi_{M+1}( \IB_N)$.
\end{proof}

\begin{proof}[{\bf Proof of Theorem \ref{th3}}]
By Proposition \ref{prop:61} it remains to show that $\dim_H\E=1$.

Note by Lemma \ref{lem:62} that
$
 \I_N\subseteq \I\subseteq\E$ {for all }$ N\ge 2.
$
Then by Lemma \ref{lem:64} it follows that
\[
\dim_H\E\ge \dim_H  \I_N\ge\frac{\log ((M+1)^N-2)}{N\log (M+1)}
\]
for all $N\ge 2$. Letting $N\ra\f$ we conclude  $\dim_H\E=1$.
\end{proof}

By Theorem \ref{th2} the function $H$ is differentiable inside each entropy plateau $(p_L, p_R)$. Then the set of points where $H$ is not differentiable   is a subset of $\E$. Here   we make the following conjecture.
\begin{conjecture}
$\E$ is the set of points where $H$ is not differentiable.
\end{conjecture}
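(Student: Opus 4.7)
The plan is to split the conjecture into two assertions: (i) $H$ is differentiable at every $q \notin \E$, and (ii) $H$ fails to be differentiable at every $q \in \E$. Assertion (i) is immediate from Theorem \ref{th2}: any $q \notin \E$ lies in the open interior of some entropy plateau $(p_L, p_R)$, on which $H$ is identically constant, so $H'(q)$ exists and equals $0$.

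For assertion (ii), I would first handle the case where $q$ is an endpoint of a plateau $[p_L, p_R]$. At such a point, one of the one-sided derivatives of $H$ is automatically $0$ because $H$ is locally constant on the plateau side; differentiability would force the other one-sided derivative to vanish as well. To rule this out I would use Proposition \ref{prop:411}: there is a sequence of irreducible (or $*$-irreducible) intervals $[p_L(n), p_R(n)]$ whose left endpoints approach $q$ monotonically from the non-plateau side. On each such auxiliary interval, $H$ is constant and equal to $\log \lambda_n$, where $\lambda_n$ is the Perron eigenvalue of the labeled $2{\times}2$ graph representing $(\VB_{p_L(n)}, \sigma)$ as a transitive sofic shift or SFT (cf.\ Lemmas \ref{lem:35}, \ref{lem:51}, \ref{lem:54}, \ref{lem:59}). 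This gives a closed-form expression for $H$ on each approximating plateau, and via Lemma \ref{lem:21} also for $p_L(n)$ in terms of the period word $a_1(n)\ldots a_{m(n)}(n)$ generating the plateau; both expressions can then be used to estimate the one-sided difference quotient $(H(q) - H(p_R(n)))/(q - p_R(n))$ from below.

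For $q \in \E$ that is not the endpoint of any plateau, Proposition \ref{prop:411} together with Theorem \ref{th3} provides sequences of irreducible or $*$-irreducible intervals accumulating at $q$ from both sides, and the same spectral-radius analysis yields difference quotients of $H$ along two distinct sequences $q_n \to q$ from the left and from the right. The goal is then to show that these quotients do not collapse to a common finite limit. I would also reduce to this picture at the boundary between $(q_c,q_T)$ and $[q_T,M+1]$, using Lemma \ref{lem:54} to get the explicit entropy values $H(q_{T_n})$ and approximability of $q_T$ described in Remark following Theorem \ref{th2}.

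The main obstacle will be the quantitative estimate: showing that along a sequence of plateau intervals shrinking towards $q \in \E$, the difference $H(q) - H(p_L(n))$ decays \emph{no faster} than $q - p_L(n)$. This reduces to a delicate two-part comparison: (a) how fast the periodic words generating the approximating plateaus converge lexicographically, hence how fast $p_L(n) \to q$; and (b) how sensitively the Perron eigenvalue of the associated labeled graph depends on perturbations of the period word. A naive bound using the spectral radius of a $2\times 2$ matrix will likely give a quotient of the correct order, but extracting uniform constants across the period lengths $m(n) \to \infty$, and excluding the pathological possibility that the two comparisons exactly cancel to produce $H'(q)=0$, is the crux of the problem and is why the statement remains a conjecture.
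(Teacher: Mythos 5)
This statement is posed in the paper as a conjecture, not a theorem: the authors prove only the easy containment, namely that the set of points where $H$ is not differentiable is contained in $\E$ (this is exactly your assertion (i), which in fact follows directly from the definition \eqref{eq:12} of $\E$ as the set of bases where $H$ is not locally constant, without any appeal to Theorem \ref{th2}). So there is no proof in the paper to compare yours against, and, more importantly, your proposal is not itself a proof: the entire content of the conjecture is your assertion (ii), the reverse containment, and for that you offer only a strategy whose decisive step you explicitly leave open.

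Concretely, the gap is this. At a right endpoint $p_R$ of a plateau the left derivative of $H$ is $0$, so you must show that the right difference quotients $(H(p)-H(p_R))/(p-p_R)$ do not tend to $0$; at a two-sided accumulation point of $\E$ you must show that the two one-sided quotients cannot converge to a common finite value. Nothing in Lemmas \ref{lem:35}, \ref{lem:51}, \ref{lem:54}, \ref{lem:59} or Proposition \ref{prop:411} controls the \emph{rate} at which $H(p_L(n))\to H(q)$ relative to the rate at which $p_L(n)\to q$: the Perron eigenvalues you invoke are attached to graphs whose labels have length $m(n)\to\infty$, so there is no uniform spectral perturbation estimate across $n$, and the lexicographic convergence of the generating words yields only an upper bound on $|q-p_L(n)|$ rather than the two-sided comparison you need. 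You correctly identify this as the crux, but identifying the obstacle is not the same as overcoming it; as written, assertion (ii) is unsupported for every $q\in\E$, including the simplest case of a plateau endpoint. A further point any eventual proof must address: since $H$ is increasing with $H'=0$ almost everywhere, at a point of $\E$ the derivative could a priori exist in the extended sense and equal $+\infty$, or exist finitely; your sketch of ``quotients not collapsing to a common finite limit'' does not yet distinguish these outcomes, and only the second contradicts non-differentiability in the usual sense.
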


Note that $\E\subset\overline{\ul}$, and  $\overline{\ul}$ is a Cantor set of full Hausdorff dimension. By Theorem \ref{th3} it follows that both sets have full Hausdorff dimension. Then it is natural to ask the size of the difference set $\overline{\ul}\setminus\E$. { In a recent paper \cite{Bak-Kon-17} the second and the third authors showed that when the digit set is $\set{0, 1}$ the Hausdorff dimension of the difference set $\overline{\ul}\setminus\E$ is approximately equal to $0.368699$. It would be interesting to determine the Hausdorff dimension of $\overline{\ul}\setminus\E$ for more general digit set $\set{0, 1,\ldots, M}$ with $M\ge 2$.}

\section{Box dimension   of $\u_q$}\label{sec:7}
\label{sec:8}

\noindent In this section we will show that the Hausdorff dimension and box dimension of $\u_q$ coincide for all $q\in(1,M+1]$.
 For $q\in(1, M+1]$ let
 \[
 \W_q:=\set{\pi_q((x_i)): \overline{\al(q)}\prec\si^n((x_i))\prec\al(q)\textrm{ for all }n\ge 0},
 \]
 and let $\WB_q$ be the set of corresponding expansions of $x\in\W_q$. It is easy to check that $\W_q\subseteq \u_q\cap[\frac{M}{q-1}-1, 1]$.

 We start by proving that the upper box dimension of $\u_q$ and $\W_q$ coincide.

\begin{lemma}\label{lem:71}
Let $q\in(1,M+1]$. Then $\overline{\dim}_\text{B}(\u_q)=\overline{\dim}_\text{B}(\W_q)$.
\end{lemma}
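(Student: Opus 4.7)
The inequality $\overline{\dim}_B(\W_q) \le \overline{\dim}_B(\u_q)$ is immediate from the inclusion $\W_q \subseteq \u_q$ together with the monotonicity of upper box dimension. The main task is therefore the reverse inequality $\overline{\dim}_B(\u_q) \le \overline{\dim}_B(\W_q)$.

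My plan is to establish that $\W_q$ is dense in $\u_q$. Once this is shown, since upper box dimension is invariant under taking closure for bounded subsets of $\mathbb{R}$, monotonicity yields
\[
\overline{\dim}_B(\u_q) = \overline{\dim}_B(\overline{\u_q}) \le \overline{\dim}_B(\overline{\W_q}) = \overline{\dim}_B(\W_q),
\]
and the lemma follows. For $q < q_G$, Lemma \ref{lem:24} implies $\WB_q = \emptyset$, but in this regime $\u_q$ is at most countable and the equality of box dimensions is trivial; so I may assume $q \ge q_G$.

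To establish density, I would take $x \in \u_q$ with its unique expansion $(x_i) \in \ub_q$ and construct approximants $y_N \in \W_q$ with $y_N \to x$. The natural construction is to truncate $(x_i)$ at length $N$ and concatenate a suitable tail: set $y_N := \pi_q(x_1 \ldots x_N z_1 z_2 \ldots)$ for some $(z_i) \in \WB_q$. The geometric bound $|y_N - x| \le \tfrac{M}{q-1}\, q^{-N}$ is automatic, so it suffices to arrange that the concatenated sequence $x_1 \ldots x_N z_1 z_2 \ldots$ itself belongs to $\WB_q$ (and thus $y_N \in \W_q$). Using the $\ub_q$ lexicographic characterization, the shift conditions on the prefix $x_1 \ldots x_N$ are already strict except possibly at positions $n < N$ where $x_{n+1} \ldots x_N$ coincides with $\al_1(q) \ldots \al_{N-n}(q)$ (when $x_n = M$) or its reflection (when $x_n = 0$). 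At such ``critical'' positions the concatenation imposes an additional upper bound $(z_i) \prec \sigma^{N-n}(\al(q))$ or the symmetric lower bound on $(z_i)$.

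The main obstacle is showing that, for cofinally many $N$, one can simultaneously satisfy all these constraints on $(z_i)$ while keeping $(z_i) \in \WB_q$. I would handle this by exploiting the richness of $\WB_q$: since $\WB_q$ is $\sigma$-invariant and contains sequences strictly between $\overline{\al(q)}$ and $\al(q)$ with arbitrary admissible prefixes, one can choose $(z_i)$ to lie in a suitable subcylinder of $\WB_q$. The delicate case is when infinitely many truncation lengths $N$ are critical, which forces $(x_i)$ itself to eventually coincide with $\al(q)$ or $\overline{\al(q)}$; such expansions form an at-most countable exceptional set $E \subseteq \u_q$ whose points are verified by direct approximation (each such $x$ is a limit of points in $\W_q$ by perturbing the tail just beyond the critical region). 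Combining the generic construction with the handling of $E$ yields $\u_q \subseteq \overline{\W_q}$, completing the plan.
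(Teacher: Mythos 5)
Your reduction to showing that $\W_q$ is dense in $\u_q$ fails, because that density statement is false for every $q\in(1,M+1)$. The definition of $\WB_q$ imposes $\overline{\al(q)}\prec\si^n((x_i))\prec\al(q)$ for \emph{all} $n\ge 0$, including $n=0$; consequently $\W_q\subseteq[\frac{M}{q-1}-1,\,1]$, which is a proper subinterval of $[0,\frac{M}{q-1}]$. The point $0=\pi_q(0^\f)$ lies in $\u_q$ but at distance at least $\frac{M}{q-1}-1>0$ from $\W_q$, so already $0\notin\overline{\W_q}$. Worse, the obstruction is not confined to a countable exceptional set: $\u_q$ contains the entire affine copies $\frac{d_m}{q^m}+\frac{\W_q}{q^m}$ (and their reflections near $\frac{M}{q-1}$), and for $m$ large these copies lie wholly outside $[\frac{M}{q-1}-1,1]$, hence outside $\overline{\W_q}$, while each has the same upper box dimension as $\W_q$. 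Your construction $y_N=\pi_q(x_1\ldots x_N z_1z_2\ldots)$ cannot repair this: for $(x_i)=0^N z_1 z_2\ldots$ the $n=0$ condition for membership in $\WB_q$ is violated no matter how $(z_i)$ is chosen, and the resulting point $y_N\le q^{-N}\cdot\frac{M}{q-1}$ is eventually bounded away from $\W_q$. The ``critical positions'' you analyse (where a suffix of the prefix matches a prefix of $\al(q)$ or its reflection) are not the real difficulty; the real difficulty is that $\ub_q$ imposes only one-sided constraints at digits $0$ and $M$, whereas $\WB_q$ imposes two-sided constraints everywhere, and this discrepancy produces an uncountable, full-dimensional part of $\u_q$ outside $\overline{\W_q}$.

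The paper's proof confronts exactly this issue. It writes $\u_q$ as the two endpoints of $[0,\frac{M}{q-1}]$ together with countably many affine copies of $\W_q$, and then, since upper box dimension is not countably stable, it counts covers directly: for a given $\delta$, all copies of depth $m\ge\bigl[\log(\tfrac{M+1}{\delta})/\log q\bigr]+1$ fall into two intervals of length $\delta$ at the ends, so $N_\delta(\u_q)\le C\log(1/\delta)\,N_\delta(\W_q)$, and the logarithmic factor vanishes in the $\limsup$ defining $\overline{\dim}_B$. If you want to keep a closure-based argument you would still need some quantitative device of this kind to absorb the countably many displaced copies of $\W_q$; density alone cannot work.
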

\begin{proof}
Unlike Hausdorff dimension the upper box dimension is not countably stable. So this lemma requires a little work. We begin by recalling an observation made in \cite[Lemma 2.5]{Komornik_Kong_Li_2015_1}. Namely
\begin{align*}
\u_{q}=\Big\{& 0,\frac{m}{q-1},\\
& \frac{d_{1}}{q}+\frac{\W_{q}}{q},\, d_{1}=1,\ldots,M-1\\
& \frac{d_{m}}{q^{m}}+\frac{\W_{q}}{q^{m}},\, m=2,3,\ldots,\, d_{m}=1,\ldots,M\\
&\sum_{i=1}^{m-1}\frac{M}{q^{i}}+\frac{d_{m}}{q^{m}}+\frac{\W_{q}}{q^{m}},\, m=2,3,\ldots,\, d_{m}=0,\ldots, M-1\Big\}.
\end{align*}
So $\u_{q}$ consists of the endpoints of $I_{q}$ and scaled copies of $\W_{q}$.

 Let us now fix some $\delta\in(0, 1)$. Note that $\W_q\subseteq[\frac{M}{q-1}-1, 1]$. It is straightforward to show that
  $$\frac{c_{m}}{q^{m}}+\frac{\W_{q}}{q^{m}}\in[0,\delta],\quad \sum_{i=1}^{m-1}\frac{M}{q^{i}}+\frac{d_{m}}{q^{m}}+\frac{\W_{q}}{q^{m}}\in\Big[\frac{M}{q-1}-\delta,\frac{M}{q-1}\Big]$$
  for any $c_{m}=1,\ldots,M$  and  for any $d_{m}=0,\ldots, M-1$, whenever
  $$m\geq \Big[ \frac{\log(\frac{M+1}{\delta})}{\log q}\Big]+1.$$

   Now we consider covers by closed intervals of width $\delta$. Let $N_{\delta}(X)$ denote the minimal number of such intervals required to cover a set $X\subset \mathbb{R}$. Then by the above we have
\begin{align*}
N_{\delta}(\u_{q})&\leq 2 +\sum_{d_{1}=1}^{M-1} N_{\delta}\Big(\frac{d_{1}}{q}+\frac{\W_{q}}{q}\Big) +\sum_{m=2}^{\Big[ \frac{\log(\frac{M+1}{\delta})}{\log q}\Big]}\sum_{d_{m}=1}^{M}N_{\delta}\Big(\frac{d_{m}}{q^{m}}+\frac{\W_{q}}{q^{m}}\Big)\\
&+\sum_{m=2}^{\Big[ \frac{\log(\frac{M+1}{\delta})}{\log q}\Big]}\sum_{d_{m}=0}^{M-1}N_{\delta}\Big(\sum_{i=1}^{m-1}\frac{M}{q^{i}}+\frac{d_{m}}{q^{m}}+\frac{\W_{q}}{q^{m}}\Big)
\end{align*}
Within each of our summations we are considering minimal $\delta$-covers of scaled down copies of $\W_{q}$. In which case each term can be bounded above by $N_{\delta}(\W_{q}),$ and we obtain the following upper bound
\begin{equation*}
N_{\delta}(\u_{q})\leq 2+M\Big(1+2\Big[ \frac{\log(\frac{M+1}{\delta})}{\log q}\Big]\Big)N_{\delta}(\W_{q})\le 3 M \Big[ \frac{\log(\frac{M+1}{\delta})}{\log q}\Big]N_{\delta}(\W_{q}).
\end{equation*}
This implies that
\begin{equation}
  \label{eq:71}
  \begin{split}
  \overline{\dim}_B(\u_q)&=\limsup_{\de\ra 0^+}\frac{\log N_\de(\u_q)}{-\log \de}\\
  &\le\limsup_{\de\ra 0^+}\frac{\log (3M)+\log \Big[ \frac{\log(\frac{M+1}{\delta})}{\log q}\Big] +\log N_{\delta}(\W_{q})}{-\log \de}\\
  &=\limsup_{\de\ra 0^+}\frac{\log N_\de(\W_q)}{-\log \de}=\overline{\dim}_B\W_q.
  \end{split}
\end{equation}

On the other hand,
we trivially have the lower bound $N_{\delta}(\W_{q})\leq N_{\delta}(\u_{q})$ for all $\delta>0$, and therefore $\overline{\dim}_{B}(\W_{q})\leq \overline{\dim}_{B}(\u_{q})$. This lower bound combined with (\ref{eq:71}) implies $\overline{\dim}_{B}(\u_{q})=\overline{\dim}_{B}(\W_{q})$.
\end{proof}

\begin{proof}[\bf{Proof of Theorem \ref{th4}}]
By \cite[Theorem 1.3 and Proposition 2.8]{Komornik_Kong_Li_2015_1}  it follows that
\[\dim_{H}(\u_{q})=\frac{h(\ub_q)}{\log q}=\frac{h(\WB_{q})}{\log q}.\]
Thus, by Lemma \ref{lem:71}, and the trivial bound $\dim_{H}(\u_{q})\leq\overline{\dim}_{\text{B}}(\u_{q}),$ it suffices to show that
$$\overline{\dim}_{\text{B}}(\W_{q}) \leq \frac{h(\WB_{q})}{\log q}.$$

From the definition of topological entropy we immediately obtain
\begin{equation}\label{eq:72}
\#B_{n}(\WB_{q})\leq 2^{n(h(\WB_{q})+o(1))}.
\end{equation}
Where we have used the standard little $o$ notation. To each $\om=\om_1\ldots \om_n\in B_{n}(\WB_{q})$ we associate the interval $${J}_{\om}:=\Big[\sum_{i=1}^{n}\frac{\om_{i}}{q^{i}},\sum_{i=1}^{n}\frac{\om_{i}}{q^{i}}+\frac{M}{q^{n}(q-1)}\Big].$$
Importantly ${J}_{\om}$ is precisely the set $x\in [0,M/(q-1)]$ for which $x$ has a $q$-expansion beginning with $\om.$ Therefore $\{{J}_{\om}\}_{\om\in B_{n}(\WB_{q})}$ is a cover of $\W_{q}.$

 The following gives us the required upper bound. Let $\de_n=\frac{M}{q^n(q-1)}$.  Note that when calculating $\overline{\dim}_\text{B} (\W_{q}) $ it suffices to consider the restricted sequence $N_{\de_n}(\W_{q})$ (see \cite{Falconer_1990}). Then by (\ref{eq:72}) it follows that
\begin{align*}
\overline{\dim}_\text{B} (\W_{q}) &=\mathop{\limsup}\limits_{n\to\infty}\frac{\log N_{\de_n}(\W_{q})}{-\log \de_n} \\
&\leq \limsup_{n\to\infty} \frac{\log 2^{n(h(\WB_{q})+o(1))}}{-\log \frac{M}{q-1}+n\log q}
 =\frac{h(\WB_{q})}{\log q}.
\end{align*}
\end{proof}

\section*{Acknowledgements}

The authors   thank the anonymous referees for many useful comments and suggestions. This work was initiated during the authors' visit to the University of Utrecht in August 2015. We are grateful to Karma Dajani (granted by NWO no.613.001.022) and Kan Jiang for their hospitality.  The authors would also like to thank Wenxia Li for his suggestions on the set $\E$ of bifurcation bases.

% ctrl d ctrl q in texnicentre to comment ctrl alt d to uncoment
% \bibliographystyle{plain}
% \bibliography{plateaus}

\begin{thebibliography}{10}

\bibitem{Alcaraz_Barrera_2014}
R.~Alcaraz~Barrera.
\newblock Topological and ergodic properties of symmetric sub-shifts.
\newblock {\em Discrete Contin. Dyn. Syst.}, \textbf{34}(11):4459--4486, 2014.

\bibitem{Allouche_Shallit_2003}
J.P. Allouche and J. Shallit.
\newblock {\em Automatic sequences}: \newblock Theory, applications, generalizations.
\newblock Cambridge University Press, Cambridge, 2003.


\bibitem{Allouche_Shallit_1999}
J.P. Allouche and J.~Shallit.
\newblock The ubiquitous {P}rouhet-{T}hue-{M}orse sequence.
\newblock In {\em Sequences and their applications ({S}ingapore, 1998)},
  Springer Ser. Discrete Math. Theor. Comput. Sci., pages 1--16. Springer,
  London, 1999.

\bibitem{Baiocchi_Komornik_2007}
C. Baiocchi and V. Komornik.
\newblock Greedy and quasi-greedy expansions in non-integer bases.
\newblock {\em arXiv:0710.3001v1}, 2007.

\bibitem{Baker_2014}
S.~Baker.
\newblock Generalized golden ratios over integer alphabets.
\newblock {\em Integers}, 14:Paper No. A15, 28, 2014.

\bibitem{Bak-Kon-17}
{ S.~Baker and D.~Kong.
\newblock Bifurcation sets arising from non-integer base expansions.
\newblock {\em arXiv:1706.05190}, 2017.}

\bibitem{Bundfuss_Kruger_Troubetzkoy_2011}
S.~Bundfuss, T.~Kr{\"u}ger, and S.~Troubetzkoy.  {Topological and symbolic
  dynamics for hyperbolic systems with holes}, \emph{Ergodic Theory Dynam. Systems}
  \textbf{31}(5), 1305--1323, 2011.


\bibitem{Dajani_deVries_2007}
K. Dajani and M. de~Vries.
\newblock Invariant densities for random {$\beta$}-expansions.
\newblock {\em J. Eur. Math. Soc. (JEMS)}, \textbf{9}(1):157--176, 2007.

\bibitem{Daroczy_Katai_1993}
Z.~Dar{\'o}czy and I.~K{\'a}tai.
\newblock Univoque sequences.
\newblock {\em Publ. Math. Debrecen}, \textbf{42}(3-4):397--407, 1993.

\bibitem{Darczy_Katai_1995}
Z.~Dar{\'o}czy and I.~K{\'a}tai.
\newblock On the structure of univoque numbers.
\newblock {\em Publ. Math. Debrecen}, \textbf{46}(3-4):385--408, 1995.

\bibitem{DeVries_Komornik_2008}
M.~de~Vries and V.~Komornik.
\newblock Unique expansions of real numbers.
\newblock {\em Adv. Math.}, \textbf{221}(2):390--427, 2009.

\bibitem{DeVries_Komornik_Loreti_2016}
M.~de~Vries, V.~Komornik, and P.~Loreti.
\newblock Topology of the set of univoque bases.
\newblock {\em Topology Appl.}, \textbf{205}:117--137, 2016.

\bibitem{Erdos_Joo_Komornik_1990}
P.~Erd\H{o}s, I.~Jo\'{o}, and V.~Komornik.
\newblock Characterization of the unique expansions $1=\sum_{i=1}^\infty
  q^{-n_i}$ and related problems.
\newblock {\em Bull. Soc. Math. France}, \textbf{118}:377--390, 1990.

\bibitem{Erdos_Joo_1992}
P.~Erd{\H{o}}s and I.~Jo{\'o}.
\newblock On the number of expansions {$1=\sum q^{-n_i}$}.
\newblock {\em Ann. Univ. Sci. Budapest. E\"otv\"os Sect. Math.}, \textbf{35}:129--132,
  1992.

\bibitem{Falconer_1990}
K.~Falconer.
\newblock {\em Fractal geometry}.
\newblock John Wiley \& Sons Ltd., Chichester, 1990.
\newblock Mathematical foundations and applications.

\bibitem{Glendinning_Sidorov_2001}
P.~Glendinning and N.~Sidorov.
\newblock Unique representations of real numbers in non-integer bases.
\newblock {\em Math. Res. Lett.}, \textbf{8}:535--543, 2001.

\bibitem{Komornik_2011}
V.~Komornik.
\newblock Expansions in noninteger bases.
\newblock {\em Integers}, 11B:Paper No. A9, 30, 2011.

\bibitem{Komornik_Kong_Li_2015_1}
V.~Komornik, D.~Kong, and W.~Li.
\newblock Hausdorff dimension of univoque sets and devil's staircase.
\newblock {\em Adv. Math.}, \textbf{305}:165--196, 2017.

\bibitem{Komornik_Loreti_2002}
V.~Komornik and P.~Loreti.
\newblock Subexpansions, superexpansions and uniqueness properties in
  non-integer bases.
\newblock {\em Period. Math. Hungar.}, \textbf{44}(2):197--218, 2002.

\bibitem{Komornik_Loreti_2007}
V.~Komornik and P.~Loreti.
\newblock On the topological structure of univoque sets.
\newblock {\em J. Number Theory}, \textbf{122}(1):157--183, 2007.

\bibitem{Kong_Li_2015}
D.~Kong and W.~Li.
\newblock Hausdorff dimension of unique beta expansions.
\newblock {\em Nonlinearity}, \textbf{28}(1):187--209, 2015.

\bibitem{Kong_Li_Dekking_2010}
D. Kong, W. Li, and F.~M. Dekking, \emph{Intersections of
  homogeneous {C}antor sets and beta-expansions}, Nonlinearity \textbf{23}
  (2010), no.~11, 2815--2834.

\bibitem{Kong_Li_Lu_Vries_2016}
{ D.~Kong, W.~Li, F.~L\"{u}, and M.~de~Vries.
\newblock Univoque bases and hausdorff dimensions.
\newblock {\em Monatsh. Math.}, DOI 10.1007/s00605-017-1047-9, 2017.}

\bibitem{Lind_Marcus_1995}
D.~Lind and B.~Marcus.
\newblock {\em An introduction to symbolic dynamics and coding}.
\newblock Cambridge University Press, Cambridge, 1995.

\bibitem{Parry_1960}
W.~Parry.
\newblock On the {$\beta $}-expansions of real numbers.
\newblock {\em Acta Math. Acad. Sci. Hungar.}, \textbf{11}:401--416, 1960.

\bibitem{Renyi_1957}
A.~R\'{e}nyi.
\newblock Representations for real numbers and their ergodic properties.
\newblock {\em Acta Math. Acad. Sci. Hungar.}, \textbf{8}:477--493, 1957.

\bibitem{Sidorov_2003}
N.~Sidorov.
\newblock Almost every number has a continuum of {$\beta$}-expansions.
\newblock {\em Amer. Math. Monthly}, \textbf{110}(9):838--842, 2003.

\bibitem{Sidorov_2009}
N.~Sidorov.
\newblock Expansions in non-integer bases: lower, middle and top orders.
\newblock {\em J. Number Theory}, \textbf{129}(4):741--754, 2009.

\bibitem{Sidorov_2003_survey}
N. Sidorov.
\newblock Arithmetic dynamics.
\newblock In {\em Topics in dynamics and ergodic theory}, volume 310 of {\em
  London Math. Soc. Lecture Note Ser.}, pages 145--189. Cambridge Univ. Press,
  Cambridge, 2003.

\bibitem{Sidorov_2007}
N. Sidorov.
\newblock Combinatorics of linear iterated function systems with overlaps.
\newblock {\em Nonlinearity}, \textbf{20}(5):1299--1312, 2007.

\end{thebibliography}

\end{document}